\documentclass[a4paper,reqno]{amsart}

\usepackage[dvips]{graphicx}

\usepackage[font=small]{caption}

\usepackage{setspace}
\usepackage[totalwidth=14cm,totalheight=22cm,hmarginratio={1:1},vmarginratio={1:1},heightrounded]{geometry}
\usepackage[T1]{fontenc}
\usepackage{amsmath,amssymb,amscd,amsthm}
\usepackage{amsfonts}

\usepackage[latin1]{inputenc}
\usepackage{latexsym}
	
\usepackage{colortbl}
\usepackage{color}
\usepackage{ae}
\usepackage{enumitem}

\usepackage{tikz}
\usepackage{cancel}
\usepackage{bbm}

\usepackage{mathrsfs}
\usepackage{relsize}

\usepackage[toc]{appendix}

\usepackage{tikz-cd}
\usepackage{hyperref}

%\usepackage[crop=off]{auto-pst-pdf}

%
%
%\makeatletter
%\newtheorem*{rep@theorem}{\rep@title}
%\newcommand{\newreptheorem}[2]{%
%\newenvironment{rep#1}[1]{%
% \def\rep@title{#2 \ref{##1}}%
% \begin{rep@theorem}}%
% {\end{rep@theorem}}}
%\makeatother
%
%\makeatletter
%\newtheorem*{rep@cor}{\rep@title}
%\newcommand{\newrepcor}[2]{%
%\newenvironment{rep#1}[1]{%
% \def\rep@title{#2 \ref{##1}}%
% \begin{rep@cor}}%
% {\end{rep@cor}}}
%\makeatother
%
%\makeatletter
%\newtheorem*{rep@prop}{\rep@title}
%\newcommand{\newrepprop}[2]{%
%\newenvironment{rep#1}[1]{%
% \def\rep@title{#2 \ref{##1}}%
% \begin{rep@prop}}%
% {\end{rep@prop}}}
%\makeatother

%
%\newtheorem{cor}{cor}[section]
%\newtheorem{corx}{cor}
%\renewcommand{\thecorx}{\Alph{corx}}
%
%\newtheorem{theorem}[cor]{Theorem}
%\newtheorem{thmx}[corx]{Theorem}
%\renewcommand{\thethmx}{\Alph{thmx}}
%
%\newtheorem{prop}[cor]{Proposition}
%\newtheorem{propx}[corx]{Proposition}
%\renewcommand{\thepropx}{\Alph{propx}}
%
%\newrepcor{cor}{cor}
%\newreptheorem{theorem}{Theorem}
%\newrepprop{prop}{Proposition}

\newtheorem{theorem}{Theorem}[section]

\newtheorem{thmroman}{Theorem}

%\newenvironment{thmbis}[1]
%  {\renewcommand{\thethmroman}{\ref{#1}$'$}%
%   %\addtocounter{thm}{-1}%
%   \begin{thmroman}}
%  {\end{thmroman}}

\newenvironment{manualtheorem}[1]{%
  \manualtheoreminner
}{\endmanualtheoreminner}

\newtheorem{lemma}[theorem]{Lemma}
\newtheorem{prop}[theorem]{Proposition}
\newtheorem{cor}[theorem]{Corollary}
\newtheorem{defi}[theorem]{Definition}
\newtheorem{notation}[theorem]{Notation}
\newtheorem{remark}[theorem]{Remark}
\newtheorem*{remark*}{Remark}

\newtheorem{cla}{Claim}

%Roman

\newcommand{\e}{\varepsilon}
\newcommand{\pt}{\partial}
\newcommand{\mc}[1]{\mathcal{#1}}

\newcommand{\mb}[1]{\mathbf{#1}}
\newcommand{\ms}[1]{\mathscr{#1}}
\newcommand{\ol}[1]{\overline{#1}}
\newcommand{\rar}{\rightarrow}
\newcommand{\R}{\mathbb{R}}

\renewcommand{\H}{\mathbb{H}}

\renewcommand{\S}{\mathbb{S}}
\newcommand{\len}{{\rm len}}

\newcommand{\diam}{{\rm diam}}
\newcommand{\area}{{\rm area}}
\newcommand{\clconv}{{\rm conv}}

\renewcommand{\area}{{\rm area}}

\newcommand{\inter}{{\rm int}}

\renewcommand{\hat}{\widehat}
\newcommand{\la}{\langle}
\newcommand{\ra}{\rangle}

\renewcommand{\S}{\mathbb{S}}
\renewcommand{\H}{\mathbb{H}}

\newcommand{\CH}{\clconv}
\newcommand{\CT}{\textsc{ct}}

\renewcommand{\L}{\operatorname{L}}

%%%%%%%%%%%%%%%old commands

\renewcommand{\div}{\operatorname{div}}

\newcommand{\Isom}{\mathrm{Isom}}

\newcommand{\tr}{\mbox{\rm tr}}

\newcommand{\grad}{\operatorname{grad}}%{\mbox{grad}}

\renewcommand{\P}{\mc P}%spacetimes with marked points
\newcommand{\p}{\mathbf{p}} %triple of marked spacetime

\newcommand{\M}{\mc M}%space of flat metrics

\def\wti#1{\widetilde{#1}}

\def\ol#1{\overline{#1}}

\DeclareFontFamily{OT1}{pzc}{}
\DeclareFontShape{OT1}{pzc}{m}{it}{<-> s * [0.99] pzcmi7t}{}
%\DeclareMathAlphabet{\mathscr}{OT1}{pzc}{m}{it}

\providecommand{\keywords}[1]
{
  \small	
  \textbf{\textit{Keywords---}} #1
}

%\bibliography{bs-bibliography.bib}

\begin{document}

\setcounter{secnumdepth}{3}
\setcounter{tocdepth}{2}

\title[Polyhedra in flat spacetimes]{Polyhedral surfaces in flat (2+1)-spacetimes and balanced cellulations on hyperbolic surfaces}

\author{
  Fran\c{c}ois Fillastre 
and
Roman Prosanov
}

\begin{abstract}
We first prove that given a hyperbolic metric $h$ on a closed surface $S$, any flat metric on $S$ with negative singular curvatures isometrically embeds as a convex polyhedral Cauchy surface in a unique future-complete flat globally hyperbolic maximal (2+1)-spacetime whose linear part of the holonomy is given by $h$.
The Gauss map allows to translate this statement to a purely 2-dimensional problem of finding a balanced geodesic cellulation on the hyperbolic surface, from which the flat metric can be easily recovered.

We show next that given two such flat metrics on the surface, there exists a unique  pair of future- and past-complete flat globally hyperbolic maximal (2+1)-spacetimes with the same holonomy, in which the flat metrics embed respectively as convex polyhedral Cauchy surfaces. The proof follows from convexity properties of the total length of the associated balanced geodesic cellulations over Teichm\"uller space.
\end{abstract}

\maketitle

\medskip

\keywords{(2+1)-spacetimes; balanced cellulation; Teichm\"uller space; Gauss map; Polyhederal surfaces; flat metrics; Codazzi tensors; length functional}

\setcounter{secnumdepth}{4}
\setcounter{tocdepth}{4}
\tableofcontents

\section{Introduction}

\subsection{Main statements}
The Gauss map encodes an intricate information about how a surface is curved in space. For convex surfaces the Gauss map can be defined without further regularity assumptions, but by dropping the smoothness we accept that the Gauss map becomes not really a map, but a multivalued map. The first concern of the current article is a study of the Gauss map from convex polyhedral surfaces in flat (2+1)-spacetimes. In particular, we are interested how it translates their extrinsic geometry into the intrinsic geometry of the target, which in our setting is a hyperbolic surface. Let us first recall some details in the classical case of Euclidean 3-space $\mathbb{R}^3$.

Consider a (compact) convex polyhedron $P$ in $\mathbb{R}^3$. The Gauss map of the polyhedron is a multivalued map that takes a boundary point of $P$ and associates it with the set of outward unit normals to the support planes of $P$ at this point. 
Therefore, the Gauss map transforms the polyhedron $P$ to a convex geodesic cellulation of the 
round sphere $\mathbb{S}^2\subset \mathbb{R}^3$. This cellulation, denoted as $\ms G$, is inherently dual to the face cellulation of the original polyhedron $P$. Here is how this duality is manifested:
\begin{itemize}[nolistsep]
\item The Gauss image of a face of $P$ is a vertex of $\ms G$.
\item For an edge $E$ of $P$ its Gauss image is an edge $e$ in $\ms G$, and the length of $e$ in $\mathbb{S}^2$ is equal to the exterior dihedral angle of $P$ at $E$.
\item Finally, under the Gauss map a vertex $V$ of the polyhedron $P$ becomes a convex spherical polygon. The area of this spherical polygon is precisely the singular curvature present at the vertex $V$.
\end{itemize}

We assign a positive weight, denoted as $w_e$, to each edge $e$
 of $\ms G$, equal to the length of the respective edge in the original polyhedron $P$.
Considering a vertex $v$ of $\ms G$, we see that the weights $w_e$ of the incident edges satisfy the following balance condition:

\begin{equation}\label{eq:balance 1}
\sum_{e\ni v} w_e U_e = 0~.
\end{equation}

Here $U_e$ is the unit tangent vector of $\mathbb{S}^2$ to the edge $e$ at the vertex $v$. We will refer to this balanced convex cellulation $(\ms G,w)$, as ``the Gauss image of $P$'', although this is a slight abuse of terminology.
We will call such a geodesic convex cellulation over $\mathbb{S}^2$ with positive weights satisfying the balance condition \eqref{eq:balance 1} a \emph{balanced cellulation} over $\mathbb{S}^2$.

In the other direction, from a balanced cellulation $(\ms G,w)$ over $\mathbb{S}^2$ it is not hard to construct a convex polyhedron $P$ such that its Gauss image is $(\ms G,w)$. Also $P$ is unique up to Euclidean isometries, see e.g.   \cite{sch77,schneider}.

The induced intrinsic metric $d$ on the boundary of $P$ is a flat metric with conical singularities with positive curvature on the topological sphere $S^2$. This metric can be also recovered from the Gauss image $(\ms G,w)$ of $P$. Indeed, the balance condition \eqref{eq:balance 1} means  that we may associate to each vertex of $(\ms G,w)$ a flat polygon so that the edges have outward unit normals $U_e$ and lengths $w_e$. Gluing these polygons along the combinatorics dual to $\ms G$, we obtain a flat surface with conical singularities, which is isometric to $d$ by construction. We will say that the flat metric $d$ is \emph{dual} to $(\ms G,w)$, see Figure~\ref{fig:vertex-to-face}.

\begin{figure}[h]
\begin{center}
%\psfrag{f}{$f$}
% \psfrag{e}{$U_1$}
% \psfrag{e2}{$U_2$}
%  \psfrag{g}{$w_2U_2^\bot$}
%   \psfrag{P}{$w_1U_1^\bot$}

\includegraphics[scale=0.2]{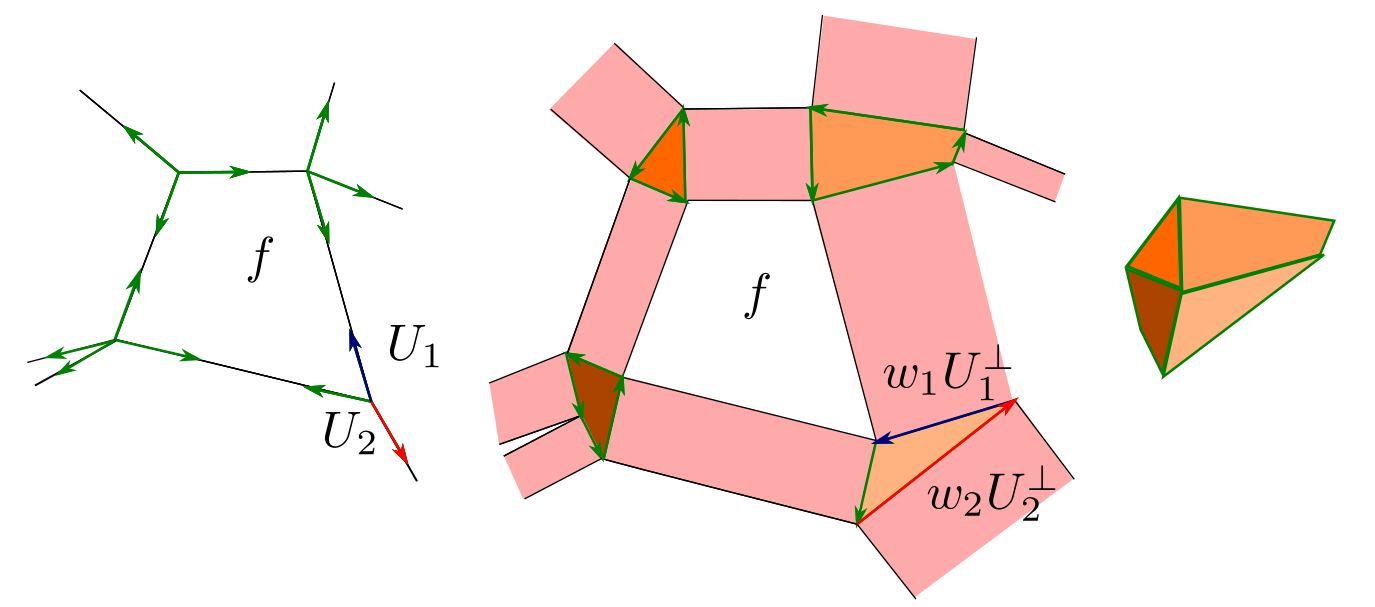}
\caption{ \footnotesize	The left hand side of the picture is a part of a balanced cellulation, around a face $f$. The green arrows in this picture are unit tangent vectors to the edges of the cellulation. In the picture in the middle, Euclidean strips are glued to every edge appearing in the left-hand side picture, in a direction orthogonal to the direction of the edge, and of width the weight of the edge. By the balance condition, the empty parts between the strips may be filled by convex flat polygons (colored brown). Gluing those flat polygons by identifying two segments that bound the same strip, produces a flat metric on the sphere (picture on the right hand side). These faces are glued around a vertex, and the sum of the interior angles of the faces at this vertex is equal to the sum of the exterior angles of the spherical convex polygon $f$. By the Gauss--Bonnet Formula, this sum plus the area of $f$ is equal to $2\pi$.}
\label{fig:vertex-to-face}
\end{center}
\end{figure}

At first glance, it might appear that retrieving the flat metric from a balanced cellulation provides less comprehensive information compared to recovering the entire convex polyhedron. However, the renowned theorem of Alexandrov demonstrates that surprisingly it is sufficient to recover the metric alone.

\begin{theorem}[Alexandrov Theorem, extrinsic version]\label{theorem: alex eucl extr}
Let $d$ be a flat metric with positive singular curvatures on the sphere $S^2$. Then there exists a unique (up to isometries) convex polyhedron $P$ in Euclidean space such that the induced intrinsic distance on $\partial P$ is isometric to $d$.
\end{theorem}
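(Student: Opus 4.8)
The plan is to prove existence and uniqueness together by the method of continuity, comparing a space of convex polyhedra with a space of flat cone metrics on the sphere. Fix an integer $n$ and let $\mathcal{M}_n$ be the space of flat metrics on $S^2$ with at most $n$ cone points, all of positive singular curvature (hence summing to $4\pi$ by Gauss--Bonnet), taken up to isometry; let $\mathcal{P}_n$ be the space of convex polyhedra in $\mathbb{R}^3$ with at most $n$ vertices, up to ambient isometry. Choosing a geodesic triangulation of a cone metric with the cone points as vertices exhibits $\mathcal{M}_n$, near a generic point, as a manifold of dimension $3n-6$: its local coordinates are the edge lengths, constrained only by strict triangle inequalities, and positivity of the curvatures is an open condition. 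The same count --- $3n$ vertex coordinates minus the six dimensions of ambient isometry --- gives dimension $3n-6$ for $\mathcal{P}_n$ near a polyhedron with vertices in general position. The natural map $\Phi\colon\mathcal{P}_n\to\mathcal{M}_n$ sends a polyhedron to the intrinsic metric induced on its boundary; it is continuous, and smooth in restriction to each combinatorial stratum. I would then show $\Phi$ is a homeomorphism.

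The argument has four ingredients. \emph{Injectivity} is the global rigidity of convex polyhedra: if $\Phi(P)=\Phi(P')$, choose a geodesic triangulation of the common metric refining the face structures of both $P$ and $P'$; then corresponding triangles have the same development, so the dihedral angles of $P$ and $P'$ along the edges differ by a function whose sign changes around each vertex can be controlled by Cauchy's combinatorial lemma, which, combined with a global count on $S^2$, forces the difference to vanish, i.e.\ $P\cong P'$. \emph{Local homeomorphism:} infinitesimal rigidity of convex polyhedra (Cauchy--Dehn) shows $d\Phi$ is injective at every polyhedron with vertices in general position, so by the equality of dimensions $\Phi$ restricts to a local diffeomorphism on the top stratum, and a short argument propagates openness across the lower-dimensional strata. \emph{Properness:} if $\Phi(P_k)\to d$ in $\mathcal{M}_n$, the diameters of $\Phi(P_k)$ and suitable lower bounds on their non-degeneracy are controlled, so after normalizing their position the polyhedra $P_k$ stay in a fixed bounded region and cannot collapse to a segment or a planar polygon; a subsequence then converges to a convex polyhedron $P$ with $\Phi(P)=d$. \emph{Connectedness:} $\mathcal{M}_n$ is connected, since one may deform the combinatorics, the edge lengths, and even the cone angles continuously while keeping every curvature positive. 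A proper local homeomorphism onto a connected manifold of the same dimension is onto, and injective by the first ingredient, hence a homeomorphism, which is exactly the assertion.

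The main obstacle I expect is the interaction between the two rigidity statements and the stratification by combinatorial type. Cauchy's lemma must be applied carefully at vertices where the edges of $P$ and $P'$ only partially agree (so the refining triangulation has ``flat'' edges across which the dihedral angle is $\pi$), and one must verify that openness and properness survive passage between strata --- in particular, that vertices of $P_k$ colliding in the limit, which would produce a metric with fewer cone points, are genuinely excluded by $\Phi(P_k)\to d$. Two alternatives bypass the continuity method. The variational approach of Bobenko--Izmestiev fixes $d$ and its cone points, parametrizes generalized convex polyhedra as convex hulls of the cone points placed at prescribed distances $h_1,\dots,h_n>0$ from an interior apex, and shows that a suitable discrete curvature functional of the heights $h_i$ is strictly concave and attains an interior maximum; strict concavity yields uniqueness and the maximum yields the polyhedron. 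A third route, closest to the viewpoint of this paper, is dual: realize $d$ as the flat metric dual to a balanced convex cellulation $(\ms G,w)$ of $\mathbb{S}^2$ and then recover $P$ from $(\ms G,w)$ by the reconstruction recalled above, so that the theorem becomes the assertion that every such $d$ is the Gauss image of a unique balanced convex cellulation.
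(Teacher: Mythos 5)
Your proposal follows the same deformation (continuity) method that the paper sketches in Section~1.2 for Alexandrov's theorem and then adapts to flat spacetimes, with the right ingredients: the dimension count $3n-6$ on both sides, global injectivity via Cauchy rigidity, local invertibility via Cauchy--Dehn infinitesimal rigidity, properness, and connectedness of the target. Two points need attention, one of which is a genuine gap.

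The gap is in properness. You assert the $P_k$ ``cannot collapse to a segment or a planar polygon.'' Collapse to a segment is indeed excluded, but collapse to a doubly-covered convex polygon is \emph{not} excluded and in fact must be permitted: the intrinsic metric on the boundary of such a degenerate convex body (two copies of a convex $n$-gon glued along their common boundary) is a flat metric on $S^2$ with $n$ cone points of positive singular curvature, and by the uniqueness half of Alexandrov's theorem no non-degenerate polyhedron realizes it. So if $d$ happens to be such a metric, properness into a space of non-degenerate polyhedra fails and the continuity argument produces nothing. The standard resolution---and the one implicitly understood in the paper's statement---is to admit doubly-covered polygons as degenerate convex polyhedra in $\mathcal{P}_n$ (with $\partial P$ interpreted as the double of the polygon); once you do, a bounded non-collapsing-to-a-segment sequence subconverges in $\mathcal{P}_n$ and the argument closes.

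The smaller issue is the phrase ``at most $n$'' cone points/vertices: these give stratified spaces, not manifolds, and the $3n-6$ count applies only on the top stratum. You should fix \emph{exactly} $n$ cone points and $n$ vertices. This also disposes of your worry about cone points merging under $\Phi(P_k)\to d$: since the target $d$ is by hypothesis in the top stratum with $n$ distinct cone points, merging is impossible, and the real content of properness is precisely the extrinsic diameter bound and the (degenerate) non-collapse just discussed.
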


From what was said above, Theorem~\ref{theorem: alex eucl extr} is equivalent to the following statement:

\begin{theorem}[Alexandrov Theorem, intrinsic version]\label{theorem: alex eucl intr}
Let $d$ be a flat metric with positive singular curvatures on the sphere $S^2$. Then there exists a unique (up to isometries) 
balanced cellulation $(\ms G,w)$ on $\mathbb{S}^2$ whose dual metric is isometric to $d$.
\end{theorem}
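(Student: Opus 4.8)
The plan is to deduce this from the extrinsic Alexandrov Theorem (Theorem~\ref{theorem: alex eucl extr}) by making precise the duality between compact convex polyhedra in $\mathbb{R}^3$ and balanced convex geodesic cellulations of $\mathbb{S}^2$ recalled above. Concretely, I would set up a bijection, defined up to isometries on each side, between such polyhedra $P\subset\mathbb{R}^3$ and such cellulations $(\ms G,w)$. In one direction $P\mapsto(\ms G_P,w_P)$ is the Gauss image, whose cells are dual to the faces of $P$ and whose weights are the lengths of the corresponding edges of $P$; the three bullet points above check that $\ms G_P$ is a convex geodesic cellulation and that \eqref{eq:balance 1} holds. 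In the other direction the Minkowski-type theorem \cite{sch77,schneider} produces from $(\ms G,w)$ a convex polyhedron $P(\ms G,w)$, unique up to translation, whose Gauss image is $(\ms G,w)$. One checks that these constructions are mutually inverse modulo isometries, and that a Euclidean isometry carrying $P$ to $P'$ induces (via its $O(3)$-part) an isometry of $\mathbb{S}^2$ carrying $\ms G_P$ to $\ms G_{P'}$, and conversely; hence the bijection descends to isometry classes.

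The second ingredient is that this bijection intertwines the induced intrinsic metric on $\partial P$ with the dual metric of $(\ms G,w)$. This is exactly the cut-and-paste construction recalled above and pictured in Figure~\ref{fig:vertex-to-face}: the balance condition at a vertex $v$ of $\ms G$ lets one assemble a flat convex polygon with outward edge normals $U_e$ and side lengths $w_e$; these polygons glue along the combinatorics dual to $\ms G$, and the resulting flat cone metric on $S^2$ is, cell by cell, isometric to the metric $\partial P$ inherits from $\mathbb{R}^3$ (the polygon at $v$ is congruent to the face of $P$ dual to $v$, and edge lengths and gluing patterns agree). Finally, Gauss--Bonnet applied to the spherical polygon that is the Gauss image of a vertex $V$ of $P$ shows that the cone angle at the corresponding cone point equals $2\pi$ minus the area of that polygon, so positivity of the singular curvatures on the polyhedral side matches positivity on the cellulation side.

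Granting these two points the theorem is immediate. Given a flat metric $d$ with positive singular curvatures on $S^2$, Theorem~\ref{theorem: alex eucl extr} provides a convex polyhedron $P$, unique up to isometry, whose intrinsic boundary metric is isometric to $d$; then $(\ms G_P,w_P)$ is a balanced geodesic cellulation with dual metric isometric to $d$. Conversely, if $(\ms G,w)$ is any balanced geodesic cellulation with dual metric isometric to $d$, then $P(\ms G,w)$ is a convex polyhedron whose induced metric is isometric to $d$, hence isometric to $P$ by extrinsic uniqueness, hence $(\ms G,w)=(\ms G_{P(\ms G,w)},w_{P(\ms G,w)})$ is isometric to $(\ms G_P,w_P)$. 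The main obstacle is purely bookkeeping: in the second step one must match combinatorics, weights and angles exactly, so that ``dual metric'' and ``induced metric'' are literally the same metric rather than two metrics sharing the same cone points and cone angles; there is no genuine analysis, since the substantive existence-and-uniqueness input is quoted from Theorem~\ref{theorem: alex eucl extr}. (Alternatively, one could give a self-contained variational proof directly on the space of balanced cellulations, by extremizing a total-length-type functional, which is in fact close to the strategy used later in this paper for the hyperbolic analogue.)
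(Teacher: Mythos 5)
Your proposal is correct and takes essentially the same route as the paper: the paper treats Theorem~\ref{theorem: alex eucl intr} as a direct reformulation of Theorem~\ref{theorem: alex eucl extr} via the Gauss-map duality between convex polyhedra and balanced cellulations described just before its statement (including the appeal to \cite{sch77,schneider} for the inverse direction), which is exactly the equivalence you spell out.
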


The first aim of the present article is to generalize the theorems above to higher genus. 
The generalization of the definition of balanced cellulation is immediate to any constant curvature surface. The dual metric of a balanced cellulation over a flat torus is a flat metric (without cone-singularities), and we won't consider this case.

We will say that a surface $S$ is of \emph{hyperbolic type} if $S$ is a connected  closed oriented surface of genus $>1$, and that $S$ is a \emph{hyperbolic surface} if it is of hyperbolic type and is endowed with a hyperbolic metric.
For a hyperbolic surface, due to the Gauss--Bonnet Formula for hyperbolic convex polygons,  
the curvature at the cone points of the dual flat metric is negative. The first aim of the present article is to prove the following  hyperbolic version of 
 Theorem~\ref{theorem: alex eucl intr}.

\begin{thmroman}[Hyperbolic Alexandrov Theorem, intrinsic version]\label{thmI}
Let $d$ be a flat metric with negative singular curvatures on a surface $S$ of hyperbolic type, and let $h$ be a hyperbolic metric on $S$. Then there exists a unique balanced cellulation $(\ms G,w)$ on $(S,h)$ whose dual metric is isometric to $d$ via an isometry isotopic to the identity.%\footnote{The dual metric to a balanced geodesic cellulation is determined up to isotopy. Note that the duality relation also assumes a choice of a geodesic cellulation $\ms C$ of $(S, d)$ and a choice of combinatorial duality between $\ms C$ and $\ms G$. For surfaces of higher genus they are, however, determined in a unique way.}
\end{thmroman}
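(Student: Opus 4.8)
\medskip
\noindent\emph{Proof proposal.} The plan is to translate the statement, via the Gauss map, into a variational problem on $(S,h)$: uniqueness will come from the convexity of a total length functional, and existence from a convex-hull/flip minimization of that functional, with the negative singular curvature of $d$ as the decisive ingredient. Concretely, the first step is to make precise the hyperbolic analogue of the Euclidean Gauss-map dictionary recalled above. A balanced geodesic cellulation $(\ms{G},w)$ on $(S,h)$ is the same datum as a convex spacelike polyhedral Cauchy surface $\Sigma$ in a future-complete flat globally hyperbolic maximal spacetime with linear holonomy $h$: the faces of $\Sigma$ are the vertices of $\ms{G}$, the edges of $\Sigma$ are the edges of $\ms{G}$ with $w_e$ the length of the corresponding edge in $\Sigma$, and the vertices of $\Sigma$ --- the cone points of the induced flat metric $d_\Sigma$, whose singular curvatures are negative by Gauss--Bonnet --- are the faces of $\ms{G}$; moreover the dual metric of $(\ms{G},w)$ is exactly $d_\Sigma$, and the balance identity at a vertex of $\ms{G}$ is the condition that the edge vectors of the corresponding planar convex face of $\Sigma$ sum to zero. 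Under this dictionary Theorem~\ref{thmI} becomes the assertion that $d$ is the induced metric, up to isotopy, of a unique such $\Sigma$.

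The key reformulation of the balance condition is the following. Fix a geodesic triangulation $\mathcal T$ of $(S,d)$ with vertex set the cone points; this determines the combinatorial type of the dual cellulation $\ms{G}$ together with its weights $w_e$ (the $d$-lengths of the edges of $\mathcal T$), and a geodesic realization of $\ms{G}$ on $(S,h)$ is a choice of one vertex of $\ms{G}$ in $(S,h)$ for each face of $\mathcal T$. Since the derivative of $\len(e)$ under a displacement of an endpoint $v$ of $e$ is $-\langle\,\cdot\,,U_e\rangle$, the balance equations $\sum_{e\ni v}w_eU_e=0$ at all vertices are exactly the criticality conditions for the total weighted length $L_{\mathcal T}=\sum_e w_e\,\len(e)$ on the space $\mathcal X_{\mathcal T}$ of geodesic realizations. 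Lifting to the universal cover $\H$ and writing each $\len(e)$ as a hyperbolic distance between suitable lifts of its endpoints, $L_{\mathcal T}$ is a positive combination of functions of the form $(p,q)\mapsto\dist_{\H}(p,q)$, each convex because $\H$ is $\mathrm{CAT}(0)$; hence $L_{\mathcal T}$ is convex on $\mathcal X_{\mathcal T}$, and a short argument using that $(S,h)$ has finite isometry group shows its Hessian is nondegenerate at any critical configuration. Therefore $\ms{G}$ admits at most one balanced geodesic realization of a given combinatorial type, occurring as the unique minimum of $L_{\mathcal T}$ whenever that minimum lies in the interior of $\mathcal X_{\mathcal T}$; this is the rigidity underlying the uniqueness part of Theorem~\ref{thmI}.

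For existence I would minimize $L_{\mathcal T}$ over $\mathcal X_{\mathcal T}$ and iterate Whitehead moves. If the infimum is attained at an interior point, the critical configuration is a balanced geodesic cellulation, and one checks --- here using crucially that $d$ has negative singular curvatures, so that all cone angles exceed $2\pi$ and the dual flat polygons close up convexly and without overlap --- that its dual metric is isotopic to $d$, i.e. that $\mathcal T$ was the ``Delaunay'' triangulation. If instead the infimum is approached only as some $\len(e)\to 0$, replacing $\mathcal T$ by the flipped triangulation does not increase the length, so one iterates and passes to the limit. Alternatively, existence can be run extrinsically as a continuity/degree argument in the Mess--Bonsante parametrization of flat globally hyperbolic maximal spacetimes with linear holonomy $h$, the local homeomorphism property of the map ``convex polyhedral Cauchy surface $\mapsto$ induced metric'' being exactly the strict convexity established above.

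The step I expect to be the main obstacle is the degeneration analysis required by either scheme: one must rule out that, along a minimizing sequence or a continuous family of cellulations, vertices of $\ms{G}$ collide in a way not absorbed by a flip, that an edge of $\ms{G}$ becomes arbitrarily long in the compact surface $(S,h)$, or that the dual polygons collapse. This is exactly where the hypothesis of negative singular curvature of $d$ is used --- it plays the role that strict convexity of the polyhedron plays in Alexandrov's classical theorem --- and where the negative curvature and compactness of $(S,h)$ must be exploited quantitatively to obtain the a priori bounds that make the minimization proper (equivalently, make the image of the ``induced metric'' map closed).
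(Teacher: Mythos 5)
Your proposal takes a genuinely different route from the paper. The paper proves Theorem~\ref{thmI} by passing to the extrinsic version Theorem~\ref{thmI'} and running the Alexandrov deformation method: it constructs the induced-metric map $\mc I_\rho:\P_c(\rho,V)\to\M_-(S,V)$ (from convex configurations of marked points in flat quasi-Fuchsian spacetimes with linear holonomy $\rho$ to flat cone-metrics), proves $\mc I_\rho$ is $C^1$ (Lemma~\ref{diff}), that $d\mc I_\rho$ is non-degenerate via a generalization of Iskhakov's infinitesimal rigidity Theorem~\ref{thm:iskhakov} (Proposition~\ref{infrig}), and that $\mc I_\rho$ is proper (Proposition~\ref{compact}, using Bonsante's compactness technique and the Barbot--B\'eguin--Zeghib estimate relating systole to cosmological time), then closes the argument with topology on universal covers. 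You instead propose minimizing the total weighted length $L_{\mathcal T}$ over geodesic realizations of a fixed dual combinatorics and iterating flips. Your observation that the balance condition is the criticality condition for $L_{\mathcal T}$ and that each edge length is convex by CAT(0) is correct and resonates with the paper's use of the total length $\L_d$ for Theorem~\ref{thmII}; but there the variable is the hyperbolic metric over Teichm\"uller space with the cellulation already supplied by Theorem~\ref{thmI}, not the vertex positions. In fact the authors state explicitly, right after discussing the energy-functional approach of Colin de Verdi\`ere and Kajigaya--Tanaka, that they do not know whether the solution of Theorem~\ref{thmI} is the minimum of a functional: your proposal is precisely the candidate they set aside as unresolved.

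The gaps are where you flag the ``main obstacle,'' and they are substantial. (i) Properness/degeneration: nothing in your sketch rules out vertex collisions, collapsing dual polygons, or edges of $\ms G$ growing without bound along a minimizing sequence; in the paper this is Section~\ref{sec:compactness} and is the hardest part of the proof. (ii) The flip iteration: you assert flipping ``does not increase the length,'' but after a flip the weight changes ($w_{e'}=\len_d(e')\ne w_e=\len_d(e)$), so there is no monotone quantity, and a priori the iteration could pass through infinitely many combinatorial types. (iii) Global uniqueness: strict convexity of $L_{\mathcal T}$ gives local uniqueness of balanced realizations within a fixed combinatorial type, but Theorem~\ref{thmI} asserts there is a unique balanced cellulation across all combinatorial types with the given dual metric, and your argument does not exclude two different combinatorial types with isometric duals. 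Two smaller points: the ``finite isometry group'' argument for non-degeneracy of the Hessian is not the right one --- what actually forces positive definiteness at a balanced critical configuration with positive weights is that the unit edge directions at each vertex are not all collinear (a consequence of convexity), so that the normal second variations span; and your claim that the local homeomorphism property of the induced-metric map is ``exactly'' the strict convexity of $L_{\mathcal T}$ conflates two distinct infinitesimal rigidity statements: the paper proves non-degeneracy of $d\mc I_\rho$ via Iskhakov's theorem (a convex geodesic cellulation cannot deform at first order without changing its face angles, proven by a careful linear-algebraic analysis of the system $(\mathcal S)$), whereas Hessian positivity of $L_{\mathcal T}$ is rigidity of balanced realizations at fixed weights. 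They are related but an implication between them would need its own proof.
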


A simplest example of the dualization is given by the convex cellulation of the hyperbolic genus $2$ surface obtained by identifying the side of a regular hyperbolic octagon. If weights $1$ are provided on any of the edges, then in the tangent space at the unique vertex of the cellulation we obtain a flat regular octagon. Identifying its sides, we get a flat metric with a unique cone singularity of curvature $4\pi$.

There is also an extrinsic version of Theorem~\ref{thmI}, which is actually the main point of our focus. We want to embed the flat metric $d$ isometrically as a convex polyhedral surface $P$ in a natural 3-dimensional space $M$. To have abundance of polyhedral surfaces, $M$ should have constant curvature. As the metric is flat, $M$ should be flat, and as the singular curvature are negative, $M$ should be Lorentzian.
The convexity allows to embed the universal covering of $P$ into Minkowski space $\R^{2,1}$ so that the embedding is equivariant with respect to a holonomy map $\pi_1S \rar {\rm Isom}_0(\R^{2,1})$. As the Gauss image of $P$ must be isometric to $(S,h)$,  the linear part $\rho$ of the holonomy is prescribed by $h$. Here we are identifying  the hyperbolic plane with its hyperboloid model in Minkowski space $\R^{2,1}$, and choosing a developing map, so the holonomy of $(S,h)$ has values in the subgroup  ${\rm SO}_0(2,1)$ of the group of linear isometries of Minkowski space. 

We will consider spacetimes $M$ whose holonomy is obtained in the following way. For any element $\gamma\in \pi_1S$ we define $\tau(\gamma)\in \R^{2,1}$ and set $\rho_\tau(\gamma)x:=\rho(\gamma)x+\tau(\gamma)$ for all $x \in \R^{2,1}$. The image $\rho_\tau(\pi_1 S)$ is a subgroup of isometries of Minkowski space if and only if $\tau$ satisfies a cocycle condition.  From a work of G. Mess \cite{mess,mess+} it is known that there exists a maximal open set in Minkowski space on which $\rho_\tau(\pi_1 S)$  acts freely and properly.  This maximal open set has two convex connected components, one future-complete and one past-complete. The quotient of such a connected component gives an example of what is known as flat globally hyperbolic maximal Cauchy compact (2+1)-spacetime, which is a concept coming from General Relativity (in short, flat GHMC \((2+1)\) spacetime ---we will often skip the reference to the dimension in the text). Let us denote %(the marked isometry classes of) 
these quotients by $\Omega^+(\rho,\tau)$ and $\Omega^-(\rho,\tau)$ respectively.

For the details of this construction, we refer the reader to \cite{mess,mess+,bonsante,barbot} (the last two consider higher-dimensional generalizations).

The following theorem implies Theorem~\ref{thmI} after an application of the Gauss map.

\begin{manualtheorem}{I'}[Hyperbolic Alexandrov Theorem, extrinsic version]\label{thmI'}
Let $d$ be a  flat metric with negative singular curvatures on a  surface $S$ of  hyperbolic type, and let $\rho$ be the holonomy of a hyperbolic metric on $S$.  
Then there exists a unique convex Cauchy polyhedron $P$ in a unique (up to marked isometry) flat GHMC (2+1)-spacetime $\Omega^+(\rho,\tau)$ such that the induced intrinsic metric on $\partial P$ is marked isometric to $d$. 
\end{manualtheorem} 

By construction, (2+1)-spacetimes $\Omega^{\pm}(\rho, \tau)$  are marked in the sense that for every Cauchy surface $\Sigma \subset \Omega^{\pm}(\rho, \tau)$ there exists a natural homeomorphism $S \rar \Sigma$ determined up to isotopy. The marked isometry between $(S, d)$ and $\pt P$ in Theorem~\ref{thmI'} means that this isotopy class contains an isometry (which is then unique).

Actually, Theorem~\ref{thmI} and Theorem~\ref{thmI'} are equivalent, as from the balanced cellulation it is easy to construct the spacetime and the polyhedral surface, in a way similar  to the Euclidean case, see  \cite{FV,FS}.

The second aim of the present article is to prove the following Simultaneous Uniformization Theorem:

\begin{manualtheorem}{II'}\label{thmII}
Let $d_+$ and $d_-$ be   flat metrics with negative singular curvatures on a  surface $S$ of  hyperbolic type.
 Then there exists a  unique (up to marked isometries) pair of flat GHMC (2+1)-spacetimes $\Omega^+(\rho,\tau)$ and $\Omega^-(\rho,\tau)$ containing unique convex Cauchy polyhedra with induced intrinsic metrics on the boundary  marked isometric to $d_+$ and $d_-$ respectively. 
%Moreover $M$ is unique up to isometries isotopic to the identity.
 \end{manualtheorem}
 
The meaning of Theorem~\ref{thmII} is that  to a discrete and faithful representation $\rho_\tau: \pi_1S \rar {\rm Isom}_0(\R^{2,1})$ we attach a geometric object, a pair of convex polyhedral surfaces embedded into the respective spacetimes $\Omega^\pm(\rho, \tau)$, and show that our representation plus the geometric object is completely determined by the intrinsic geometry of the surfaces. In Section~\ref{reviewII} we review analogous results from other settings serving as a source of our motivation.

The cocycles $\tau$ introduced above have values in Minkowski space. But there is a natural identification $\Lambda$ between Minkowski space and the Lie algebra of ${\rm SO}(2,1)$.  If the Teichm\"uller space $\mc T(S)$ of $S$ is seen as  the space of the equivalence classes of discrete and faithful representations of $\pi_1 S$ into ${\rm SO}_0(2,1)$, then, applying the identification $\Lambda$, a cocycle is identified with a tangent vector to the Teichm\"uller space. 
 In turn, Theorem \ref{thmI'} says that any  flat metric $d$ over $S$ with negative singular curvatures defines a vector field $X_d$ over $\mc T(S)$, which will be shown to be $C^1$. In the present article we will prove the following theorem, which implies Theorem~\ref{thmII}. It is worth noting that if a convex polyhedral  set $P$ in Minkowski space is $\rho_\tau$-invariant then  $-P$ is $\rho_{-\tau}$-invariant, and of course the induced intrinsic metrics over $\partial P$ and $\partial (-P)$ are related by an isometry.

\begin{thmroman}\label{thmII'}
Let $d_1$ and $d_2$ be two   flat metrics over $S$ with negative singular curvatures. Then $X_{d_1}$ and $-X_{d_2}$ intersect in the tangent bundle of Teichm\"uller space at a unique point, and the intersection is transverse.
\end{thmroman}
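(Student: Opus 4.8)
The plan is to realize both vector fields $X_{d_1}$ and $-X_{d_2}$ as gradients (with respect to a suitable metric on $\Teich(S)$, e.g. the Weil--Petersson metric or a natural $L^2$-type pairing) of smooth functions on Teichm\"uller space, and then to deduce the existence and uniqueness of a transverse intersection from strict convexity of the sum of these potentials. The natural candidate for the potential attached to a flat metric $d$ is the total weighted length $\mathcal L_d(h) = \sum_e w_e \,\len_h(e)$ of the balanced geodesic cellulation $(\ms G, w)$ produced by Theorem~I on the hyperbolic surface $(S,h)$; by the duality construction this is, up to a constant, the ``extrinsic'' quantity dual to the area/volume of the associated polyhedral Cauchy surface. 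First I would show that $\mathcal L_d$ is well-defined and $C^1$ on $\Teich(S)$ — this requires knowing that the combinatorics of $(\ms G,w)$ is locally constant away from a codimension-$\geq 1$ locus where edges degenerate or cellulation type jumps, and that across such walls the first derivative still matches (a first-variation-of-length computation, using that at a wall a degenerating edge has weight tending compatibly, or that the two adjacent combinatorial types give the same gradient because the extra/removed edge has zero length there). Then I would identify $d\mathcal L_d = -X_d$ (up to sign conventions fixed by the future/past distinction), using the first variation of geodesic length along a Fenchel--Nielsen or harmonic-map deformation and comparing with the cocycle description of $X_d$ via the identification $\Lambda$.

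The second and decisive step is to prove that $\mathcal L_{d_1} + \mathcal L_{d_2}$ (equivalently, the potential whose gradient is $X_{d_1} - (-X_{d_2}) = X_{d_1}+X_{d_2}$, wait — rather $X_{d_1}$ meeting $-X_{d_2}$ corresponds to a critical point of $\mathcal L_{d_1} - \mathcal L_{d_2}$... let me restate) is \emph{strictly convex} along suitable paths in $\Teich(S)$, and \emph{proper} / has the correct behavior near the boundary of $\Teich(S)$ so that a critical point exists. Convexity of total geodesic length of measured laminations along Weil--Petersson geodesics is classical (Wolpert); here one needs the analogous statement for these balanced cellulations, which are not measured laminations but weighted geodesic graphs — the balance condition should play the role that makes the second variation manifestly positive, perhaps by expressing $\mathcal L_d$ as an integral of a convex function of the deformation tensor (a Codazzi tensor, as the keywords suggest), or by realizing it as the length of an equivariant path/current and invoking second-variation formulas for geodesic length. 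A critical point of the difference being transverse then follows because strict convexity forces the Hessian of the potential to be nondegenerate (positive definite) at the critical point, which is exactly transversality of the two graphs in $T\Teich(S)$.

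The existence half — that a critical point exists — I would handle by a properness/coercivity argument: show $\mathcal L_{d_1} + \mathcal L_{d_2} \to +\infty$ as $h$ leaves every compact set of $\Teich(S)$ (a short geodesic on $(S,h)$ forces some edge of one of the cellulations to be long, or forces the total length up by a collar-type estimate), so that the strictly convex function attains its minimum at an interior point, which is the desired unique intersection. Alternatively one can run a continuity/degree argument along the lines of Alexandrov--Pogorelov: use Theorem~I' to parametrize, for fixed $d_1$, the set of admissible $\tau$, show the relevant map to pairs of metrics is a local homeomorphism (via the nondegeneracy coming from the convexity/Hessian computation) and proper, hence a covering, hence a homeomorphism by simple connectedness of the target. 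I expect the \textbf{main obstacle} to be the second-variation / convexity estimate for $\mathcal L_d$: unlike the case of laminations, the cellulation and its weights vary with $h$, so one must either show the optimal $(\ms G, w)$ is the one minimizing weighted length among a fixed combinatorial family (so that $\mathcal L_d$ is an infimum of convex functions and the envelope theorem gives $C^1$-ness plus convexity on strata) — this is the cleanest route and ties the proof back to the variational characterization implicit in Theorem~I — or control directly the contribution of wall-crossings to the Hessian. Establishing this convexity cleanly, together with matching the sign conventions so that ``$X_{d_1}$ meets $-X_{d_2}$'' corresponds precisely to a critical point of $\mathcal L_{d_1} + \mathcal L_{d_2}$ via the future/past $\tau \mapsto -\tau$ symmetry noted before the statement, is the heart of the argument.
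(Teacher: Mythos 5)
Your proposal matches the paper's approach essentially step for step: define the total length potential $\L_d$, show it is $C^1$ (the paper uses a Schl\"afli-type formula to handle wall-crossings, Lemma~\ref{diff2}), identify its differential with $X_d$ (the paper proves $X_d$ is the Weil--Petersson \emph{symplectic} gradient of $-\L_d$, Proposition~\ref{prop:WP gradient} --- not the metric gradient as you tentatively suggest, though this distinction does not affect the logic since either duality is a linear isomorphism), establish properness (Lemma~\ref{proper2}) and strict convexity of the Weil--Petersson Hessian along geodesics (Proposition~\ref{prop:derivve seconde}, adapting Wolf's computation for geodesic length and handling the boundary terms created by the vertices via the balance condition), and conclude by analyzing critical points of $\L_{d_1}+\L_{d_2}$ exactly as you describe. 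One caveat: your proposed ``cleanest route'' for convexity, treating $\L_d$ as an infimum of convex functions over combinatorial families, would require a variational characterization of the balanced cellulation which the authors explicitly state they do not know; the paper instead carries out the direct second-variation computation you offer as the alternative.
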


Theorem~\ref{thmII'} is proved by associating to the  metric $d$ the \emph{total length function} $\L_d$ over the Teichm\"uller space. For any hyperbolic metric over $S$ it is equal to the sum of the weighted  lengths of the edges of the corresponding balanced convex cellulation. It will turn out that $\L_d$ is strictly convex and proper, and that its Weil--Petersson symplectic  gradient is $X_d$.

\subsection{Sketch of the proof of Theorem~\ref{thmI'} and related results}

The original proof by A.~Alexandrov of Theorem~\ref{theorem: alex eucl extr} introduced the now classical deformation method. Roughly speaking, the space of isometric classes of flat metrics with a given number $n$ of vertices and the space of isometric classes of
convex polyhedra (in $\R^3$) with $n$ vertices are endowed with  suitable topologies that make them manifolds of the same finite dimension. A natural map $\mathcal{I}$ from the latter to the former is given by considering the induced intrinsic metric on the boundary of a polyhedron. Theorem~\ref{theorem: alex eucl extr} says that $\mathcal{I}$ is a bijection,  and furthermore, Alexandrov proves that it is a homeomorphism \cite{AlexandrovBook}. A key point is to prove that $\mathcal{I}$ is injective, which is a refinement of the famous Cauchy Theorem.
The proofs of the latter results heavily use special topological properties of the sphere and cannot be directly generalized to surfaces of higher genus. Instead of directly proving  that $\mathcal{I}$ is injective, one may show that it is locally injective, by using a first-order version of the Cauchy Theorem and the Local Inverse Theorem. 
In the case of the sphere, the first-order version of the Cauchy Theorem can be formulated as follows: it is not possible to deform at first order a convex geodesic cellulation of $\mathbb{S}^2$ without changing the face angles at first order \cite[Theorem 10.3.2]{AlexandrovBook}. 

In the hyperbolic case, the analog of the 
 first order Cauchy theorem is the following statement.
\begin{theorem}[{Iskhakov, \cite{iskhakov}}]\label{thm:iskhakov}
It is not possible to deform at first order a convex geodesic cellulation of a hyperbolic surface without changing the face angles at first order. \end{theorem}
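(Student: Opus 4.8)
\medskip

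\noindent\emph{Proof proposal.}
Because the hyperbolic metric on $S$ is held fixed, a first-order deformation of a convex geodesic cellulation $\ms G$ is simply the choice of a tangent vector $\dot v\in T_vS$ at each vertex $v$, the edges and faces then being dragged along as geodesics; the statement to prove is that if every face angle is stationary then $\dot v=0$ for every $v$. The plan is to lift everything to the hyperboloid $\Hyp^2\subset\R^{2,1}$, where the deformation becomes $\rho$-equivariant, to encode it in a discrete family of Killing fields of $\Hyp^2$ indexed by the vertices, to show that these form a ``transvection cocycle'' along the edges, and finally to annihilate that cocycle by a discrete maximum principle in which negative curvature is essential.

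First I would carry out the local analysis. The first-order deformation of an edge $e$ is the Jacobi field $J_e$ along the geodesic $e$ determined by the motions of its two endpoints. A direct first-variation computation — the derivative of the angle between two edges consecutive at $v$ equals the difference of their first-order rotation rates at $v$, measured by parallel transport along the path traced by $v$ — shows that stationarity of every face angle at $v$ is equivalent to the existence of a unique Killing field $K_v$ of $\Hyp^2$ such that, for each edge $e$ at $v$, the Jacobi fields $J_e$ and $K_v|_e$ agree in value and normal derivative at $v$. Fixing an edge $e=vv'$: both $J_e$ and $K_v|_e$ are Jacobi fields along $e$ that agree in value and normal derivative at $v$, and the tangential part of $K_v|_e$ is constant along $e$ because $K_v$ is Killing; hence $J_e-K_v|_e$ is a purely tangential field, linear in arclength, and evaluating it at $v'$ yields $K_{v'}=K_v+T_e$, where $T_e\in\so(2,1)=\R^{2,1}$ generates the transvection along the geodesic carrying $e$, i.e.\ a multiple of the spacelike normal $\nu_e$ to the plane $\R v\oplus\R v'$. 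Thus $(K_v)$ is a $\rho$-equivariant family of Killing fields, consecutive members differing by an edge-transvection, with $\sum_{e\subset\partial f}T_e=0$ around each face.

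Then comes the global step. Identify $K_v$ with $\xi_v\in\R^{2,1}$ and set $h(v):=\langle\xi_v,v\rangle$; by equivariance $h$ descends to the finite vertex set of $\ms G$. Since $\nu_e$ is orthogonal to $v$ and to $v'$, the relation $\xi_{v'}-\xi_v=T_e$ gives $\langle\xi_v,v'\rangle=\langle\xi_{v'},v'\rangle=h(v')$ for every neighbour $v'$ of $v$. Convexity of the faces forces the edge directions at $v$ to positively span $T_v\Hyp^2$ — consecutive ones differ by an angle $<\pi$ and they sum to $2\pi$ — which in $\R^{2,1}$ becomes a relation $v=\sum_i\lambda_i v'_i$ with all $\lambda_i>0$ over the neighbours $v'_i$ of $v$; pairing it with $v$, and using $\langle v,v\rangle=-1$ together with $\langle v'_i,v\rangle<-1$, forces $\sum_i\lambda_i<1$, while pairing it with $\xi_v$ gives $h(v)=\sum_i\lambda_i h(v'_i)$. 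Hence, if $M:=\max|h|$ is attained at $v_*$, then $M\le\sum_i\lambda_i|h(v'_i)|\le\big(\sum_i\lambda_i\big)M<M$ unless $M=0$, so $h\equiv0$; but then $\xi_v\perp v$ and $\xi_v\perp v'_i$ for every neighbour, and since the star of a vertex spans $\R^{2,1}$ we conclude $\xi_v=0$, whence $\dot v=\xi_v\times v=0$.

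I expect the bulk of the work to lie in the local step: setting up a frame along the moving vertex so that ``the edge star rotates coherently'' is a precise statement, proving its equivalence with angle-stationarity, and checking that the discrepancy between $J_e$ and $K_v|_e$ along an edge is exactly a transvection along that edge. The global step is then short, and it is precisely there that negative curvature is indispensable — through $\cosh>1$, hence $\sum_i\lambda_i<1$; the same scheme collapses on $\mathbb{S}^2$, which is consistent with the spherical statement genuinely requiring Cauchy's combinatorial lemma about sign changes in planar graphs.
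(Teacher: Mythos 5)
Your proof is correct in outline, but it takes a genuinely different route from the paper's, and it contains one gap that must be filled.

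The paper dualizes to the Gauss image, writes the infinitesimal rigidity as the linear system $(\mathcal S)$ on the deformations $\dot e$ of the unit normals to the edges of the geodesic cellulation, and shows that the nullity of the adjoint $\mc R^*$ equals $|V|$, whence $\ker\mc R=0$; the decisive step is the invertibility of a strictly diagonally dominant $(|V|\times|V|)$-matrix $\mc X$, whose strict dominance rests on $\cosh\ell_e>1$. Your proof stays on the primal side, attaching a Killing field $K_v=\xi_v\times\cdot$ to each vertex, proving a transvection cocycle relation along edges, and running a discrete maximum principle on $h(v)=\langle\xi_v,v\rangle$, where the strict inequality $\sum_i\lambda_i<1$ is again a shadow of $-\langle v,v'\rangle=\cosh d(v,v')>1$. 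So the two arguments funnel through the same hyperbolic strictness, but yours is more geometric and avoids the explicit matrix manipulation; the paper's version is set up so that it extends to cellulations with degenerate (zero-length) edges, which is needed for the full strength of Proposition~\ref{infrig} beyond the clean statement of Theorem~\ref{thm:iskhakov}.

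The gap is in the cocycle step. You claim that evaluating the purely tangential field $J_e-K_v|_e$ at $v'$ already yields $K_{v'}=K_v+T_e$, but this only pins down the \emph{value} of the Killing field $K_{v'}-K_v$ at the single point $v'$. A Killing field of $\H^2$ whose value at $v'$ is tangent to $e$ corresponds to $\xi\in\mathrm{span}(v',\nu_e)$, a two-parameter family, whereas the transvections along $e$ form only the one-parameter subfamily $\xi\in\R\nu_e$. To actually conclude, you must also invoke your local characterization at the other endpoint: stationarity of the face angles at $v'$ makes $J_e-K_{v'}|_e$ purely tangential along $e$ as well, so $K_{v'}|_e-K_v|_e=(J_e-K_v|_e)-(J_e-K_{v'}|_e)$ is tangential along the \emph{whole} edge, and a Killing field of $\H^2$ tangent to a geodesic along its length is generated by the spacelike normal to the corresponding plane, i.e.\ is a transvection along that geodesic. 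With this sentence supplied, the rest of your argument (the discrete harmonicity $h(v)=\sum_i\lambda_ih(v'_i)$ with $\lambda_i>0$, $\sum_i\lambda_i<1$, the resulting $h\equiv0$, and $\xi_v=0$ from the star of $v$ spanning $\R^{2,1}$) goes through correctly.
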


This theorem is proven in~\cite{iskhakov}. It seems to us, however, that the proof, unfortunately, contains a mistake, which, however, is fixable for convex cellulations. (I.~Iskhakov in~\cite{iskhakov} claims to prove Theorem~\ref{thm:iskhakov} also for non-convex cellulations, but in this case we, unfortunately, do not see how to make his proof working.) Because of this and because for our purposes we need to slightly generalize this result, we give our exposition of the proof of Theorem~\ref{thm:iskhakov} in Section~\ref{sec:iskhakov} together with more details on the mistake in the original argument.

Now we describe more precisely our proof strategy for Theorem~\ref{thmI'}.
Fix a holonomy $\rho:\pi_1S\to {\rm SO}_0(2,1)$ of a hyperbolic metric $h$ over $S$, and let $V \subset S$ be a finite set of cardinality $n$. By $\mathcal{P}_c(\rho,V)$ we denote the space of configurations of points in $\Omega^+(\rho,\tau)$ for some $\tau$ that are in bijection with $V$ and are in strictly convex position. The convex polyhedra we are looking for are the convex hulls of such configurations.
We will see in Lemma~\ref{open} and Lemma~\ref{lem: p rho connect}  that
$\P_c(\rho, V)$ is a connected analytic manifold of dimension $6\mathbf{g}-6+3n$, where $\mathbf{g}$ is the genus of $S$. Heuristically, $3n$ corresponds to the position of the points, and $6\mathbf{g}-6$ to the choice of the cocycle, since the space of equivalence classes of cocycles can be identified with the tangent space at $\rho_0$ of the Teichm\"uller space of $S$.

On the other hand, let $\M_-(S,V)$ be the space of flat metrics over $S$ with the set of cone points in bijection with $V$, and with negative singular curvatures, considered up to marked isometries fixing $V$ and isotopic to identity.
It is well-known that such a (isotopy class of) metric is determined by a point in $\mc T(S)$, the position of the elements of $V$ and their cone angles. In turn, $\M_-(S,V)$ 
is a connected analytic manifold of dimension $6\mathbf{g}-6+3n$, see Section~\ref{sec: the realization map}
 and Lemma~\ref{lem:dim flat}. 

We define the map $\mathcal{I}_\rho$ from $\mathcal{P}_c(\rho,V)$ to $\M_-(S,V)$ that associates to any configuration of points in 
$\mathcal{P}_c(\rho,V)$ the induced intrinsic metric on the boundary of the convex hull of the marked points. It appears that it is a $C^1$-map (Lemma~\ref{diff}), and Theorem~\ref{thm:iskhakov} implies that $d\mathcal{I}_\rho$ is non-degenerate. By the Local Inverse Theorem, $\mathcal{I}_\rho$ is a local diffeomorphism.

Section~\ref{sec:compactness} establishes that $\mathcal{I}_\rho$ is proper. The compactness result for the ambient spacetimes is inspired by the techniques of F. Bonsante in \cite{bonsante}, and the compactness result for the position of vertices is based on a result of T. Barbot, F. B\'eguin and A. Zeghib that relates the systole of a Cauchy surface to its distance to the initial singularity  \cite{BBZ}.

With the help of few additional topological arguments we obtain (see Section~\ref{sec:proof thm I'''}).

\begin{theorem}\label{thm:1}
$\mc I_\rho:\P_c(\rho, V)\to \mc \M_-(S,V)$ is a $C^1$-diffeomorphism.
\end{theorem}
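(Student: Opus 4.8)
The plan is to assemble the pieces collected above into a standard monodromy (covering–space) argument. Recall that, by Lemma~\ref{open} and Lemma~\ref{lem: p rho connect}, $\mathcal{P}_c(\rho,V)$ is a connected analytic manifold of dimension $6\mathbf{g}-6+3n$, that $\M_-(S,V)$ is a connected analytic manifold of the same dimension (Section~\ref{sec: the realization map} and Lemma~\ref{lem:dim flat}), and that $\mathcal{I}_\rho$ is a $C^1$-map between them (Lemma~\ref{diff}). First I would record that $d\mathcal{I}_\rho$ is everywhere invertible: it is a linear map between vector spaces of the same finite dimension, so invertibility is equivalent to injectivity, and injectivity says precisely that a nontrivial infinitesimal deformation of a strictly convex configuration changes the induced intrinsic metric at first order; read through the Gauss-map duality, this is exactly the infinitesimal rigidity statement of Theorem~\ref{thm:iskhakov}. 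By the Inverse Function Theorem, $\mathcal{I}_\rho$ is therefore a local $C^1$-diffeomorphism.

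Next I would invoke the compactness results of Section~\ref{sec:compactness}, whose conclusion is exactly that $\mathcal{I}_\rho$ is proper. A proper continuous map into a locally compact Hausdorff space is closed; a local homeomorphism is open. Hence $\mathcal{I}_\rho(\mathcal{P}_c(\rho,V))$ is a nonempty subset of $\M_-(S,V)$ that is both open and closed, and since $\M_-(S,V)$ is connected, $\mathcal{I}_\rho$ is surjective. A surjective proper local homeomorphism between manifolds (the target being connected and locally connected) is a finite-sheeted covering map, so $\mathcal{I}_\rho$ is such a covering.

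It remains to see the covering has one sheet, which is where the ``few additional topological arguments'' enter. For this I would use that $\M_-(S,V)$ is simply connected — in fact contractible. This is built into its description in Section~\ref{sec: the realization map}: remembering only the conformal class of the flat metric and its marked cone points exhibits $\M_-(S,V)$ as a fibration over the Teichm\"uller space of the $n$-punctured surface $S$, a cell, with fibers given by the cone-angle data (a convex region cut out by positivity and the Gauss--Bonnet relation) together with an overall scale — again convex, hence contractible. Since $\mathcal{P}_c(\rho,V)$ is connected, a connected covering of the simply connected space $\M_-(S,V)$ is a homeomorphism; being also a local $C^1$-diffeomorphism, $\mathcal{I}_\rho$ is a $C^1$-diffeomorphism, which is the assertion of the theorem.

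In this argument the final assembly is soft; the two substantial inputs it rests on are the ones done elsewhere. The main obstacle is the non-degeneracy of $d\mathcal{I}_\rho$, which needs the corrected and mildly generalized form of Iskhakov's infinitesimal Cauchy theorem established in Section~\ref{sec:iskhakov}; a close second is the properness of $\mathcal{I}_\rho$, which requires simultaneously controlling the geometry of the ambient spacetimes $\Omega^+(\rho,\tau)$ and the positions of the vertices of the convex hulls, carried out in Section~\ref{sec:compactness}. One should also take some care that $\mathcal{I}_\rho$ is genuinely well defined and $C^1$ into $\M_-(S,V)$ with the correct marking (so that vertices of $\partial P$ correspond to the prescribed set $V$ and the singular curvatures are negative by Gauss--Bonnet for hyperbolic polygons), but this is the content of Lemma~\ref{diff}.
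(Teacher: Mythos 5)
The first three stages of your argument — local $C^1$-diffeomorphism via Iskhakov's infinitesimal rigidity together with the Inverse Function Theorem, properness from Section~\ref{sec:compactness}, and the conclusion that $\mathcal{I}_\rho$ is a covering map onto the connected target — coincide with the paper's Theorem~\ref{thm:1}. The gap is in the final step, where you assert that $\M_-(S,V)$ is simply connected (even contractible). Look again at the definition in Section~\ref{sec: the realization map}: there, flat metrics are identified by isometries that fix $V$ and are isotopic to the identity, but \emph{the isotopy need not fix $V$}. The space whose base is the Teichm\"uller space $\mc T(S,V)$ of the $V$-marked surface (a cell) and that is in fact contractible is $\M_-^\sharp(S,V)$, defined using isotopies \emph{relative} to $V$ — this is exactly Lemma~\ref{lem:dim flat}. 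The forgetful map $\M_-^\sharp(S,V)\to\M_-(S,V)$ is a non-trivial covering with deck group the surface pure braid group $\pi_1 S^{\ast V}$ (Remark~\ref{decktr}), so $\M_-(S,V)$ is not simply connected, and ``connected covering of a simply connected space is one-sheeted'' does not apply. Note too that $\P_c(\rho,V)$ is not obviously simply connected either, so you cannot run the argument on the domain side instead; a covering between two spaces with isomorphic nontrivial fundamental group can perfectly well have degree $>1$.

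The paper's workaround is to lift. It constructs the cover $\P_c^\sharp(\rho,V)$ of $\P_c(\rho,V)$ (the preimage of $\P_c(\rho,V)$ in the universal cover of $\P^\ast(\rho,V)$, using the trivialization of Lemma~\ref{fbundle} and the evaluation homeomorphism of Lemma~\ref{evaluation}), lifts $\mathcal{I}_\rho$ to $\mathcal{I}_\rho^\sharp:\P_c^\sharp(\rho,V)\to\M_-^\sharp(S,V)$, and then notes the square of coverings commutes, so $\mathcal{I}_\rho^\sharp$ is itself a covering. Now Lemma~\ref{lem: p rho connect} supplies connectivity of the \emph{lifted} domain $\P_c^\sharp(\rho,V)$, and Lemma~\ref{lem:dim flat} supplies simple connectivity of $\M_-^\sharp(S,V)$; together these force $\mathcal{I}_\rho^\sharp$ to be a homeomorphism. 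Finally, since both vertical coverings have deck group $\pi_1 S^{\ast V}$ and $\mathcal{I}_\rho^\sharp$ is equivariant, the homeomorphism descends to $\mathcal{I}_\rho$. This equivariance step is the substantive topological input your proposal is missing, and without it the degree of your covering is not controlled. In short: replace ``$\M_-(S,V)$ is simply connected'' by ``$\M_-^\sharp(S,V)$ is simply connected and $\P_c^\sharp(\rho,V)$ is connected,'' and add the deck-group equivariance argument to get back down to $\mathcal{I}_\rho$ itself.
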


%Theorem~\ref{thm:1} obviously implies that 
% $\mc I_\rho$ is a bijection, that is another way to state Theorem~\ref{thmI'}.

%The deformation method had been used to prove existence of convex polyhedral isometric embeddings of flat metrics on surfaces in spaces of constant curvature. 
There is the following cousin of  Theorem~\ref{thmI'}.
% We call $\Omega^+(\rho,\tau)$   \emph{Fuchsian}
%if $\tau$ is trivial.

\begin{theorem}\label{thm:cas fuchsien}
 Let $d$ be a  flat metric with negative singular curvatures on a  surface $S$ of  hyperbolic type.  Then there exists a unique  convex Cauchy polyhedron $P$ in a unique (up to marked isometries) $\Omega^+(\rho,0)$  such that the induced intrinsic metric on $\partial P$ is marked isometric to $d$. 
\end{theorem}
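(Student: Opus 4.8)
The plan is to derive Theorem~\ref{thm:cas fuchsien} as the Fuchsian specialization of the machinery developed for Theorem~\ref{thmI'}, i.e. essentially as the fixed-point locus of an order-two symmetry acting on the moduli of solutions, together with an equivariance version of the compactness and rigidity statements. Concretely: fix the finite set $V\subset S$ with $|V|=n$, let $\rho$ be the holonomy of a hyperbolic metric $h$ on $S$, and recall from Theorem~\ref{thm:1} that $\mathcal I_\rho\colon\P_c(\rho,V)\to\M_-(S,V)$ is a $C^1$-diffeomorphism. The locus of Fuchsian spacetimes inside $\P_c(\rho,V)$ is the subset $\P_c^{\mathrm{F}}(\rho,V)$ of configurations lying in the Fuchsian spacetime $\Omega^+(\rho,0)$; a convex polyhedron in $\Omega^+(\rho,0)$ is the quotient of a $\rho$-invariant convex polyhedron in $\R^{2,1}$, and the data of such a $P$ together with the induced metric is precisely what Theorem~\ref{thm:cas fuchsien} asks to parametrize by $d\in\M_-(S,V)$ (ranging over all $n$).

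First I would prove existence. Given $d$, use Theorem~\ref{thmI} (equivalently Theorem~\ref{thmI'}) with the given $h$: there is a unique balanced geodesic cellulation $(\ms G,w)$ on $(S,h)$ whose dual metric is isotopic to $d$. By the construction recalled in the introduction — and in \cite{FV,FS} — the pair $((S,h),(\ms G,w))$ produces, via the Gauss map in reverse, a convex $\rho_\tau$-invariant polyhedral surface; the point is that when one performs this construction \emph{equivariantly for the Fuchsian group} $\rho(\pi_1 S)\subset SO_0(2,1)$, the translational part $\tau$ that arises is a coboundary, so the spacetime is Fuchsian. This can be seen directly: lift $(\ms G,w)$ to the hyperbolic plane $\H^2\subset\R^{2,1}$, attach to each vertex its dual flat polygon in the tangent plane, and set the support function / height to be the one canonically associated to a $\rho$-invariant (not merely $\rho_\tau$-invariant) cellulation — the balance condition \eqref{eq:balance 1} guarantees the polygons close up, and $\rho$-invariance of $(\ms G,w)$ forces $\tau=0$. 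Thus $d$ embeds isometrically on the boundary of a convex Cauchy polyhedron in $\Omega^+(\rho,0)$.

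For uniqueness I would argue as follows. Suppose $d$ embeds as $\partial P_1$ and $\partial P_2$ in future-complete Fuchsian flat spacetimes; applying the Gauss map, each $P_i$ yields a balanced geodesic cellulation on the \emph{same} hyperbolic surface $(S,h)$ (here one must check that the linear holonomy of a Fuchsian $\Omega^+$ is exactly the prescribed $\rho$, which is built into the definition of Fuchsian), whose dual metric is isotopic to $d$. By the uniqueness clause of Theorem~\ref{thmI} the two cellulations coincide, hence $P_1$ and $P_2$ have isometric Gauss images with matching weights, and the reconstruction result (uniqueness up to isometry of the convex body with prescribed Gauss image, \cite{sch77,schneider} in the spherical case, its Fuchsian analogue here) gives $P_1=P_2$ up to a Fuchsian isometry and the two spacetimes are marked-isometric. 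Alternatively, and more in the spirit of the rest of the paper, uniqueness is immediate from Theorem~\ref{thm:1} once one knows that $\mathcal I_\rho$ restricts to a map $\P_c^{\mathrm{F}}(\rho,V)\to\M_-(S,V)$ that is \emph{still} injective: injectivity of $\mathcal I_\rho$ on the larger space $\P_c(\rho,V)$ gives injectivity on the Fuchsian subvariety for free.

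The main obstacle, and the step deserving the most care, is pinning down existence \emph{within the Fuchsian locus} rather than merely inside the full moduli space $\P_c(\rho,V)$: a priori Theorem~\ref{thm:1} produces a convex polyhedron in some $\Omega^+(\rho,\tau)$ with possibly nonzero $\tau$, and one needs to argue that the solution associated to a given $d$ and $h$ is automatically Fuchsian. The clean way to see this is the symmetry/fixed-point argument: the Fuchsian group admits the orientation-reversing involution coming from $v\mapsto -v$ in $\R^{2,1}$ conjugating $\rho$ to itself, which induces an involution $\sigma$ on $\P_c(\rho,V)$ fixing exactly $\P_c^{\mathrm{F}}(\rho,V)$ and an involution on $\M_-(S,V)$ sending $d$ to itself (the metric on $\partial(-P)$ is isometric to that on $\partial P$); since $\mathcal I_\rho$ is equivariant and bijective, the unique preimage of the $\sigma$-fixed metric $d$ must be $\sigma$-fixed, hence Fuchsian. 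Making ``$d$ is $\sigma$-fixed'' precise (it is the statement that $\M_-(S,V)$ records $d$ only up to isometry, which holds by definition) is the one place where one must be attentive to how the markings interact with the orientation-reversal; once that is handled, Theorem~\ref{thm:cas fuchsien} follows by letting $n$ vary over the number of cone points of $d$.
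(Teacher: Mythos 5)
The paper does not prove Theorem~\ref{thm:cas fuchsien} at all: it cites \cite{filMA} (by the first author), where the result is established by a deformation argument directly in the Fuchsian moduli space --- i.e., with $\tau$ frozen to zero and the linear holonomy $\rho$ varying over Teichm\"uller space --- using the first-order rigidity result of \cite[Theorem~6.2]{sch-polygons} rather than Iskhakov's theorem. Your proposal instead tries to derive the theorem from Theorem~\ref{thmI}/\ref{thm:1}, and this derivation has several genuine gaps.

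The essential point you are missing is that Theorem~\ref{thmI'} and Theorem~\ref{thm:cas fuchsien} are \emph{complementary}, not nested: in the former, the linear holonomy $\rho$ is \emph{prescribed} and one solves for $\tau$; in the latter $\tau$ is \emph{forced} to be zero and one must solve for $\rho$. So ``apply Theorem~\ref{thmI} with the given $h$'' does not make sense in the Fuchsian problem, because no $h$ is given. Concretely, your existence argument is incorrect: for a generic hyperbolic $h$ the dual balanced cellulation produces a genuinely nontrivial class $\tau\in\mc H^1(\rho)$, exactly because that class \emph{is} the vector $X_d(\rho)$ of Section~\ref{sec:lift}. If $\tau$ were automatically a coboundary, $X_d$ would be identically zero and Theorems~\ref{thmII}, \ref{thmII'} would be vacuous. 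Your uniqueness argument is circular for the same reason: if $P_1\subset\Omega^+(\rho_1,0)$ and $P_2\subset\Omega^+(\rho_2,0)$ both realize $d$, nothing forces $\rho_1=\rho_2$, so the two Gauss images live on a priori \emph{different} hyperbolic surfaces, and the uniqueness clause of Theorem~\ref{thmI} (which is at fixed $h$) does not apply. Injectivity of $\mc I_\rho$ on the Fuchsian slice gives uniqueness only for fixed $\rho$, which is strictly weaker than what the theorem claims.

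Your proposed fixed-point argument also fails as stated: the map $v\mapsto -v$ sends the future-complete domain $\tilde\Omega^+(\rho,\tau)$ to the past-complete domain $\tilde\Omega^-(\rho,-\tau)$, so $P\mapsto -P$ does \emph{not} define an involution of $\P_c(\rho,V)$ --- it leaves the space entirely. There is no orientation/time reversal that both returns to $\Omega^+$ and commutes with $\rho$ inside $SO_0(2,1)$. A repaired symmetry argument does exist, but it runs through Theorem~\ref{thmII}: take $d_+=d_-=d$, obtain the unique pair $(\rho,\tau)$, observe that reflecting by $v\mapsto -v$ exchanges $\Omega^\pm(\rho,\tau)$ with $\Omega^\mp(\rho,-\tau)$ and thus shows $(\rho,-\tau)$ is also a solution, and conclude $\tau=0$ by uniqueness. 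That correctly proves Theorem~\ref{thm:cas fuchsien}, but as a corollary of the much deeper Theorem~\ref{thmII}, reversing the logical order the paper intends (Theorem~\ref{thm:cas fuchsien} is treated as a known independent precedent, not a consequence of Theorem~\ref{thmII}).
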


The spacetimes $\Omega^+(\rho,0)$ are often called \emph{Fuchsian}.
Theorem~\ref{thm:cas fuchsien} was proved in \cite{filMA} by the first author. Note that in this reference the fact that the map induced metric is $C^1$ is not established, but it is covered by Lemma~\ref{diff} of the present article. Instead of Iskhakov Theorem~\ref{thm:iskhakov}, the proof of Theorem~\ref{thm:cas fuchsien} is based on a first-order rigidity result proved in \cite[Theorem~6.2]{sch-polygons}. Later, a variational proof of Theorem~\ref{thm:cas fuchsien} was done in \cite{brunswic}, using the \emph{Hilbert--Einstein functional}. This variational approach was first used in the Euclidean case (see the introduction of \cite{ivan}), and in many other situations. However, we do not know if the variational approach can be implemented in our general non-Fuchsian case. Note that~\cite{filMA} considers similar results for spherical and hyperbolic cone-metrics with negative singular curvatures, while the Fuchsian realizations of hyperbolic cone-metrics of positive singular curvatures were investigated in~\cite{schlenker2002hyperbolic,filAF} ---which were thought as a polyhedral analogue of \cite{lab-sch}.

There is a smooth analogue of Theorem~\ref{thmI'} where the flat metric is replaced by a Riemannian metric of negative curvature. The equivalent statement similar to Theorem~\ref{thmI} is about the existence of a \emph{Codazzi tensor} on a hyperbolic surface, determined by the negatively curved metric on the same surface. It is proved in \cite{trapani-valli}, we refer to \cite{smith} for more details. An interesting point is that, even if the proof of \cite{trapani-valli} is by a deformation method, the solution is identified with a critical point of a functional.

In Theorem~\ref{thmI} we start from a flat metric and a hyperbolic metric, and look for a suitable balanced cellulation. There is a related problem, when one starts from a topological cellulation with positive weights on the edges, and for any hyperbolic metric on the surface finds a homotopic balanced cellulation, which can be considered as a discrete harmonic map.  A proof is done for a different balance condition by minimizing an energy functional \cite{CdV,KT}. The balance condition appearing there for a vertex $v$ is given by
$$\sum_{e \ni v} w_e U_e \ell_e=0, $$
where $\ell_e$ is the length of $e$ for the hyperbolic metric, and it can be equal to $0$.  Note that the balance condition forces the geodesic realization to be convex: if it is not convex at a vertex, then all the $U_e$ will point to a same half space, and as the weights are taken positive, the balance condition cannot be satisfied. Also some edges of the cellulations may degenerate to a point.
When this does not happen, we call the weighted cellulation \emph{admissible}. It is known that 
triangulations are admissible  \cite{CdV,LWZ}. It is possible to see that \cite[Theorem 2.1]{KT} translates as a statement similar to our Theorem~\ref{thmI'}, at least in the admissible case, but instead of the induced metrics one prescribes combinatorics of the polyhedron and for each edge prescribes the value of the edge-length divided by dihedral angle.
See \cite{GLM} for another discretization of the energy functional, ans \cite{lam} for related results.

We  don't know if the result of Theorem~\ref{thmI} corresponds to some minimum of a functional. However, we will obtain as a byproduct of our  proof of Theorem~\ref{thmI'} some regularity properties that are needed in order to state Theorem~\ref{thmII'}, as we will discuss in the next section.

Note that an alternative to prescribe a weighted cellulation is to prescribe a weighted multi-curve, or, more generally, a measured lamination on a  surface. To obtain a measured geodesic lamination from it once a hyperbolic metric is chosen on the surface is classical,  and  it has a clear $3d$ interpretation as it is the Gauss image of the initial singularity of a flat GHMC (2+1)-spacetime, which was first noted by  Mess \cite{mess}. 

Theorem~\ref{thmI'} can be considered as a part of a general paradigm that geometry of spacelike future-convex equivariant convex sets in $\R^{n,1}$ might be seen as a Lorentizan analogue of theory of Euclidean convex bodies, see e.g.~\cite{FV, BF}.
See the next section for similar results to Theorem~\ref{thmI'}  in the hyperbolic or anti-de Sitter setting.

%If we suppose that   the graph fills the surface (i.e. its complement is a union of discs), and, for any positive weights on the edges, and any hyperbolic metric, the result mentioned above says that there exists a convex polyhedron in a future quasi-Fuchsian flat spacetime with combinatorics given by   

\subsection{Sketch of the proof of Theorem~\ref{thmII'} and related results}
\label{reviewII}

From the discussion above, one can associate to a balanced cellulation $(\ms G, w)$ over a hyperbolic surface $(S,h)$ an element of $T_\rho \mc T(S)$, where $\rho$ is the class of the holonomy of $h$. Indeed, one considers the cohomology class associated to the flat GHMC spacetime, into which the  flat metric dual to $(\ms G,w)$ embeds. Theorem~\ref{thmI'} says that then any flat metric $d$ over $S$, with negative singular curvatures, defines a tangent vector field $X_d$ over Teichm\"uller space. From the regularity of maps involved in the proof of 
Theorem~\ref{thmI'}, it will follow that $X_d$ is $C^1$ (Lemma~\ref{lem:Xd C1}).

We will consider the \emph{total length function} $\L_d$ over $\mc T(S)$ that associates to each hyperbolic metric $h$ the weighted sum of the lengths of the balanced convex cellulation defined by $d$ and $h$. The first point  is that the Weil--Petersson symplectic gradient of $\L_d$ is $-X_d$ (Proposition~\ref{prop:WP gradient}). Note that this immediately implies a \emph{reciprocity formula} as in \cite[Theorem 2.11]{wolpert82}.

The main point will be Lemma~\ref{prop:derivve seconde}, which says that the (Weil--Petersson) Hessian of $\L_d$ is positive definite. The argument is inspired by \cite{wolf}. In this article M. Wolf  computed the Hessian of the length of a closed geodesic, where the length is also considered as a function over Teichm\"uller space (that this function is convex was previously observed by S. Wolpert in \cite{wolpert87}). 

The proof of Theorem~\ref{thmII'} is established by the analysis of the function $L_{d_1}+L_{d_2}$. This argument is inspired by a work of F.~Bonahon \cite{bonahon}, where an analogous result is shown with flat metrics replaced by measured laminations.

In the realm of flat GHMC \((2+1)\)-spacetimes this connection was further extended in \cite{fixedpoint}, where it was proved that such spacetimes can be parameterized by filling pairs of measured laminations (and moreover, it was also extended to the anti-de Sitter case, i.e., the case of constant curvature $-1$). A measured lamination essentially serves as the Gauss image of the initial singularity of a spacetime. It can be seen as a degenerate analogue of Theorem~\ref{thmI'}.

The  \emph{energy} of a closed geodesic was studied in \cite{yamada1999} by S.~Yamada.  For a weighted graph filling the surface the fact that the energy has a minimum was proved in \cite{KT} by T.~Kajigaya and R.~Tanaka. 
The results in 
\cite{KT} (especially Theorem 2.3 and Theorem 2.5), in the case we called admissible in the end of the preceding section,
%, especially if one compares their Proposition~3.3 with Equation \eqref{eq:la formule mixte} of the present article, 
 imply an analogue of Theorem~\ref{thmII} with weighted filling graphs instead of flat metrics, and the prescribed weights give the edges lengths of the polyhedron divided by the dihedral angles.
 % However, as we explained in the end of the preceding section, it is not clear what could be an analogue of Theorem~\ref{thmII'} in that case, because the resulting weights satisfying the balance condition at each hyperbolic metric do not encode the dihedral angles of the resulting polyhedral surface. We do not know if it is possible to have a Simultaneous Uniformization Theorem prescribing a topological cellulation and dihedral angles. 
 What worked without restriction and what we are doing in the present article is to prescribe a flat metric instead of a weighted cellulation.

The smooth analogue of Theorem~\ref{thmII} was done in \cite{smith} by G.~Smith (see \cite{GF} for a partial result). The proof uses a functional, which is heuristically the same as ours, in the sense that when a respective surface is realized in a flat spacetime, the value of both functionals is the total mean curvature of the surface. In the case when $d$ is a hyperbolic metric, most of the relevant properties of a similar function $\L_d$ were established in~\cite{BMS}. It is curious to notice that despite the general outline of our proof is similar to the one in~\cite{smith}, the details of the arguments are quite different. 

An interesting direction of further research would be to obtain a common generalization of the present article and~\cite{smith}, which would be about isometric embeddings of CAT(0)-metrics. The existence part of Theorem~\ref{thm:cas fuchsien} in this setting was obtained in~\cite{filslu} by the first author and D.~Slutskiy. The existence is obtained by approximation, and thus do not rely on uniqueness results. Uniqueness results for induced metrics on the boundary of an arbitrary convex set is a hard problem, even in Euclidean space, see discussion in  \cite{roman-rig}.

As we already mentioned, Theorem~\ref{thmII} belongs to the stream of works describing a representation of a discrete group into a Lie group, endowed with some kind of a geometric object. In our very brief review we mention only some results on representations of $\pi_1 S$ (for a longer albeit still non-exhausting survey we refer to~\cite{FSSurvey}). A model result is the famous Bers Simultaneous Uniformization Theorem~\cite{bers} stating that for any two Riemann surfaces there exists a essentially unique Kleinian surface group producing these Riemann surfaces simultaneously as the quotients of its two domains of discontinuity in $\mathbb{CP}^1$. 

With a Kleinian surface group one also associates its quotient of $\H^3$, a quasi-Fuchsian hyperbolic 3-manifold. W.~Thurston conjectured that it is also uniquely determined by the bending lamination on the boundary of convex core. The realization part of this was established in~\cite{BO} by F. Bonahon and J.-P. Otal, while the uniqueness part was just recently obtained in~\cite{DS} by B.~Dular and J.-M.~Schlenker. In a quasi-Fuchsian hyperbolic 3-manifold one can consider a totally convex subset, which is homeomorphic to $S \times [-1, 1]$. In the case of smooth strictly convex boundaries such a set (together with the ambient manifold, and hence with the Kleinian group) is uniquely determined by either the first fundamental form of the boundary, or by the third fundamental form as shown by Schlenker in~\cite{schlenker}. The respective realization result on the first fundamental form was shown earlier by F.~Labourie in \cite{labourie}. These results can be considered as generalizations of the classical \emph{Weyl Problem} to the case of non-trivial topology (the Weyl Problem is a smooth counterpart to Theorem~\ref{theorem: alex eucl extr}, see e.g. \cite{HH} for more information on it). The polyhedral counterparts to the results of Schlenker were recently established by the second author in~\cite{roman-dualhyp, roman-hyp}. It is interesting that in contrast to the flat ``quasi-Fuchsian'' case presented here, in the hyperbolic case cone-metrics may admit not so polyhedral, somewhat degenerate embeddings, which make the analysis more intricate. Particularly, a hyperbolic analogue of Theorem~\ref{thmI'} is still open both in smooth and in polyhedral settings: the main progress is a related result on the realization of hyperbolic metrics without cone-singularities, which corresponds to the famous Grafting Problem, see~\cite{SW}, and see~\cite{DW} for the dual problem. We mention also that the recent article~\cite{CS} mixes the results of Labourie--Schlenker with the result of Bers.

The last thing we want to remark is that a curious direction of current research deals with similar problems in the context of \emph{anti-de Sitter geometry}, i.e., Lorentzian geometry of constant curvature $-1$. The interest to it is spanned by deep connections to Teichm\"uller theory, see e.g.~\cite{mess, bsSurvey}. Plenty of mentioned problems still remain open in the anti-de Sitter context, though we refer to the work~\cite{Tam} of A.~Tamburelli, settling the anti-de Sitter version of the mentioned result of Labourie. Also, very recently Q.~Chen and J.-M.~Schlenker resolved in~\cite{CS2} an anti-de Sitter counterpart to Theorem~\ref{thmI'} in the smooth setting. As for polyhedral surfaces in anti-de Sitter spacetimes, they exhibit similar degenerations as in the hyperbolic case (contrary to the flat case), which is additionally aggravated by the lack of some necessary toolbox in the anti-de Sitter setting. Thereby, a thorough understanding of anti-de Sitter polyhedral surfaces is still the matter of further investigations.

Let us emphasize that in our Minkowski case, convex hulls of finitely many points are polyhedral, see Definition~\ref{def:polyhedral} and Lemma~\ref{lem:boudary polyhedral}. As mentioned above, this is not true in the hyperbolic or Anti-de Sitter setting, mainly because the convex hull may meet the boundary of the convex core. In the hyperbolic case however, if the \emph{dual} induced metric is polyhedral, then the resulting convex hull of vertices is polyhedral in the sense of the present article \cite{roman-dualhyp}. A way to explain this is to consider our theorems from the present article not as results for induced metric on surfaces in Minkowski spacetimes, but results for dual induced metrics on surfaces in \emph{co-Minkowski quasi-Fuchsian manifolds}, where a convex core enters the picture, see \cite{barbot-fillastre}. Such manifolds are modeled on \emph{half-pipe geometry}, which was popularized by J. Danciger \cite{jeff}, and which is roughly the geometry of space-like planes of Minkowski space. We, however, chose not to use this point of view in the present article.

\subsection*{Acknowledgments}
The first author was supported by the ANR G\'eom\'etrie et Analyse dans le cadre Pseudo-Riemannien -- GAPR. 

This research of the second author was funded in whole by the Austrian Science Fund (FWF) \url{https://doi.org/10.55776/ESP12}. For open access purposes, the author has applied a CC BY public copyright license to any author-accepted manuscript version arising from this submission.

This article was initiated during a visit of the second author to the University of Montpellier. It was finished during authors' stay at the Erwin Schr\"odinger International Institute for Mathematics and Physics, Vienna, at the thematic program Geometry beyond Riemann: Curvature and Rigidity. They thank the institutions for their hospitality.

The first author wishes to express his gratitude to Andrea Seppi for fruitful discussions related to the content of the present article. We are also grateful to the anonymous referee for helpful comments.

\section{Background}

\subsection{Flat GHMC spacetimes}

Recall the following definitions from the introduction.

\begin{defi}
A surface $S$ is of \emph{hyperbolic type} if $S$ is a connected closed oriented surface of genus $\mathbf{g}>1$, and $S$ is a \emph{hyperbolic surface} if it is of hyperbolic type and endowed with a hyperbolic metric~$h$.
\end{defi}

We consider the coordinates $(x_0, x_1, x_2)$ in the Minkowski space $\R^{2,1}$ such that for two points $x, y \in \R^{2,1}$ their scalar product is expressed as
\[\la x, y\ra=-x_0y_0+x_1y_1+x_2y_2~.\]

Let $S$ be a surface of hyperbolic type.  
In all the article we identify the hyperbolic plane $\H^2$ with the future component of the 
two-sheeted hyperboloid in $\R^{2,1}$. For a hyperbolic metric $h$ over $S$, a developing map provides a homeomorphism from $\tilde S$ onto $\H^2 \subset \R^{2,1}$, and its holonomy  gives a representation $\rho:\pi_1S\to {\rm SO}_0(2,1)$, where ${\rm SO}_0(2,1)$ is the connected component of identity in ${\rm SO}(2,1)$.

The group $\Isom_0(\R^{2,1})$ is ${\rm SO}_0(2,1)\ltimes \R^{2,1}$, and for any map
$$\tau:\pi_1S\to \R^{2,1} $$
satisfying the \emph{cocycle} condition (twisted by $\rho$):
\begin{equation}\label{eq:cocycle condition twisted}
\tau(\gamma\mu)=\rho(\gamma)(\tau(\mu))+\tau(\gamma)~,
\end{equation}
we obtain a representation 
\begin{equation*}\rho_\tau:\pi_1S\to \Isom_0(\R^{2,1})~.\end{equation*}

A $\rho$-cocycle $\tau$ is a \emph{$\rho$-coboundary} if there exists $v\in \R^{2,1}$ such that
$$\tau(\gamma)=\rho(\gamma)(v)-v~.$$
We then write this cocycle as $\tau_v$.
If two $\rho$-cocycles $\tau, \tau'$ differ by a $\rho$-coboundary $\tau_v$, then the actions of $\rho_\tau$ and $\rho_{\tau'}$ are conjugated by the translation by vector $v$, see e.g. \cite{FV}. We denote by $\mc Z^1(\rho)$ the space of $\rho$-cocycles and by $\mc B^1(\rho) \subset \mc Z^1(\rho)$ the subspace of $\rho$-coboundaries. The quotient is denoted by  $\mc H^1(\rho)$. Abusing notation, we will denote by the same letter a cocycle $\tau$ and its equivalence class in $\mc H^1(\rho)$, but the context should always clarify this ambiguity.

We denote by $\mc R(S)$ the  space of Fuchsian representations of $\pi_1S$ into ${\rm SO}_0(2,1)$, and we see its quotient by the conjugate action of   ${\rm SO}_0(2,1)$ as the  Teichm\"uller space $\mc T(S)$ of $S$. Abusing again notation, we will denote by the same letter such a representation $\rho$ and its equivalence class in Teichm\"uller space.

The tangent space of $\mc R(S)$ is standardly identified with the set of $\mathfrak{so}(2,1)$-valued cocycles twisted by the adjoint action of $\rho$, and the tangent space of $\mc T(S)$ at $\rho$ can be considered as the quotient by the respective space of coboundaries, see e.g. \cite{goldman-symp}. There is a natural identification between $\mathfrak{so}(2,1)$ and $\R^{2,1}$: for example a spacelike vector $v$ is identified with an infinitesimal hyperbolic translation along the geodesic in the hyperbolic plane obtained by the intersection of the hyperboloid and the plane orthogonal to the vector, with the amount of displacement defined by the length of $v$. See e.g. \cite{FS} for an explicit description.
This identification is ${\rm SO}_0(2,1)$-equivariant, where ${\rm SO}_0(2,1)$ acts on $\mathfrak{so}(2,1)$ via the adjoint representation. Hence, this identification induces an isomorphism between $T_{\rho}\mc T (S)$ and $\mc H^1(\rho)$. 

For a $\rho$-cocycle $\tau$ there exist 
two disjoint open convex sets of $\R^{2,1}$, maximal for the inclusion, respectively future- and past-complete, 
on which $\rho_\tau(\pi_1S)$ acts freely and properly discontinuously \cite{mess,mess+,bonsante,barbot-fillastre}. We denote them $\tilde{\Omega}^+(\rho,\tau)$ and $\tilde{\Omega}^-(\rho,\tau)$ respectively.
 If $\tau_v$ is a $\rho$-coboundary, then 
 $\tilde{\Omega}^\pm(\rho,\tau)$ and  $\tilde{\Omega}^\pm(\rho,\tau+\tau_v)$ differ by a translation. 
 
We will denote by $\Omega^\pm(\rho,\tau)$ the quotients of $\tilde{\Omega}^\pm(\rho,\tau)$ by $\rho_\tau(\pi_1 S)$ and will refer to them as to \emph{flat GHMC (2+1)-spacetimes}. The construction provides a natural marking isomorphism between $\pi_1\Omega^{\pm}(\rho, \tau)$ and $\pi_1S$. 
%We will see just below that one can pick a homeomorphism $\pi_1\Omega^{\pm}(\rho, \tau) \cong S \times \R$ that induces the marking isomorphism and that is defined up to isotopy. 
By a \emph{marked isometry} we will mean an isometry respecting the marking isomorphism. When two representations are conjugated by an element of ${\rm Isom}_0(\R^{2,1})$, the resulting flat GHMC spacetimes differ by a marked isometry. We are interested in spacetimes up to marked isometry. Hence, sometimes we define $\Omega^\pm(\rho,\tau)$ for $\rho \in \mc T(S)$ and $\tau \in T_\rho\mc T(S)\cong \mc H^1(\rho)$, meaning that we take representatives $\rho \in \mc T(S)$, $\tau \in \mc Z^1(\rho)$ and the quotient of $\tilde\Omega^\pm(\rho, \tau)$. The space $T\mc T(S)$ can be interpreted then as the space of flat GHMC spacetimes up to marked isometry. Equivalently, it can be considered as the space of flat GHMC Lorentzian structures on $S \times \R$ up to isometry isotopic to the identity.
%Hence, for our usual confusion of classes and elements, in the notation $\tilde{\Omega}^\pm(\rho,\tau)$ the pair $(\rho, \tau)$ is meant as a representation and its cocycle, while in $\Omega^\pm(\rho,\tau)$ it is an equivalence class of such pairs modulo conjugation.

\subsection{Convex polyhedral surfaces in flat spacetimes}

We now study totally convex subsets in  $\Omega^+(\rho, \tau)$. In this section a spacetime is fixed, so we will mostly skip the mention of $\rho$ and $\tau$.
By a \emph{totally convex subset} of $\Omega^+$ we mean a subset that contains every geodesic segment between every two points. For $K \subset \Omega^+$ we denote by $\clconv(K)$ the convex hull of $K$, i.e., the inclusion-minimal totally convex subset containing $K$. In our setting, however, convexity is better described by looking inside a choice of $\tilde\Omega^+$ in $\R^{2,1}$.
A set $C$ is totally convex if and only if $\tilde C$ is a convex subset of $\R^{2,1}$, so $\clconv(K)$ is the quotient of the convex hull of $\tilde K$.

For a convex set $C\subset \R^{2,1}$ we will say that a plane $\Pi$ is \emph{weakly supporting} to $C$ if $C$ belongs to a single closed halfspace bounded by $\Pi$. We will say that $\Pi$ is \emph{supporting} if additionally $\Pi$ has a common point with the closure of $C$. We will say that a surface in $\R^{2,1}$ is \emph{spacelike convex} if it is the boundary of a convex set and has only spacelike supporting planes. A spacelike convex surface in $\Omega^+$ is a surface such that its lift to $\tilde\Omega^+$ is spacelike convex.

We will say that $C \subset \Omega^+$ is \emph{future (complete)} if for any point $p\in C$ we have $I^+(p)\subset C$, where $I^+(p)$ is the set of the endpoints of all the future-directed timelike curves from $p$.
 A \emph{Cauchy surface} of $\Omega^+$ is a surface (not necessarily smooth) such that every inextendible causal curve intersects it exactly once.  Cauchy surfaces in $\Omega^+$ are all homeomorphic (to $S$), hence compact, see e.g. \cite{oneil}. (Note that in \cite{oneil} only timelike curves are used for the definition of Cauchy surfaces, which is weaker than our definition.) The embedding of the universal covering of a convex Cauchy surface to Minkowski space is the graph of a convex $1$-Lipschitz function over the whole horizontal plane \cite[Lemma 3.11]{bonsante}, a fact that we will often use implicitly.

The \emph{cosmological time} of $p \in \Omega^+$ is the supremum of Lorentzian lengths of the past-directed causal curves emanating from $p$ in $\Omega^+$. For a lift $\tilde p \in \tilde\Omega^+$ of $p$ this supremum is realized by a geodesic segment with an endpoint on the initial singularity of $\tilde\Omega^+$, which is basically the spacelike part of $\pt \tilde\Omega^+$. This produces a $C^1$-function on $\Omega^+$, the \emph{cosmological time} $\CT:\Omega^+ \rar \R_{>0}$. For $\alpha>0$ we will denote by $L_\alpha$ the level set of the cosmological time for the value $\alpha$. It is a $C^1$   spacelike convex Cauchy surface. We refer to \cite{bonsante} for details. 

We will need the following basic results.

\begin{lemma}[{\cite{bonsante}, Proof of Theorem 5.1, Steps 2 and 3}]
\label{convpoint}
Let $p \in \Omega^+$ and $C:=\clconv(p)$. Then $C$ is a future totally convex set with compact boundary.
\end{lemma}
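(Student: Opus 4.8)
The plan is to work with a single lift $\tilde p \in \tilde\Omega^+ \subset \R^{2,1}$ and to analyze $\tilde C := \CH(\pi_1 S \cdot \tilde p)$, the convex hull in Minkowski space of the full orbit of $\tilde p$ under $\rho_\tau(\pi_1 S)$; then $C = \clconv(p)$ is the quotient $\tilde C / \rho_\tau(\pi_1 S)$, and the three assertions — that $C$ is future, totally convex, and has compact boundary — reduce to statements about $\tilde C$. Total convexity is immediate: $\tilde C$ is by construction a convex subset of $\R^{2,1}$, hence $C$ is totally convex by the characterization recalled just before the lemma. So the two substantive points are that $\tilde C$ (equivalently $C$) is future-complete, and that $\pt \tilde C / \rho_\tau(\pi_1 S)$ is compact.

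For future-completeness, first I would observe that a single level set $L_\alpha$ of the cosmological time through $p$ is a spacelike convex Cauchy surface bounding a future-complete convex region; since $p$ lies in the closure of that region and the orbit points $\rho_\tau(\gamma)\tilde p$ all lie above the (equivariant) initial singularity, one checks that the whole orbit, hence its convex hull $\tilde C$, lies in the closed future of the initial singularity. More directly: pick $\alpha = \CT(p)$; every orbit point has cosmological time $\geq$ some uniform positive value, actually $=\alpha$ since $\CT$ is $\rho_\tau$-invariant, so the orbit lies on $L_\alpha$, and $\CH(L_\alpha \cap \text{orbit})$ is contained in $J^+(\text{initial singularity})$, which is future-complete; thus $I^+(q) \subset \tilde C$ for every $q \in \tilde C$ follows from the fact that a convex set all of whose supporting planes are spacelike or null-and-future-supporting is automatically future-complete. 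I would phrase this last implication carefully, using that $\tilde C$ is the convex hull of points on a single Cauchy surface and therefore its boundary, where not already "vertical/lateral", is spacelike convex.

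For compactness of $\pt C$, the key point is that $\pt \tilde C$ decomposes into a \emph{spacelike} part (the "top" part of the boundary facing the initial singularity, which is the graph of a convex function over the horizontal plane) and possibly a lateral or lower part; one shows the lower part is empty because $\tilde C \subset J^+(\text{initial singularity})$ and the initial singularity is a past-complete-type locus, so $\tilde C$ has no boundary points other than the spacelike convex "top". Then $\pt \tilde C$ is a spacelike convex surface invariant under $\rho_\tau(\pi_1 S)$, which acts cocompactly (it is a Cauchy-compact situation), so the quotient $\pt C$ is a closed surface homeomorphic to $S$. The main obstacle I expect is precisely this step: ruling out a non-spacelike portion of $\pt\tilde C$ and showing the cocompactness of the action on it — i.e., that $\pt\tilde C$ is genuinely a Cauchy surface (or lies between two Cauchy surfaces) rather than escaping to the boundary of $\tilde\Omega^+$. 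This is where I would lean on the description of $\tilde\Omega^+$ and its initial singularity from \cite{bonsante} and on the fact that $\tilde p$ lies strictly inside $\tilde\Omega^+$, so its orbit stays at bounded cosmological time and the convex hull cannot accumulate on $\pt\tilde\Omega^+$; squeezing $\pt\tilde C$ between two level sets $L_{\alpha_1}$ and $L_{\alpha_2}$ of the cosmological time then gives both the spacelike character and the cocompactness at once.
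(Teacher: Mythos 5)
The paper gives no proof of this lemma: it is stated as a citation to Bonsante's proof of Theorem 5.1, Steps 2 and 3, so there is no ``paper's proof'' to compare against. Your high-level strategy --- pass to the lift $\tilde C = \clconv(\rho_\tau(\pi_1S)\cdot\tilde p)$ in $\R^{2,1}$, note that total convexity is then automatic, and check the remaining two properties upstairs --- is the right one, and your dispatching of total convexity is correct. But the proposal has two genuine gaps, both at exactly the steps you flag as delicate, and neither is closed.

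\textbf{Future-completeness.} The implication you invoke is fine: if every supporting plane of the convex set $\tilde C$ is spacelike (or lightlike with $\tilde C$ in its future halfspace), then $\tilde C$ is an intersection of future halfspaces and hence a future set. What is missing is the justification of that hypothesis. You try to derive it from ``$\tilde C$ is the convex hull of points lying on $L_\alpha$, hence contained in $\{\CT\geq\alpha\}$'', but containment in a future set does not make a subset a future set, and the convex hull of a \emph{finite} part of the orbit on $L_\alpha$ has plenty of timelike supporting planes. What actually kills the timelike supporting planes is that the orbit accumulates at infinity on the whole set $\pt^0_\infty\R^{2,1}$ of future lightlike directions, so that the recession cone of $\tilde C$ is the entire future causal cone; then for any $q\in\tilde C$ and any future timelike $v$ we get $q+tv\in\tilde C$ for all $t\geq0$, which is exactly $I^+(q)\subset\tilde C$. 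This is the content of the paper's Lemma~\ref{lem:orbite infinie}, and you need it (or something equivalent) here; nothing in your sketch supplies it.

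\textbf{Compactness of $\pt C$.} The inclusion $\pt\tilde C\subset\{\CT\geq\alpha\}$ you do get for free, but the substantive half --- that $\CT$ is bounded \emph{above} on $\pt\tilde C$, so that $\pt\tilde C$ lives in a slab with compact quotient --- is the crux, and you defer it entirely to \cite{bonsante}. The argument ``the lower/lateral part of $\pt\tilde C$ is empty because $\tilde C\subset J^+(\text{initial singularity})$'' does not work: all of $\tilde\Omega^+$ is in the future of the initial singularity, yet small convex sets inside it have timelike boundary pieces, so this containment rules out nothing. Once the recession-cone fact above is in hand one knows $\pt\tilde C$ is the graph of a $1$-Lipschitz convex function over the whole horizontal plane, hence a Cauchy surface, and then an upper bound on $\CT$ follows from the control of cosmological time on convex Cauchy surfaces (compare Lemma~\ref{minmax}). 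Since the lemma is an explicit citation, leaning on Bonsante is defensible, but as a self-contained argument the proposal is incomplete at precisely the two places it identifies as obstacles.
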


\begin{lemma}
\label{cauchy}
Let $C \subset \Omega^+$ be a totally convex set such that the cosmological time is bounded away from zero on $C$. Then it is future and $\pt C$ is a spacelike convex Cauchy surface.
\end{lemma}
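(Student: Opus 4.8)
The plan is to work in the universal cover and reduce everything to convex geometry of $\tilde{C}\subset\tilde\Omega^{+}\subset\R^{2,1}$, the lift of $C$. We may assume $C$ closed, so $\tilde{C}$ is a closed convex $\rho_{\tau}(\pi_{1}S)$-invariant set, and the hypothesis says exactly that $\tilde{C}\subseteq\tilde{K}_{\alpha_{0}}:=\{\CT\geq\alpha_{0}\}$ for some $\alpha_{0}>0$. Recall from \cite{bonsante} that $\tilde{K}_{\alpha}$ is a closed future-complete convex set whose boundary $\tilde{L}_{\alpha}$ is a $C^{1}$ spacelike convex Cauchy surface; hence $\tilde{K}_{\alpha}$ has only spacelike supporting planes, $\operatorname{rec}(\tilde{K}_{\alpha})$ is the closed future cone $\overline{I^{+}}$, and $\inter\tilde{K}_{\alpha}=\{\CT>\alpha\}$. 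Also $\tilde{C}$ is full-dimensional, for otherwise it lies in a proper affine subspace, hence (intersecting its translates) in a $\rho_{\tau}(\pi_{1}S)$-invariant proper affine subspace — impossible, since the holonomy of a flat GHMC spacetime over a surface of genus $>1$ preserves no proper affine subspace.

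First I would establish that $\tilde{C}$ is future-complete, i.e. that $C$ is future. The recession cone $\operatorname{rec}(\tilde{C})$ is a closed convex cone, contained in $\operatorname{rec}(\tilde{K}_{\alpha_{0}})=\overline{I^{+}}$ and invariant under the linear parts $\rho(\gamma)$. Since a cocompact Fuchsian group acts minimally on $\partial\H^{2}$, the only $\rho(\pi_{1}S)$-invariant closed subcones of $\overline{I^{+}}$ are $\{0\}$ and $\overline{I^{+}}$; and $\operatorname{rec}(\tilde{C})=\{0\}$ would make $\tilde{C}$ compact, which is impossible for a full-dimensional set invariant under an infinite properly discontinuous group. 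Hence $\operatorname{rec}(\tilde{C})=\overline{I^{+}}$, equivalently $\tilde{C}+\overline{I^{+}}=\tilde{C}$, which is future-completeness. In particular $\tilde{C}$ has no timelike supporting plane, all its supporting planes are causal with $\tilde{C}$ on the future side, and $\partial\tilde{C}$ is the graph of a convex $1$-Lipschitz function over the whole horizontal plane.

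Next I would produce an \emph{inner barrier}: $\tilde{K}_{\beta}\subseteq\tilde{C}$ for some $\beta\geq\alpha_{0}$. Pick $p\in\tilde{C}$ and set $\alpha_{1}:=\CT(p)$; since $\CT$ is $\rho_{\tau}(\pi_{1}S)$-invariant, the orbit of $p$ lies on the level set $L_{\alpha_{1}}$, on which $\rho_{\tau}(\pi_{1}S)$ acts cocompactly. Thus the orbit is coarsely dense in $L_{\alpha_{1}}$, so its convex hull contains $\tilde{K}_{\alpha_{1}}$ outside a bounded neighbourhood of $\tilde{L}_{\alpha_{1}}$; combined with future-completeness this gives $\tilde{K}_{\beta}\subseteq\tilde{C}$ for $\beta$ large. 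Hence $\tilde{K}_{\beta}\subseteq\tilde{C}\subseteq\tilde{K}_{\alpha_{0}}$, and since $\partial\tilde{C}$ is disjoint from $\inter\tilde{K}_{\beta}=\{\CT>\beta\}$ and contained in $\{\CT\geq\alpha_{0}\}$, we get $\alpha_{0}\leq\CT\leq\beta$ along $\partial\tilde{C}$. This is the key estimate, and it rules out non-spacelike supporting planes: a supporting plane of $\tilde{C}$ that is not spacelike is null (by the previous paragraph), with future-null outer normal $\nu$ at the contact point $p_{0}$, and then the future null ray $p_{0}+\R_{\geq 0}\nu$ lies in $\partial\tilde{C}$ (it lies in the supporting plane and in $J^{+}(p_{0})\subseteq\tilde{C}$); but the cosmological time diverges along any future null ray of $\tilde\Omega^{+}$, contradicting $\CT\leq\beta$ on $\partial\tilde{C}$. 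Therefore every supporting plane of $\tilde{C}$ is spacelike: $\partial\tilde{C}$ is a spacelike convex surface.

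Finally, $\partial\tilde{C}$ is a $\rho_{\tau}(\pi_{1}S)$-invariant spacelike convex surface with $\tilde{K}_{\beta}\subseteq\tilde{C}\subseteq\tilde{K}_{\alpha_{0}}$, so it lies between the Cauchy surfaces $\tilde{L}_{\beta}$ and $\tilde{L}_{\alpha_{0}}$; being the graph of a convex $1$-Lipschitz function over the whole plane it is achronal. Along every inextendible causal curve of $\tilde\Omega^{+}$ the cosmological time increases from $0$ to $+\infty$, so such a curve is first (near the initial singularity) outside $\tilde{C}$, where $\CT<\alpha_{0}$, and eventually inside $\tilde{C}$, where $\CT>\beta$; hence it meets $\partial\tilde{C}$, and exactly once, since a spacelike surface contains no null segment and an achronal set meets a causal curve at most once. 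Passing to the quotient, $\partial C$ is a spacelike convex Cauchy surface. I expect the third paragraph to be the main obstacle: extracting the barrier $\tilde{K}_{\beta}$ from cocompactness and converting the divergence of the cosmological time along null rays into the exclusion of null supporting planes; the rest is either standard Lorentzian convexity or directly available from \cite{bonsante}.
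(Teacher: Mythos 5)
Your outline is correct but takes a genuinely different and longer route than the paper's, which works by invoking Lemma~\ref{convpoint} twice and does not need recession cones, cocompactness of the orbit, or divergence of $\CT$ along null rays. In the paper's proof, Lemma~\ref{convpoint} gives at once that $C$ is future (since $\clconv(p) \supset I^+(p)$ for every $p \in C$) and that $C$ contains $L_\alpha$ for all large $\alpha$ (since $\pt\clconv(p)$ is compact); combined with the hypothesis $\CT \geq \alpha_0$ on $C$ this sandwiches $\pt C$ between two level surfaces. The Cauchy property is then proved directly: every inextendible causal curve crosses a high and a low level surface and hence $\pt C$, and uniqueness follows from total convexity, since a causal line through two boundary points meets $\inter(C)$ only in a bounded set and hence misses $L_\alpha$ for $\alpha$ large, contradicting that $L_\alpha$ is Cauchy. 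Lightlike supporting planes are excluded last, because such a plane $\Pi$ at $p$ would force the ray $\Pi \cap \pt I^+(p)$ into $\pt C$, violating the Cauchy property just established.

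Your recession-cone argument for futureness is correct and self-contained (minimality of a cocompact Fuchsian action on $\pt_\infty\H^2$ does rule out proper invariant closed convex subcones of $\ol{I^+}$), but your second paragraph essentially re-proves Lemma~\ref{convpoint}, which is available to you and would be cleaner to cite. The reordering --- excluding null supporting planes first, via bounded $\CT$ on $\pt\tilde C$, and deducing the Cauchy property from achronality afterwards --- does work, but two steps need more care than you give them. First, the claim that $\CT$ diverges along any future null ray requires a short justification: extend the null ray to a past-inextendible causal curve in $\tilde{\Omega}^+$; the resulting inextendible causal curve must cross $L_{\beta+1}$, and the crossing cannot lie on the past part because $\CT$ is non-decreasing toward the future with $\CT(p_0) \le \beta$, so it lies on the ray, contradicting $\CT\le\beta$ there. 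Second, ``an achronal set meets a causal curve at most once'' is false in general for null curves; to conclude you need the spacelike property of $\pt\tilde C$: if $p,q\in\pt\tilde C$ were causally related, convexity together with future-completeness forces the whole segment $(p,q)$ into $\pt\tilde C$, and a spacelike convex surface cannot contain a causal segment; alternatively, use the paper's bounded-segment argument. None of this undermines the overall idea, but these are the places your proof needs tightening.
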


\begin{proof}
From Lemma~\ref{convpoint}, for every $p \in C$ we have $C \supseteq \clconv(p) \supset I^+(p)$. Hence, $C$ is a future set. Because of this, its boundary $\pt C$ is a closed achronal topological surface \cite[Cor. 14.27]{oneil}.
Lemma~\ref{convpoint} also says that $\pt\clconv(p)$ is compact, thereby the cosmological time is bounded from above on it. Thus $C$ contains all level surfaces of sufficiently large cosmological time in the interior. Since the cosmological time is also bounded from below on $C$, the complement to the closure of $C$ also contains a level surface of cosmological time. Hence, every inextendible causal curve intersects $\pt C$. The total convexity of $C$ implies that the intersection is unique. Indeed, suppose the converse that $p,q \in \pt C$ are in causal relation. Then the line through them meets $\inter(C)$ in a precompact set, and thus does not meet $L_\alpha$ for sufficiently large $\alpha$, which is a contradiction as all $L_\alpha$ are Cauchy surfaces. Hence, $\pt C$ is a Cauchy surface. Thereby, $\tilde C$ has only spacelike or lightlike supporting planes. Suppose that there is a lightlike supporting plane $\Pi$ at a point $p\in\partial \tilde C$. Then as $I^+(p)\subset \tilde C$, then the boundary of $C$ must contain the lightlike ray $\Pi\cap (\pt I^+(p))$, which contradicts the definition of Cauchy surface.
\end{proof}

For the next result, we will make use of the spherical model of $\R^{2,1}$: a geodesic map sending it injectively to an open half-sphere of the standard sphere $\S^3$. By $\pt_\infty \R^{2,1}$ we will denote its boundary in the spherical model, by $\pt^0_\infty \R^{2,1} \subset \pt_\infty \R^{2,1}$ we will denote the subset of endpoints of the future-directed lightlike rays, and by $\ol{\R^{2,1}}$ we will denote the compactification $\R^{2,1} \cup \pt_\infty \R^{2,1}$.

\begin{lemma}\label{lem:orbite infinie}
Let $\tau \in \mc Z^1(\rho)$, $p \in \tilde\Omega^+$, and $P$ be the $\rho_\tau$-orbit of $p$. Then the limit set of $P$ in $\ol {\R^{2,1}}$ is $\pt_\infty^0 \R^{2,1}$.
\end{lemma}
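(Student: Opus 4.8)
The plan is to prove that the limit set $\Lambda$ of $P$ is a nonempty closed $\rho(\pi_1S)$-invariant subset of $\pt^0_\infty\R^{2,1}$, and then to conclude by minimality of the cocompact Fuchsian action on $\pt^0_\infty\R^{2,1}\cong\pt_\infty\H^2$.

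First I would note that in the spherical (geodesic) model $x\mapsto[(1,x)]\in\S^3$ the compactification is projective: the affine isometry $\rho_\tau(\gamma)\colon x\mapsto\rho(\gamma)x+\tau(\gamma)$ is the restriction to the open half-sphere of the projective transformation of $\S^3$ induced by the linear map $(s,z)\mapsto(s,\rho(\gamma)z+s\tau(\gamma))$ of $\R^4$. Hence each $\rho_\tau(\gamma)$ extends to a homeomorphism of $\ol{\R^{2,1}}$ whose restriction to $\pt_\infty\R^{2,1}$ is the linear action of $\rho(\gamma)$; in particular it preserves $\pt^0_\infty\R^{2,1}$ and acts there exactly as $\rho(\gamma)$ acts on $\pt_\infty\H^2$. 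Since $\rho_\tau$ acts freely and properly discontinuously on $\tilde\Omega^+$, the orbit $P$ is an infinite discrete subset of $\R^{2,1}$, so by compactness of $\ol{\R^{2,1}}$ the limit set $\Lambda=\ol P\cap\pt_\infty\R^{2,1}$ is nonempty and compact, and (as the set of accumulation points of the $\rho_\tau$-invariant set $P$) a closed $\rho(\pi_1S)$-invariant subset of $\pt_\infty\R^{2,1}$.

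The heart of the argument is to show $\Lambda\subseteq\pt^0_\infty\R^{2,1}$. Since every $\rho_\tau(\gamma)$ is an isometry of $\tilde\Omega^+$ it preserves the cosmological time $\CT$, so $P$ lies entirely on the single level set $L_\alpha$ with $\alpha=\CT(p)$. By \cite{bonsante} the lift $\tilde L_\alpha$ is the graph over the horizontal plane of a convex $1$-Lipschitz function $f\colon\R^2\to\R$, and future-completeness of $\tilde\Omega^+$ gives $I^+(\tilde L_\alpha)=\{(t,y):t>f(y)\}\subseteq\tilde\Omega^+$, so the recession cone of the convex set $\{(t,y):t\ge f(y)\}$ is contained in that of $\tilde\Omega^+$. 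Now $\tilde\Omega^+$ is an intersection of future half-spaces $\{x:\langle x,n_i\rangle\le c_i\}$ of its lightlike support planes (\cite{mess,bonsante}), the $n_i$ being future null; its recession cone is $\bigcap_i\{v:\langle v,n_i\rangle\le0\}$, and the directions of the $n_i$ form a nonempty $\rho(\pi_1S)$-invariant subset of the future null directions, hence a dense one by minimality of the cocompact action, so this intersection equals $\{v:\langle v,n\rangle\le0\text{ for all future null }n\}$, i.e.\ the closed future cone. Comparing with the recession cone $\{(v_0,v_y):v_0\ge f^\infty(v_y)\}$ of the epigraph of $f$, where $f^\infty(v_y)=\lim_{r\to+\infty}f(rv_y)/r\le|v_y|$, forces $f^\infty(v_y)=|v_y|$ for all $v_y$; by Dini's theorem $f(y)/|y|\to1$ uniformly in the direction as $|y|\to+\infty$, whence $\ol{L_\alpha}\cap\pt_\infty\R^{2,1}=\pt^0_\infty\R^{2,1}$, and in particular $\Lambda\subseteq\pt^0_\infty\R^{2,1}$.

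Finally, as $\rho(\pi_1S)$ is a cocompact Fuchsian group its action on $\pt^0_\infty\R^{2,1}\cong\pt_\infty\H^2$ is minimal, so the only nonempty closed invariant subset is the whole circle; combined with the two previous steps this yields $\Lambda=\pt^0_\infty\R^{2,1}$. The main obstacle is the middle step: controlling the accumulation of the Cauchy surface $L_\alpha$ at infinity — showing it meets $\pt_\infty\R^{2,1}$ only along future null directions, not at timelike or spacelike points at infinity. This is exactly where cocompactness of the holonomy enters, through the density of the null support directions of $\tilde\Omega^+$; if one prefers, it may also be quoted directly from the theory of cosmological time on flat regular domains in \cite{bonsante}.
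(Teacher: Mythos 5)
Your proof is correct, and it takes a genuinely different route from the paper's. The paper argues sequence-by-sequence: fix $q\in\pt^0_\infty\R^{2,1}$, take $\gamma_i\to q$ in $\pt_\infty\pi_1S$, and use the continuous boundary extension of the support function of a cosmological-time level surface (from \cite[Prop.~3.7]{BF}) to show that the supporting planes $\Pi_{\gamma_i}$ converge to the lightlike plane at $q$; a causality constraint then forces $\rho_\tau(\gamma_i)p\to q$, and the reverse inclusion follows by running the same argument on an arbitrary convergent subsequence. You instead establish $\Lambda\subseteq\pt^0_\infty\R^{2,1}$ by identifying the recession cone of the epigraph of the convex $1$-Lipschitz function defining the level surface with the closed future cone (itself deduced from $\rho$-invariance and hence density of the lightlike normal directions of $\tilde\Omega^+$), and then conclude $\Lambda=\pt^0_\infty\R^{2,1}$ from invariance together with minimality of the cocompact Fuchsian action on the boundary circle. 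Both routes rest on the same structural input — the level surface accumulates at infinity only along future-null directions, equivalently it is sandwiched between two translates of the future cone — but the paper extracts it via the explicit support-function extension and thereby gets the sharper conclusion that $\rho_\tau(\gamma_i)p$ has $q$ as its unique limit for any $\gamma_i\to q$, whereas you trade that precision for a quicker minimality shortcut; this is a legitimate and, for the purposes of this lemma, slightly more economical argument. One small point to firm up: to justify that the limit set $\Lambda$ sits entirely on $\pt_\infty\R^{2,1}$ you need, as the paper notes, that $L_\alpha$ is closed in $\R^{2,1}$ (it is the graph of a continuous function over the horizontal plane) and that the action on it is properly discontinuous, so that $P$ is closed and discrete in all of $\R^{2,1}$; proper discontinuity on the open set $\tilde\Omega^+$ alone does not by itself rule out accumulation on $\pt\tilde\Omega^+$.
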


\begin{proof}
Let $L \subset \tilde\Omega^+$ be the level surface of the cosmological time that contains $P$. For $\gamma \in \pi_1S$, denote the supporting plane to $L$ at $\rho_{\tau}(\gamma) p$ by $\Pi_\gamma$. Denote by $\mathbb B^2 \subset \R^{2,1}$ the unit disk in the horizontal plane translated by the future unit vertical vector. We consider now $\mathbb B^2$ as a model for $\H^2$ and use it as the domain for the support function $s: \mathbb B^2 \rar \R$ of $L$. More precisely, $s(x)$ is the oriented Lorentzian distance from the origin to the support plane of $L$ with inward normal $(x,1)$, see e.g. \cite{BF} for details. By \cite[Proposition 3.7]{BF}, there exists a continuous function $r_{\rho, \tau}: \pt \mathbb B^2 \rar \R$ such that the support function of every $\rho_\tau$-invariant future convex set admits a continuous extension to $\pt \mathbb B^2$ by $r_{\rho, \tau}$. Denote $\mathbb B^2 \cup \pt \mathbb B^2$ by $\ol{\mathbb B^2}$, and consider now this as the domain of the continuous extension of $s$, which we still denote by $s$.

Let $\pt_\infty^c \R^{2,1}$ and $\pt_\infty^t \R^{2,1}$ be the subsets of $\pt_\infty \R^{2,1}$ corresponding to the future causal and future timelike rays respectively. Let $\iota: \ol{\mathbb B^2} \rar \pt_\infty^c \R^{2,1}$ be the homeomorphism induced by the projection from the origin. Abusing the notation, we will denote $s \circ \iota^{-1}$ still by $s$, i.e., now we consider $s$ defined over $\pt_\infty^c \R^{2,1}$. The group $\rho(\pi_1S)$ acts naturally on $\pt_\infty^c \R^{2,1}$. Moreover, since for any Fuchsian group isomorphic to the fundamental group of a closed surface, the limit set of its action on $\H^2$ is the whole $\pt_\infty \H^2$, the action of $\rho$ on $\pt_\infty^t\R^{2,1}$  associates the Gromov boundary $\pt_\infty \pi_1S$ with $\pt_\infty^0 \R^{2,1}$.

Let $q \in \pt_\infty^0 \R^{2,1}$. There exists a sequence $\gamma_i$ converging to $q$ as to an element of $\pt_\infty\pi_1S$. Then for any $x \in \pt_\infty^t \R^{2,1}$ the sequence $\rho(\gamma_i) x$ converges to $q$. Hence, the sequence of planes $\Pi_{\gamma_i}$ converges to the weakly supporting lightlike plane $\Pi$ to $L$ determined by $s(q)$. Note that since $L$ is a closed subset of $\R^{2,1}$ and $\rho_\tau(\pi_1S)$ acts properly discontinuously on it, the $\rho_ \tau$-orbit of $p$ cannot have limit points in $\R^{2,1}$. Suppose that a subsequence of $\rho_{\tau}(\gamma_i)p$ converges to $q' \in \pt_\infty \R^{2,1}$, $q' \neq q$. Then $q' \in \pt_\infty \Pi$, where $\pt_\infty \Pi$ is the limit set of $\Pi$ in $\pt_\infty{\R^{2,1}}$. Then $q'$ either corresponds to a spacelike ray, or to a past lightlike ray. Note, however, that the mentioned result of \cite{BF}  means that an affine coordinate system in $\R^{2,1}$ can be chosen so that $L$ belongs to the future cone of the origin. This clearly means that $q'$ must correspond to a future causal ray. Hence, $q'=q$, i.e. $\rho_{\tau}(\gamma_i)p$ converge to $q$ in $\ol{\R^{2,1}}$.

Suppose that $\gamma_i \in \pi_1S$ is a sequence such that $\rho_{\tau}(\gamma_i) p$ has a limit point $q \in \ol{\R^{2,1}}$. Up to a subsequence, there exists a limit point $q' \in \pt_\infty^0 \R^{2,1}$ for $\gamma_i$, where $\pt_\infty^0 \R^{2,1}$ is perceived as $\pt_\infty \pi_1S$. By the previous argument $\rho_{\tau}(\gamma_i)p$ must converge to $q'$, hence $q=q'$. This finishes the proof.
\end{proof}

\begin{defi}\label{def:polyhedral}We call a \emph{future polyhedral cone} in $\R^{2,1}$ the intersection of the closed future half-spaces of a finite number of spacelike planes meeting at a common point.

We say that a subset $C \subset \Omega^+$ is  \emph{convex polyhedral} if it is closed, totally convex, its boundary is locally modeled on future polyhedral cones and the cosmological time is bounded away from zero on it.

We say that a point $p \in \pt C$ is a \emph{face-point} if it has a local model on a future polyhedral cone with one plane, it is an \emph{edge-point} if it has a local model on a future polyhedral cone with two planes, and a \emph{vertex} otherwise. A \emph{face} of $C$ is the closure of a connected component of face-points and an \emph{edge} is the closure of a connected component of edge-points.
\end{defi}

\begin{remark}
Note that the condition that the cosmological time is bounded from below does not follow from the other conditions. For instance, consider $\tilde\Omega^+(\rho, \tau)$ such that its \emph{dual measured lamination} in the sense of Mess~\cite{mess} is rational. Then $\tilde\Omega^+(\rho, \tau)$ has isolated vertices. One may consider a spacelike supporting plane to $\tilde\Omega^+(\rho, \tau)$ at such a vertex and push it slightly in the direction of $\tilde\Omega^+(\rho, \tau)$. Next, one considers the intersection of the closed future sides of the $\rho_\tau$-orbit of such a plane. The quotient of the resulting $\rho_\tau$-invariant set is a desired example, provided that the push was sufficiently small.
\end{remark}

By Lemma~\ref{cauchy}, for a convex polyhedral subset $C$ its boundary $\partial C$ is a  Cauchy surface, hence compact and then it has a finite number of vertices. 

\begin{lemma}\label{lem:boudary polyhedral}
Every convex polyhedral subset $C \subset \Omega^+$ is the convex hull of its vertices. Conversely, if $V \subset \Omega^+$ is a finite set, then $\clconv(V)$ is a convex polyhedral subset.
\end{lemma}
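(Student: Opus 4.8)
The plan is to prove the two implications separately, treating the forward direction (every convex polyhedral $C$ is the convex hull of its finitely many vertices) first, then the converse. For the forward direction, I would lift to $\tilde\Omega^+$ and argue inside $\R^{2,1}$: by Lemma~\ref{cauchy} the boundary $\pt C$ is a spacelike convex Cauchy surface, so $\partial\tilde C$ is the graph of a convex $1$-Lipschitz function over the horizontal plane. The local model condition says each boundary point is a face-point, an edge-point, or a vertex, and compactness of $\partial C$ (from the Cauchy surface property) forces the vertices $V$ to be finite. Let $C' := \clconv(V)$; clearly $C' \subseteq C$ since $C$ is totally convex. For the reverse inclusion I would show that $\partial\tilde C$ is contained in the union of the (finitely many) spacelike support planes through the vertices: starting from any boundary point $p$, if $p$ is a face-point or edge-point one follows the face/edge it lies on; since faces and edges are pieces of affine planes/lines and the total number of vertices is finite, every face is a convex polygon spanned by vertices and every edge is a segment between two vertices — here one uses that a face cannot "run off to infinity" because $\partial C$ is compact. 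Hence $\partial\tilde C \subseteq \clconv(\tilde V)$, and since both $\tilde C$ and $\clconv(\tilde V)$ are future-complete convex sets with the same boundary, they coincide.

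For the converse, given a finite set $V \subset \Omega^+$, write $C := \clconv(V) = \clconv(\{p_1,\dots,p_n\})$, whose lift is $\tilde C = \clconv(\rho_\tau\text{-orbit of }\tilde V)$. By Lemma~\ref{convpoint} each $\clconv(p_i)$ is future and has compact boundary, so $C \supseteq \bigcup_i \clconv(p_i) \supseteq \bigcup_i I^+(p_i)$, which already shows $C$ is a future set and that the cosmological time on $C$ is bounded below (it is bounded below on each compact $\partial\clconv(p_i)$, and monotone increasing into the future). Lemma~\ref{cauchy} then applies: $C$ is future and $\partial C$ is a spacelike convex Cauchy surface, in particular compact. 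It remains to verify the local polyhedral model. The key input is Lemma~\ref{lem:orbite infinie}: the lifted set $\tilde V$ has its only limit points (in $\ol{\R^{2,1}}$) on $\pt^0_\infty\R^{2,1}$, i.e. at lightlike infinity, so any given compact region of $\R^{2,1}$ meets only finitely many points of the orbit. Therefore near any point $\tilde p \in \partial\tilde C$ the convex hull is locally the convex hull of finitely many points together with the future cones over them, which is exactly a future polyhedral cone; the support planes are spacelike because $\partial C$ is a spacelike Cauchy surface (no lightlike support planes, by the last paragraph of the proof of Lemma~\ref{cauchy}). This gives the local model, so $C$ is convex polyhedral.

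The step I expect to be the main obstacle is controlling the faces of $C$ in the forward direction — precisely, showing that each face is the convex hull of finitely many vertices rather than, say, a planar region whose boundary accumulates without reaching a vertex, or an unbounded planar piece. The resolution should come from compactness of the Cauchy surface $\partial C$ together with the local finiteness of vertices and the fact that the relative boundary of a face consists of edge-points and vertices; one argues that walking along the boundary of a face one meets only finitely many edges before closing up. In the converse direction the analogous subtlety — that local finiteness of the orbit of $\tilde V$ really does yield a \emph{finite} local model at each boundary point — is handled cleanly by Lemma~\ref{lem:orbite infinie}, so the genuine work is concentrated in the first implication. Throughout, one should be careful that "convex hull" in $\Omega^+$ means the quotient of the $\R^{2,1}$-convex hull of the full $\rho_\tau$-orbit, as spelled out before Lemma~\ref{convpoint}, so that all the Euclidean-style convexity arguments are legitimately carried out upstairs in $\R^{2,1}$.
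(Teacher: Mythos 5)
Your converse direction is essentially the paper's, but your forward direction takes a genuinely different route and it has a real gap precisely where you flag it. You propose to show $\partial\tilde C\subseteq\clconv(\tilde V)$ by tracing the boundary of each face until it closes up, justifying boundedness of a face of $\tilde C$ by ``compactness of the Cauchy surface $\partial C$.'' That is not enough on its own: compactness of the quotient boundary does not obviously forbid a face of $\tilde C$ from being an unbounded strip (the covering $\partial\tilde C\to\partial C$ is infinite-sheeted, so a face of $\tilde C$ need not embed properly into the compact $\partial C$, and ruling out a nontrivial stabilizer of a face requires a separate argument). What actually bounds the faces is the fact that $\tilde\Omega^+$ can be put inside the future cone of a point: an unbounded spacelike ray in $\partial\tilde C$ would have a limit direction in $\partial_\infty\R^{2,1}$ outside the causal disk $\partial^c_\infty\R^{2,1}$, which is impossible. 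The paper uses exactly this, but in a way that avoids any face-by-face analysis: it passes to the closure $\ol C$ in $\ol{\R^{2,1}}$, identifies $\ol C\cap\partial_\infty\R^{2,1}=\partial^c_\infty\R^{2,1}$ (one inclusion from Lemma~\ref{lem:orbite infinie}, the other from the future-cone containment), deduces $\ol C=\clconv(\tilde V\cup\partial^0_\infty\R^{2,1})$, and applies Carath\'eodory together with spacelikeness of $\partial\tilde C$ to exclude contributions from $\partial^0_\infty\R^{2,1}$. Your approach can likely be repaired, but only by importing the same future-cone input; as written the resolution you propose does not close the gap.

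In the converse direction two steps are glossed over. First, the cosmological time being bounded away from zero on $C$ does not follow from boundedness on the $\partial\clconv(p_i)$ plus future-monotonicity, since $\clconv(V)$ is strictly larger than $\bigcup_i\clconv(p_i)$; the clean argument (which the paper gives) is that level sets of $\CT$ are convex, so the infimum of $\CT$ over $\clconv(V)$ is attained at some point of $V$, and then Lemma~\ref{cauchy} delivers both futureness and the Cauchy-surface property. Second, you should verify that $C$ is closed: the convex hull of the non-compact orbit $\tilde V$ is not automatically closed in $\R^{2,1}$, and the paper derives $\partial\tilde C\subset\clconv(\tilde V)$ from the same Carath\'eodory argument. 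Your use of Lemma~\ref{lem:orbite infinie} to get local finiteness and hence the local polyhedral model is correct and matches the paper.
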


\begin{proof}
Let $V$ be the set of vertices of $C$. Consider the closure $\ol C$ of $\tilde C \subset \tilde \Omega^+\subset \R^{2,1}$ in $\ol{\R^{2,1}}$. By Lemma~\ref{lem:orbite infinie}, $\ol C \cap \pt_\infty \R^{2,1}$ contains the set $\pt^c_\infty \R^{2,1}$ of the future causal directions. On the other hand, since we can choose the affine coordinate system so that $\Omega^+$ is contained in the future cone of the origin, $\ol C \cap \pt_\infty \R^{2,1}$ is contained in $\pt^c_\infty \R^{2,1}$. Thus, these sets coincide. It follows that $\ol C = \clconv(\tilde V \cup \pt^0_\infty \R^{2,1})$. By the Caratheodory Theorem, every point of $\pt \tilde C$ is a convex combination of at most three points from $\tilde V \cup \pt^0_\infty \R^{2,1}$ (and, clearly, at least one from $\tilde V$). If there is a point of $\pt^0_\infty \R^{2,1}$ participating in the convex combination, then there exists a lightlike ray in $\pt \tilde C$. This contradicts to that $\pt C$ is a spacelike surface. It is easy to deduce then that the interior points of $\tilde C$ are also convex combinations of $\tilde V$ and then obtain the first claim. 

For the second claim, denote $\clconv(V)$ by $C$. Note that because the level sets of the cosmological time are convex, the infimum of the cosmological time on $C$ is achieved at a point of $V$. Hence, due to Lemma~\ref{cauchy}, the boundary $\pt C$ is a spacelike convex Cauchy surface.

Consider the full preimage $\tilde V$ in $\tilde \Omega^+$. Let $\ol C$ be the closure of $\tilde C$ in $\ol{\R^{2,1}}$, so $\ol C=\ol{\clconv}(\tilde V)$. By Lemma~\ref{lem:orbite infinie}, $\ol C=\CH(\tilde V \cup \pt^0_\infty \R^{2,1})$. By the same argument as above, $\pt \tilde C \subset \clconv (\tilde V)$, i.e. $\tilde C$ is closed in $\R^{2,1}$, and $C$ is closed in $\Omega^+$. Note that there are finitely many edges of $\pt \tilde C$ emanating from a point of $\tilde V$. Indeed, otherwise once again there is a lightlike ray belonging to $\pt \tilde C$. It follows that $\tilde C$ is the intersection of a locally finite collection of spacelike planes, i.e., $C$ is convex polyhedral.
\end{proof}

We remark that the face decomposition of the boundary of a convex polyhedral subset $C$ is what we will call a \emph{cellulation}. We would like to consider it pulled back to $S$. To this purpose, let $\Sigma$ be a Cauchy surface in $\Omega^+$. We will need the following classical result.
\begin{cla}\label{claim:existence homeo}
 The marking isomorphism of the fundamental groups of $\pi_1S$ and $\pi_1\Omega^+$ induces a homeomorphism $S \rar \Sigma$ determined up to isotopy.   
\end{cla}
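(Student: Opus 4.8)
The plan is to reduce the claim to two classical inputs: Geroch's splitting theorem for globally hyperbolic spacetimes, and the Dehn--Nielsen--Baer theorem (together with its companion statement that a self-homeomorphism of a closed surface of genus $\geq 1$ which is homotopic to the identity is isotopic to the identity). Since $\Omega^+$ is globally hyperbolic with Cauchy surface $\Sigma$, Geroch's theorem (see e.g.\ \cite{oneil}) provides a homeomorphism $\Omega^+\cong\Sigma\times\R$ carrying $\Sigma$ to a slice; in particular the inclusion $\iota\colon\Sigma\hookrightarrow\Omega^+$ is a homotopy equivalence, so $\iota_*\colon\pi_1\Sigma\to\pi_1\Omega^+$ is an isomorphism. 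Composing its inverse with the marking isomorphism $m\colon\pi_1S\xrightarrow{\sim}\pi_1\Omega^+$ gives an isomorphism $\psi:=\iota_*^{-1}\circ m\colon\pi_1S\xrightarrow{\sim}\pi_1\Sigma$, well defined up to inner automorphisms (as is anything to do with markings) and compatible with the natural orientations, where $\Sigma$ is oriented as a two-sided hypersurface with distinguished future normal inside the orientable $3$-manifold $\Omega^+$.

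Next I would realize $\psi$ by a homeomorphism. All Cauchy surfaces of $\Omega^+$ are homeomorphic to $S$, so $\Sigma$ is a closed oriented surface of genus $\mathbf{g}>1$; thus $S$ and $\Sigma$ are aspherical, $\psi$ is induced by a continuous map $f_0\colon S\to\Sigma$ unique up to free homotopy, and $f_0$ is a homotopy equivalence because $\psi$ is an isomorphism. Fixing an auxiliary orientation-preserving homeomorphism $g\colon\Sigma\to S$ (available since the surfaces share the same genus), the composite $g\circ f_0\colon S\to S$ represents an element of $\mathrm{Out}(\pi_1S)$ which, by Dehn--Nielsen--Baer, is realized by a homeomorphism $\varphi\colon S\to S$; since $S$ is a $K(\pi_1S,1)$, $\varphi$ is freely homotopic to $g\circ f_0$, and it may be taken orientation-preserving because $\psi$ respects orientations. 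Then $f:=g^{-1}\circ\varphi\colon S\to\Sigma$ is an orientation-preserving homeomorphism inducing $\psi$ up to inner automorphism --- this is the homeomorphism asserted in the statement.

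For uniqueness, if $f_1,f_2\colon S\to\Sigma$ are homeomorphisms both inducing $\psi$, then $f_2^{-1}\circ f_1\colon S\to S$ induces the trivial outer automorphism of $\pi_1S$, hence is freely homotopic to the identity, hence (by homotopy-implies-isotopy for closed surfaces) isotopic to the identity; therefore $f_1$ and $f_2$ are isotopic. I do not anticipate a genuine obstacle: the three substantive ingredients --- Geroch splitting, Dehn--Nielsen--Baer, and homotopy-implies-isotopy for surface homeomorphisms --- are all classical, and the only care needed is the harmless bookkeeping with basepoints and inner automorphisms, to which both the marking and the conclusion are insensitive by design.
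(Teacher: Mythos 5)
Your proof is correct and follows essentially the same route as the paper's: pass from the isomorphism of fundamental groups to a homotopy equivalence of aspherical surfaces, upgrade that to a homeomorphism via the Dehn--Nielsen--Baer circle of ideas, and invoke homotopy-implies-isotopy for uniqueness. The paper's version is terser (three citations to \cite{hatcher} and \cite{primer}); your added remarks about Geroch splitting, basepoints, and orientation-preservation are correct but are scaffolding for the same argument.
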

\begin{proof}
It is a general topological fact that the isomorphism of fundamental group gives a homotopy equivalence between the surfaces \cite[Theorem 1.B.8]{hatcher}. In our hyperbolic type case, any homotopy equivalence is homotopic to a homeomorphism \cite[Theorem 8.9]{primer}, and for compact surfaces, the homotopy is an isotopy \cite[Theorem 1.12]{primer}.
\end{proof}

This is particularly the case with the boundary of a convex polyhedral set $C$. Let $V$ be its set of vertices, by the homeomorphism above, abusing the notation, we identify it with a subset of $S$.

\begin{defi}
Let $V$ be a finite set in $S$. A \emph{cellulation} $\ms C$ of $(S, V)$ is a collection of simple disjoint arcs with endpoints in $V$ that cut $S$ into cells homeomorphic to the 2-disk. The points of $V$ are called \emph{vertices} of $\ms C$, the arcs are called \emph{edges} and the cells are called \emph{faces}. Two cellulations are \emph{equivalent} if there exists a homeomorphism $S \rar S$ fixing $V$ and isotopic to identity that sends one cellulation to another.
\end{defi}

To see that the faces of $C$ are homeomorphic to the 2-disk we note that the faces of $\tilde C$ are compact (since $\pt \tilde C$ is spacelike convex) convex Euclidean polygons. We frequently do not distinguish between a cellulation and its equivalence class. After identifying $\pt C$ with $S$, we will refer to the corresponding cellulation as to the \emph{face cellulation} of $C$.

\subsection{Spacetimes with marked points}
\label{sec:markedpoints}

Let $V \subset S$ be a finite set of cardinality $n$. The aim of the present section is to describe a topology on the following space.

\begin{defi}
A (future) \emph{flat GHMC  spacetime with marked points} is a triple $\p=(\rho, \tau, f)$, where $\rho \in \mc T(S)$, $\tau \in \mc H^1(\rho)$ and $f: V \rightarrow \Omega^+(\rho, \tau)$ is a map, the \emph{vertex marking map}.
The space of such marked spacetimes is denoted by $\P(S,V)$.
\end{defi}

By $\tilde V$ we denote the full preimage of $V$ in $\tilde S$ equipped with the natural $\pi_1S$-action. 
%By an arbitrary choice of a lift in $\tilde V$ for each $v \in V$, $\tilde V$ can be identified with the product $\pi_1S \times V$. 
By $\widetilde{\P}(S, V)$ we denote the set of triples $(\rho,\tau, \tilde f)$, where $\rho$ is a Fuchsian representation, $\tau \in \mc Z^1(\rho)$ and $\tilde f: \tilde V \rightarrow \tilde\Omega^+(\rho, \tau)$ is an equivariant  map. This is an open subset of the space $T\mc R(S) \times (\R^{2,1})^V$. The group ${\rm Isom}_0(\R^{2,1})$ acts on $T\mc R(S)$ freely and properly by conjugation. By adding the usual action on $(\R^{2,1})^V$, we get an action on $T\mc R(S) \times (\R^{2,1})^V$, and $\widetilde{\P}(S, V)$ is invariant with respect to this action. Next, the group $\pi_1S$ acts on $T\mc R(S) \times (\R^{2,1})^V$ fiberwise, via $\rho_\tau$ on the fiber $\{\rho\}\times \{\tau\} \times (\R^{2,1})^V$. This action is free and properly discontinuous on the set $\{\rho\}\times\{\tau\} \times( \tilde\Omega^+(\rho, \tau))^V$, which is the intersection of $\widetilde{\P}(S, V)$ with $\{\rho\}\times\{\tau\} \times (\R^{2,1})^V$. These two actions commute and the quotient by them of $\widetilde{\P}(S, V)$ is exactly $\P(S, V)$, which is thus endowed with the structure of an analytic manifold of dimension $12\mathbf{g}-12+3n$, where $\mathbf{g}$ is the genus of $S$. We remark that we will need the analytic structure only in Section~\ref{sec:lift}.

%The group $SO_0(2,1)$ acts on $\mc R(S)$ freely and properly by conjugation, by extending this to $T\mc R(S)$ and adding the usual action on $(\R^{2,1})^V$ we get an action on $T\mc R(S) \times (\R^{2,1})^V$, and $\widetilde{\P}(S, V)$ is invariant with respect to this action. Also for each $\rho \in \mc R(S)$, the group $\mc B^1( \rho)$ acts freely and properly on the fiber $\{\rho\} \times \mc Z^1( \rho) \times (\R^{2,1})^V$, and the intersection of this fiber with $\widetilde{\P}(S, V)$ is invariant with respect to this action. Finally, for each $\rho \in \mc R(S)$, $\tau \in \mc Z^1(\rho)$, the group $\rho_\tau(\pi_1S)$ acts freely and properly discontinuously on the set $\{\rho\}\times\{\tau\} \times( \tilde\Omega^+(\rho, \tau))^V$, which is the intersection of $\widetilde{\P}(S, V)$ with $\{\rho\}\times\{\tau\} \times (\R^{2,1})^V$. All these three actions commute and the quotient by them of $\widetilde{\P}(S, V)$ is exactly $\P(S, V)$, which is thus endowed with the structure of an analytic manifold of dimension $12\mathbf{g}-12+3n$, where $\mathbf{g}$ is the genus of $S$. We remark that we will need the analytic structure only in Section~\ref{sec:lift}.

We remark that  there is a natural projection map \begin{equation}\label{eq:def hat pi}\mu: \P(S, V) \rar T\mc T(S)\end{equation} forgetting the marked points. 

\begin{notation} By $\P_{c}(S, V)$ we denote the subset of $\P(S, V)$ corresponding to $f$ in a strictly convex position, i.e., no point of $f(V)$ is a convex combination of other points (note that it implies the injectivity of $f$). 
\end{notation}

\begin{lemma}
\label{open}
$\P_c(S, V)$ is open in $\P(S, V)$. In particular, $\P_c(S, V)$ is an analytic manifold of dimension $12\mathbf{g}-12+3n$.
\end{lemma}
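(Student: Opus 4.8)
\textbf{Proof plan for Lemma~\ref{open}.}

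The plan is to show that the condition defining $\P_c(S,V)$ inside $\P(S,V)$ is open, and for this it is easiest to work upstairs in $\widetilde{\P}(S,V)$ and use the equivariance of the whole construction. First I would recall that a point of $\widetilde{\P}(S,V)$ is a triple $(\rho,\tau,\tilde f)$ and that $\tilde f(\tilde V)$ lies in $\tilde\Omega^+(\rho,\tau)\subset\R^{2,1}$. The strict convexity condition on $f$ is equivalent to the statement that for every $v\in V$ and every lift $\tilde v\in\tilde V$, the point $\tilde f(\tilde v)$ is \emph{not} in the convex hull of $\rho_\tau(\pi_1S)\cdot\tilde f(\tilde V)\setminus\{\tilde f(\tilde v)\}$; equivalently, for each such $\tilde v$ there is a spacelike plane through $\tilde f(\tilde v)$ that is strictly supporting at $\tilde f(\tilde v)$ to the convex hull $\clconv(\tilde f(\tilde V))$ (where by Lemma~\ref{lem:boudary polyhedral} this convex hull is a convex polyhedral set and the point is a vertex of it). So the key step is: being a vertex of the convex hull is an open condition as the finitely many vertex positions and the holonomy $(\rho,\tau)$ vary continuously.

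The main step I would carry out is a local finiteness / compactness argument to reduce the infinite orbit to a finite configuration. Fix $(\rho_0,\tau_0,\tilde f_0)\in\widetilde{\P}_c(S,V)$. By Lemma~\ref{lem:orbite infinie} the limit set of the orbit $\rho_{\tau_0}(\pi_1S)\cdot\tilde f_0(\tilde V)$ in $\ol{\R^{2,1}}$ is $\pt^0_\infty\R^{2,1}$, and by the argument in Lemma~\ref{lem:boudary polyhedral} only finitely many edges of $\pt\tilde C_0$ emanate from each vertex; hence for a given lift $\tilde v$ with image $x_0:=\tilde f_0(\tilde v)$, there is a finite subset $F\subset\pi_1S$ (a neighborhood of the identity in a word metric, together with a few group elements controlling the faces adjacent to $x_0$) such that the local structure of $\clconv(\tilde f_0(\tilde V))$ near $x_0$ is already determined by $\{\rho_{\tau_0}(\gamma)\tilde f_0(\tilde w):\gamma\in F,\ \tilde w\in\tilde V\cap(\text{fundamental domain})\}$, and moreover a strictly supporting spacelike plane at $x_0$ can be chosen so that all the \emph{other} orbit points lie strictly in the open future half-space, with a uniform gap. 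This uses that $\tilde\Omega^+$ sits inside the future cone of the origin (after a suitable affine choice, as in \cite{BF}) and that the cosmological time is bounded below on the convex hull, so points far away in the orbit are far in the future and stay clear of a fixed spacelike supporting plane.

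Once this is in place, the conclusion is routine: the finitely many points $\rho_\tau(\gamma)\tilde f(\tilde w)$, $\gamma\in F$, depend continuously (indeed analytically) on $(\rho,\tau,\tilde f)$ in $\widetilde{\P}(S,V)$, the domain $\tilde\Omega^+(\rho,\tau)$ varies continuously, and the condition "there exists a spacelike plane through $\tilde f(\tilde v)$ with all these points and the tail of the orbit strictly on one side" is manifestly open; doing this for one representative lift of each of the finitely many $v\in V$ gives an open neighborhood of $(\rho_0,\tau_0,\tilde f_0)$ inside $\widetilde{\P}_c(S,V)$. Since $\widetilde{\P}_c(S,V)$ is invariant under the three commuting group actions whose quotient is $\P(S,V)$, and the quotient map is open, $\P_c(S,V)$ is open in $\P(S,V)$; it is therefore an analytic submanifold of the same dimension $12\mathbf{g}-12+3n$. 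I expect the only real obstacle to be the uniform-gap claim controlling the tail of the orbit — i.e. making precise that outside the finite set $F$ the orbit points stay uniformly away from a fixed strictly supporting spacelike plane at $x_0$, uniformly for $(\rho,\tau,\tilde f)$ near $(\rho_0,\tau_0,\tilde f_0)$ — and this is exactly where the estimates from \cite{BF} (the function $r_{\rho,\tau}$ and the fact that $\tilde\Omega^+$ lies in a future cone) together with properness of the $\rho_\tau(\pi_1S)$-action do the work.
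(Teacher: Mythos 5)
Your plan is correct in outline and correctly identifies the key input — Lemma~\ref{lem:orbite infinie}, which pins the limit set of the orbit at $\pt^0_\infty\R^{2,1}$ — but it takes a genuinely different route from the paper's proof. You argue \emph{directly}: for each marked vertex $x_0=\tilde f_0(\tilde v)$, exhibit a strictly supporting spacelike plane with a uniform positive gap to the rest of the orbit, and then observe that this configuration persists under small perturbations of $(\rho,\tau,\tilde f)$. The paper instead argues \emph{by contradiction} and \emph{by compactness}: given a sequence $\p_i\to\p$ with $\p$ in strict convex position but each $\p_i$ not, it invokes Carath\'eodory's theorem to extract, for a fixed lift $\tilde v$, a 4-tuple of orbit points $p^1_i,\dots,p^4_i$ whose convex hull contains $\tilde f_i(\tilde v)$; it then passes to a convergent subsequence in $\ol{\R^{2,1}}$, using Lemma~\ref{lem:orbite infinie} to force the limits to lie in $\tilde f(\tilde V\setminus\tilde v)\cup\pt^0_\infty\R^{2,1}$, and reads off a contradiction with strict convexity of $\p$ in the limit. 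What the paper's formulation buys is that the subsequence mechanism absorbs exactly the uniformity issue you flag as "the only real obstacle": Lemma~\ref{lem:orbite infinie} is a statement about a single $(\rho,\tau)$, so to run your direct argument you would additionally need the gap between the supporting plane and the tail of the orbit to stay positive \emph{uniformly} over a neighborhood of $(\rho_0,\tau_0,\tilde f_0)$, since far-out orbit points $\rho_\tau(\gamma)\tilde f(\tilde w)$ can move a lot as $(\rho,\tau)$ moves a little. That can be made to work (compactify: the orbit together with $\pt^0_\infty\R^{2,1}$ is compact in $\ol{\R^{2,1}}$ and disjoint from the closure of a spacelike plane, and this separation is stable), but it is precisely the bookkeeping the paper's sequence argument side-steps. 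In short, the two arguments are roughly dual — supporting hyperplanes versus Carath\'eodory 4-tuples — with the paper's version being the more economical.
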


\begin{proof}
Suppose the converse, then there exists $\p:=(\rho, \tau, f)\in \P_c(S, V)$ and a sequence $\p_i \in \P(S, V)\backslash \P_c(S, V)$ converging to $\p$. Lift them to a sequence $\tilde \p_i \in \widetilde{\P}(S, V)$ converging to $\tilde \p$. Denote the respective vertex marking maps by $\tilde f_i$, $\tilde f$. Up to passing to a subsequence, by the Carath\'eodory Theorem there exists $\tilde v \in \tilde V$ and a sequence of 4-tuples of points $p^1_i, p^2_i, p^3_i, p^4_i \in \tilde f_i(\tilde V\backslash \tilde v)$ such that $\tilde f_i(\tilde v)$ belongs to the convex hull of $p^1_i, p^2_i, p^3_i, p^4_i$. Up to passing to a subsequence, $p^1_i, p^2_i, p^3_i, p^4_i$ converge to a 4-tuple of points $p^1, p^2, p^3, p^4 \in \big(\tilde f(\tilde V\backslash \tilde v)\cup \pt^0_\infty \R^{2,1}\big)$ by Lemma~\ref{lem:orbite infinie}, containing $\tilde f(\tilde v)$ in their convex hull. It follows that $\tilde f(\tilde v)$ is contained in $\ol{\clconv}(\tilde f(\tilde V \backslash \tilde v))$ in $\ol{\R^{2,1}}$. On the other hand, $\tilde f(\tilde v) \in \R^{2,1}$, thus $\tilde f(\tilde v) \in \R^{2,1} \cap \ol{\clconv}(\tilde f(\tilde V \backslash v))$, where $\R^{2,1} \cap \ol{\clconv}(\tilde f(\tilde V \backslash v))=\clconv(\tilde f(\tilde V \backslash v))$. This contradicts to that $\p \in \P_c(S, V)$.
\end{proof}

\begin{notation} For any flat GHMC  spacetime with marked points $\p=(\rho,\tau,f)$ in $\P(S,V)$, we will denote by $\CH(\p)$ 
 the convex hull of $f(V)$ in $\Omega^+(\rho,\tau)$. \end{notation}

\subsection{The induced metric map}\label{sec: the realization map}

Let $d$ be a flat metric on $S$ with cone-points. We denote by $V(d)$ the set of cone-point of $d$. For a finite $V \subset S$,
by $\ol\M(S, V)$ denote the set of such metrics with $V(d)\subset V$. The metrics are considered  up to isometries fixing $V$ and isotopic to the identity. (Note that the isotopy does not have to fix $V$. In the next section we will have to deal with isotopies fixing $V$, in this case we will say \emph{isotopies relative to $V$}.) We denote by $\M(S, V)$ the  subset of $\ol\M(S, V)$ such that $V(d)=V$. Finally, by $\ol\M_-(S, V)$ and $\M_-(S, V)$ we denote the subsets of $\ol\M(S, V)$, $\M(S, V)$ respectively consisting of metrics with negative singular curvatures.

%Let $d$ be a flat metric on $S$ with negative singular curvatures. We denote by $V(d)$ the set of cone-point of $d$. For a finite $V \subset S$,
%by $\ol\M_-(S, V)$ denote the set of such metrics such that $V(d)\subset V$. The metrics are considered  up to isometries fixing $V$, and isotopic to the identity (the isotopy does not have to fix $V$).
%We denote by  $\M_-(S, V)$ the  subset of $\ol\M_-(S, V)$ such that $V(d)=V$. 

A metric $d$ of $\ol\M(S, V)$  can be triangulated so that the set of vertices of the triangulation is exactly $V$. Changing the lengths of the edges provides a chart of $\ol\M(S, V)$ valued in $\R^E$, where $E$ is the set of edges. If $\mathbf{g}$ is the genus of $S$ and $n=|V|$, then, as we are considering triangulations, $|E|=6\mathbf{g}-6+3n$. Changes of triangulation provide analytic transition maps, endowing  $\ol\M(S, V)$ with a structure of $(6\mathbf{g}-6+3n)$-dimensional analytic manifold. By looking at affine maps between two triangles, it is easy to see that the topology induced by this analytic structure is the one of Lipschitz convergence of metric spaces. The sets $\M(S, V)$, $\M_-(S, V)$ are open subsets of $\ol\M(S, V)$.

Take an element  $\p\in\P_c(S,V)$. The marking allows us to associate its boundary with $(S, V)$ up to a homeomorphism fixing $V$ and isotopic to identity (Claim~\ref{claim:existence homeo}; the isotopy does not have to fix $V$).  By Lemma~\ref{lem:boudary polyhedral}, the induced metric belongs to  $\M_-(S, V)$. This provides a map
$$\mc I: \P_c(S,V)\to \mc \M_-(S, V) $$
called the \emph{induced metric map}.

By $\ol{\P}_c(S, V) \subset \P(S, V)$ we denote the subset of injective configurations in convex position, i.e., when $f$ is injective and $f(V) \subset \pt\clconv(f(V))$. Note that it differs from the closure of $\P_c(S, V)$ in $\P(S, V)$ as the latter also include the configurations when some of the marked points collapsed. 
The map $\ol{\mc I}: \ol\P_c(S, V) \rar \ol\M_-(S, V)$ is defined in an obvious manner, and its restriction to $\P_c(S,V)$ is~$\mc I$. Pick $\p=(\rho,\tau,f) \in \ol{\P}_c(S, V)$, denote by $V(\p)\subset V$ its set of vertices, i.e., the inclusion-maximal set such that $f(V(\p))$ is in a strictly convex position and $\clconv(f(V(\p)))=\clconv(f(V))$. It is well-defined, since it can be defined alternatively as the set of $v \in V$ such that $f(v) \notin \clconv(f(V \backslash v))$.

If the face celluation of $\p \in \P_c(S, V)$ is a triangulation, then clearly $\mc I$ is analytic around $\p$. Now we analyze the regularity of $\mc I$ in a more general situation.

%\b{\begin{defi}
%A \emph{cellulation} $\mc C$ over $S$ is ....
%For $p\in \P_c(S, V)$, its \emph{face cellulation} is... given by the face cellulation of the boundary of $\CH(\p)$.
%\end{defi}}

\begin{lemma}
\label{subdiv}
Let $\p=(\rho,\tau,f) \in \ol{\P}_c(S, V)$, and $\ms C$ be its face cellulation on $(S, V(\p))$. Then there exists a neighborhood $U$ of $\p$ in $\ol{\P}_c(S, V)$ such that for all $\p' \in U$ its face cellulation is a subdivision of $\ms C$ (with vertex set between $V(\p)$ and $V$).
\end{lemma}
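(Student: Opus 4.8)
The statement is a semicontinuity/stability property of the face cellulation: a nearby configuration in $\ol{\P}_c(S,V)$ can only refine the combinatorial structure of the convex hull, never coarsen it. The plan is to work entirely upstairs in $\R^{2,1}$ with the equivariant lifts $\tilde f, \tilde f'$ of $f$ and nearby $f'$, and to exploit that the convex hull of a $\rho_\tau$-orbit of finitely many points is a locally finite polyhedron (Lemma~\ref{lem:boudary polyhedral}) whose combinatorics on the boundary is captured by a \emph{finite} amount of data: the supporting spacelike planes through the faces of a compact fundamental domain of $\pt\tilde C$. Since $\pt C$ is a compact Cauchy surface (Lemma~\ref{cauchy}), only finitely many orbit-representatives of vertices, edges and faces are relevant, so the whole analysis reduces to a finite configuration of points and their convex hull in Minkowski space.

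\textbf{Key steps.} First I would fix a compact set $K \subset \tilde\Omega^+$ large enough to contain a fundamental domain of $\pt\tilde C$ together with all the lifts of the points of $f(V(\p))$ meeting it and all their ``neighbors'' participating in the faces that touch $K$; by Lemma~\ref{lem:orbite infinie} and the proper discontinuity of the $\rho_\tau$-action, only finitely many group elements $\gamma_1,\dots,\gamma_m$ are involved, and the combinatorics of $\ms C$ is completely determined by the combinatorial type of the convex hull of the finite point set $\{\rho_{\tau}(\gamma_j)\tilde f(\tilde v) : j, \tilde v\}$. Second, I would invoke the elementary but crucial fact that for a \emph{finite} set of points in general position in $\R^{2,1}$ lying on a spacelike convex surface, the combinatorial type of the convex hull is \emph{upper semicontinuous}: a small perturbation of the points can only subdivide faces (a face can split when a vertex that was in the affine span of a face moves off it, and an edge can appear when four coplanar points become non-coplanar), it can never merge two faces or delete an edge, because the supporting planes of $C$ remain strictly supporting under small perturbation away from the degenerate incidences. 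Here one uses that all points of $\tilde f(\tilde V)$ are genuine vertices of $\clconv(\tilde f(\tilde V))$ for $\p\in\ol\P_c$, so no vertex can disappear. Third, I would transfer this back: for $\p'$ close to $\p$ the lifts $\tilde f'$ are close to $\tilde f$ uniformly on $K$ (continuity of the construction of $\tilde\Omega^+$ and of the equivariant maps in the parameters $(\rho,\tau,f)$ — this is exactly the analytic/topological structure on $\widetilde\P(S,V)$ recalled in the previous section), hence the face cellulation of $\CH(\p')$ restricted to the region corresponding to $K$ is a subdivision of $\ms C$ there; by equivariance this holds globally, giving a cellulation of $(S,V(\p'))$ with $V(\p)\subseteq V(\p')\subseteq V$ subdividing $\ms C$.

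\textbf{Technical point to be careful about.} One must ensure that the perturbation cannot create \emph{new} faces that are not supported by subdivisions of the old ones — i.e. that a supporting plane of $C$ through a face $F$ of $\ms C$ does not, under perturbation, ``tilt'' so as to cut $F$ into pieces by separating vertices of $F$. Since all vertices of a face $F$ of $\tilde C$ lie in a common spacelike plane, a small perturbation keeps them within a common thin spacelike slab, and the new convex hull boundary over that region is a convex subdivision of (a slight deformation of) $F$ into smaller spacelike convex polygons; this is the content of upper semicontinuity and is where one should argue carefully using that spacelike supporting planes form an open condition (they have a common point with $\ol C$ and bound it — a closed condition that is stable) while a lightlike supporting plane is ruled out by Lemma~\ref{cauchy} applied to $\CH(\p')$, whose boundary is again a spacelike Cauchy surface by Lemma~\ref{lem:boudary polyhedral}.

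\textbf{Main obstacle.} The genuine difficulty is the equivariant bookkeeping: making precise that ``finitely many orbit representatives suffice'' and that the local subdivision statements glued over the whole of $S$ yield a single well-defined cellulation subdividing $\ms C$ in its equivalence class. Once the problem is localized to a compact piece and reduced to the classical upper semicontinuity of convex-hull combinatorics for finite point sets, the remaining work is routine; so I expect the write-up to spend most of its effort on the reduction to a finite configuration and on checking that the perturbed supporting planes stay spacelike and strictly supporting away from the degenerate incidences.
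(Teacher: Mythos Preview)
Your overall intuition---upper semicontinuity of the boundary combinatorics---is correct, but your route is more roundabout than the paper's and the step you flag as ``routine'' is not.

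The paper argues directly by contradiction on a single edge. It observes that ``the face cellulation of $\p'$ subdivides $\ms C$'' is equivalent to ``every edge $e$ of $\ms C$ is still an edge of $\CH(\p')$''. If this fails along a sequence $\p_i\to\p$, then for each $i$ the segment between the lifts $\tilde f_i(\tilde v),\tilde f_i(\tilde w)$ is obstructed: by Carath\'eodory there are three points $p_i^1,p_i^2,p_i^3$ in $\tilde f_i(\tilde V\setminus\{\tilde v,\tilde w\})$ whose triangle meets that segment. Now Lemma~\ref{lem:orbite infinie} is used at the limit, not at $\p$: up to a subsequence each $p_i^j$ converges in $\ol{\R^{2,1}}$ to a point of $\tilde f(\tilde V\setminus\{\tilde v,\tilde w\})\cup\pt_\infty^0\R^{2,1}$, and the limiting triangle still meets the edge $\tilde e$ of $\widetilde\CH(\p)$, a contradiction. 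No reduction to a finite configuration, no fundamental domain, no gluing.

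Your approach instead tries to fix a finite set of orbit representatives a priori and then quote ordinary upper semicontinuity for finite convex hulls. The gap is the uniformity: you need that for \emph{all} nearby $\p'$ the faces over your compact $K$ involve only orbit points indexed by the same finite list $\gamma_1,\dots,\gamma_m$. You invoke Lemma~\ref{lem:orbite infinie} and proper discontinuity for $\p$, but that only tells you which $\gamma_j$ participate at $\p$; it does not prevent far-away orbit points of $\p'$ from becoming vertices of faces near $K$. Ruling that out is itself a compactification argument of exactly the same flavor as the paper's proof (a sequence of stray vertices would subconverge to $\pt_\infty^0\R^{2,1}$ and force a face to degenerate toward lightlike), so you end up redoing the paper's key idea inside your reduction step. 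Once you accept that, it is cleaner to drop the finite-configuration scaffolding and argue on edges directly as the paper does.
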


\begin{proof}
Let $e$ be an edge of $\CH(\p)$ with vertices $v$ and $w$. We need to show that for all $\p'$ close enough to $\p$, $e$ is an edge of the face cellulation of $\CH(\p')$. Suppose the converse. Fix a lift $\tilde \p \in \widetilde{\P}(S, V)$. Then there exists a sequence $\tilde \p_i$ converging to $\tilde \p$ for which $e$ is not an edge of $\CH(\p_i)$,  where $\p_i$ is the projection of $\tilde \p_i$. Denote the respective vertex marking maps by $\tilde f_i$, $\tilde f$. Fix a lift $\tilde e$ of $e$ to $\tilde S$ with endpoints $\tilde v, \tilde w \in \tilde V$. For each $i$ there exists a triple of points $p^1_i, p^2_i, p^3_i \in \tilde f_i(\tilde V\backslash\{\tilde v_i,\tilde w_i\})$ such that the convex hull of $p^1_i, p^2_i, p^3_i$ intersects the geodesic segment between $\tilde f_i(\tilde v)$ and $\tilde f_i(\tilde w)$. Up to passing to a subsequence, each sequence $p^j_i$ converges to a point $p^j \in \big(\tilde f(\tilde V\backslash\{\tilde v, \tilde w\}) \cup \pt_\infty^0 \R^{2,1}\big)$ by Lemma~\ref{lem:orbite infinie}, and the convex hull of the points $p^j$ intersects the segment between $\tilde f(\tilde v)$, $\tilde f(\tilde w)$. This contradicts to that the latter is an edge of $\widetilde \CH(\p)$.
\end{proof}

Since there are only finitely many cellulations subdividing $\ms C$, from the definition of the topologies, it is evident that $\ol{\mc I}$ is continuous. We go further and show

\begin{lemma}
\label{diff}
The map $\mc I$ is $C^1$.
\end{lemma}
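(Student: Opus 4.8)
The plan is to work locally near a fixed $\p = (\rho, \tau, f) \in \P_c(S, V)$ and to realize $\mc I$, in suitable charts, as a composition of analytic maps with a "maximum/minimum of finitely many analytic functions" operation, and then to argue that this composition is $C^1$ even where the combinatorics of the face cellulation changes. The point is that by Lemma~\ref{subdiv} there is a neighborhood $U$ of $\p$ in $\ol\P_c(S,V)$ (hence in $\P_c(S,V)$) such that every $\p' \in U$ has face cellulation subdividing the fixed cellulation $\ms C$ of $\CH(\p)$. So there are only finitely many combinatorial types occurring in $U$, indexed by the subdivisions $\ms C_1, \dots, \ms C_N$ of $\ms C$. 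For each fixed subdivision $\ms C_j$ one has an analytic map $\mc I_j$ defined on the (locally closed) stratum of configurations whose face cellulation is exactly $\ms C_j$: indeed, lifting to $\widetilde\P(S,V)$, the positions $\tilde f(\tilde v)$ of the vertices depend analytically on the data, the supporting spacelike planes of the faces of $\widetilde\CH(\p')$ depend analytically on three of these points each, the edge lengths of the boundary polygons are analytic functions of the vertex coordinates, and thus the induced metric $\ol{\mc I}(\p')$, recorded in edge-length coordinates on $\M_-(S,V)$ for a fixed auxiliary triangulation refining all the $\ms C_j$, is analytic. The subtlety is that these local pieces $\mc I_j$ must be glued across the strata where the combinatorics jumps.

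The first step is therefore to set up these charts carefully: fix a triangulation $T$ of $(S,V)$ that simultaneously refines every $\ms C_j$ (possible since there are finitely many, and any cellulation can be refined to a triangulation with the same vertex set), use its edge lengths as the target chart on $\M_-(S,V)$, and for the source use the analytic chart on $\P_c(S,V)\subset\P(S,V)$ from Section~\ref{sec:markedpoints}. In these coordinates I must show each component of $\mc I$ is $C^1$. Along the open stratum (where the face cellulation is a genuine triangulation, so $\ms C_j = T$) this is the already-noted analyticity. The work is to understand what happens as a configuration degenerates so that several triangles of $T$ become coplanar, i.e.\ merge into a single face of the coarser cellulation $\ms C_j$. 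Here the key geometric fact is that when a diagonal $e$ of a quadrilateral face flips, the induced metric does not care which diagonal one uses: the flat quadrilateral is the same metric object, and the length of the "virtual" diagonal on the two sides agrees (both compute the same flat distance in the intrinsic metric of the face). This is exactly the mechanism that makes the piecewise-analytic function $\mc I$ match up to first order: the lengths recorded in the chart $T$ are, on each stratum, analytic in the configuration, and across a wall the one-sided limits of the functions and of their first derivatives coincide because they both compute intrinsic distances inside a face that varies analytically.

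Concretely, the main step I expect to be the obstacle is the $C^1$ matching across a wall: one takes an edge $a$ of $T$ that, in the cellulation $\ms C_j$ corresponding to one side of the wall, lies in the interior of a face $F$ (so $F$ is a flat convex polygon), and one must check that the function "length of $a$" extends from the $\ms C_j$-side to a $C^1$ function across the wall, matching the (a priori different) analytic formula coming from the other side's triangulation. The clean way to do this: parametrize $F$ by the analytically-varying positions of its vertices in the plane of the supporting spacelike plane (whose normal varies analytically), and observe that for every configuration in $U$ — on either side of the wall — the length of $a$ equals the Euclidean distance, inside that planar polygon, between the two relevant vertices of $T$, which is a single analytic function of the vertex positions, valid on a full neighborhood of the wall. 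Thus $\mc I$ is, in the chart $T$, locally given by finitely many analytic functions that agree with each other on overlaps, hence is analytic — in particular $C^1$ — on all of $U$ except possibly on the lower-dimensional set where the supporting plane of a face ceases to be transverse, but by Lemma~\ref{cauchy} and the spacelike convexity of $\pt\CH(\p')$ all supporting planes stay uniformly spacelike on a neighborhood of $\p$, so this does not occur. I would then remark that $C^1$ (rather than real-analytic) is all we claim because the global gluing of charts on $\M_-(S,V)$, and the fact that the identification of the boundary of $\CH(\p')$ with $(S,V)$ via Claim~\ref{claim:existence homeo} is only canonical up to isotopy, contribute only $C^1$ (indeed smooth) transition data; the honest loss of analyticity, if any, comes from the choice of the refining triangulation $T$ not being natural, but the resulting map is independent of that choice and $C^1$.
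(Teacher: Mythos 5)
Your set-up via Lemma~\ref{subdiv} is correct, and the decision to reduce to a chart given by a fixed refining triangulation is the same one the paper makes. But the core of your argument --- the $C^1$ matching across a wall --- contains a concrete error, and it hides exactly the point that needs proving.

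You assert that for every configuration on either side of the wall, the length of a diagonal $a$ of a face $F$ of $\ms C$ ``equals the Euclidean distance, inside that planar polygon, between the two relevant vertices, which is a single analytic function of the vertex positions, valid on a full neighborhood of the wall.'' This is false away from the wall. The wall consists precisely of the configurations where the vertices of $F$ are coplanar; once you move off it, $F$ is no longer a planar face of $\CH(\p')$ and there is no single supporting spacelike plane containing it. Take the simplest case: $F$ is a quadrilateral with vertices $v_1,v_2,v_3,v_4$ and $a=v_1v_3$. On the side of the wall where $v_1v_3$ is a convex edge of $\CH(\p')$, the length of $a$ is the Minkowski chord $\|v_1-v_3\|$. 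On the other side, $v_2v_4$ is the convex edge; the intrinsic geodesic realizing $a$ bends over $v_2v_4$, and its length is the distance in the folded surface $\triangle v_1v_2v_4\cup\triangle v_2v_3v_4$, which is strictly larger than $\|v_1-v_3\|$. So the length of $a$ is genuinely piecewise analytic with two different analytic formulas, not a single analytic function. (Indeed, if it were a single analytic function, $\mc I$ would be $C^\omega$, which is more than the lemma claims and more than is true; the paper has to work for it in Lemma~\ref{lem:lift}.) The content of the $C^1$ claim is precisely that these two formulas agree to first order on the wall, and this requires an argument. The paper's argument (Claim~\ref{vertical}) is that at the planar configuration the derivative of each $l_{e,i}$ in the direction \emph{normal} to the plane of $F$ vanishes --- because moving a vertex orthogonally to a spacelike segment does not change its length to first order, and $l_{e,i}$ is an analytic function of the edge lengths of the triangulated surface $Q_i$ --- while the derivatives in the plane of $F$ trivially agree. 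Your proposal would need to supply this observation; as written it replaces it with a claim that is simply not true.

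A secondary point: you cannot ``fix a triangulation $T$ that simultaneously refines every $\ms C_j$.'' If two of the subdivisions $\ms C_j$ are triangulations differing by a diagonal flip in a quadrilateral, there is no triangulation refining both (it would have to contain both diagonals). This does not wreck your strategy --- fixing \emph{one} triangulation refining $\ms C$ is what the paper does and is enough to give a chart on $\M_-(S,V)$ --- but it indicates the same mental picture behind the main error, namely that the face structure stays consistent on both sides of a wall.
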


The proof is reminiscent to similar proofs in \cite{roman-dualhyp, roman-hyp}.

\begin{proof}
Pick $\p \in \P_c(S, V)$ with a lift $\tilde \p \in \widetilde{\P}(S, V)$. Let $\ms C$ be the boundary cellulation of $\CH(\p)$. By Lemma~\ref{subdiv}, there exists a small enough neighborhood $\tilde U$ of $\tilde \p$ such that it projects onto a neighborhood $U$ of $\p$ in $\P_c(S, V)$ and for every $\p' \in U$ the edges of $\ms C$ remain to be the edges of $\CH(\p')$. Let $\ms T$ be a triangulation subdividing $\ms C$. Denote $\mc I(\p)$ by $d$. If $e \in E(\ms T)$ is an edge of $\ms C$, then its length $l_e$ is an analytic function over $U$. We need to deal with those edges of $\ms T$ that are not edges of $\ms C$.

Choose such an edge $e$, and let $Q$ be a face of $\widetilde \CH(\p)$ containing a realization of $e$, $V_Q$ be the set of its vertices (which are considered here as points in $\R^{2,1}$). We orient $Q$ with respect to the orientation of $\widetilde \CH(\p)$. Pick a small enough perturbation of the position of the points in $V_Q$ such that all boundary edges of $Q$ remain to be edges of $\CH(V_Q)$, and all faces of $\CH(V_Q)$ remain spacelike. Let $U_Q \subset (\R^{2,1})^{V_Q}$ be a small neighborhood of the initial positions of the points in $V_Q$, where this holds. If $\CH(V_Q)$ has non-empty interior, $\pt \CH(V_Q)$ is divided by the cycle of boundary edges of $Q$ into two domains. One of them, chosen with the help of the orientation and called \emph{the lower part}, is considered as a deformation of $Q$. Let $l_{e, Q}$ be the length of $e$ in this realization, considered as a function over $V_Q$.

Provided that $\tilde U$ is sufficiently small, there is a smooth map $q: \tilde U \rar U_Q$ sending $\tilde \p' \in \tilde U$ to the corresponding positions of points of $V_Q$. The length function $l_e$ over $\tilde U$ coincides with $l_{e, Q} \circ q$. Hence, it is enough to show that $l_{e, Q}$ is a $C^1$-function of the positions of the points in $V_Q$. Abusing the notation, we denote $l_{e, Q}$ by $l_e$ until the end of the proof.

Let $\ms T_1, \ldots, \ms T_r$ be all the triangulations of $Q$. For every $i=1,\ldots, r$ and every configuration in $U_Q$, we consider the triangulated polyhedral surface $Q_i$ in $\R^{2,1}$ determined by the triangulation $\ms T_i$ and the positions of vertices. By decreasing $U_Q$ if necessary, we may assume that all face triangles of all these surfaces are spacelike,  and all boundary angles (of the surfaces in the intrinsic metric) are less than $\pi$. Then for every such surface we can determine the length of $e$ in this surface. This determines $r$ functions over $U_Q$, which we denote by $l_{e,1}, \ldots, l_{e,r}$. We note that since for every $i$ the lengths of all edges of $Q_i$ are analytic over $U_Q$, and $l_{e, i}$ is a analytic function of these lengths, we get that $l_{e,i}$ are analytic over $U_Q$.

For every configuration in $U_Q$, the lower part of $\CH(V_Q)$ coincides with one of $Q_i$. This provides a decomposition of $U_Q$ into cells $U_{Q, 1}, \ldots U_{Q, r}$, which are analytic manifolds with corners, and $l_e$ coincides with $l_{e, i}$ over $U_{Q, i}$. We need to check that the differentials of the corresponding $l_{e, i}$ coincide on the common points of $U_{Q, i}$. It is enough to prove this at the initial configuration.
\begin{cla}
\label{vertical}
The differentials of all $l_{e, i}$ at the initial configuration coincide.
\end{cla}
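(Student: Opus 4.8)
The plan is to compute the differential of each $l_{e,i}$ at the initial configuration explicitly and to observe that the answer does not see the combinatorics of $\ms T_i$. Write $a,b\in V_Q$ for the endpoints of $e$, and for $v\in V_Q$ keep writing $v\in\R^{2,1}$ for its position at the initial configuration; let $\Pi$ be the (spacelike) plane of the face $Q$ at that configuration, so $a,b\in\Pi$ and, for spacelike $x$, $|x|:=\sqrt{\langle x,x\rangle}$ satisfies $l_e=|a-b|$. A tangent vector to $U_Q$ at the initial configuration is a tuple $(\dot v)_{v\in V_Q}$ with $\dot v\in\R^{2,1}$; decompose $\dot v=\dot v^{\top}+\dot v^{\perp}$ with $\dot v^{\top}\in\Pi$ and $\dot v^{\perp}$ in the timelike normal line $\Pi^{\perp}$.

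First I would record that $l_{e,i}$, as a function on $U_Q$, factors through the Minkowski edge lengths $\ell_{pq}:=|p-q|$ of the edges $pq$ of $\ms T_i$: the surface $Q_i$ is assembled from Euclidean triangles of those side lengths glued along the combinatorics of $\ms T_i$, so its intrinsic metric, and hence the intrinsic length of the geodesic along $Q_i$ between the endpoints of $e$, depends only on the vector $(\ell_{pq})_{pq\in\ms T_i}$; this dependence is smooth near the initial edge-length vector since near a convex configuration the relevant geodesic stays away from the vertices. Next, the routine first variation of a Minkowski norm gives, at the initial configuration,
\[
\dot\ell_{pq}=\frac{\langle p-q,\ \dot p-\dot q\rangle}{\ell_{pq}}\,.
\]
Because $p-q\in\Pi$ and $\Pi^{\perp}$ is the Minkowski normal of $\Pi$, the right-hand side is unchanged upon replacing each $\dot v$ by $\dot v^{\top}$. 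Hence $\dot\ell_{pq}$, and therefore $\dot l_{e,i}$ by the chain rule, depends only on the tangential components $(\dot v^{\top})_{v\in V_Q}$.

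It then suffices to evaluate $\dot l_{e,i}$ on the purely tangential variation, i.e. along the curve $t\mapsto(v+t\,\dot v^{\top})_{v\in V_Q}$, which stays inside $\Pi$. For such a curve $Q_i(t)$ is obtained by triangulating an honest planar polygonal region and is therefore isometric to that region with its Euclidean metric; as its boundary angles are all $<\pi$ and (being planar) it has no interior cone points, this region is convex, so the geodesic between the endpoints of $e$ is the straight chord and
\[
l_{e,i}(t)=\bigl|\,(a-b)+t(\dot a^{\top}-\dot b^{\top})\,\bigr|\,,
\]
an expression that does not involve $i$ at all. Differentiating at $t=0$ and combining with the previous paragraph yields
\[
\dot l_{e,i}=\frac{\langle a-b,\ \dot a-\dot b\rangle}{l_e}
\]
for every $i$, which is the desired equality of differentials; in particular $dl_{e,i}$ at the initial configuration is precisely the first variation of the Minkowski distance between the endpoints of $e$.

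I expect no genuine geometric obstacle here; the delicate points are bookkeeping. One must be careful to read ``the length of $e$ in $Q_i$'' as the intrinsic distance along $Q_i$ between the (fixed) endpoints of $e$ — equivalently, since $Q_i$ stays convex, the developed chord — so that it really factors through the edge-length vector of $\ms T_i$. The one substantive observation, which is what destroys the dependence on the triangulation, is the elementary fact used above: the first variation of the length of a segment contained in a spacelike plane only feels the components of the endpoint deformations tangent to that plane, and along tangential deformations a triangulated planar convex polygon is isometric to the polygon itself.
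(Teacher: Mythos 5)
Your proof is correct and follows essentially the same route as the paper: decompose variations into components tangent and normal to the plane $\Pi$ of $Q$, use that $l_{e,i}$ factors analytically through the Minkowski edge lengths of $\ms T_i$ whose normal first variations vanish, and then observe the tangential derivatives agree. The only difference is cosmetic --- you make the paper's ``clear'' horizontal step explicit by producing the common formula $\dot l_{e,i}=\langle a-b,\dot a-\dot b\rangle/l_e$.
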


Let $\Pi$ be the oriented plane containing $Q$. Introduce a coordinate system in $\R^{2,1}$, where $\Pi$ is a coordinate plane, which we call \emph{horizontal}. The last coordinate is accordingly called \emph{vertical}. It is clear that at the initial configuration, all the horizontal derivatives of all $l_{e, i}$ coincide. We now show that the vertical derivatives coincide too, and, moreover, are equal to zero. 

Let $v_1, v_2$ be two points in $\R^{2,1}$ in spacelike position. If $v_1$ moves orthogonally to the segment connecting them, then it is easy to check that the derivative of the distance between $v_1$ and $v_2$ is zero. This implies that at the initial configuration, the vertical derivatives of the length of any edge of $Q_i$ are zero. Since for every $i$ the function $l_{e,i}$ is an analytic function of the edge-lengths of $Q_i$, its vertical derivatives are zero too. This finishes the proof.
\end{proof}

\section{Proof of Theorem~\ref{thmI'}}

%\section{Proof of Theorem~\ref{thmI'}}

\subsection{A topological interlude}
\label{interlude}

Our space $\P_c(S, V)$ is not simply connected. We are interested to give an interpretation to elements of its universal covering. To this purpose, we need a topological tool, Lemma~\ref{evaluation}, which we will prove in this section.

First, we describe the setting. Let $V \subset S$ be a finite subset of size $n$ and $S^{\ast V}$ be the space of injective maps $c: V \hookrightarrow S$,  i.e., the configuration space of $n$ points on $S$, where the points are marked by $V$. The inclusion map allows us to consider $V$ itself as a point of $S^{\ast V}$. Denote by $H_0(S)$ the group of self-homeomorphisms $S \rar S$ isotopic to the identity, denote by $H_0(S, V)$ the subgroup of self-homeomorphism that fix $V$ and are isotopic to the identity relative to $V$, and we denote by $\mc C(S, V)$ the space of left cosets $H_0(S)/H_0(S, V)$. We consider both homeomorphisms groups with the compact-open topology and the coset space with the induced topology. We have the evaluation map $H_0(S) \rar S^{\ast V}$ given by the evaluation of $h$ at $V$, which is clearly continuous. It factors through $H_0(S, V)$ as a map ${\rm ev}_{V}: \mc C(S, V) \rar S^{\ast V}.$ This map is a local homeomorphism, see e.g.~\cite[Chapter 4.1]{birman} (note that in \cite[Chapter 4.1]{birman} the homeomorphisms are not required to be isotopic to the identity, however, the argument does not change). Hence, $\mc C(S, V)$ is a manifold of dimension $2n$.

Every homeomorphism $h: S \rar S$ determines a homeomorphism $h_V: S^{\ast V} \rar S^{\ast V}$ by $c \mapsto h\circ c$, and if $h$ is isotopic to the identity, then so is $h_V$. Every self-homeomophism of $S^{\ast V}$ isotopic to the identity admits a well-defined lift to the universal cover via the homotopy-lifting property applied to an isotopy. So every such $h_V$ lifts to a homeomorphism $\tilde h_V: \widetilde{S^{\ast V}} \rar \widetilde{S^{\ast V}}$. Fix a lift $\hat V$ of $V$ to the universal covering $\widetilde{S^{\ast V}}$. Now we define a new evaluation map $H_0(S) \rar \widetilde{S^{\ast V}}$, given by the evaluation of $\tilde h_V$ at $\hat V$, which is clearly continuous. For $h \in H_0(S, V)$, the map $\tilde h_V$ fixes $\hat V$, hence the evaluation map factors through $H_0(S, V)$ as $$\wti{\rm ev}_{\tilde V}: \mc C(S, V) \rar \widetilde{S^{\ast V}}.$$

\begin{lemma}
\label{evaluation}
The evaluation map $\wti{\rm ev}_{\hat V}$ is a homeomorphism.
\end{lemma}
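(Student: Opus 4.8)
The statement asserts that $\wti{\rm ev}_{\hat V} \colon \mc C(S,V) \rar \widetilde{S^{\ast V}}$ is a homeomorphism. The plan is to show it is a covering map of degree one. First I would observe that $\wti{\rm ev}_{\hat V}$ is a local homeomorphism: it is the composition of $\mathrm{ev}_V \colon \mc C(S,V) \rar S^{\ast V}$, which is already known to be a local homeomorphism by the cited result of Birman, with the lifting construction, which is compatible with the universal covering projection $p \colon \widetilde{S^{\ast V}} \rar S^{\ast V}$ in the sense that $p \circ \wti{\rm ev}_{\hat V} = \mathrm{ev}_V$ up to the identification of $\mc C(S,V)$ with its image; more precisely I would argue directly that for $h,h'$ close in $H_0(S)$, the lifts $\tilde h_V, \tilde h'_V$ of the isotopic-to-identity homeomorphisms $h_V,h'_V$ depend continuously on $h$ (the homotopy-lifting construction is continuous in the homeomorphism), so $\wti{\rm ev}_{\hat V}$ is continuous, and being a lift of the local homeomorphism $\mathrm{ev}_V$ through a covering it is itself a local homeomorphism.

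Next I would establish that $\wti{\rm ev}_{\hat V}$ is a covering map onto its image. Since $\widetilde{S^{\ast V}}$ is simply connected, once $\wti{\rm ev}_{\hat V}$ is a covering onto $\widetilde{S^{\ast V}}$ it must be a homeomorphism, so the real content is surjectivity together with the covering (path-lifting) property. For the covering property I would use that $\mathrm{ev}_V \colon \mc C(S,V) \rar S^{\ast V}$ is itself a covering map (this follows from the fibration $H_0(S,V) \hookrightarrow H_0(S) \rar \mc C(S,V)$ and the fact that $\mathrm{ev}_V$ is the map induced on the quotient by the evaluation fibration $H_0(S) \rar S^{\ast V}$ with fiber $H_0(S,V)$; alternatively, $\mc C(S,V)$ is the cover of $S^{\ast V}$ associated to the subgroup $\ker(\pi_1(S^{\ast V}) \rar \mathrm{MCG})$-type data coming from pushing homeomorphisms). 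Granting that $\mathrm{ev}_V$ is a covering, the map $\wti{\rm ev}_{\hat V}$ is, up to the deck action, exactly the pullback of the universal cover $\widetilde{S^{\ast V}} \rar S^{\ast V}$ along $\mathrm{ev}_V$ restricted to the connected component containing the basepoint determined by $\hat V$ and the identity coset; such a pullback component maps homeomorphically onto $\widetilde{S^{\ast V}}$ precisely when the component is simply connected, which holds here because $\mc C(S,V) = H_0(S)/H_0(S,V)$ is connected (as $H_0(S)$ is path-connected) and its fundamental group maps onto that of $S^{\ast V}$ with image exactly the subgroup realized by push maps — but composing with the universal cover of $S^{\ast V}$ kills precisely this image.

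Concretely, the cleanest route is: (i) check $\wti{\rm ev}_{\hat V}$ is well-defined, continuous, and a local homeomorphism (routine, as above); (ii) construct an explicit inverse. For (ii), given a point $\tilde c \in \widetilde{S^{\ast V}}$, project it to $c \in S^{\ast V}$, choose a path from $V$ to $c$ in $S^{\ast V}$ whose lift starting at $\hat V$ ends at $\tilde c$, extend this path of configurations to an ambient isotopy of $S$ (isotopy extension theorem) starting at the identity, and let $h$ be its time-one map; the coset $h H_0(S,V) \in \mc C(S,V)$ is the desired preimage, and one checks it is independent of the choices using that two isotopies of $S$ realizing the same path of configurations differ by a path in $H_0(S,V)$, and that homotopic paths in $S^{\ast V}$ have lifts with the same endpoint. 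Verifying this inverse is continuous and two-sided finishes the proof.

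\textbf{Main obstacle.} The delicate point is the bookkeeping in step (ii): showing that the coset $h H_0(S,V)$ does not depend on the chosen path from $V$ to $c$ and on the chosen isotopy extension. This is exactly where one uses that $\widetilde{S^{\ast V}}$ is simply connected — two paths from $V$ to $c$ with lifts ending at the same $\tilde c$ are homotopic rel endpoints, and the isotopy-extension ambiguity is absorbed into $H_0(S,V)$ because an isotopy of $S$ that fixes the configuration $c$ at every time has time-one map in $H_0(S,V)$ only after one knows such an isotopy is isotopic rel $V$ to a constant; this in turn relies on the contractibility of (the relevant components of) $\mathrm{Homeo}(S)$ for hyperbolic-type $S$, or can be circumvented by working directly with the known fact that $\mathrm{ev}_V$ is a covering map and invoking the universal property of $\widetilde{S^{\ast V}}$. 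I expect the write-up to lean on the latter, more structural argument to keep the point-set topology under control.
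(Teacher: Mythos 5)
Your overall strategy — to deduce the result from the Birman evaluation fibration and general covering-space theory — is a genuinely different route from the paper's, and it is precisely the alternative that the authors acknowledge in the remark following their proof but choose not to pursue. The paper instead gives a self-contained induction on $|V|$, moving one marked point at a time using point-pushing homeomorphisms and the Fadell--Neuwirth fibration (Lemmas~\ref{univcover}--\ref{isotopy2}); what that buys is independence from the statement that $H_\star(S,V)\rar H_0(S)\xrightarrow{\rm ev}S^{\ast V}$ is a fiber bundle, for which they could not locate a convenient reference. Your ``concrete route (ii)'' via isotopy extension is close in spirit to the paper's Lemma~\ref{isotopy1}/\ref{isotopy2}, while your route (i) is the structural covering-space argument; both are legitimate.

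There are, however, two slips you would need to repair. First, the fiber of the evaluation fibration $H_0(S)\rar S^{\ast V}$ is $H_\star(S,V)$ (homeomorphisms \emph{fixing} $V$ and isotopic to the identity), \emph{not} $H_0(S,V)$ (those isotopic to the identity \emph{rel} $V$). If the fiber were $H_0(S,V)$, the induced map ${\rm ev}_V:\mc C(S,V)\rar S^{\ast V}$ would be a homeomorphism, which is false; what actually happens is that ${\rm ev}_V$ has discrete fiber $H_\star(S,V)/H_0(S,V)\cong\pi_0 H_\star(S,V)$, making it a nontrivial covering. Second, the paragraph about ``pullback components,'' ``push maps,'' and ``killing the image'' does not parse: push maps generate the whole pure braid group $\pi_1 S^{\ast V}$, so the image of ${\rm ev}_{V*}$ cannot be ``the subgroup realized by push maps'' unless ${\rm ev}_V$ were already a homeomorphism. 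The clean replacement is shorter than what you wrote: once ${\rm ev}_V$ is known to be a covering and $\wti{\rm ev}_{\hat V}$ is a lift of it through the universal covering $p:\wti{S^{\ast V}}\rar S^{\ast V}$ (which is how the paper defines it), a standard lemma gives that $\wti{\rm ev}_{\hat V}$ is itself a covering of $\wti{S^{\ast V}}$; since $\wti{S^{\ast V}}$ is simply connected and $\mc C(S,V)$ is connected, it is a homeomorphism. No separate argument about $\pi_1\mc C(S,V)$ being trivial, nor about pullbacks, is needed. With those corrections your approach goes through.
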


As $\wti{\rm ev}_{\hat V}$ is a continuous map between two manifolds of the same dimension, it is enough to prove that it is bijective. We will show it by induction on $n$. We need to establish few auxiliary facts. First, consider $v \in V$ and $W:=V\backslash\{v\}$. When $|V|>1$, we have the forgetful map $\theta_V: S^{\ast V} \rar S^{\ast W}$, so for $c: W \hookrightarrow S$ the fiber of $\theta_V$ over $c$ is naturally identified with  $S \backslash c(W)$. It is known that $\theta_V$ is a fiber bundle (with fiber homeomorphic to $S \backslash W$), see~\cite[Theorem 1.1]{FH}. We can then extend this bundle to the universal covering.

\begin{lemma}
\label{univcover}
The universal covering $\wti{S^{\ast V}}$ is homeomorphic to an open ball of dimension $2n$ and there exists a fiber bundle $\tilde\theta_V: \wti{S^{\ast V}} \rar \wti{S^{\ast W}}$ extending $\theta_V$.
\end{lemma}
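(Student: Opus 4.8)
The plan is to prove Lemma~\ref{univcover} by induction on $n = |V|$, peeling off one marked point at a time and using the fibration structure of configuration spaces together with the long exact sequence of homotopy groups.

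\textbf{Base case.} For $n = 1$ we have $S^{\ast V} = S$, a closed hyperbolic surface, so $\wti{S^{\ast V}} = \wti S \cong \H^2$, which is an open ball of dimension $2$, and there is nothing to extend since $W = \emptyset$. (If one prefers to start the induction at $n=0$, then $S^{\ast \emptyset}$ is a point and the statement is trivial.)

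\textbf{Inductive step.} Assume the statement for $|W| = n-1$, i.e.\ $\wti{S^{\ast W}}$ is an open ball of dimension $2(n-1)$. By \cite[Theorem 1.1]{FH} the forgetful map $\theta_V : S^{\ast V} \rar S^{\ast W}$ is a fiber bundle with fiber $F := S \setminus W$, an open surface, hence a $K(\pi,1)$ with free (non-abelian, since $n-1 \geq 1$) fundamental group and in particular aspherical with contractible universal cover. From the long exact sequence of homotopy groups of the fibration $F \rar S^{\ast V} \rar S^{\ast W}$, together with $\pi_i(S^{\ast W}) = 0$ for $i \geq 2$ (by the inductive hypothesis, since $\wti{S^{\ast W}}$ is a ball) and $\pi_i(F) = 0$ for $i \geq 2$, one gets $\pi_i(S^{\ast V}) = 0$ for $i \geq 2$; thus $\wti{S^{\ast V}}$ is contractible. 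To produce the bundle $\tilde\theta_V$, I would pull back the universal cover $\wti{S^{\ast W}} \rar S^{\ast W}$ along $\theta_V$: the pullback $\theta_V^\ast \wti{S^{\ast W}}$ is a fiber bundle over $\wti{S^{\ast W}}$ with the same fiber $F$, and since $\wti{S^{\ast W}}$ is simply connected (indeed a ball) this bundle has $\pi_1$ equal to $\pi_1(F)$, which is generated by loops in the fiber; one checks the covering $\wti{S^{\ast V}} \rar S^{\ast V}$ factors through $\theta_V^\ast \wti{S^{\ast W}}$, and that the residual covering $\wti{S^{\ast V}} \rar \theta_V^\ast \wti{S^{\ast W}}$ restricts on each fiber to the universal cover $\wti F \rar F$. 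Composing with the projection $\theta_V^\ast \wti{S^{\ast W}} \rar \wti{S^{\ast W}}$ gives the desired fiber bundle $\tilde\theta_V : \wti{S^{\ast V}} \rar \wti{S^{\ast W}}$ with fiber $\wti F$, a contractible open surface (an open ball of dimension $2$). Finally, a fiber bundle with contractible base (an open ball of dimension $2(n-1)$) and contractible fiber (an open ball of dimension $2$) has total space homeomorphic to the product of base and fiber — here one should invoke that the base is an open ball so the bundle is trivial — hence $\wti{S^{\ast V}}$ is an open ball of dimension $2n$.

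\textbf{Main obstacle.} The delicate point is not the homotopy-theoretic computation (that the total space is contractible), but the clean identification of the covering $\wti{S^{\ast V}} \rar S^{\ast V}$ with the iterated construction so that it genuinely fibers over $\wti{S^{\ast W}}$ with fiber the universal cover of $S \setminus W$. Concretely one must verify that the composite $\wti{S^{\ast V}} \rar S^{\ast V} \xrightarrow{\theta_V} S^{\ast W}$ lifts to $\wti{S^{\ast W}}$ (which holds because $\wti{S^{\ast V}}$ is simply connected), that the resulting map $\tilde\theta_V$ is a submersion with the expected fibers, and that local triviality of $\theta_V$ lifts. Upgrading from ``contractible open $2n$-manifold'' to ``homeomorphic to an open ball'' is automatic only once one knows the bundle over the ball $\wti{S^{\ast W}}$ is trivial with ball fiber; since a fiber bundle over a (paracompact, contractible, finite-dimensional) base is trivial, this causes no real trouble, but it should be stated carefully. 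Alternatively, and perhaps more cleanly, one can bypass the ball identification by simply iterating: $\wti{S^{\ast V}}$ is built as a tower of fibrations with fibers the universal covers of once-punctured surfaces, each of which is an open disk, so the total space is an iterated disk bundle over a disk and hence a ball.
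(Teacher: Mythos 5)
Your proposal is correct and, especially in light of your closing ``alternatively, one can bypass the ball identification by simply iterating\dots'' remark, is essentially the paper's argument: pull $\theta_V$ back to $\wti{S^{\ast W}}$, use that a bundle over a ball is trivial, take fiberwise universal covers to get a trivial bundle with fiber $\wti{S\setminus W}$, and note the resulting ball is simply connected and covers $S^{\ast V}$, hence is $\wti{S^{\ast V}}$. The long-exact-sequence step in your main argument is a harmless detour that the paper skips by directly observing the constructed ball is simply connected and covers $S^{\ast V}$.
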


\begin{proof}
We will also prove this by induction on $n$. For $n=1$, $S^{\ast V} \cong S$ and $\wti{S^{\ast V}} \cong \tilde S$, hence we have the base case for the first statement. Consider now the pull-back of the bundle $\theta_V$ to $\wti{S^{\ast W}}$. It has fiber homeomorphic to $S\backslash W$. If we know that $\wti{S^{\ast W}}$ is a topological ball, then the pull-back bundle is trivial, and we can extend it further to a trivial bundle over $\wti{S^{\ast W}}$ with fiber homeomorphic to $\wti{S\backslash W}$. The total space of the latter bundle is a topological ball of dimension $2n$ as it is a trivial bundle over a ball of dimension $2n-2$ with fiber homeomorphic to a 2-dimensional ball. Particularly, the total space is simply connected. It also covers $S^{\ast V}$ by construction, thus, this covering is the universal covering of $S^{\ast V}$. This gives the base case for the second statement and the induction steps for both statements.
\end{proof}

For the next two statements we need yet more notation. Let $W \subset S$ be a finite subset, possibly empty, and $h \in H_0(S, W)$ (which is just $H_0(S)$ if $W$ is empty). It then also lifts to a self-homeomorphism of $\wti{S\backslash W}$, which we denote by $\tilde h_{\backslash W}$. The construction in Lemma~\ref{univcover} shows that for a lift $\hat W$ of $W$ to $\wti{S^{\ast W}}$ the fiber of $\tilde\theta_V$ over $\hat W$ is naturally identified with $\wti{S\backslash W}$, in the sense that for every $h \in H_0(S, W)$ the restriction of $\tilde h_W$ to the fiber over $\hat W$ preserves the fiber and coincides with $\tilde h_{\backslash W}$ on $\wti{S\backslash W}$.

\begin{lemma}
\label{isotopy1}
Let $\tilde p, \tilde q \in \wti{S\backslash W}$ be two points. Then there exists $h \in H_0(S, W)$ such that $\tilde h_{\backslash W}$ sends $\tilde p$ to $\tilde q$.
\end{lemma}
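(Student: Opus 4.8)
The plan is to prove Lemma~\ref{isotopy1} by exhibiting an ambient isotopy of $S$ that is stationary on $W$ and whose lift to $\wti{S\backslash W}$ moves $\tilde p$ to $\tilde q$. The key observation is that, since $\wti{S\backslash W}$ is a covering of the connected surface $S\backslash W$ (in fact, by Lemma~\ref{univcover} it is a $2$-dimensional open ball), connectedness lets us reduce to the case where $\tilde p$ and $\tilde q$ lie in a single evenly-covered coordinate chart, i.e. project to points $p,q \in S\backslash W$ joined by an embedded arc $\gamma$ in $S\backslash W$ along which $\tilde p$ and $\tilde q$ are the endpoints of a common lift. Indeed, joining $\tilde p$ to $\tilde q$ by any path in $\wti{S\backslash W}$ and subdividing it into finitely many pieces, each contained in a chart, reduces the problem to finitely many such elementary moves, and composing the corresponding homeomorphisms (each in $H_0(S,W)$, a group) yields the desired element.

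For the elementary move, I would use the standard \emph{point-pushing} construction: given an embedded arc $\gamma$ in $S\backslash W$ from $p$ to $q$, thicken it to an embedded disk $D\subset S\backslash W$ and take a compactly supported isotopy $(\phi_t)_{t\in[0,1]}$ of $S$, supported in $D$, with $\phi_0=\mathrm{id}$ and $\phi_1(p)=q$, realized by dragging $p$ along $\gamma$ (see e.g.~the disk lemma / point-pushing in~\cite{primer}). Since $D\subset S\backslash W$, the whole isotopy fixes $W$ pointwise, so $h:=\phi_1\in H_0(S,W)$, and moreover it is isotopic to the identity \emph{relative to} $W$ through $(\phi_t)$. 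The point is now to compute $\tilde h_{\backslash W}(\tilde p)$: by the homotopy-lifting property the isotopy $(\phi_t)$ lifts to an isotopy $(\tilde\phi_t)$ of $\wti{S\backslash W}$ with $\tilde\phi_0=\mathrm{id}$, and this lifted isotopy is precisely the one defining $\tilde h_{\backslash W}$ (this is where the construction of the lift via an isotopy, as recalled just before Lemma~\ref{evaluation}, is used). Since the path $t\mapsto \phi_t(p)$ is exactly $\gamma$ and $\tilde p$ lies over $p$, the lifted path $t\mapsto\tilde\phi_t(\tilde p)$ is the lift of $\gamma$ starting at $\tilde p$, whose endpoint is by hypothesis $\tilde q$. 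Hence $\tilde h_{\backslash W}(\tilde p)=\tilde q$, as required.

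The remaining care is bookkeeping: I must make sure that when I subdivide a path in $\wti{S\backslash W}$ from $\tilde p$ to $\tilde q$ into pieces contained in evenly-covered charts, each elementary homeomorphism $h_j\in H_0(S,W)$ acts on the lift in the expected way, so that the composite $h_k\circ\cdots\circ h_1$ has lift $\tilde h_{k,\backslash W}\circ\cdots\circ\tilde h_{1,\backslash W}$ sending $\tilde p$ to $\tilde q$. This uses that the assignment $h\mapsto \tilde h_{\backslash W}$ is a homomorphism into the homeomorphism group of $\wti{S\backslash W}$ commuting with composition, which follows from uniqueness of isotopy-lifts fixing a basepoint direction, together with the compatibility statement recorded in the paragraph before this lemma (restriction of $\tilde h_W$ to the fiber $=\tilde h_{\backslash W}$).

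The main obstacle I anticipate is not conceptual but a matter of getting the lifting conventions exactly right: the homeomorphism $\tilde h_{\backslash W}$ is defined as \emph{the} lift of $h_{\backslash W}$ obtained from a chosen isotopy to the identity, and one must verify that this is independent of the chosen isotopy (true because $S\backslash W$ has the property that isotopic-to-identity maps have a canonical lift — here because $\wti{S\backslash W}$ is a ball, so two isotopies differ by a loop which is contractible) and that the point-pushing isotopy I build is one such admissible isotopy. Once these conventions are pinned down, the argument above is straightforward; the geometric content is entirely the classical point-pushing lemma together with connectedness of the surface.
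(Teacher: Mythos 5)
Your proof is correct, and like the paper's it rests on point-pushing homeomorphisms supported away from $W$, but the two arguments decompose the connecting path differently. The paper projects a path $\tilde\psi$ from $\tilde p$ to $\tilde q$ to a path $\psi$ in $S\setminus W$, homotopes $\psi$ (implicitly rel endpoints) into a concatenation of finitely many simple closed loops based at $p$ followed by a simple arc from $p$ to $q$, and performs one point-push along each of these simple curves. You instead subdivide the lifted path $\tilde\psi$ into finitely many pieces each lying in an evenly-covered chart disk, replace each projected piece by an embedded arc inside the corresponding disk, and do a disk-push for each piece, then compose. Your version stays purely local: every elementary move is a push inside a single disk, and no global homotopy of $\psi$ into a standard form is needed, at the cost of more moves and of having to track the composition and its lift carefully. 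The paper's version uses fewer pushes but requires arranging $\psi$ into simple curves first. Both routes are sound.

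A minor remark on your parenthetical justification for the well-definedness of $\tilde h_{\setminus W}$: the claim that it is independent of the chosen isotopy "because $\wti{S\setminus W}$ is a ball, so two isotopies differ by a loop which is contractible" does not identify the right reason. Evaluating the discrepancy between two isotopies at a point produces a loop in $S\setminus W$, not in $\wti{S\setminus W}$, and $S\setminus W$ is not simply connected. What is actually needed is that $H_0(S,W)$ is simply connected, so that the loop of homeomorphisms is null-homotopic in $H_0(S,W)$ and its evaluation at any point of $S\setminus W$ contracts \emph{in} $S\setminus W$; this follows from the Earle--Eells theorem together with the asphericity of configuration spaces of hyperbolic surfaces. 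The paper relies on this same background fact implicitly when introducing $\tilde h_{\setminus W}$, so it does not undermine your argument for this particular lemma, but the justification as you wrote it would not survive scrutiny.
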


\begin{proof}
Let $\tilde p, \tilde q$ project to points $p, q \in S\backslash W$. Consider an oriented path $\tilde \psi$ connecting $\tilde p$ and $\tilde q$ in $\wti{S\backslash W}$, let $\psi$ be its projection to an oriented path between $p$ and $q$ in ${S\backslash W}$. We make a homotopy of $\psi$ on $S \backslash W$ so that $\psi$ becomes the concatenation of finitely many simple closed curves $\psi_i$ based at $p$ and of a simple arc $\psi_a$ connecting $p$ and $q$. 

We now take ``point-pushing'' homeomorphisms along every $\psi_i$ and along $\psi_a$. For the loops $\psi_i$ such a map is obtained as follows. We take a small embedded strip along $\psi_i$ in ${S\backslash W}$ and consider a homeomorphism $h_i: S \rar S$ that is the identity outside the strip, that is also the identity on $\psi_i$, but that makes one full twist inside the strip in the direction determined by the orientation of $\psi_i$. Every $h_i$ is isotopic to the identity so that $W$ is fixed and $p$ moves along $\psi_i$ during the isotopy. For $\psi_a$ we consider a small embedded disk around $\psi_a$ in ${S\backslash W}$ and a homeomorphism $h_a: S \rar S$ that is the identity outside the disk and that sends $p$ to $q$. It is also isotopic to the identity by an isotopy fixing $W$ and moving $p$ along $\psi_a$. We denote by $h$ the composition of all $h_i$ and $h_a$. It is clear that it is isotopic to the identity by an isotopy fixing $W$ and is moving $p$ along $\psi$. Thereby $\tilde h_{\backslash W}$ sends $\tilde p$ to $\tilde q$.
\end{proof}

\begin{lemma}
\label{isotopy2}
Let $h \in H_0(S, W)$, let $v \in S \backslash W$ be a point and let $\tilde v \in \wti{S\backslash W}$ be a lift of $v$. Assume that the lift $\tilde h_{\backslash W}$ fixes $\tilde v$. Then $h \in H_0(S, V)$ for $V:=W\cup\{v\}$.
\end{lemma}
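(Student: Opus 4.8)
The plan is to deduce the statement from the bijectivity of the evaluation map $\wti{\rm ev}_{\hat V}$ established in Lemma~\ref{evaluation}. Since $H_0(S,W)\subset H_0(S)$, the homeomorphism $h$ determines a coset $[h]\in\mc C(S,V)=H_0(S)/H_0(S,V)$, and $h\in H_0(S,V)$ precisely when $[h]$ is trivial; by Lemma~\ref{evaluation} this is equivalent to $\tilde h_V(\hat V)=\hat V$, so the whole proof reduces to establishing this equality. One first notes that the hypothesis is insensitive to the choice of the lift $\tilde v$: because the isotopy of $S$ realizing $h\simeq\mathrm{id}$ fixes $W$, it restricts to an isotopy of $S\backslash W$ exhibiting $h|_{S\backslash W}$ as isotopic to the identity of $S\backslash W$; hence $\tilde h_{\backslash W}$ commutes with the deck transformations of $\wti{S\backslash W}\to S\backslash W$ and therefore fixes every lift of $v$ as soon as it fixes one. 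Consequently we may choose the lift $\hat V$ of $V$ in Lemma~\ref{evaluation} so that, writing $\hat W:=\tilde\theta_V(\hat V)$, the point $\hat V$ corresponds to $\tilde v$ under the identification of the fiber $\tilde\theta_V^{-1}(\hat W)$ with $\wti{S\backslash W}$ furnished by Lemma~\ref{univcover}.

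The second step is to record the compatibility of the lifts with the bundle $\tilde\theta_V$. Fix an isotopy $h_t$ from $\mathrm{id}$ to $h$ relative to $W$. It induces an isotopy $h_{V,t}$ of $S^{\ast V}$ from $\mathrm{id}$ to $h_V$, and, through $\theta_V\circ h_{V,t}=h_{W,t}\circ\theta_V$, an isotopy $h_{W,t}$ of $S^{\ast W}$ from $\mathrm{id}$ to $h_W$ fixing the point $W\in S^{\ast W}$. Lifting both isotopies starting at the identity and using uniqueness of homotopy lifts yields $\tilde\theta_V\circ\tilde h_V=\tilde h_W\circ\tilde\theta_V$; moreover, since $h_{W,t}$ fixes $W$ and the covering $\wti{S^{\ast W}}\to S^{\ast W}$ has discrete fibers, the lifted isotopy is stationary at $\hat W$, so $\tilde h_W(\hat W)=\hat W$. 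Hence $\tilde\theta_V(\tilde h_V(\hat V))=\tilde h_W(\hat W)=\hat W$, i.e.\ $\tilde h_V(\hat V)$ lies in the fiber $\tilde\theta_V^{-1}(\hat W)\cong\wti{S\backslash W}$.

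Finally, one invokes the naturality clause behind Lemma~\ref{univcover}: under the identification $\tilde\theta_V^{-1}(\hat W)\cong\wti{S\backslash W}$ the restriction of $\tilde h_V$ to that fiber is exactly $\tilde h_{\backslash W}$. As $\hat V$ corresponds to $\tilde v$ and $\tilde h_{\backslash W}(\tilde v)=\tilde v$ by hypothesis, we obtain $\tilde h_V(\hat V)=\hat V$, and therefore $h\in H_0(S,V)$ by Lemma~\ref{evaluation}; this also subsumes that $h(v)=v$, which in any case follows by projecting $\tilde h_{\backslash W}(\tilde v)=\tilde v$ to $S\backslash W$. The one genuinely delicate point is the bookkeeping in the second step — choosing the isotopies on $S^{\ast V}$ and $S^{\ast W}$ and their lifts coherently so that the square $\tilde\theta_V\circ\tilde h_V=\tilde h_W\circ\tilde\theta_V$ really commutes, and tracking which lift of $v$ the basepoint $\hat V$ sees inside the fiber; once these identifications are pinned down, the rest is a direct diagram chase.
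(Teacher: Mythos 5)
Your argument is circular. You propose to deduce Lemma~\ref{isotopy2} from the bijectivity of the evaluation map in Lemma~\ref{evaluation}, but in the paper Lemma~\ref{evaluation} is established \emph{by induction whose base cases and induction step both invoke Lemma~\ref{isotopy2}}. Concretely, the injectivity part (2) of the proof of Lemma~\ref{evaluation} reads: ``By the induction hypothesis we have $h \in H_0(S, W)$\ldots\ Lemma~\ref{isotopy2} implies then that $h \in H_0(S, V)$,'' and the $n=1$ base case of (2) is literally Lemma~\ref{isotopy2} with $W=\emptyset$. So the very implication you need at the end of your first paragraph --- that $\tilde h_V(\hat V)=\hat V$ forces $h\in H_0(S,V)$ --- is logically downstream of the lemma you are trying to prove. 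The diagram-chasing in your second and third paragraphs is sound as bookkeeping (it correctly relates $\tilde h_V$, $\tilde h_W$, $\tilde h_{\backslash W}$ via the bundle $\tilde\theta_V$), but it only reproduces the reduction step already present in the paper's proof of Lemma~\ref{evaluation}; it does not supply an independent argument where the chain ends.

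What is actually needed --- and what the paper does --- is a direct construction of an isotopy from $\mathrm{id}$ to $h$ fixing $V=W\cup\{v\}$. One starts from an isotopy $h_t$ rel $W$ from $\mathrm{id}$ to $h$; the hypothesis $\tilde h_{\backslash W}(\tilde v)=\tilde v$ says precisely that the loop $t\mapsto h_t(v)$ is null-homotopic in $S\backslash W$. One chooses such a null-homotopy $s(t,\tau)$, passes to the cylinder $S\times[0,1]$ with the map $H(p,t)=h_t(p)$, and uses $s$ to build a level-preserving self-homeomorphism $F$ of $S\times[0,1]$ that is the identity on $S\times\{0,1\}$ and on $W\times[0,1]$, and that carries the vertical segment $\{v\}\times[0,1]$ onto $H^{-1}(v)$. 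Then $H\circ F$ is an isotopy from $\mathrm{id}$ to $h$ that fixes $V$, which is the conclusion. To repair your proposal you would need to replace the appeal to Lemma~\ref{evaluation} with some such hands-on construction, since the former cannot be assumed at this point in the logical development.
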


\begin{proof}
Let $h_t$, $t \in [0,1]$, be a path of homeomorphisms fixing $W$ such that $h_0$ is the identity and $h_1=h$. Since $\tilde h_0(\tilde v)=\tilde h_1(\tilde v)$, the loop $h_t(v)$, $t \in [0,1]$, is contractible in $S \backslash W$. Let $s: [0,1]^2 \rar S\backslash W$ be a homotopy sending $s(t, 0):=h_t(v)$ to the constant map $s(t,1):=v$ on $S\backslash W$ so that $s(0,\tau)=s(1,\tau)=v$ for all $\tau \in [0,1]$. Consider the space $S \times [0,1]$ and the map $H: S \times [0,1] \rightarrow S$, $H(p, t):=h_t(p)$. The map $(t, \tau) \mapsto (s(t, \tau), t)$ sends $[0,1]^2$ to $(S\backslash W) \times [0,1]$, and the image contains $\{v\} \times [0,1]$ and $H^{-1}(v)$. Turns out that the paths $\{v\} \times [0,1]$ and $H^{-1}(v)$ are homotopic in $S \times [0,1]$ by a homotopy respecting the levels $S \times \{t\}$ and fixing pointwise the set $W \times [0,1]$. 
%Since the loop $h_t(v)$ is contractible  in $S \backslash W$, it is easy to see that the paths $\{v\} \times [0,1]$ and $H^{-1}(v)$ are isotopic in $S \times [0,1]$ by an isotopy respecting the levels $S \times \{t\}$ and fixing pointwise the set $W \times [0,1]$. 
Then there exists a homeomorphism $F: S  \times [0,1] \rar S\times [0,1]$ respecting the levels, fixing the levels $S \times \{0\}$ and $S \times \{1\}$ pointwise, fixing the set $W \times [0,1]$ pointwise, and sending $\{v\} \times [0,1]$ to $H^{-1}(v)$. The map $H \circ F$ constitutes the desired isotopy from $id$ to $h$ fixing $V$. 
\end{proof}

\begin{proof}[Proof of Lemma~\ref{evaluation}]
We prove the following two statements by induction: (1) for any $X \in \wti{S^{\ast V}}$ there exists $h \in H_0(S)$ such that $\tilde h_V(\hat V)=X$; and (2) for any $h \in H_0(S)$ such that $\tilde h_V$ fixes $\hat V$, we have $h \in H_0(S, V)$. It is easy to see that this shows that $\wti{\rm ev}_{\hat V}$ is bijective.

The base cases are trivially given by Lemmas~\ref{isotopy1} and~\ref{isotopy2}. For the induction step we employ the (trivial) fiber bundle $\tilde\theta:=\tilde\theta_V: \wti{S^{\ast V}} \rar \wti{S^{\ast W}}$ from Lemma~\ref{univcover}. 

(1) By the induction hypothesis, there exists $h' \in H_0(S)$ such that $\tilde h_W'\circ\tilde\theta(\hat V)=\tilde\theta(X)$. Next, by Lemma~\ref{isotopy1}, there exists $h'' \in H_0(S, h'(W))$ such that $\tilde h''_{\backslash h'(W)}$ sends $\tilde h'_V(\hat V)$ to $X$ where the fiber of $\tilde \theta$ containing $X$ is naturally identified with $\wti{S \backslash h'(W)}$. It follows that $h:=h''\circ h'$ fits.

(2) By the induction hypothesis, we have $h \in H_0(S, W)$. Since the fiber of $\tilde \theta$ containing $\hat V$ is naturally identified with $\wti{S \backslash W}$, Lemma~\ref{isotopy2} implies then that $h \in H_0(S, V)$.
\end{proof}
\begin{remark}
\label{decktr}
We need to identify the group of deck transformations %${\rm Deck}(S, V)$ 
of $\wti{S^{\ast V}}$, which is the \emph{pure braid group of $S$, i.e., the fundamental group of $S^{\ast V}$}. This is already easy to derive from the discussion above. Let $H_\star(S, V)$ be the group of homeomorphisms fixing $V$ and isotopic to identity. Then $H_0(S, V)$ is a normal subgroup of $H_\star(S, V)$. Consider the quotient group %$\Gamma(S, V):=H_\star(S, V)/H_0(S, V)$
$H_\star(S, V)/H_0(S, V)$. It clearly acts on $\mc C(S, V)$ from the right by deck transformations, which provides a homomorphism %$\Gamma(S, V) \rar {\rm Deck}(S, V)$
\begin{equation}\label{eq:identitf}H_\star(S, V)/H_0(S, V) \rar \pi_1 S^{\ast V}~.\end{equation} It remains to notice that Lemma~\ref{isotopy1} clearly implies that this homomorphism is surjective, while Lemma~\ref{isotopy2} establishes its injectivity. Note that the fact that \eqref{eq:identitf} is an isomorphism can also be deduced from a theorem of J. Birman, see \cite[Theorem 4.2]{birman}.
\end{remark}

\begin{remark}
Lemma~\ref{evaluation} is probably already known, but we were unable to find a precise reference. We provided a proof for completeness. Another way to prove it would have been to start from  (the adaptation of) a classical result of J. Birman saying that
\begin{equation*}\label{eq: fb 1}H_\star(S,V)\rar H_0(S)  \xrightarrow[]{{\rm ev}} S^{\ast V}~ \end{equation*} 
is a fiber bundle (it is immediate to adapt the proof of Theorem 4.6 in \cite{primer}). Then Lemma~\ref{evaluation} would follow from general topological arguments.
\end{remark}
\subsection{The induced metric map on fibers}\label{sec:proof thm I'''}

We have a projection map $\pi: \P_c(S,V)\to \mc T(S)$, which is obtained from the projection map $\mu$ in \eqref{eq:def hat pi} composed with the natural projection from $T \mc T(S) $ onto $\mc T(S)$.  For $\rho \in \mc T(S)$ we will denote  $\pi^{-1}(\rho)$ by $\P_c(\rho, V)$. In other terms, we denote by $\P_c(\rho, V) \subset \P_c(S, V)$ the space of spacetimes with marked points and with linear holonomy given by $\rho$. Other related notation from Section~\ref{sec:markedpoints} restricts to this case straightforwardly. 

%We now show that different combinatorics of $C(P)$ induce a piecewise-analytic decomposition of $\P_c(S, V)$ into cells. By this we mean the following. 
%Let $\ms C$ be a cellulation of $(S, V)$, $\P_c(S, \ms C)$ be the set of $\p\in \P_c(S, \ms C)$ such that the face cellulation of $\CH(\p)$ is  $\ms C$. Let $\ol{\P_c}(S, \ms C)$ be defined in the same way, but asking only that 
% $\ms C$ subdivides the face cellulation. We note that $\P_c(S, \ms C)$, $\ol{\P_c}(S, \ms C)$ are subsets (open and closed respectively with respect to a usual topology) of an analytic variety $\mc A(\ms C)$ determined by the coplanarity conditions for the quadruples of points belonging to the same face. (Here by an analytic variety we mean the intersection of finitely many analytic submanifolds. Note that we cannot guarantee that the intersection is transverse, and hence that $\mc A(\ms C)$ is an analytic submanifold.)

%By this we mean that for every $P \in \P_c(S, \ms C)$ there exists its neighborhood $U$ in $\P_c(S, V)$ such that $\P_c(S, \ms C) \cap U$ is the intersection of $U$ and finitely many codimension-1 analytic submanifolds of $\P_c(S, V)$ (given by the coplanarity conditions for the quadruples of points belonging to the same face). Note that we cannot guarantee that these submanifolds intersect transversely, and hence that $\P_c(S, \ms C)$ is an analytic submanifold. The set $\ol{\P_c}(S, \ms C)$ is locally a closed subset of the same variety (with respect to the usual topology).

We denote the restriction of $\mc I$ to $\P_c(\rho, V)$ by $\mc I_\rho$. Our aim is to show  Theorem~\ref{thm:1}, which implies Theorem~\ref{thmI'}.

We have the restriction of $\mu$ to $\P(\rho, V)$, which we denote by $\mu_\rho$,
$$\mu_\rho:\P(\rho, V)\to T_\rho\mc T(S)= \mc H^1(\rho)~.$$
 We now introduce on $\P(\rho, V)$ a structure of fiber bundle of special type over $\mc H^1(\rho)$ with fiber $(S \times \R_{>0})^V$. We first suppose that $|V|=1$. This is a special case, and to distinguish it, we will denote $\P(\rho, V)$ rather by $\P(\rho, v)$, meaning that $V=\{v\}$, and denote $\mu_\rho$ by $\mu_{\rho, v}$.

\begin{lemma}
\label{fbundle}
There exists a fiber bundle atlas $\{(U, \phi_{U})\}$, $U \subset \mc H^1(\rho)$, for $\mu_{\rho, v}: \P(\rho, v) \rightarrow \mc H^1(\rho)$ with
$$\phi_{U}: \mu^{-1}_{\rho, v}(U) \rightarrow U \times S \times \R_{>0}$$
such that \\
(1) for every $\tau \in U$, $r \in \R_{>0},$ the set $\phi_{U}^{-1}(\tau, S, r)$ is a $\CT$-level surface in $\Omega^+(\rho, \tau)$;\\
(2) for every $\tau \in U$, $p \in S,$ the set $\phi_{U}^{-1}(\tau, p, \R_{>0})$ is a gradient line of $\CT$ parametrized by $\CT$.
\end{lemma}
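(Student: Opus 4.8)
The plan is to build the bundle atlas by trivializing over contractible charts of $\mc H^1(\rho)$ using the cosmological time flow. First I would fix a chart $U \subset \mc H^1(\rho)$ that is contractible (for instance a convex coordinate ball), and over it consider the total space $\widetilde{\mu}_{\rho,v}^{-1}(U) \subset \widetilde{\P}(\rho, v)$ of equivariant maps $\tilde f\colon \tilde v \to \tilde\Omega^+(\rho, \tau)$ as $\tau$ ranges over $U$. Passing to the quotient, $\mu_{\rho, v}^{-1}(U)$ is the set of pairs $(\tau, x)$ with $\tau \in U$ and $x \in \Omega^+(\rho, \tau)$. The key geometric input is the cosmological time function $\CT\colon \Omega^+(\rho, \tau) \to \R_{>0}$, which is $C^1$ (Bonsante), proper, whose gradient flow lines hit the initial singularity in finite backward time, and whose level sets $L_\alpha$ are convex Cauchy surfaces homeomorphic to $S$. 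The plan is to use the pair (value of $\CT$, which gradient line one sits on) as the $\R_{>0}\times S$ coordinate in the fiber.

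The construction of $\phi_U$ then proceeds in three steps. First, I would produce a fiberwise identification of each level set $L_\alpha \subset \Omega^+(\rho, \tau)$ with $S$, compatibly in $\tau$: here one uses that the family of spacetimes over the contractible $U$ is (real-analytically, by the structure on $\widetilde{\P}$) trivial, so one can choose a $C^0$ (indeed $C^1$ away from the singularity) family of homeomorphisms $L_{\alpha_0(\tau)} \to S$ for some fixed reference level — concretely, one can normal-project $L_{\alpha_0}$ to the horizontal plane using that its lift is the graph of a convex $1$-Lipschitz function (Bonsante, Lemma 3.11), which also handles the marking compatibly with Claim~\ref{claim:existence homeo}. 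Second, I would flow: given $x \in \Omega^+(\rho,\tau)$, let $\alpha = \CT(x)$ be its cosmological time, follow the $\CT$-gradient line through $x$ back or forward to the reference level $L_{\alpha_0(\tau)}$, record the point $p \in S$ it meets there, and set $\phi_U(\tau, x) := (\tau, p, \alpha)$. Since $\CT$ is $C^1$ with nonvanishing timelike gradient on $\Omega^+$, its flow is $C^1$ and this map is a $C^1$ diffeomorphism onto $U \times S \times \R_{>0}$; properties (1) and (2) hold by construction since $\phi_U^{-1}(\tau, S, r)$ is exactly $\{\CT = r\}$ and $\phi_U^{-1}(\tau, p, \R_{>0})$ is exactly the gradient line through $p$. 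Third, I would check the transition maps between two charts $U, U'$: on the overlap they are of the form $(\tau, p, r) \mapsto (\tau, g_\tau(p), r)$ where $g_\tau$ is a homeomorphism of $S$ depending continuously on $\tau$, because both trivializations preserve $\CT$-levels and $\CT$-gradient lines (the only freedom is the choice of reference-level identification with $S$), so the structure group is reduced to $C^0(U, \mathrm{Homeo}(S))$ acting trivially on the $\R_{>0}$-factor — this is the "special type" in the statement.

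The main obstacle I expect is the regularity and the behaviour of the cosmological time flow near the initial singularity together with showing the $\tau$-dependence is genuinely $C^1$: $\CT$ is only $C^1$ (not smooth), its gradient lines limit onto the non-smooth initial singularity, and one must argue that the joint map $(\tau, x) \mapsto (\text{level}, \text{flow position})$ is $C^1$ in all variables including $\tau$. The cleanest way around this is to avoid flowing to a degenerate reference and instead use the reference level $L_{\alpha_0}$ with $\alpha_0$ bounded away from $0$ uniformly over the (relatively compact pieces of the) chart $U$, and to invoke the continuity of the support functions in $(\rho, \tau)$ from \cite{BF} (Proposition 3.7) — already used in Lemma~\ref{lem:orbite infinie} — to get the continuous, and on the smooth locus $C^1$, dependence of $L_{\alpha_0}$, hence of its normal projection to the horizontal plane, on $\tau$. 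Once the trivialization is in place for $|V| = 1$, the general case follows by taking fiber products over $\mc H^1(\rho)$ of $|V|$ copies of $\P(\rho, v)$, giving the bundle with fiber $(S \times \R_{>0})^V$ claimed at the start of the section.
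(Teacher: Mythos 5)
Your proposal is in the same spirit as the paper's: both use the cosmological time's level sets and gradient lines as the $\R_{>0}$- and $S$-coordinates of the fiber. But the paper does not build the trivialization from scratch; it simply cites the needed result. Bonsante [Section~6 of \cite{bonsante}] already constructs, for each bounded open $\tilde U\subset\mc Z^1(\rho)$, a continuous $\pi_1S$-equivariant map $\Phi_{\tilde U}\colon\tilde U\times\tilde S\times\R_{>0}\to\R^{2,1}$ with $\Phi_{\tilde U}(\tau,\cdot,\cdot)$ a homeomorphism onto $\tilde\Omega^+(\rho,\tau)$ carrying $\tilde S\times\{r\}$ to the $\CT$-level $r$ surface and $\{p\}\times\R_{>0}$ to a gradient line. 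The chart is then $(\mathrm{id}\times\Phi_{\tilde U})^{-1}$ passed to the quotient, and that is the whole proof.

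Your reconstruction has a genuine gap: the proposed identification of the reference level $L_{\alpha_0}$ with $S$ via vertical projection does not work. The vertical projection is a homeomorphism $\tilde L_{\alpha_0}\to\R^2$, but the resulting conjugated action of $\pi_1S$ on $\R^2$ depends on the graph function and is not the linear $\rho$-action on anything resembling $\tilde S=\H^2$; even in the Fuchsian case $\tau=0$, vertically projecting $\alpha\H^2$ to $\R^2$ is not $\rho$-equivariant. So this does not produce a $\rho$-equivariant map $\tilde L_{\alpha_0}\to\tilde S$, and the assertion that it ``handles the marking compatibly with Claim~\ref{claim:existence homeo}'' is where the argument breaks. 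The identification that actually works, and that is encoded in Bonsante's $\Phi$, is the retraction-based Gauss map $\tilde p\mapsto(\tilde p-r(\tilde p))/\CT(\tilde p)\in\H^2$, where $r(\tilde p)$ is the retraction point on the initial singularity; this is $\rho$-equivariant by construction. Separately, the claim that the $\CT$-gradient flow is a $C^1$ diffeomorphism is an overclaim: $\CT$ is only $C^1$, so its gradient is merely continuous, and continuous vector fields do not automatically have unique $C^1$ flows (Peano). Uniqueness of integral curves holds here for the geometric reason that they are the rays from the initial singularity, but the resulting atlas is only a topological fiber bundle atlas --- which is exactly what the lemma asserts and all that is used afterwards, since the structure group is $H_0(S)$.
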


\begin{proof}
Lift $\rho$ to an element of $\mc R(S)$. We denote by $\tilde \mu_{\rho, v}$ the natural projection $\tilde\mu_{\rho, v}: \tilde \P(\rho, v) \rightarrow \mc Z^1(\rho)$, which is a lift of $\mu_{\rho, v}$.
Let $\tilde U$ be an open bounded set in $Z^1(\rho)$. In \cite[Section 6]{bonsante} Bonsante showed that there exists a continuous map
$$\Phi_{\tilde U}: \tilde U \times \tilde S \times \R_{>0} \rar \R^{2,1}$$
such that for every $\tau \in \tilde U$ we have $\Phi_{\tilde U}(\tau, \tilde S, \R_{>0})=\tilde\Omega^+(\rho, \tau)$ and\\
(0) for every $\tau \in \tilde U$, the map $\Phi_{\tilde U}$ is $\pi_1 S$-equivariant, i.e., for every $\gamma \in \pi_1 S$, $p \in \tilde S$ and $r \in \R_{>0}$ we have $\Phi_{\tilde U}(\tau, \gamma p, r)=\rho_\tau(\gamma)\Phi_{\tilde U}(\tau, p, r)$,\\
(1) for every $\tau \in \tilde U$, $r \in \R_{>0}$ the set $\Phi_{\tilde U}(\tau, \tilde S, r)$ is a $\CT$-level surface in $\tilde\Omega^+(\rho, \tau)$;\\
(2) for every $\tau \in \tilde U$, $p \in \tilde S$ the set $\Phi_{\tilde U}(\tau, p, \R_{>0})$ is a gradient line of $\CT$ parametrized by $\CT$.

Now we consider the map $\zeta \times \Phi_{\tilde U}: \tilde U \times \tilde S \times \R_{>0} \rightarrow \tilde U \times \R^{2,1} $, where $\zeta$ is the projection onto the first factor. This is a homeomorphism onto the image, which is $\tilde \mu^{-1}_{\rho, v}(\tilde U)$. So the inverses of these maps constitute an equivariant fiber bundle atlas of the desired form for $\tilde\mu_{\rho, v}$. In turn, this gives the desired atlas for $\mu_{\rho, v}: \P(\rho, v) \rar \mc H^1(\rho)$.
\end{proof}

Our fiber bundle given by Lemma~\ref{fbundle} has the structure group $H_0(S)$, where we consider it acting on $S \times \R_{>0}$ by homeomorphisms on the first factor and trivially on the second factor. Since the space $\mc H^1(\rho)$ is contractible, this fiber bundle admits a trivialization
\begin{equation}
\label{trivialization}
\P(\rho, v) \cong \mc H^1(\rho) \times S \times \R_{>0}~,
\end{equation}
where for every $U \subset \mc H^1(\rho)$, the trivialization $U \times S \times \R_{>0}$ belongs to the maximal atlas for $\mu_{\rho, v}$ satisfying the conditions (1) and (2) from Lemma~\ref{fbundle}. For what follows, we fix such a trivialization. This endows every Cauchy surface $\Sigma \subset \Omega^+(\rho, \tau)$ with a homeomorphism $\Sigma \rar S$ given by the projection to $S$ in (\ref{trivialization}).

%Since the space $\mc H^1(\rho)$ is contractible, this fiber bundle admits a trivialization \b{actually, why??}
%\begin{equation}
%\label{trivialization}
%\phi: \P(\rho, V) \rar \mc H^1(\rho) \times S \times \R_{>0}
%\end{equation}
%such that\\
%(1) for every $\tau \in \mc H^1(\rho), r \in \R_{>0},$ the set $\phi^{-1}(\tau, S, r)$ is a $\CT$-level surface in $\Omega^+(\rho, \tau)$;\\
%(2) for every $\tau \in \mc H^1(\rho), p \in S,$ the set $\phi^{-1}(\tau, p, \R_{>0})$ is a gradient line of $\CT$ parametrized by $\CT$.\\
%We fix such $\phi$, so we identify $\P(\rho, V) \cong \mc H^1(\rho) \times S \times \R_{>0}$. 

For $|V|=n>1$, we note that after an enumeration of points of $V$, the $n$-th power of the fiber bundle $\mu_{\rho, v}$ is naturally diffeomorphic to the space $\P(\rho, V)$, with the diffeomorphism preserving the projection to $\mc H^1(\rho)$. This endows $\mu_\rho: \P(\rho, V) \rar \mc H^1(\rho)$ with the desired fiber bundle structure with fiber $(S \times \R_{>0})^V$. The trivialization (\ref{trivialization}) then gives a trivialization
$$\P(\rho, V) \cong \mc H^1(\rho) \times S^V \times \R_{>0}^V~.$$

Consider a subset $\P^\ast(\rho, V)\subset \P(\rho, V)$ consisting of all configurations such that for every distinct $v, w\in V$, the points $f(v)$ and $f(w)$ do not belong to the same gradient line of $\CT$. This is clearly an open subset of $\P(\rho, V)$, and we have a trivialization 
\begin{equation}
\label{trivialization*}
\P^\ast(\rho, V) \cong \mc H^1(\rho) \times S^{\ast V} \times \R_{>0}^V~.
\end{equation}
We consider the universal cover $\P^\sharp(\rho, V)$ of $\P^\ast(\rho, V)$, which then has a trivialization
\[\P^\sharp(\rho, V) \cong \mc H^1(\rho) \times \wti{S^{\ast V}} \times \R_{>0}^V~.\]
Finally, we fix an evaluation homeomorphism from Section~\ref{interlude}, allowing us to identify $\wti{S^{\ast V}} \cong \mc C(S, V)$. %By $\nu: \P^\sharp(\rho, V) \rar \P^\ast(\rho, V)$ we denote the covering map.

%It is clear that $\P_c(\rho, V) \subset \P^\ast(\rho, V)$. By $\P^\sharp_c(\rho, V)$ we denote its full preimage in the universal covering $\P^\sharp(\rho, V)$. By the identifications we chose, for every $\p^\sharp \in \P^\sharp_c(\rho, V)$ we have a homeomorphism between $\pt\clconv(\p^\sharp)$ and $(S, V)$ respecting $f$ and defined up to isotopy relative to $V$. 

It is clear that $\ol \P_c(\rho, V) \subset \P^\ast(\rho, V)$. We denote by $\ol \P^\sharp_c(\rho, V)$ its full preimage in the universal cover $\P^\sharp(\rho, V)$, and denote by $\P_c^\sharp(\rho, V)$ the full preimage of $\P_c(S, V)$. Let $\p^\sharp$ be an element of $\P^\sharp_c(\rho, V)$ and $\p$ be its projection to $\P_c(\rho, V)$. Basically, for us $\p^\sharp$ is just $\p$ equipped with one additional piece of data: a homeomorphism from $\pt\clconv(\p)$ to $(S, V)$ respecting the vertex marking map and defined up to isotopy fixing $V$. To see this, first, observe that trivialization (\ref{trivialization}) endows $\pt\clconv(\p)$ with a homeomorphism $\chi$ to $S$. Second, the choice of the evaluation map gives an element of $\mc C(S, V)$, i.e., a class in $H_0(S)/H_0(S, V)$. If $h \in H_0(S)$ is a representative of this class, then $h^{-1}\circ \chi$ is a desired homeomorphism from $\pt\clconv(\p)$ to $S$, and $h^{-1}\circ \chi \circ f$ is the identity on $V$.

%We note that they agree on $V$. I.e., if $\p=(\rho, \tau, f) \in \P_c(\rho, V)$ is the projection of $\p^\sharp$, and $h \in H_0(S)$ is a representative of the given class in $\mc C(S, V)$, then $\chi\circ f$ and $h$ agree on $V$ (here we mean that $\clconv(\p^\sharp)$ is $\clconv(\p)$). Thus, $h^{-1}\circ \chi$ produces a homeomorphism from $\clconv(\p^\sharp)$ to $S$, defined up to an element of $H_0(S, V)$, such that $h^{-1}\circ \chi \circ f$ is the identity on $V$.
%So we denote an element of $\ol P_c^\sharp(S, V)$ by a triple $(\rho, \tau, F)$, where $\tau\in \mc H^1(S)$ and $F: S \rar \pt\clconv(\p)$ determined up to isotopy relative to $V$. 

Now we denote by $\M_-^\sharp(S, V)$ the respective space of metrics up to isometry isotopic to identity relative to $V$. 
The forgetful map $\M_-^\sharp(S, V) \rar \M_-(S, V)$ is a covering map, and we will soon see that it is the universal covering. It is straightforward from the definitions that the group of deck transformations can be identified with the group $\pi_1 S^{\ast V}$, which was also mentioned in Remark~\ref{decktr}. 
%Now by $\M_-^\sharp(S, V)$ and $\ol \M_-^\sharp(S, V)$ we denote the respective spaces of metrics up to isometry isotopic to identity relative to $V$. The forgetful maps $\M_-^\sharp(S, V) \rar \M_-(S, V)$ and $\ol \M_-^\sharp(S, V) \rar \ol \M_-(S, V)$ are covering maps, and we will soon see that they are universal coverings. It is straightforward from the definitions that the groups of deck tranformations can be identified with the group $\Gamma(S, V)$ from Remark~\ref{decktr}.

We can define the lift of the induced metric map
\[\mc I^\sharp_\rho: \P_c^\sharp(\rho, V) \rar \M_-^\sharp(S, V)~.\]

From Section~\ref{sec: the realization map} and Lemma~\ref{open}, we know that $\P_c^\sharp(\rho, V)$ and $\M_-^\sharp(S, V)$ are manifolds of the same dimension $6\mathbf{g}-6+3n$. We will need more information about their topologies, that are given by the two following lemmas.

\begin{lemma}
\label{lem: p rho connect}
$\P_c^\sharp(\rho, V)$ is connected.
\end{lemma}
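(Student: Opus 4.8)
The plan is to prove connectedness of $\P_c^\sharp(\rho, V)$ by combining the fiber-bundle structure over the contractible base $\mc H^1(\rho)$ with a deformation argument that brings any configuration of marked points into convex position, keeping track of the covering data. Recall from \eqref{trivialization*} and the subsequent discussion that $\P^\sharp(\rho, V) \cong \mc H^1(\rho) \times \wti{S^{\ast V}} \times \R_{>0}^V$, and that $\wti{S^{\ast V}}$ is homeomorphic to an open ball by Lemma~\ref{univcover}; hence the total space $\P^\sharp(\rho, V)$ is connected (in fact contractible). The issue is that $\P_c^\sharp(\rho, V)$ is the open subset cut out by the strict convexity condition, and one must show this subset is still connected. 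So the heart of the matter is to show that any point of $\P_c^\sharp(\rho, V)$ can be joined by a path inside $\P_c^\sharp(\rho, V)$ to some fixed reference configuration.

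First I would reduce to a single fiber. Since $\mu_\rho: \P(\rho, V) \rar \mc H^1(\rho)$ is a fiber bundle over a contractible (hence connected) base, and $\P_c(\rho,V)$ (resp. its preimage $\P_c^\sharp$) is an open subset, it suffices to connect configurations lying over the same cocycle $\tau$, and then to connect different fibers; but by following a path in $\mc H^1(\rho)$ from $\tau$ to $0$ and using the fiber-bundle trivialization one can isotope any convex configuration in $\Omega^+(\rho,\tau)$ to one in the Fuchsian spacetime $\Omega^+(\rho, 0)$ while staying convex, because the trivialization \eqref{trivialization} carries $\CT$-level surfaces to $\CT$-level surfaces and gradient lines to gradient lines, so "strictly convex position" — a condition about the marked points relative to their convex hull inside the spacetime — is essentially an open condition preserved along a sufficiently slow path, and in any case one only needs the destination fiber to be connected and reachable. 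More carefully, I would fix $\tau = 0$ and work inside $\Omega^+(\rho, 0)$: here the marked points live on $\CT$-level surfaces which are graphs of convex functions over $\H^2/\rho$, and placing them all on a single very-high-$\CT$ level surface and then pushing them apart along that surface into a "spread out" position makes them strictly convex (high level surfaces are very flat-curved relative to their size); such reference configurations form a connected set because $S^{\ast V}$ connectedness lifts to $\wti{S^{\ast V}}$ which is a ball.

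The key steps, in order: (i) observe $\P^\sharp(\rho,V)$ is connected via the trivialization and Lemma~\ref{univcover}; (ii) given $\p^\sharp \in \P_c^\sharp(\rho,V)$ with underlying cocycle $\tau$, use a path in the contractible $\mc H^1(\rho)$ together with the fiber bundle structure of Lemma~\ref{fbundle} to isotope to a configuration over $\tau = 0$, verifying that one can keep the points in strictly convex position along the way (or, more economically, first push the $n$ points in $\Omega^+(\rho,\tau)$ onto a single $\CT$-level surface which is strictly convex, which is a well-defined convex-position-preserving move, and only then vary $\tau$ — since on a fixed $\CT$-level surface "being in convex position" is a condition purely about the intrinsic convex geometry of the strictly convex surface, which survives the variation of $\tau$ provided one also slides along level surfaces, a continuity/compactness argument using Lemma~\ref{lem:orbite infinie} as in Lemma~\ref{open}); (iii) inside $\Omega^+(\rho,0)$, connect any strictly convex configuration to the reference one by a path through strictly convex configurations, using that the space of $n$-tuples of points in strictly convex position on a fixed strictly convex Cauchy surface deformation-retracts onto a connected model, combined with the contractibility of $\wti{S^{\ast V}}$; (iv) finally, because $\P_c^\sharp$ is the full preimage under a covering map, check that the chosen paths lift to paths with the correct endpoints in the covering — here one uses that $\M^\sharp$ and $\P^\sharp$ are built by the same universal-cover construction over $S^{\ast V}$, so the marking data is transported consistently.

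The main obstacle I expect is step (ii)–(iii): showing that the strict-convex-position condition is not just open but that its complement does not disconnect the fiber. The danger is that a generic path of configurations might be forced, for topological reasons, to pass through a non-convex (or degenerate) configuration. I would handle this by exploiting the extra room afforded by the non-compact direction: the marked points can always be lifted to higher and higher $\CT$-level surfaces, and as $\CT \to \infty$ the level surfaces become, in a suitable rescaled sense, arbitrarily "spread out", so that any prescribed combinatorial type of $n$-point configuration on $S$ can be realized in strictly convex position there; thus one has a large connected reservoir of reference configurations, and moving any given strictly convex configuration "upward" in $\CT$ (a convex-position-preserving move, since $\CT$-level surfaces are nested convex surfaces and pushing a convex configuration to a higher level along gradient lines keeps it convex) lands it in this reservoir. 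Combining this upward push with the contractibility of the base $\mc H^1(\rho)$ and of $\wti{S^{\ast V}}$ yields connectedness of $\P_c^\sharp(\rho, V)$.
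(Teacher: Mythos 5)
Your plan starts from the correct observation that $\P^\sharp(\rho,V)$ is connected (via the trivialization and Lemma~\ref{univcover}), and your step (iii)---that $n$ distinct points on a single $\CT$-level surface are automatically in strictly convex position, and that such configurations form a connected set via $\wti{S^{\ast V}}$---is sound. But the central mechanism you propose for reaching that "reservoir" has the direction of the cosmological-time push wrong. For a marked point $v$, the convex hull $\CH$ of the \emph{other} $n-1$ points is a future-complete set (Lemma~\ref{convpoint}); therefore moving $v$ to the \emph{future} along its $\CT$-gradient line tends to drive it \emph{into} that hull, destroying convex position. A concrete example in the Fuchsian model: take three points on the unit hyperboloid in strictly convex position and flow one of them up along the $x_0$-axis; for large enough height it lands in the future of a point on the chord joining the other two, hence inside their convex hull. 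So "pushing a convex configuration to a higher level along gradient lines keeps it convex" is false unless all $n$ points already lie on a common level surface (in which case they move as a single strictly convex slice). Your parenthetical attempt to first "push the $n$ points onto a single $\CT$-level surface, a well-defined convex-position-preserving move" is exactly the unresolved hard case, because some points would have to go up and some down.

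The paper's proof is shorter and avoids this trap by pushing in the opposite direction. One takes an arbitrary path $\p^\sharp_t$ in the connected $\P^\sharp(\rho,V)$, then for each $v\in V$ looks at the open set $O_v$ of times where $f_t(v)$ has entered $\inter\CH(\p_t)$; over each component interval of $O_v$ one \emph{decreases} the cosmological time of $f_t(v)$ until it reaches $\pt\CH(\p_t)$. Because the hull of the remaining points is future-complete, flowing a point into the past never pushes it back into that hull, and bringing it exactly to the boundary of the hull does not change the hull (so no other marked point is swallowed). A final slight downward push of the non-vertex points produces strictness. No reduction to $\tau=0$, no fiber-bundle deformation of the base, and no high-$\CT$ reservoir is needed: the whole argument lives inside an arbitrary fiber and runs along any connecting path. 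Your covering-consistency step (iv) is fine in spirit but becomes moot once the path is repaired inside $\P_c^\sharp$, since the repair is a deformation and automatically lifts.
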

\begin{proof}
Take $\p_0^\sharp, \p_1^\sharp \in \P_c^\sharp(\rho, V)$. First note that $\P^\sharp(\rho, V)$ is clearly connected. We connect $\p_0^\sharp$ and $\p_1^\sharp$ by a path $\p_t^\sharp$, $t \in [0,1]$, in $\P^\sharp(\rho, V)$. Our first goal is to deform $\p_t^\sharp$ to a path in $\ol{\P}_c^\sharp(\rho, V)$. Denote the projection of $\p_t^\sharp$ to $\P^\ast(\rho, V)$ by $\p_t=(\rho, \tau_t, f_t)$. 

%By using a trivialization~(\ref{trivialization}) and projecting $\p_t$ to the component $S^{\ast V}$ we get a continuous family of maps $a_t: V \hookrightarrow S$.

%We lift $\p_t$ to a path $\tilde \p_t$ in $\widetilde{\P}(\rho, V)$ and use it to obtain a natural topology of a 4-dimensional manifold on the union of $\Omega^+(\rho_t, \tau_t)$, $t\in [0,1]$.  Fix a family of homeomorphisms $a_t: \CH(\p_t) \rar S \times [0, +\infty)$ such that the resulting map
%\[a:\{(p, t): p \in \CH(\p_t), t \in [0,1]\} \rar S\times [0, \infty)\]
%is continuous with respect to the aforementioned topology on the union of $\Omega^+(\rho_t, \tau_t)$. 

For $v \in V$ we define the set 
\[O_v:=\{t \in [0,1]: f_t(v) \in \inter(\CH(\p_t))\}~.\]
Clearly, $O_v$ is an open subset of $(0,1)$, hence it consists of countably many open intervals. For every such interval $I$ with endpoints $t_1$ and $t_2$, we have $f_{t_1}(v) \in \pt \CH(\p_{t_1})$ and $f_{t_2}(v) \in \pt \CH(\p_{t_2})$. Now we decrease the cosmological time of $v$ to get $f_t(v) \in \pt \CH(\p_t)$. This will deform our path $\p_t$ to another continuous path, and we lift it to a deformation of the path $\p_t^\sharp$. We can do this for every $v \in V$ and every such interval $I$.  After all the modifications, $\p_t^\sharp$ is a path in $\ol{\P}_c^\sharp(\rho, V)$. When $f_t(v)$ is not a vertex of $\CH(\p_t)$, it remains again to decrease slightly the cosmological time of $f_t(v)$. After doing this for all $t$, the sets $f_t(V)$ get to strictly convex positions, thus $\p_t^\sharp$ becomes a path in $\P_c^\sharp(\rho, V)$.
\end{proof}

\begin{lemma}\label{lem:dim flat}
$ \M_-^\sharp(S,V)$ is connected and simply connected.
\end{lemma}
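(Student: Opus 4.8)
The plan is to identify $\M_-^\sharp(S,V)$ explicitly with a product of contractible spaces, via the uniformization of flat cone metrics. Write $\mathbf g$ for the genus of $S$ and $n=|V|$. To a metric $d\in\M_-^\sharp(S,V)$ I attach three invariants: (i) its underlying conformal structure, which together with the marked set $V$ is a point of the Teichm\"uller space $\mc T(S,V)$ of the surface $S$ with $n$ marked points $V$ --- recall that $\mc T(S,V)$ is homeomorphic to an open ball of real dimension $6\mathbf g-6+2n$, in particular contractible; (ii) the vector $\theta(d)=(\theta_1,\dots,\theta_n)$ of cone angles at the points of $V$, which by the Gauss--Bonnet formula satisfies $\sum_i(2\pi-\theta_i)=2\pi\chi(S)=2\pi(2-2\mathbf g)$ and, since all singular curvatures are negative, has $\theta_i>2\pi$ for every $i$; thus $\theta(d)$ lies in
\[
\Delta:=\Big\{\theta\in\R^n:\ \theta_i>2\pi\ \text{for all }i,\ \textstyle\sum_i\theta_i=2\pi(n+2\mathbf g-2)\Big\},
\]
a non-empty (since $\mathbf g>1$) open convex subset of an $(n-1)$-dimensional affine subspace, hence contractible; and (iii) the total area $\area(d)\in\R_{>0}$, also contractible. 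This defines a map
\[
\Phi\colon \M_-^\sharp(S,V)\longrightarrow \mc T(S,V)\times\Delta\times\R_{>0},
\]
whose target is contractible, in particular connected and simply connected. So it suffices to show that $\Phi$ is a homeomorphism.

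Continuity of $\Phi$ is routine: in a chart of $\ol\M(S,V)$ given by the edge lengths of a geodesic triangulation with vertex set $V$ (Section~\ref{sec: the realization map}), the cone angle at a vertex is the sum of the triangle angles around it and the area is the sum of the triangle areas --- both real-analytic in the edge lengths --- while the conformal structure likewise varies continuously.

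Bijectivity rests on the uniformization of flat cone metrics. Fix a point of $\mc T(S,V)$, represented by a conformal structure on $S$ with marked points $V$, and a smooth background metric $g_0$ in that class. A flat conformal cone metric with cone angles $\theta\in\Delta$ at $V$ is of the form $e^{2u}g_0$, where $u$ solves a \emph{linear} elliptic equation on the compact surface $S$ of Poisson type --- the right-hand side being the fixed smooth density $K_{g_0}\,dA_{g_0}$ plus a combination of Dirac masses at the $p_i$ determined by the $\theta_i$ (linearity is exactly the point, the target curvature being $0$). Such an equation is solvable precisely when the total mass of its right-hand side vanishes, which is the Gauss--Bonnet relation defining $\Delta$, and its solution is unique up to an additive constant, i.e. up to global rescaling of the metric. (Alternatively one may invoke Troyanov's theorem; its auxiliary conditions are vacuous here because all cone angles exceed $2\pi$.) Surjectivity of $\Phi$ then follows by rescaling to the prescribed area. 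For injectivity, suppose $\Phi(d_1)=\Phi(d_2)$ and pick a biholomorphism $\phi$ witnessing equality in $\mc T(S,V)$; it fixes $V$ pointwise and is isotopic to the identity relative to $V$, and $\phi^*d_2$ is a flat cone metric in the conformal class of $d_1$, with the same cone angles and the same area as $d_1$, so by the uniqueness above $\phi^*d_2=d_1$; thus $\phi$ is an isometry isotopic to $\mathrm{id}$ relative to $V$, hence $d_1=d_2$ in $\M_-^\sharp(S,V)$.

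Finally, $\M_-^\sharp(S,V)$ is a manifold of dimension $6\mathbf g-6+3n$ (being a covering of the manifold $\M_-(S,V)$ of Section~\ref{sec: the realization map}), and $\mc T(S,V)\times\Delta\times\R_{>0}$ is a manifold of the same dimension $(6\mathbf g-6+2n)+(n-1)+1$. A continuous bijection between manifolds of equal dimension is open by invariance of domain --- the same device used in the proof of Lemma~\ref{evaluation} --- so $\Phi$ is a homeomorphism, and $\M_-^\sharp(S,V)$ is contractible, hence connected and simply connected. The only substantive ingredient is the uniformization of flat cone metrics in a fixed conformal class with prescribed cone angles; everything else is Teichm\"uller bookkeeping and point-set topology. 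I do not expect a genuine obstacle; the one thing to watch is that the cone-angle datum must be taken modulo the Gauss--Bonnet constraint (so $\dim\Delta=n-1$, not $n$), so that the dimensions match and the overall scale is recorded as the extra $\R_{>0}$ factor.
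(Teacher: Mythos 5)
Your proof is correct and follows essentially the same route as the paper: both identify $\M_-^\sharp(S,V)$ with the contractible space $\mc T(S,V)\times(\text{cone data})\times\R_{>0}$ via the map recording the conformal class, the cone angles (equivalently, singular curvatures), and the area, invoking Troyanov's uniformization for bijectivity and invariance of domain to upgrade the continuous bijection to a homeomorphism. The paper also cites Troyanov and phrases the cone data as the tuple of singular curvatures lying in a hyperplane slice of $\R_{<0}^V$, which matches your $\Delta$ up to the affine change of variable $\kappa_i=2\pi-\theta_i$; your added detail about reducing Troyanov to a linear Poisson equation (since the target curvature is zero) is a nice elaboration of the same ingredient.
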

 \begin{proof}
We denote by $\mc T(S, V)$ the Teichm\"uller space with marked points, i.e., the space of conformal structures on $S$ up to conformal maps isotopic to identity relative to $V$. It is classical that $\mc T(S, V)$ is homeomorphic to an open ball of dimension $6\mathbf{g}-6+2n$. We denote by $X^V$ a subset of $\R_{<0}^V$ of tuples of negative numbers with sum $2-2\mb g.$ Consider the trivial bundle $\mc T(S, V) \times X^V \times \R_{>0}$. To $d \in \M_-^\sharp(S,V)$ we associate the following data: the underlying conformal structure as an element of $\mc T(S, V)$, the tuple of singular curvatures and the area. This gives a map $ \M_-^\sharp(S,V) \rar \mc T(S, V) \times X^V \times \R_{>0}$, which is clearly continuous. The work of Troyanov~\cite{troyanov} implies that it is injective and surjective, hence, from the invariance of the domain, it is a homeomorphism.

We remark that another similar proof but using discrete conformality instead of conformality follows from the work~\cite{gu-luo-sun-wu}.
 
%As $ \M_-^\sharp(S,V)$ is a covering of $\M_-(S, V)$, it is indeed an analytic manifold of the mentioned dimension. 
% 
%It is classical that any flat metric on $S$ is determined by a conformal structure over $S$, the positions of the singular points, the singular curvatures and the area. The converse holds, provided the curvatures satisfy Gauss--Bonnet condition. This follows from the utilization of a conformal atlas on those metrics. Moreover, from the works of Y. Reshetnyak, A. Huber and M. Troyanov (see \cite{resh}), an isometry preserving the cone points of two flat metrics induces a biholomorphism of the respective conformal structures. Conversely, a biholomorphism preserving the marked points and the singular curvatures induces an homothety between the respective flat surfaces.
%Hence we obtain a bijection between $ \M_-(S,V)$ and $X$, the product of the Teichm\"uller space of $S$ with an open subset of $\R^{3n}$ (the choice of the positions of the cone points and the singular curvatures and the area). $ \M_-(S,V)$ and $X$ have the same finite dimension, and one may see that the dependence of elements of $ \M_-(S,V)$ continuously depends on the ones of $X$, see \cite[Remark 2.23]{tro-plat}, or \cite{gu-luo-sun-wu} or \cite[Theorem 1.4]{chen-li}.
 \end{proof}

The following result will be proven in Section~\ref{sec:iskhakov}. 
In a generic case when the face decomposition is a triangulation, it is Theorem \ref{thm:iskhakov} of Iskhakov.

\begin{prop}
\label{infrig}
The differential $d\mc I_\rho$ is non-degenerate.
\end{prop}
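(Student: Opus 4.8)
The plan is to deduce Proposition~\ref{infrig} from Theorem~\ref{thm:iskhakov} by a local subdivision argument, exactly paralleling the way the $C^1$-regularity of $\mc I$ was obtained in Lemma~\ref{diff}. Fix $\p=(\rho,\tau,f)\in\P_c(\rho,V)$ and let $\ms C$ be its face cellulation on $(S,V)$. By Lemma~\ref{subdiv} we may choose a neighbourhood of $\p$ over which the edges of $\ms C$ persist as edges of the convex hull, so that it suffices to analyze the differential of $\mc I_\rho$ in terms of the edge-lengths of a fixed triangulation $\ms T$ subdividing $\ms C$. A tangent vector to $\P_c(\rho,V)$ at $\p$ is an infinitesimal variation of the cocycle $\tau$ together with an infinitesimal variation of the lifted vertex positions $\tilde f(\tilde V)\subset\R^{2,1}$, equivariant with respect to $\rho_\tau$ (modulo coboundaries and the $\rho_\tau$-action). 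Suppose such a tangent vector lies in the kernel of $d\mc I_\rho$: then the first-order variation of the induced metric vanishes, i.e.\ the lengths of all edges of $\ms C$ (equivalently, of $\ms T$, restricted to each face) are stationary, and hence the first-order variations of all face angles of the cellulation vanish as well. The goal is to conclude that the tangent vector itself is zero.

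\textbf{Passing to the developing picture and applying Iskhakov's theorem.} Here the key point is that the datum $(d\tau, d\tilde f)$ produces an infinitesimal deformation of the \emph{Gauss image} cellulation on the hyperbolic surface $(S,h)$. Indeed, by the duality between convex polyhedral Cauchy surfaces in $\Omega^+(\rho,\tau)$ and balanced geodesic cellulations on $(S,h)$ recalled in the introduction (and realized concretely via the Gauss map / support function, cf.\ the discussion around Lemma~\ref{lem:orbite infinie} and \cite{BF,FS}), the face cellulation $\ms C$ of $\partial\CH(\p)$ corresponds to a convex geodesic cellulation $\ms G$ on $(S,h)$: faces of $\ms C$ go to vertices of $\ms G$, edges of $\ms C$ dual to edges of $\ms G$ whose hyperbolic lengths are the exterior dihedral angles, i.e.\ (up to sign and $\pi$) the face angles of the polyhedral surface. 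Under this dictionary: an infinitesimal deformation of $\p$ moving the vertices and the cocycle induces an infinitesimal deformation of the convex geodesic cellulation $\ms G$ on the fixed hyperbolic surface (the linear holonomy $\rho$ being held fixed is exactly what makes $\ms G$ deform \emph{inside} $(S,h)$ rather than the hyperbolic metric itself moving); and the face angles of $\ms G$ at its vertices are precisely the quantities dual to the edge-lengths / cone-angle data of the induced flat metric. Therefore ``$d\mc I_\rho$ applied to our tangent vector is zero'' translates into ``the convex geodesic cellulation $\ms G$ on $(S,h)$ is deformed at first order without changing any face angle at first order.'' Theorem~\ref{thm:iskhakov} then forces the first-order deformation of $\ms G$ to be trivial. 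Unwinding the duality (the Gauss map is a bijection onto balanced convex cellulations, with the weights recording the edge-lengths of $P$, and $P$ is determined up to translation by its Gauss image), triviality of the deformation of $\ms G$ forces the weights to be stationary and the vertex positions in $\R^{2,1}$ to move only by an infinitesimal Minkowski isometry compatible with $d\tau$; but such a variation is by definition zero in $T_\p\P_c(\rho,V)$. Hence $d\mc I_\rho$ is injective, and since source and target have equal dimension $6\mathbf g-6+3n$, it is non-degenerate.

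\textbf{The main obstacle, and how I would handle it.} The delicate point is not the linear algebra but making the translation ``kernel of $d\mc I_\rho$ $\leftrightarrow$ angle-preserving infinitesimal deformation of a convex geodesic cellulation on $(S,h)$'' genuinely rigorous, and in particular checking that Theorem~\ref{thm:iskhakov} as stated (or the mildly generalized version the authors promise in Section~\ref{sec:iskhakov}) applies to the cellulation that actually arises. Two issues need care. First, $\ms C$ need not be a triangulation: its faces are arbitrary convex polygons, so one must confirm that the first-order Cauchy-type statement is available for general convex cellulations, not just triangulations — this is precisely the generalization of \cite{iskhakov} the authors announce, and I would invoke it here rather than reprove it. Second, one must be careful about \emph{which} angles are held fixed: vanishing of $d\mc I_\rho$ gives stationarity of all edge-lengths of the flat metric and of the cone angles at the $V$-vertices, and one must verify that this is equivalent, via Gauss–Bonnet applied face-by-face (as in Figure~\ref{fig:vertex-to-face}), to stationarity of all face angles of $\ms G$ — including the bookkeeping that the cone angle at a vertex of the flat metric equals $2\pi$ minus the area of the dual spherical/hyperbolic polygon, so a fixed cone angle is equivalent to a fixed polygon area, which together with fixed edge-lengths pins the polygon and hence all its angles at first order. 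Once this correspondence is set up cleanly, the proof is a one-line appeal to Theorem~\ref{thm:iskhakov} plus the equality of dimensions; the real work is entirely in the setup, which I would organize so that it can be reused verbatim when proving that $\mc I$ (not just $\mc I_\rho$) and later $X_d$ are $C^1$.
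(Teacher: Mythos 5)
Your strategy — translate a kernel vector of $d\mc I_\rho$ via the Gauss map into an angle-preserving infinitesimal deformation of a convex geodesic cellulation of $(S,h)$, invoke Theorem~\ref{thm:iskhakov}, and finish by equality of dimensions — is indeed the paper's. But the delicate point you single out is not the right one, and the actual difficulty is left unaddressed. Theorem~\ref{thm:iskhakov} is already stated for general convex geodesic cellulations, so ``cellulations versus triangulations'' is not the issue. The issue is that a tangent vector $\dot\p$ in the kernel of $d\mc I_\rho$ may push $\p$ into the locus where the face cellulation of $\CH(\p')$ strictly refines $\ms C$ (a non-triangular face breaks along a diagonal). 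When this happens, a face of $\ms C$ with more than three vertices has no well-defined first-order normal variation: the plane through one triple of its vertices tilts differently from the plane through another triple. Hence your sentence ``the datum $(d\tau,d\tilde f)$ produces an infinitesimal deformation of the Gauss-image cellulation'' has no content for the dual of $\ms C$, and applying Theorem~\ref{thm:iskhakov} there is not possible.

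The paper's fix is to subdivide $\ms C$ by a fixed triangulation $\ms T$ (whose faces are triangles, hence always planar, so all normals vary smoothly) and to realize the trivalent dual graph $\ms G$ of $\ms T$ on $(S,h)$ with \emph{zero-length} edges at the diagonals of $\ms T$ where the dihedral angle equals $\pi$. The announced ``slight generalization'' of \cite{iskhakov} is precisely the extension of the first-order rigidity statement to graphs with zero-length edges (whose zero-length part is a forest), and proving it — as Lemma~\ref{lem:system S}, via the linear system $(\mathcal S)$ and the diagonal dominance argument — is the bulk of Section~\ref{sec:iskhakov}, not a one-line appeal. You should also mention Claim~\ref{vertical} of Lemma~\ref{diff}: it is what guarantees that the first-order $\ms T$-edge-length variations are intrinsically defined, i.e.\ independent of the choice of refining triangulation, and this is what legitimizes translating ``$d\mc I_\rho(\dot\p)=0$'' into ``all corner angles of $\ms G$ are stationary,'' which is the actual input to the rigidity lemma. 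Your remaining observations (Gauss--Bonnet bookkeeping relating cone angles to polygon areas, and the injectivity-plus-dimension-count finish) are correct.
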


In Section~\ref{sec:compactness}, we will prove the following compactness result.

\begin{prop}
\label{compact}
Let $\p_i=(\rho, \tau_i, f_i)$ be a sequence in $\P_c(\rho, V)$ such that the induced metrics $d_i \in \mc \M_-(S, V)$ converge to a metric $d \in \mc \M_-(S, V)$. Then, up to a subsequence, $\p_i$ converge to $\p \in \P_c(\rho, V)$.
\end{prop}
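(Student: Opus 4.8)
The plan is to split the argument into two compactness statements --- one for the ambient spacetimes (boundedness of the cocycles $\tau_i$) and one for the positions of the marked points --- followed by a short non-degeneracy check for the limit. First I would extract from $d_i\to d$ the standard consequences of Lipschitz convergence: for $i$ large the induced metrics have uniformly bounded area and diameter and uniformly positive systole, say $\area(d_i)\le A$, $\diam(d_i)\le D$, $\sys(d_i)\ge\sigma>0$. Fixing once and for all a triangulation $\ms T$ of $(S,V)$ and using that its $d_i$-edge-lengths converge, I also record that a fixed generating set $\gamma_1,\dots,\gamma_{2\mb g}$ of $\pi_1 S$ has, in every $d_i$, representative loops of uniformly bounded length $L$. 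Fix a lift $\rho\in\mc R(S)$ and lifts $\tilde\p_i=(\rho,\tau_i,\tilde f_i)\in\widetilde\P(\rho,V)$; by Lemmas~\ref{cauchy} and~\ref{lem:boudary polyhedral} the convex hull $C_i:=\CH(\p_i)$ has a convex polyhedral Cauchy boundary $\pt C_i$ carrying the metric $d_i$. Since replacing $\tau_i$ by a cohomologous cocycle (a translation of the picture in $\R^{2,1}$) does not move $\p_i$ inside $\P(\rho,V)$, it suffices to produce coboundary normalizations after which $\{\tau_i\}$ is bounded in $\mc Z^1(\rho)$ and the orbit points $\tilde f_i(\tilde v_k)$ (one lift $\tilde v_k$ per $v\in V$) are bounded in $\R^{2,1}$, and then to identify a subsequential limit.

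The heart of the proof is a two-sided bound on the cosmological time along $\pt C_i$, and here I would follow the techniques of Bonsante~\cite{bonsante} and of Barbot--B\'eguin--Zeghib~\cite{BBZ}. The systole lower bound should rule out any approach of $\pt C_i$ to the initial singularity of $\Omega^+(\rho,\tau_i)$: via the relation between the systole of a Cauchy surface and its distance to the initial singularity from~\cite{BBZ}, I expect $\CT\ge\alpha_0$ on $\pt C_i$ for some $\alpha_0=\alpha_0(\rho,\sigma)>0$, so in particular $\CT(f_i(v))\ge\alpha_0$ for all $v\in V$. In the other direction the diameter and area bounds should keep $\pt C_i$ from running off to future infinity and from spreading too far laterally; arguing as in Bonsante's compactness analysis I expect an upper bound $\CT\le\alpha_1=\alpha_1(\rho,D,A)$ on $\pt C_i$ and, after normalizing $\tau_i$ by a coboundary (e.g. translating so that $\tilde f_i(\tilde v_1)$ stays in a fixed compact set), a uniform compact set $K\subset\R^{2,1}$ containing a fundamental domain of $\pt\widetilde C_i$ together with its $\rho_{\tau_i}(\gamma_j^{\pm 1})$-translates. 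Granting this, all the points $\tilde f_i(\tilde v_k)$ and $\rho_{\tau_i}(\gamma_j)\tilde f_i(\tilde v_k)=\tilde f_i(\gamma_j\tilde v_k)$ lie in $K$, which bounds $\{\tilde f_i(\tilde v_k)\}$ and, through $\tau_i(\gamma_j)=\tilde f_i(\gamma_j\tilde v_k)-\rho(\gamma_j)\tilde f_i(\tilde v_k)$ with $\rho$ fixed, bounds $\{\tau_i\}$ in $\mc Z^1(\rho)$. I expect the main obstacle to lie precisely here: converting the \emph{intrinsic} diameter bound on $\pt C_i$ into an \emph{extrinsic} bound in $\R^{2,1}$, since a priori $\pt\widetilde C_i$ could become nearly lightlike over large regions; the point will be that the systole lower bound, used a second time, forces $\pt\widetilde C_i$ to remain uniformly spacelike on a fundamental domain, which is exactly the mechanism available in~\cite{bonsante,BBZ}.

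With these bounds I would pass to a subsequence along which $\tau_i\to\tau$ in $\mc Z^1(\rho)$ and $\tilde f_i(\tilde v_k)\to q_k$ for each $k$, and let $\tilde f$ be the $\rho_\tau$-equivariant extension of $\tilde v_k\mapsto q_k$. Since the domains $\tilde\Omega^+(\rho,\tau_i)$ depend continuously on the cocycle and converge to $\tilde\Omega^+(\rho,\tau)$, while $\CT(\tilde f_i(\tilde v_k))\in[\alpha_0,\alpha_1]$, the limits $q_k$ have cosmological time in $[\alpha_0,\alpha_1]$ in the limiting spacetime; hence $\tilde f(\tilde V)\subset\tilde\Omega^+(\rho,\tau)$ and $\tilde\p:=(\rho,\tau,\tilde f)\in\widetilde\P(\rho,V)$. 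Its class $\p\in\P(\rho,V)$ is then the limit of $\p_i$ along the subsequence, and the convex hulls, their boundaries and the induced metrics converge accordingly, so $\ol{\mc I}(\p)=d$ by the continuity of $\ol{\mc I}$ noted above.

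Finally I would check that $\p\in\P_c(\rho,V)$, i.e. that $f(V)$ is in strictly convex position. If $f$ failed to be injective, or if some $f(v)$ lay in the interior of $\CH(\p)$ or in the relative interior of a face or edge, then the induced metric on $\pt\CH(\p)$ would carry strictly fewer than $n$ cone points; but that metric is $d\in\M_-(S,V)$, whose cone set is exactly $V$, a contradiction. Hence $f$ is injective with $f(V)$ the vertex set of $\CH(\p)$, so $\p\in\P_c(\rho,V)$ and $\p_i\to\p$ along the subsequence, as desired.
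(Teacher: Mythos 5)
Your overall decomposition — control the cocycles and then the vertex positions, finishing with a convex-position check on the limit — matches the paper's structure, and your concluding argument that the limit lies in $\P_c(\rho,V)$ is in the same spirit as the paper's (which uses the reversed triangle inequality rather than the cone-point count, but either works once convergence in $\P(\rho,V)$ is secured). However, there is a genuine circularity in the middle of your proposal, in the way you combine the cosmological-time estimates with the boundedness of $\{\tau_i\}$.

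The two-sided cosmological-time bounds you invoke are, exactly as stated in the paper (Lemmas~\ref{minmax} and~\ref{curve2}), uniform only over a \emph{compact} set $U\subset\mc H^1(\rho)$ of cocycle classes; the constants $\alpha,\beta$ there depend on $U$. This dependence is essential: the proof of Lemma~\ref{curve2} passes through a uniform area bound on the level-$1$ cosmological-time surfaces $L_1(\tau)$, obtained from continuity of support functions over a compact family of cocycles, and would degenerate if $\tau_i$ escaped to infinity. You write ``$\CT\ge\alpha_0$ on $\pt C_i$ for some $\alpha_0=\alpha_0(\rho,\sigma)>0$'' and the analogous upper bound, i.e.\ you expect constants independent of $\tau_i$; the cited sources do not provide that. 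Consequently you cannot first establish the $\CT$-bounds on $\pt C_i$ and then read off compactness of $\{\tau_i\}$ from the orbit structure — the $\CT$-bounds already require $\{\tau_i\}$ to lie in a compact set. You sensed an obstacle ``precisely here,'' but its source is not the intrinsic-to-extrinsic conversion; it is that the cosmological time is a feature of the ambient spacetime $\Omega^+(\rho,\tau_i)$, and all BBZ/Bonsante-type estimates on it degenerate together with the cocycle.

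The paper breaks this circularity by proving the cocycle compactness (Lemma~\ref{cohomconverge}) \emph{before} any cosmological-time estimate, via a blow-down argument: normalize by coboundaries so that one vertex is at the origin with a horizontal supporting plane; assume $\tau_i$ escapes to infinity; rescale by $\lambda_i\to 0$ so that $\lambda_i\tau_i$ subconverges to a nonzero cocycle $\tau_\lambda$ whose values must be spacelike; and then use the vanishing of the rescaled intrinsic metrics $\lambda_i d_i$ together with the monotonicity Lemma~\ref{nondecr} (built on Lemmas~\ref{horiz} and~\ref{vertic}) to show that intrinsic geodesics on the rescaled convex surfaces project causally, forcing either $\tau_\lambda(\gamma_0)$ causal or a timelike boundary point — a contradiction. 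Only after this is Lemma~\ref{minmax} (upper bound, combined with the monotonicity of area along the cosmological time) and Lemma~\ref{curve2} (lower bound from systole) invoked, now legitimately, since $\tau_i$ ranges in a compact set. To repair your proposal you would either need to supply such a direct, estimate-free argument for boundedness of $\{\tau_i\}$, or find versions of the BBZ-type bounds whose constants are uniform over unbounded families of cocycles; neither is available off the shelf.
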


%This easily imply properness for $\mc I^\sharp$.
%
%\begin{cor}
%\label{compact*}
%Let $\p_i^\sharp$ be a sequence in $\P_c^\sharp(\rho, V)$ such that the induced metrics $d_i^\sharp \in \mc \M_-^\sharp(S, V)$ converge to a metric $d^\sharp \in \mc \M_-^\sharp(S, V)$. Then, up to a subsequence, $\p_i^\sharp$ converge to $\p^\sharp \in \P_c^\sharp(\rho, V)$.
%\end{cor}
%
%\begin{proof}
%
%\end{proof}

Putting all these results together, we can now give the proof of Theorem~\ref{thm:1}.

\begin{proof}[Proof of Theorem~\ref{thm:1}]
The spaces $\P_c(\rho, V)$ and $\mc \M_-(S, V)$ are manifolds of the same dimension.
From  Lemma~\ref{diff}, Proposition~\ref{infrig} and the Inverse Function Theorem, $\mc I_\rho$ is a local $C^1$-diffeomorphism.
 Due to Lemma~\ref{compact}, $\mc I_\rho$ is proper. Hence the image of $\mc I_\rho$ is closed and open, so $\mc I_\rho$ is surjective, as, by Lemma~\ref{lem:dim flat}, the target is  connected. Then $\mc I_\rho$ is a covering map. Consider now a commutative diagram
 \begin{center}
\begin{tikzcd}
\P_c^\sharp(\rho, V) \arrow[r,"\mc I^\sharp_\rho"] \arrow[d]
& \mc \M_-^\sharp(S, V) \arrow[d] \\
\P_c(\rho, V) \arrow[r, "\mc I_\rho"]
& \mc \M_-(S, V)
\end{tikzcd}
\end{center}
We know that the vertical arrows are covering maps, and so is the lower horizontal arrow. Since all spaces are connected, it follows that $\mc I^\sharp_\rho$ is also a covering. 
%Note that this is a rather standard conclusion from theory of coverings, and can be found e.g. on wikipedia.
By Lemma~\ref{lem: p rho connect}, the domain is connected, and by Lemma~\ref{lem:dim flat}, the target is simply connected, thus  $\mc I^\sharp_\rho$ is a homeomorphism. The groups of deck transformations of the vertical coverings are both naturally isomorphic to the group $\pi_1 S^{\ast V}$, see Remark~\ref{decktr}, and the map $\mc I^\sharp_\rho$ is clearly equivariant with respect to it. As it induces an isomorphism of the deck transformation groups, it follows that $\mc I_\rho$ is also a homeomorphism. 

% , but by Lemma~\ref{lem: p rho connect}, the domain is connected and by Lemma~\ref{lem:dim flat}, the target is simply connected, thus  $\mc I$ is also injective.
\end{proof}

\subsection{Infinitesimal rigidity}\label{sec:iskhakov}

In this section we prove Proposition~\ref{infrig}.

Let $\p=(\rho,\tau,f) \in \P_c(\rho, V)$ and $\dot \p \in T_\p\P_c(\rho, V)$ be such that $d\mc I_\rho(\dot \p)=0$. Denote $\mc I(\p)$ by $d$ and denote a hyperbolic metric on $S$ with holonomy $\rho$ by $h$. Choose a triangulation $\ms T$ subdividing the face cellulation of $\p$. Pick a small enough neighborhood $U$ of $\p$, and for every $\p'=(\rho, \tau', f') \in U$ consider the polyhedral surface in $\Omega(\rho, \tau')$ with the combinatorics given by $\ms T$. Provided that $U$ is small enough, such a surface is an embedded Cauchy surface, though possibly non-convex. Its edge-lengths and oriented dihedral angles are analytic functions over $U$. By Claim~\ref{vertical} in the proof of Lemma~\ref{diff}, $\dot \p$ induces zero derivatives of all edge-lengths. We will prove that the induced variations of the (oriented) dihedral angles of $\ms T$ are zero. One can see that this implies that $\dot \p$ is zero.  
%One can see that given a triangulation $\ms T$, a metric $d \in \mc \M_-(S, \ms T)$ and a compatible data of dihedral angles on $\ms T$, then we can restore $g \in \P_c(\rho, V)$. Hence, to prove Lemma~\ref{infrig} it is enough to show that the variations of the dihedral angles induced by $g'$ are zero. 

Let $\ms G$ be the trivalent dual graph to $\ms T$. The Gauss map of $\partial\CH(\p)$ realizes $\ms G$ as a geodesic graph on $(S, h)$, although some edges, which correspond to the dihedral angles equal to $\pi$, have zero length. From now and until the end of this section, we will work in $(S, h)$. We will change now our notation, and until the end of the section we will denote by $V$, $E$ and $F$ the sets of vertices, edges and faces of $\ms G$ rather than of $\ms T$. 
%We denote the set of vertices of $\ms C$ by $V$ and the set of edges by $E$. 
 Note that every connected component of the subgraph of the  edges of zero length is a tree. We consider all the edges oriented arbitrarily. By corner we will mean an angle of a face of $\ms G$, more exactly, we will use the word \emph{corner} to denote it as a combinatorial object, and the word \emph{angle} to denote its angle-measure. We denote by $Q$ the set of corners of $\ms G$. 

Consider a pull-back $\tilde {\ms G}$ of $\ms G$ to $\tilde S \cong \H^2 \subset \R^{2,1}$. We denote by $\tilde V, \tilde E$ and $\tilde Q$ its sets of vertices, edges and corners respectively.  For an edge $\tilde e \in \tilde E$ we will abuse the notation and will denote its unit oriented normal (with respect to the chosen orientation of the edge and the orientation of the surface), considered as a point in $\R^{2,1}$, also by $\tilde e$. Note that, because the angles between edges are known, points of $\R^{2,1}$ defined by edges of zero length are also well-defined. We will use a similar convention for the vertices $\tilde v \in \tilde V$, denoting by $\tilde v$ also the respective point in $\H^2 \subset \R^{2,1}$. For a corner $\tilde q$ we denote by $\alpha_{\tilde q}$ its angle. We have all $\alpha_{\tilde q} \in (0, \pi)$. Particularly, for any two adjacent edges, the (non-oriented) lines determined by them are distinct. For an edge $\tilde e$ we denote by $l_{\tilde e}$ its length. 

The deformation $\dot \p$ induces infinitesimal deformations $\dot{\tilde e}$ on all edges $\tilde e \in \tilde E$. Here we consider the latter as points in $\R^{2,1}$, and we consider $\dot{\tilde e}$ as vectors in $\R^{2,1}$. These deformations are equivariant with respect to $\rho$. Because $d\mc I_\rho(\dot \p)=0$, the induced deformations $\dot\alpha$ of all corner angles are zero. Our aim is to show that then the induced deformations of all lengths are zero too. The remainder of the present section is aimed to prove a slight generalization of Iskhakov's Theorem~\ref{thm:iskhakov}.

\begin{remark} 
In the case when $\ms G$ does not have edges of zero length, Theorem~\ref{thm:iskhakov} was basically proven by Iskhakov in \cite{iskhakov}. We need to make two remarks. First, we need to modify Iskhakov's proof to allow  edges of zero length. Second, unfortunately it seems that there is a mistake in Iskhakov's proof. Iskhakov claims that his result holds also when the face angles can be bigger than $\pi$. The first step of his proof is to eliminate enough edges to leave only one face, and then he gives a proof only for this case. Because we can do the elimination in such a way that any given edge can be kept, the one-face case implies the general case. In the bottom of p.56 in  \cite{iskhakov} Iskhakov writes that his coefficients $\lambda$ are positive numbers, which is crucial for him to finish the proof. However, this is wrong exactly when there are non-convex angles, so Iskhakov's proof does not seem to work in the non-convex case. This also makes the reduction to the one-face case invalid, as it is unclear how to guarantee that the resulting single face is convex, especially with the task to keep any given edge. However, it does not seem hard to fix Iskhakov's proof for convex initial cellulations. Because of these remarks, and because Iskhakov's proof remains formally unpublished, we decided to write an exposition of his proof. Our strategy is slightly different from Iskhakov's, as we do not make a reduction to the one-face case, but all the main ideas remain his.
\end{remark}

Recall that our edges are oriented. For a vertex $\tilde v \in \tilde V$ and an adjacent edge $\tilde e \in \tilde E$, we define the number $\tilde \sigma_{\tilde v\tilde e}$ equal to $1$ if $\tilde v$ is the source of $\tilde e$ and equal to $-1$ if $\tilde v$ is the sink of $\tilde e$. Hence, if $\tilde e$ is adjacent to $\tilde v$ and $\tilde w$, then $\tilde \sigma_{\tilde w\tilde e}=-\tilde \sigma_{\tilde v\tilde e}$. Every corner $\tilde q \in \tilde Q$, considered with clockwise orientation, can be denoted by an ordered pair of edges, which we denote $\tilde e_{\tilde q-}$ and $\tilde e_{\tilde q+}$.  We denote by $\tilde \lambda_{\tilde v\tilde e}$ the number $\tan(\frac{\alpha_{\tilde q_1}}{2})+\tan(\frac{\alpha_{\tilde q_2}}{2})$, where $\tilde q_1$ and $\tilde q_2$ are the two corners determined by the edge $\tilde e$ at the vertex $\tilde v$. 
Note that by convexity, the corner angles are less than $\pi$, which implies that $\tilde \lambda_{\tilde v\tilde e}$ are positive.

For every vertex, edge and corner of $\ms G$, we choose an arbitrary lift to $\tilde S$ and, abusing the notation, use these lifts to identify $V,$ $E$ and $Q$ as subsets of $\tilde V,$ $\tilde E$ and $\tilde Q$ respectively.
 We have the following system of equations:
\[(\mathcal S) \; \; \begin{cases}
\la \dot e, e\ra=0 &\forall e\in E \\
\la \tilde e_{q+}, \dot{\tilde e}_{q-}\ra+\la \dot{\tilde e}_{q+},\tilde e_{q-}\ra=0 &\forall q\in Q\\
\la \sum_{\tilde e \in \tilde E_v}\tilde\lambda_{v\tilde e}\tilde\sigma_{v\tilde e}\dot{\tilde e}, v\ra=0 &\forall v\in V
\end{cases} \]

Here $\langle \cdot,\cdot\rangle$ is the Minkowski bilinear form, and the first type of equations accounts for the fact that $e$ remains to be unit under the deformation. The second type is responsible for that the angles of the corners do not change. The third type comes from the fact that around a vertex $v$, the lines containing elements of $\tilde E_{v}$ (the set of edges of $\tilde {\ms G}$ adjacent to $v$) remain concurrent. Indeed, it is easy to compute that since the oriented normals are coplanar Euclidean unit vectors, we have $\sum_{\tilde e \in \tilde E_v}\tilde\lambda_{v\tilde e}\tilde\sigma_{v\tilde e}{\tilde e}=0$ (see, e.g., \cite[Lemma A.2]{iskhakov}). We want to prove the following

\begin{lemma}\label{lem:system S}
The zero vector is the only solution of $(\mathcal S)$.
\end{lemma}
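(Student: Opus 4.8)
The system $(\mathcal S)$ says exactly that $(\dot{\tilde e})_{\tilde e\in\tilde E}$ is a $\rho$-equivariant infinitesimal deformation of the geodesic graph $\tilde{\ms G}$ inside the \emph{fixed} $\H^2$ which preserves every corner angle (the corner equations) and keeps the edges through each vertex concurrent (the vertex equations), the edge equations merely recording that the normals stay unit; Lemma~\ref{lem:system S} asserts that such a deformation is trivial. The plan is to show, vertex by vertex, that the deformation is the restriction of a single infinitesimal isometry of $\H^2$, and then to kill that isometry using $\rho$-equivariance. \textbf{Local model.} Fix a vertex $\tilde v\in\tilde V$ with incident edges $\tilde e_1,\tilde e_2,\tilde e_3$, whose normals lie in the spacelike plane $\tilde v^\perp$. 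I would first show that the edge equations $\la\dot{\tilde e}_i,\tilde e_i\ra=0$ together with the three corner equations at $\tilde v$ force a unique $\xi_{\tilde v}\in\so(2,1)\cong\R^{2,1}$ with $\dot{\tilde e}_i=\xi_{\tilde v}\times\tilde e_i$ for $i=1,2,3$, where $\times$ denotes the Minkowski cross product (the bracket under $\so(2,1)\cong\R^{2,1}$). Uniqueness is immediate since $\tilde e_1,\tilde e_2,\tilde e_3$ span $\R^{2,1}$; existence amounts to checking that the corresponding $6\times 9$ linear system has corank exactly $3$, and this is where convexity enters — the determinant governing it is, up to harmless factors, a product of the numbers $\tilde\lambda_{\tilde v\tilde e}=\tan(\alpha_{\tilde q_1}/2)+\tan(\alpha_{\tilde q_2}/2)$, which are strictly positive precisely because all corner angles lie in $(0,\pi)$. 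The vertex equation of $(\mathcal S)$ is then a formal consequence, obtained by differentiating $\sum_{\tilde e\ni\tilde v}\tilde\lambda_{\tilde v\tilde e}\tilde\sigma_{\tilde v\tilde e}\tilde e=0$ (\cite[Lemma A.2]{iskhakov}). Zero-length edges cause no trouble, their normals still being well defined; a whole tree of zero-length edges is first collapsed and the same computation run at the resulting higher-valence vertex.

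\textbf{Propagation.} For a non-degenerate edge $\tilde e$ joining $\tilde v$ and $\tilde w$, the two formulas for $\dot{\tilde e}$ give $(\xi_{\tilde v}-\xi_{\tilde w})\times\tilde e=0$, so $\xi_{\tilde v}-\xi_{\tilde w}$ lies in the line $\R\,\tilde e$ (the stabilizer of $\tilde e$ in $\so(2,1)$); write $\xi_{\tilde w}=\xi_{\tilde v}+s_{\tilde e}\tilde e$. I would show all $s_{\tilde e}=0$ by a discrete maximum-principle type argument on the finite graph $\ms G\subset S$ (this is the technical heart, following and slightly repairing Iskhakov): pick a generic linear functional $\varphi$ on $\R^{2,1}$, set $u(v):=\varphi(\xi_v)$, and at a vertex where $u$ is maximal combine the inequalities $u(w_j)\le u(v)$ with the positive balance relation $\sum_j\tilde\lambda_j\tilde\sigma_j\tilde e_j=0$ — positivity again from convexity — to conclude $s_{\tilde e}=0$ on all incident edges; connectedness of $\ms G$ then spreads this everywhere. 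For a zero-length edge the two endpoints coincide as points of $\H^2$, so $\xi_{\tilde v}(\tilde v)=\dot{\tilde v}=\xi_{\tilde w}(\tilde w)$ forces the constant to vanish directly. The outcome is a single $\xi\in\so(2,1)$ with $\dot{\tilde e}=\xi\times\tilde e$ for every edge.

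\textbf{Equivariance.} Equivariance of the deformation gives $\rho(\gamma)\dot{\tilde e}=\dot{(\gamma\tilde e)}=\xi\times\rho(\gamma)\tilde e$, while $\rho(\gamma)\dot{\tilde e}=\rho(\gamma)(\xi\times\tilde e)=(\rho(\gamma)\xi)\times\rho(\gamma)\tilde e$; since the edge-normals span $\R^{2,1}$, this yields $\rho(\gamma)\xi=\xi$ for all $\gamma\in\pi_1 S$. But a cocompact Fuchsian representation has no nonzero invariant vector in $\R^{2,1}\cong\so(2,1)$ — two hyperbolic elements with distinct axes already have none in common — so $\xi=0$, and therefore $\dot{\tilde e}=0$ for all $\tilde e$, which is the assertion.

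The main obstacle is the propagation step: assembling the local models $\xi_v$ into one global field. This is exactly where convex and non-convex cellulations part ways — positivity of the $\tilde\lambda$'s, which fails when some angle exceeds $\pi$, is used both to pin down the local model and to run the maximum principle — and it is precisely here that Iskhakov's original argument has its gap. Since we only claim (and only need) the convex case, with zero-length edges allowed, the remaining work is to carry out these two positivity arguments carefully and to check that the collapsed trees of degenerate edges do not interfere.
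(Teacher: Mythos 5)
Your approach is genuinely different from the paper's. The paper studies the adjoint system $\mathcal R^*$ on the finite quotient graph, eliminates the $c$-variables via a strictly diagonally dominant matrix $\mathcal X$, and deduces that the nullity of $\mathcal R^*$ is exactly $|V|$, hence $\mathcal R$ has full rank. You instead try to integrate the deformation into a global Killing field $\xi$ and kill it by equivariance. The strategy is appealing, but two of its steps have real gaps.

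The local model is wrong as stated. The normals $\tilde e_1,\tilde e_2,\tilde e_3$ at a vertex $\tilde v$ all satisfy $\langle\tilde e_i,\tilde v\rangle=0$, so they lie in the $2$-plane $\tilde v^\perp$ and do \emph{not} span $\R^{2,1}$. (Uniqueness of $\xi_{\tilde v}$ still holds, since any two of them already have trivial common stabilizer, but your reason is incorrect.) More seriously, the corank of the $6\times 9$ edge$+$corner system is $4$, not $3$. Decomposing $\dot{\tilde e}_i=a_i\tilde v+b_i(\tilde v\times\tilde e_i)+c_i\tilde e_i$, the edge equations kill the $c_i$ and the three corner equations only force $b_1=b_2=b_3$ (two independent constraints); the $a_i$ are untouched. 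Concretely, $\dot{\tilde e}_1=\tilde v$, $\dot{\tilde e}_2=\dot{\tilde e}_3=0$ satisfies every edge and corner equation but is not of the form $\xi\times\tilde e_i$. So the vertex equation of $(\mathcal S)$ is \emph{not} a formal consequence; it is a genuinely needed seventh constraint, cutting the $a_i$ from $3$ free parameters to $2$, and only then does the solution space match $\so(2,1)$. None of this rank count involves the positivity of the $\tilde\lambda$'s, so that part of your explanation is misplaced.

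The propagation step is also unsupported. After $\xi_{\tilde w}=\xi_{\tilde v}+s_{\tilde e}\tilde e$, you want a maximum-principle argument on $u(\tilde v)=\varphi(\xi_{\tilde v})$. But the local fields are only $\rho$-equivariant, $\xi_{\gamma\tilde v}=\Ad(\rho(\gamma))\xi_{\tilde v}$, so $u$ is neither bounded on the infinite graph $\tilde V$ (no extremal vertex to choose) nor well-defined on the finite quotient $V$. This is precisely the point at which the paper sidesteps your difficulty by working with the adjoint system on the finite graph, where the coefficient matrix $\mathcal X$ \emph{is} well-defined and strictly diagonally dominant. Your final equivariance step (a cocompact Fuchsian group has no nonzero $\Ad$-invariant vector) is correct, but it is extra machinery that the paper's argument never needs — a hint that the two strategies really are different, and that yours shifts the difficulty into the as-yet-unjustified propagation.
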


 Note that the proof below does not assume that the graph is trivalent, moreover this generality in needed for the case of the presence of edges of zero length.
 
System $(\mathcal S)$  is a linear homogeneous system of $|E|+|Q|+|V|$ equations in variables $\dot e$ for $e \in E$.  Indeed, every $\dot{\tilde e}$ for $\tilde e \in \tilde E$ is obtained from some $\dot e$, where $e \in E$, by the application of an isometry of $\R^{2,1}$, given by $\rho$, hence $\dot{\tilde e}$ is a linear combination of coefficients of $\dot e$. As
every edge is adjacent to 4 corners and every corner is adjacent to 2 edges, we actually have $3|E|+|V|$ equations.

 Denote the matrix of the system $(\mathcal S)$ by $\mc R$ and consider the adjoint matrix $\mc R^*$. It is the matrix of a linear operator from $\R^{E\cup Q\cup V}$ to $\R^{3E}$. Consider an element of its kernel. We denote its components by $a_e,$ $e \in E$, $b_q,$ $q \in Q$, and $c_v,$ $v \in V$. Every edge $e$ determines a vector equation on these numbers.

\begin{figure}[h]
\begin{center}
%\psfrag{e++}{$\tilde{e}_{++}$}
% \psfrag{e+-}{$\tilde{e}_{+-}$}
% \psfrag{e}{$e$}
%\psfrag{e--}{$\tilde{e}_{--}$}
% \psfrag{e-+}{$\tilde{e}_{-+}$}
% \psfrag{v+}{$\tilde{v}_+$}
%  \psfrag{w}{$\tilde{v}_-$}
\includegraphics[scale=0.3]{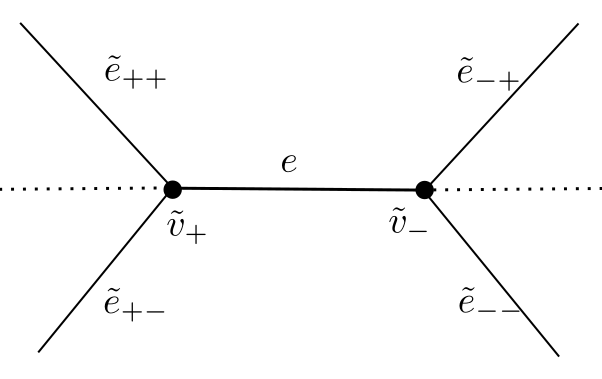}
\caption{\footnotesize	 A four-legged picture of the four corners around an edge $e$.}
\label{fig:fourleg}
\end{center}
\end{figure}

%
%
%\begin{figure}
%\begin{center}
%\includegraphics[scale=0.3]{Pic1.png}
%\caption{A four-legged picture.}
%\label{Pic1}
%\end{center}
%\end{figure}

For every edge $e \in E$ we consider a ``four-legged picture'' (see Figure~\ref{fig:fourleg}). Denote the vertices of $e$ by $\tilde v_+$ and $\tilde v_-$. Note that we do not make an assumption on the orientation of $e$ with respect to the vertices. Denote the edges forming corners with $e$ at $\tilde v_+$ by $\tilde e_{++}$ and $\tilde e_{+-}$ clockwise, and forming corners at $\tilde v_-$ by $\tilde e_{-+}$, $\tilde e_{--}$ counterclockwise. For simplicity, we denote the numbers corresponding to this picture by $b_{++}$, $b_{+-}$, $b_{-+}$, $b_{--}$, $c_+$, $c_-$, $\tilde \lambda_+$, $\tilde \lambda_-$, $\tilde \sigma_+$, $\tilde \sigma_-$ respectively.
By noting that the equations in $(\mathcal S)$ are invariant for hyperbolic isometries, we get the following equation from $\mc R^*$
\begin{equation}
\label{r1}
a_e e+b_{++}\tilde e_{++}+b_{+-}\tilde e_{+-}+b_{-+}\tilde e_{-+}+b_{--}\tilde e_{--}+c_+\tilde\lambda_+\tilde\sigma_{+}\tilde v_++c_-\tilde\lambda_-\tilde\sigma_{-}\tilde v_-=0~.
\end{equation}

Denote also $\tilde\sigma_{++}:=\tilde \sigma_{\tilde v_+\tilde e_{++}}$, $\tilde \sigma_{+-}:=\tilde \sigma_{\tilde v_+\tilde e_{+-}}$, and denote the angles of the respective corners by $\alpha_{++}$ and $\alpha_{+-}$. Note that
\begin{equation*}
\begin{split}
& \la \tilde v_-, \tilde\sigma_{++}\tilde e_{++}\ra=-\sin(\alpha_{++})\sinh(\ell_e)~,\\
& \la \tilde v_-, \tilde\sigma_{+-}\tilde e_{+-}\ra=\sin(\alpha_{+-})\sinh(\ell_e)~,
\end{split}
\end{equation*}
where $\ell_e$ is the hyperbolic length of $e$.
For example, for the first line, we consider an orthogonal projection of $\tilde{v}_-$ onto the line directed by $\tilde{e}_{++}$ in the hyperbolic plane, and use formula for hyperbolic right-angled triangle, and then use the relation between the Minkowski scalar product and the oriented distance from a point to a geodesic in the hyperbolic plane, see also \cite{iskhakov}.

Thus, when we take the Minkowski product of  Equation~\eqref{r1} and $-\tilde\sigma_+\tilde v_-$, it turns to
\begin{equation}
\label{r2}
\sinh(\ell_e)\left(b_{++}\tilde\sigma_{++}\tilde\sigma_+\sin(\alpha_{++})-b_{+-}\tilde\sigma_{+-}\tilde\sigma_+\sin(\alpha_{+-})\right)+c_+\tilde\lambda_+\cosh(\ell_e)-c_-\tilde\lambda_-=0~.
\end{equation}

At this point, we need to treat separately the presence of edges of zero length. Actually, the case without edges of zero length is included in the case with edges of zero length, but we provide a proof for  completeness,  because 
it is more straightforward, and also because 
 this is the argument the closest to \cite{iskhakov}.

\vspace{\baselineskip}

\begin{proof}[Proof of Lemma~\ref{lem:system S} when all edges have positive length.]  In this case, Equation~\eqref{r2}
can be transformed to
\begin{equation}\label{eq:r2b}b_{++}\tilde\sigma_{++}\tilde\sigma_+\sin(\alpha_{++})-b_{+-}\tilde\sigma_{+-}\tilde\sigma_+\sin(\alpha_{+-})+\frac{c_+\tilde\lambda_+\cosh(\ell_e)-c_-\tilde\lambda_-}{\sinh(\ell_e)}=0~.\end{equation}

This expression depends only on the image of $\tilde v_+$ in $V$ and on $e$. Now for $v \in V$ we denote by $E_v$ the set of edges of $\ms G$ adjacent to $v$. We need to add one technical peculiarity: if $e \in E$ is a loop in $\ms G$, then we consider two copies of it in $E_v$. We then mark these copies by the orientation of $e$ at $v$. For $e \in E_v$
define \begin{equation}\label{eq:notation san tilde}\lambda_{ve}=\tilde \lambda_{\tilde v\tilde e}~,\;\sigma_{ve}=\tilde\sigma_{\tilde v\tilde e}\end{equation}  for adjacent lifts $\tilde v$ of $v$ and $\tilde e$ of $e$.
 As here $e \in E_v$, it can be one of the two copies of a loop, and thus a loop contributes two times. We denote by $E_{vw}$ the set of edges adjacent, as edges of $\ms G$, to both $v, w\in V$. Fixing $v \in V$ and summing Equations \eqref{eq:r2b} over all edges incident to $v$, we see that the summands containing $b$ get eliminated and we obtain a system of equations only on the summands containing $c$:
\begin{equation}\label{eq: matrice X} c_v\sum_{e \in E_v}\lambda_{ve}\frac{\cosh(\ell_e)}{\sinh(\ell_e)}-\sum_{w \in V}c_w \sum_{e \in E_{vw}}  \lambda_{we}\frac{1}{\sinh(\ell_e)}=0,~~~ \forall v \in V~.\end{equation}

The matrix of this system is a $(|V|\times |V|)$-matrix, which we denote by $\mc X$. It has positive coefficients on the diagonal, non-positive coefficients in the other entries, and has a positive sum of the entries in each column. Indeed, for the column corresponding to a vertex $v$ the sum of the coefficients is
\begin{equation}
\label{r3}
\sum_{e \in E_{v}}\lambda_{ve}\frac{\cosh(\ell_e)-1}{\sinh(\ell_e)}>0~.
\end{equation}
Then $\mc X$ is an instance of what is called a strictly diagonally dominant matrix. It is easy to check that it has a non-zero determinant, see e.g. \cite[Appendix B]{iskhakov}. Thereby, all the components $c_v$ are zero. 

This implies that Equation~\eqref{r2} turns to
\begin{equation}
\label{r4}
b_{++}\tilde\sigma_{++}\sin(\alpha_{++})=b_{+-}\tilde\sigma_{+-}\sin(\alpha_{+-})~.
\end{equation}
For a given vertex $v \in V$ and any corner $q$ incident to $v$ in $\ms G$, Equation~\eqref{r4}
 says that the expression $b_q\sigma_{q+}\sigma_{q-}\sin(\alpha_q)$ is independent on $q$, where the meaning of the notation $\sigma_{q\pm}$ is obvious. Hence, there exists a real number $t_v$ such that $b_q=\frac{t_v}{\sigma_{q+}\sigma_{q-}\sin(\alpha_q)}$. The components $a_e$ are expressed in terms of $t_v$ from Equation~(\ref{r1}). Thus, the nullity of $\mc R^*$ is at most $|V|$, but it is also at least $|V|$, hence it is equal to $|V|$. Then the rank of $\mc R^*$, as well as the rank of $\mc R$, is $3|E|$, thereby the nullity of $\mc R$ is zero.
 \end{proof}
 
 \vspace{\baselineskip}

\begin{proof}[Proof of Lemma~\ref{lem:system S} in the general case.]
 We denote by $E_0 \subset E$ the subset of edges of zero length.  First, note that if $e \in E_0$, then Equation~(\ref{r2}) changes to
\begin{equation}\label{eq: null edge 5}c_+\tilde\lambda_+-c_-\tilde\lambda_-=0~.\end{equation}
We still sum it together with the other equations for each vertex as for \eqref{eq: matrice X}, and obtain the matrix $\mc X$. Namely, due to  \eqref{eq: null edge 5}, using the notation from \eqref{eq:notation san tilde}, for every $v \in V$ the equation defining $\mc X$ is 
\begin{multline}\label{eq: matrice X2} c_v\left(\sum_{e \in E_v\backslash E_0}\lambda_{ve}\frac{\cosh(\ell_e)}{\sinh(\ell_e)}+\sum_{e\in E_v\cap E_0}\lambda_{ve}\right)-\\-\sum_{w \in V}c_w \left(\sum_{e \in E_{vw}\backslash E_0}  \lambda_{we}\frac{1}{\sinh(\ell_e)}+ \sum_{e\in E_{vw}\cap E_0}\lambda_{we}\right)=0 ~.\end{multline}
The  sum of the coefficients in a column 
is now equal to
\begin{equation}\label{eq: sommediag2}s_v:=\sum_{e \in E_{v}\backslash E_0}\lambda_{ve}\frac{\cosh(\ell_e)-1}{\sinh(\ell_e)}~,\end{equation}
which is zero when $v$ is adjacent only to  edges of zero length. Thus we cannot conclude that $\mc X$ has non-zero determinant exactly as in the previous case.
However, $\mc X$ is still non-degenerate. Indeed, consider a non-zero vector $c\in \R^V$, and let $c_v$ be such that $|c_v|$ is maximal among the coefficients of $c$. Without loss of generality, let us suppose that $c_v$ is positive. Let $x_{vw}$ be the coefficients of $\mc X^T$. There are two cases.
\begin{itemize}
\item If there exists an edge $e$ adjacent to $v$ and with positive length, then $s_v>0$. On the other hand,
$\left( \mc X^T c\right)_v\geq c_v s_v>0$, i.e.,
 $c$ is not in the kernel of $\mc X^T$. 
\item Suppose now that all edges adjacent to $v$ have zero length, then $s_v=0$ and $x_{vv}=-\sum_{w\not= v} x_{vw}$. Suppose that $c$ is in the kernel of $\mc X^T$, i.e., $x_{vv}c_v=-\sum_{w\not= v}x_{vw}c_w$. This is impossible if $c_v>c_w$ when $x_{vw}\not=0$. Hence for $w\in V$ adjacent to $v$, we have $|c_v|=|c_w|$. As the graph of $\ms G$ is connected, and as there are edges of positive length, we are back to the previous case.
\end{itemize}
In turn, the kernel of $\mc X^T$ is trivial, thus so is the kernel of $\mc X$.
% Denote the coefficients of $\mc X$ by $x_{vw}$. An explicit expression of those coefficients shows that $x_{vv}\geq 0$ and $x_{vw}\leq 0$ for $v\not=w$. For a vector $c \in \R^V$ we have
%\[ c^T\mc X c=\sum_{v,w \in V} x_{vw} c_v c_w=-\frac{1}{2}\sum_{\substack{v, w \in V\\ v\neq w}} x_{vw}(c_v-c_w)^2+\sum_{v\in V}c_v^2\sum_{w\in V}x_{vw}~,\]
%and the two summands in the right-hand expression are non-negative.
%Let us suppose that there exists a non-zero isotropic vector $c$ for $\mc X$. Then all components of $c$ are equal, because  the graph of $\ms G$ is connected. There exists a $v$ which is adjacent to an edge of positive length, in particular  the sum of the coefficients of the column $\sum_{w\in V}x_{vw}$ is positive. In turn, as $c$ is not zero, $c^T\mc X c>0$, that is a contradiction. Hence, as in the case with only edges of non-zero length, $\mc X$ is invertible, and we deduce that the components $c_v$ of an element of the kernel of $\mc R^*$ are zero.

Now we have to study the coefficients $b_q$. A difficulty is that we do not have Equations~\eqref{r4} for  edges of zero length. 

Consider the restricted matrix $\mc R_r$ obtained from $\mc R$ by deleting all the equations of the vertex-type. Define $\bar E:=E\backslash E_0$. We denote by $\bar V$ the set of visible vertices, i.e., the vertices of the quotient of $\ms G$ obtained by contracting all  edges of zero length. Similarly, denote by $\bar Q$ the set of visible corners. Consider the matrix $\bar{\mc R}_r$ coming from the system of equations
\[\begin{cases}
\la \dot e, e\ra=0 &\forall e\in \bar E \\
\la \tilde e_{q+}, \dot{\tilde e}_{q-}\ra+\la \dot{\tilde e}_{q+},\tilde e_{q-}\ra=0 &\forall q\in \bar Q
\end{cases}~. \]

By the argument from the proof of the non-zero edge length case, \[\dim(\ker(\bar{\mc R}_r)\cap \R^{\bar E})\leq |\bar V|~.\] 
 Due to Lemma~\ref{zerocorn}, all rows of $\bar{\mc R}_r$ are linear combinations of the rows of $\mc R_r$, see Figure~\ref{fig:mergecorner}.  Hence, also \[\dim(\ker(\mc R_r)\cap \R^{\bar E})\leq |\bar V|~.\]

\begin{figure}[h]
\begin{center}
%\psfrag{A}{$q_1$}
%\psfrag{a}{$q_1$}
% \psfrag{b}{$q_2$}
% \psfrag{c}{$\bar q$}
%  \psfrag{D}{$v_1$}
% \psfrag{e}{$v_2$}
% \psfrag{f}{$\bar v$}
 \includegraphics[scale=0.3]{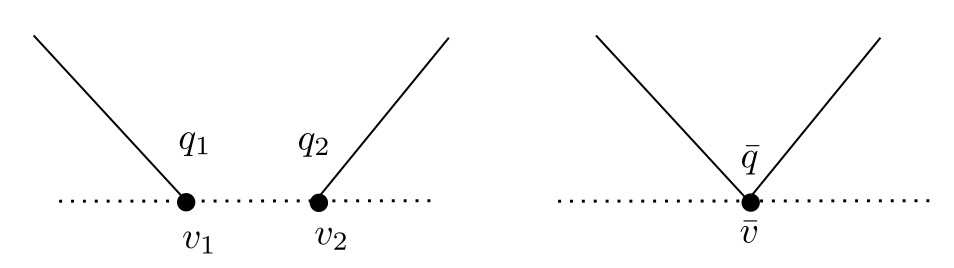}
\caption{\footnotesize	 On the left picture are two corners defining rows in $\mc R_r$, the doted edge between the two vertices having zero length. On the right is the corresponding corner defining a row in $\bar{\mc R}_r$.  Lemma~\ref{zerocorn} says that those rows are linearly dependent.}
\label{fig:mergecorner}
\end{center}
\end{figure}

 We claim that $\dim(\ker(\mc R_r)) \leq |\bar V|+|E_0|$. Indeed, order all edges in $E_0$ as $e_1, \ldots, e_k$ such that every $e_i$ is adjacent to some $e \in \bar E_{i-1}:=\bar E\cup\{e_1,\ldots, e_{i-1}\}$, and pick a corner $q_i$ adjacent to $e_i$ and to such $e$. Consider the matrix $\bar{\mc R}_{r, 1}$ obtained from $\bar{\mc R}_r$ by adding the equations
\begin{equation*}\begin{split}&\la \dot e_1, e_1\ra=0~, \\
& \la \tilde e_{q_1+}, \dot{\tilde e}_{q_1-}\ra+\la \dot{\tilde e}_{q_1+},\tilde e_{q_1-}\ra=0~.\end{split}\end{equation*}
Clearly, the corresponding two rows are linearly independent with each other and with all the previous rows: the entries corresponding to $\dot e_1$ constitute two linearly independent vectors, while for the previous rows those are zero-vectors. Hence, we have \[\dim(\ker(\bar{\mc R}_{r, 1})\cap \R^{\bar E_1})\leq |\bar V|+1~.\] By continuing similarly, we obtain matrices $\bar{\mc R}_{r, i}$, $i=1,\ldots, k$, and in the end we get $\dim(\ker(\bar{\mc R}_{r, k}))\leq |\bar V|+|E_0|~.$ But all the rows of $\bar{\mc R}_{r, k}$ are linear combinations of the rows of $\mc R_r$. Hence, \[\dim(\ker(\mc R_{r}))\leq |\bar V|+|E_0|= |V|~.\] It follows that the nullity of $\mc R$ is zero and the proof of Lemma~\ref{lem:system S} is finished.
\end{proof}

During the proof of Lemma~\ref{lem:system S}, we used the following fact.

\begin{lemma}
\label{zerocorn}
Let $e_1$, $e_2$, $e_3$ be three pairwise distinct hyperbolic lines intersecting at a single point and $\dot e_1$, $\dot e_2$, $\dot e_3$ be their infinitesimal deformations inducing zero-deformations on the angle between $e_1$ and $e_2$ and on the angle between $e_2$ and $e_3$. Then the induced deformation on the angle between $e_1$ and $e_3$ is also zero.
\end{lemma}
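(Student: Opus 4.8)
The plan is to reduce the statement to a computation about three coplanar unit vectors in $\R^{2,1}$ and the variation of the angles between them. Three hyperbolic lines through a common point $p\in\H^2$ are determined by their unit (spacelike) normals $e_1,e_2,e_3$, which we regard as points of $\R^{2,1}$ lying in the common orthogonal complement $p^\perp$; since $p$ is timelike, $p^\perp$ is a spacelike plane with the induced Euclidean metric, and the $e_i$ are Euclidean unit vectors there. The angle between two lines $e_i$ and $e_j$ is (up to sign and an additive constant $\pi$) the Euclidean angle between the vectors $e_i,e_j$ in this plane, so that $\cos\alpha_{ij}=\pm\la e_i,e_j\ra$. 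The hypotheses say that the deformations $\dot e_1,\dot e_2,\dot e_3$ keep all three vectors unit, $\la \dot e_i,e_i\ra=0$, keep the common intersection point, i.e.\ the infinitesimal version of ``$e_1,e_2,e_3$ stay coplanar and all orthogonal to a moving point $p$'', and satisfy $\tfrac{d}{dt}\la e_1,e_2\ra=0$ and $\tfrac{d}{dt}\la e_2,e_3\ra=0$. The conclusion to reach is $\tfrac{d}{dt}\la e_1,e_3\ra=0$.

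First I would set up coordinates: after applying a hyperbolic isometry we may assume $p=(1,0,0)$, so $p^\perp$ is the horizontal plane $\{x_0=0\}$, on which $\la\cdot,\cdot\ra$ restricts to the standard Euclidean inner product; write $e_i=(0,\cos\theta_i,\sin\theta_i)$. The condition that the lines stay concurrent is exactly the content of the third ``vertex-type'' equation used throughout Section~\ref{sec:iskhakov}: there is a relation $\sum_i \lambda_i\sigma_i e_i=0$ among the (oriented) normals with the $\lambda_i>0$ the tangent-of-half-angle coefficients, and this relation is preserved to first order, i.e.\ $\sum_i \lambda_i\sigma_i \dot e_i$ is tangent to the motion of $p$. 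Decompose $\dot e_i=\mu_i e_i^\perp+\nu_i p$ into its component $e_i^\perp$ within the spacelike plane $p^\perp$ (the unit vector obtained by rotating $e_i$ by $\pi/2$, equivalently the tangent direction to the $i$-th line) and its component along $p$; the coefficient $\mu_i$ is precisely the infinitesimal rotation rate of the line, i.e.\ $\mu_i=\dot\theta_i$ in the coordinates above. A short computation gives $\tfrac{d}{dt}\la e_i,e_j\ra=-\sin(\theta_i-\theta_j)(\dot\theta_i-\dot\theta_j)+(\text{terms involving }\nu_i,\nu_j\text{ and }\la p,\cdot\ra)$, but since $\la e_i,p\ra=0$ identically the $\nu$-terms drop out, and one is left with the purely two-dimensional identity $\tfrac{d}{dt}\alpha_{ij}=\dot\theta_i-\dot\theta_j$ (with appropriate signs $\sigma$).

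From here the conclusion is immediate and is the only genuinely combinatorial point: writing $\beta_{ij}$ for the signed angle variation $\dot\theta_i-\dot\theta_j$, the three variations satisfy the cocycle relation $\beta_{13}=\beta_{12}+\beta_{23}$, so that $\beta_{12}=\beta_{23}=0$ forces $\beta_{13}=0$. The one subtlety worth spelling out is the bookkeeping of orientations and signs $\sigma$ relating the combinatorial ``angle of a corner'' to the analytic quantity $\theta_i-\theta_j$, and the verification that the concurrency condition really does kill the out-of-plane ($p$-direction) contributions; but because $p^\perp$ is a genuine spacelike plane and the angles are all in $(0,\pi)$, nothing degenerates, and the argument is a direct unwinding of definitions.

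I expect the main obstacle to be purely notational rather than conceptual: carefully matching the sign conventions $\sigma$, $\lambda$ used in system $(\mathcal S)$ to the coordinate computation, so that ``the angle between $e_1$ and $e_3$'' in the statement is literally $\pm(\theta_1-\theta_3)$ plus a constant and its derivative is the sum of the two derivatives that are assumed to vanish. Once that is pinned down, the lemma follows from the additivity of angle around the common point $p$, which is why the hypothesis that the three lines are concurrent (rather than merely pairwise intersecting) is used.
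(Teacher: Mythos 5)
Your argument is correct, and it proves the same statement, but by a different route than the paper. The paper's proof is a short coordinate-free algebraic chain: starting from the linear dependence $\lambda_1 e_1+\lambda_2 e_2+\lambda_3 e_3=0$ with all $\lambda_i\neq 0$ (coplanarity and pairwise distinctness of the lines), it repeatedly substitutes one $e_i$ by a combination of the other two, each time killing a term with the unit-norm relation $\la\dot e_i,e_i\ra=0$ and flipping a pairing with one of the two angle-preservation relations, to land directly on $\la\dot e_1,e_3\ra=-\la\dot e_3,e_1\ra$. Your version instead normalizes $p=(1,0,0)$, splits each $\dot e_i=\mu_i e_i^\perp+\nu_i p$ using $\la\dot e_i,e_i\ra=0$, notes that the out-of-plane parts $\nu_i p$ never contribute to $\tfrac{d}{dt}\la e_i,e_j\ra$ because $\la e_j,p\ra=0$, and reduces the claim to $\tfrac{d}{dt}\la e_i,e_j\ra=(\mu_i-\mu_j)\sin(\theta_j-\theta_i)$ with $\sin(\theta_j-\theta_i)\neq 0$, from which the conclusion is transitivity $\mu_1=\mu_2=\mu_3$. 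What your route buys is a transparent geometric reading: the $\mu_i$ are the in-plane rotation rates of the lines, the $\nu_i$ are irrelevant tilts, and two successive vanishing angle-variations force all rotation rates to agree. The paper's version avoids choosing coordinates and sign conventions, at the cost of being less interpretable; both are about the same length once written out carefully.

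One small but substantive slip in your set-up: you list ``keep the common intersection point'' (the first-order preservation of the relation $\sum_i\lambda_i\sigma_i e_i=0$) among the hypotheses. The paper's remark immediately after the lemma states the opposite --- the deformations are explicitly \emph{not} assumed to preserve concurrency, and this extra generality is what is actually needed when the lemma is applied to the rows of $\mc R_r$, where precisely the vertex-type (concurrency) equations have been deleted. Fortunately your proof never invokes this hypothesis: the $\nu_i$ are left arbitrary and simply disappear from every $\tfrac{d}{dt}\la e_i,e_j\ra$. So the argument stands, but the concurrency-preservation assumption should be struck from your list of hypotheses to match the statement that is actually being used.
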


Note that here the infinitesimal deformations are not assumed to preserve the concurrence of the lines.

\begin{proof}
We orient the lines arbitrarily and use the same notation $e_i$ for their unit oriented normals in $\R^{2,1}$. Then the deformations $\dot e_i$ also can be considered as vectors in $\R^{2,1}$.
Since the lines intersect at a point, for some $\lambda_1, \lambda_2, \lambda_3 \neq 0$ we have
\[\lambda_1e_1+\lambda_2e_2+\lambda_3e_3=0~.\]
Then
\begin{equation*} 
\begin{split}
\la \dot e_1, e_3\ra=\la \dot e_1, -\frac{\lambda_2}{\lambda_3}e_2\ra & =\frac{\lambda_2}{\lambda_3}\la \dot e_2, e_1 \ra=\frac{\lambda_2}{\lambda_3}\la \dot e_2, -\frac{\lambda_3}{\lambda_1} e_3\ra= \\ &=\frac{\lambda_2}{\lambda_1}\la \dot e_3, e_2 \ra=\frac{\lambda_2}{\lambda_1}\la \dot e_3, -\frac{\lambda_1}{\lambda_2} e_1 \ra=-\la \dot e_3, e_1\ra~.
\end{split}
\end{equation*}
\end{proof}

\subsection{Compactness}\label{sec:compactness}

The aim of this section is to prove Proposition~\ref{compact}.  We first prove the subconvergence of the spacetimes in Section~\ref{sec:cv cocycles}, then the subconvergence of the marked points in Section~\ref{sec:sub points}.

\subsubsection{Convergence of ambient spacetimes}
\label{sec:cv cocycles}

The aim of this subsection is to prove Lemma~\ref{cohomconverge}, which gives the subconvergence of the classes of cocycles when induced distances converge.

\begin{lemma}
\label{horiz}
Let $\Pi_0, \Pi_1, \Pi_2 \subset \R^{2,1}$ be three spacelike planes intersecting transversely at a common point; $\psi$ be the intersection line of $\Pi_1$ and $\Pi_2$; $C$ be the future-convex wedge formed by $\Pi_1$ and $\Pi_2$; and $\chi$ be the intersection curve of $\pt C$ and $\Pi_0$. Denote by $\alpha_+$ the intrinsic angle in $\pt C$ of $\chi$ at its intersection point with $\psi$ in the future side from $\Pi_0$. Then $\alpha_+ \geq \pi$.
\end{lemma}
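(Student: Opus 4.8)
The plan is to normalize coordinates and reduce the assertion to a one‑variable monotonicity inequality. I would first apply a Lorentz transformation moving the common point of the three planes to the origin and sending the (necessarily spacelike) line $\psi=\Pi_1\cap\Pi_2$ onto the $x_2$‑axis. Since $\Pi_1,\Pi_2$ are spacelike and contain $\psi$, they then have the form $\Pi_i=\{x_0=m_ix_1\}$ with $|m_i|<1$; relabel so that $m_1<m_2$. The future‑convex wedge, being the intersection of the two future half‑spaces, is $C=\{x_0\ge m_1x_1\}\cap\{x_0\ge m_2x_1\}=\{x_0\ge\max(m_1x_1,m_2x_1)\}$; hence $\partial C$ is the part of $\Pi_1$ over $\{x_1\le 0\}$ glued to the part of $\Pi_2$ over $\{x_1\ge 0\}$ along $\psi$, and in particular $\partial C$ is flat (cone angle $2\pi$) near the origin.

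Next I would describe $\chi$ and reduce $\alpha_+$ to a sum of two planar angles. Write $\Pi_0=\{x_0=c_1x_1+c_2x_2\}$, where spacelikeness means $c_1^2+c_2^2<1$; the future side of $\Pi_0$ is $\{x_0>c_1x_1+c_2x_2\}$. If $c_2=0$ then $\Pi_0\supseteq\psi$, and a direct check gives $\chi=\psi$ and $\alpha_+=\pi$; so assume $c_2\ne 0$, and after the isometry $x_2\mapsto -x_2$ (which fixes $\Pi_1,\Pi_2,C,\psi$) assume $c_2<0$, so that the ray $\psi^+:=\{(0,0,t):t>0\}$ lies on the future side of $\Pi_0$ (since $0>c_2t$ for $t>0$). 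Then $\chi=\partial C\cap\Pi_0$ is the union of the two rays from the origin $r_a=\Pi_1\cap\Pi_0\cap\{x_1\le 0\}$ and $r_b=\Pi_2\cap\Pi_0\cap\{x_1\ge 0\}$. On the $\Pi_1$‑part of $\partial C$ the set of tangent directions at the origin pointing into the future side of $\Pi_0$ is the arc bounded by $r_a$ and $\psi^+$ (not $\psi^-$), because the relevant linear functional is negative along $\psi^+$ and vanishes along $r_a$; likewise on the $\Pi_2$‑part; patching these gives $\alpha_+=\gamma_1+\gamma_2$, where $\gamma_1\in(0,\pi)$ is the angle in $\Pi_1$ between $r_a$ and $\psi^+$ and $\gamma_2\in(0,\pi)$ is the angle in $\Pi_2$ between $\psi^+$ and $r_b$, both in the Euclidean metric induced on the respective spacelike plane. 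As $\gamma_1,\gamma_2\in(0,\pi)$, the inequality $\alpha_+\ge\pi$ is equivalent to $\cos\gamma_1+\cos\gamma_2\le 0$.

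Finally I would compute these cosines. Using $v_{\psi^+}=(0,0,1)$, $v_a=(-m_1,-1,(c_1-m_1)/c_2)$, $v_b=(m_2,1,(m_2-c_1)/c_2)$ and the Minkowski product, one obtains $\cos\gamma_1=\Psi(m_1)$ and $\cos\gamma_2=-\Psi(m_2)$, where
\[
\Psi(m):=\frac{m-c_1}{\sqrt{D(m)}},\qquad D(m):=c_2^2(1-m^2)+(m-c_1)^2=(1-c_2^2)m^2-2c_1m+c_1^2+c_2^2,
\]
and $D(m)>0$ for all real $m$ since its discriminant equals $4c_2^2(c_1^2+c_2^2-1)<0$. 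Hence $\cos\gamma_1+\cos\gamma_2=\Psi(m_1)-\Psi(m_2)$, and it remains to show $\Psi$ is increasing on $(-1,1)$. A short differentiation gives
\[
\Psi'(m)=\frac{c_2^2\,(1-c_1m)}{D(m)^{3/2}}>0\qquad\text{for }m\in(-1,1),
\]
using $c_2\ne 0$ and $|c_1m|<1$. Therefore $\Psi(m_1)<\Psi(m_2)$, so $\cos\gamma_1+\cos\gamma_2<0$ and $\alpha_+>\pi$; together with the degenerate case $c_2=0$ this yields $\alpha_+\ge\pi$.

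The step I expect to require the most care is the geometric bookkeeping: checking that $\chi$ is exactly the union of the two rays $r_a,r_b$, that the future‑side tangent arc of $\partial C$ at the origin is the one running from $r_a$ through $\psi^+$ to $r_b$, and hence that $\alpha_+=\gamma_1+\gamma_2$ with the correct orientation conventions — this is precisely where the sign of $\langle n_0,v_{\psi^+}\rangle$, and hence the normalization $c_2<0$, must be tracked. Once this reduction is in place, the remaining inequality collapses to the elementary monotonicity of the single function $\Psi$.
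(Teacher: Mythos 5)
Your proof is correct, and it takes a genuinely different route from the paper's. The paper normalizes so that the wedge $C$ is symmetric with respect to the $x_0$-axis, reflects the ray $\chi_1\subset\Pi_1$ through the $x_0$-axis to obtain $\chi_1'\subset\Pi_2$, notes that $\chi_1\cup\chi_1'$ is an intrinsic geodesic of $\partial C$ (each side has angle $\pi$) by symmetry, and then observes that $\chi_1'$ lies in the future of \emph{every} spacelike plane containing $\chi_1$ — the key point being that $\chi_1'+\chi_1$ is a future timelike vector once $\chi_1\neq\psi^{\pm}$ — so $\chi_1'$ lies in the future-side sector, forcing $\alpha_+\geq\pi$. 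Your argument instead normalizes $\psi$ to the $x_2$-axis (leaving the wedge asymmetric), identifies $\alpha_+=\gamma_1+\gamma_2$ explicitly, and reduces the inequality to the monotonicity of the one-variable function $\Psi$. The paper's argument is shorter and more conceptual; yours is more mechanical but self-contained, makes the reduction $\alpha_+=\gamma_1+\gamma_2$ fully explicit, yields the sharper conclusion $\alpha_+>\pi$ whenever $\Pi_0\not\supset\psi$, and handles the degenerate case $\Pi_0\supset\psi$ separately (a case the paper does not explicitly address, and for which the reflection step $v_1'+v_1$ future-timelike degenerates to the zero vector since then $\chi_1=\psi^{\pm}$). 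Your bookkeeping of which arc is the future-side one (via the sign of $c_2$ and of $\ell$ along $\psi^+$ and $r_a$) is exactly the point where a sign error would break the proof, and I verified that your computation of $\cos\gamma_1=\Psi(m_1)$, $\cos\gamma_2=-\Psi(m_2)$, $D(m)>0$, and $\Psi'(m)=c_2^2(1-c_1m)/D(m)^{3/2}>0$ are all correct.
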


\begin{proof}
By applying a Lorentzian isometry, we can assume that the intersection point of all three planes is the origin and that the wedge $C$ is symmetric with respect to the $x_0$-axis. This particularly means that the line $\psi$ is horizontal. Denote the two rays that constitute $\chi$ by $\chi_1\subset \Pi_1$ and $\chi_2 \subset \Pi_2$. Denote by $\chi'_1 \subset \Pi_2$ the ray obtained from $\chi_1$ by the symmetry in $\R^{2,1}$ with respect to the $x_0$-axis. Note that by symmetry, $\chi_1$ and $\chi_1'$ constitute an intrinsic geodesic in $\pt C$, see Figure~\ref{fig:lemmas}. On the other hand, one can see that $\chi_1'$ belongs to the future side of all spacelike planes containing the ray $\chi_1$, particularly of $\Pi_0$. Therefore, the intrinsic angle between $\chi_1$ and $\chi_2$ in $\pt C$ that is in the future of $\Pi_0$ contains $\chi_1'$, and hence is $\geq \pi$.
\end{proof}

\begin{lemma}
\label{vertic}
Let $\Pi_1, \Pi_2 \subset \R^{2,1}$ be two non-parallel spacelike planes; $\psi$ be the intersection line of $\Pi_1$ and $\Pi_2$; $C$ be the future-convex wedge formed by $\Pi_1$ and $\Pi_2$; $\Pi_0$ be a timelike plane intersecting $\psi$ transversely; and $\chi$ be the intersection curve of $\pt C$ and $\Pi_0$. Assume that the origin $o$ is the intersection point of all three planes and that a normal vector $n$ to $\Pi_0$ belongs to $C$. Denote by $\alpha_+$ the intrinsic angle in $\pt C$ of $\chi$ at $o$ in the opposite side to $n$ with respect to $\Pi_0$. Then $\alpha_+ \geq \pi$.
\end{lemma}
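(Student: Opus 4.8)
The statement is the "vertical" analogue of Lemma~\ref{horiz}; the natural strategy is to imitate the proof of Lemma~\ref{horiz} but with the symmetry $x_0$-axis replaced by the line spanned by the normal vector $n$ to the timelike plane $\Pi_0$. First I would apply a Lorentzian isometry fixing the origin to normalize the configuration: since $\Pi_0$ is timelike and passes through $o$, and $n\in C$ is a normal vector to $\Pi_0$, the vector $n$ is timelike (it is spacelike-orthogonal-complement of a timelike 2-plane... wait — the normal to a timelike plane is spacelike; but $n\in C$ and $C$ is future-convex, so actually here one should note $n$ is the vector giving the side of $\Pi_0$, and the hypothesis $n\in C$ pins down which side). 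I would choose coordinates so that $\psi$ is the $x_1$-axis and $\Pi_0=\{x_1=0\}$, with $n$ pointing into $C$. Because $C$ is the future wedge of the two spacelike planes $\Pi_1,\Pi_2$ meeting along $\psi=\{x_0=x_2=0\}$, the wedge $C$ is invariant (after a further boost along $\psi$ if necessary, or a reflection) under the linear reflection $r$ in the plane spanned by $\psi$ and $n$; more simply, I would arrange the configuration to be symmetric under the isometry $r$ that fixes $\psi$ pointwise and reflects across $\Pi_0$ composed appropriately, so that $r$ interchanges $\Pi_1$ and $\Pi_2$.

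With this symmetry in hand, write $\chi=\chi_1\cup\chi_2$ with $\chi_i\subset\Pi_i$ the two rays emanating from the intersection point $\psi\cap\Pi_0$. Let $\chi_1'\subset\Pi_2$ be the image of $\chi_1$ under the symmetry $r$. By the symmetric choice of coordinates, $\chi_1$ and $\chi_1'$ together form an intrinsic geodesic of $\partial C$ through that point (they lie in the two faces and meet $\psi$ at supplementary extrinsic angles by symmetry, hence their union is a geodesic of the flat surface obtained by unfolding the wedge). The key geometric fact to establish is then: $\chi_1'$ lies on the $n$-side of $\Pi_0$, equivalently $\chi_2$ lies on the opposite side from where $\chi_1'$ goes, so that the angle $\alpha_+$ measured on the side opposite to $n$ satisfies $\alpha_+\ge\pi$. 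Concretely, I would check that $\chi_1'$ belongs to the closed half-space bounded by $\Pi_0$ containing $n$: this reduces to a sign computation showing that the reflection $r$ sends a point on the $(-n)$-side of $\Pi_0$ lying in $\Pi_1$ to the $n$-side, which follows because $r$ reverses the sign of the $\Pi_0$-coordinate while the face $\Pi_1$ already lies (on the relevant ray) on the $(-n)$-side. Hence the geodesic $\chi_1\cup\chi_1'$ separates $\chi_2$ to the $(-n)$-side locally, forcing the angle between $\chi_1$ and $\chi_2$ on that side to be at least the straight angle $\pi$.

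The main obstacle I anticipate is bookkeeping the signs and the causal character of $n$: unlike in Lemma~\ref{horiz}, where the axis of symmetry is the canonical timelike $x_0$-axis and "future side" has an unambiguous meaning, here $\Pi_0$ is timelike and its normal $n$ is spacelike, so the symmetry $r$ is a Lorentzian reflection in a timelike plane (spanned by the timelike direction in $\psi^\perp\cap\Pi_0$ and... ) — one must be careful that $r$ is indeed a Lorentzian isometry preserving $C$ and the causal structure, and that "the opposite side to $n$" is the side on which the angle opens up to $\ge\pi$ rather than $\le\pi$. I would handle this by an explicit parametrization: writing $\Pi_1,\Pi_2$ as graphs $x_0=a x_2 + b x_1$ over the horizontal plane (future-convex wedge corresponds to the upper envelope), and $n$ as an explicit vector with $n_1=0$, then directly computing the $\Pi_0$-coordinate (the $x_1$-coordinate in these coordinates — or rather the coordinate dual to $n$) along $\chi_1,\chi_2,\chi_1'$ and reading off the angle via the flat metric on $\partial C$. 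Everything else then parallels Lemma~\ref{horiz} verbatim.
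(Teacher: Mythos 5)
Your proposed normalization---simultaneously placing $\psi$ on the $x_1$-axis and taking $\Pi_0=\{x_1=0\}$---is over-determined: it forces $\Pi_0$ to be the Minkowski orthogonal complement of $\psi$, which is a special configuration that a Lorentzian isometry cannot produce in general (the stabilizer of a timelike plane does not act transitively on spacelike lines transverse to it). Consequently the symmetry $r$ you invoke (fixing $\psi$ pointwise, swapping $\Pi_1$ and $\Pi_2$, and ``reflecting across $\Pi_0$'') does not exist generically: the wedge $C$ has a canonical bisecting plane through $\psi$, but that plane has nothing to do with $\Pi_0$. There is a deeper problem even if you only use the honest reflection across the bisecting plane of $C$: because that map is orientation-reversing, on the developed (flat) picture of $\partial C$ it acts as the reflection across the line $\psi$, not as the rotation by $\pi$. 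So $r(\chi_1)$ is the mirror image of $\chi_1$ in $\psi$, and $\chi_1\cup r(\chi_1)$ is a geodesic only when $\chi_1\perp\psi$. Your ``key fact'' also has a sign error: if the geodesic continuation of $\chi_1$ lay on the $n$-side, you would get $\alpha_+\le\pi$; what is needed is that it lies on the side opposite to $n$.

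The paper's proof fixes all of this by re-using the set-up of Lemma~\ref{horiz} verbatim: the wedge is normalized symmetric about the vertical $x_0$-axis, and $\chi_1'$ is the rotation of $\chi_1$ by $\pi$ about this axis---an orientation-preserving map that reverses $\psi$, hence genuinely extends $\chi_1$ to a geodesic of $\partial C$. The only new input is a sign argument replacing your adapted symmetry: since $n\in C$, the normal $n$ has $n_0>0$, so $\langle n_v,n\rangle=-n_0<0<\langle n,n\rangle$, i.e.\ the future vertical vector $n_v$ and $n$ lie in opposite open half-spaces of $\Pi_0$. As $n_v$ is in the convex cone spanned by $\chi_1$ and $\chi_1'$ and $\chi_1\subset\Pi_0$, this forces $\chi_1'$ into the closed half-space opposite to $n$, after which the conclusion $\alpha_+\ge\pi$ follows just as in Lemma~\ref{horiz}. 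You should discard the $\Pi_0$-adapted reflection and instead keep the fixed $x_0$-axis symmetry plus this half-space observation.
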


\begin{proof}
We consider the configuration and the notation as in the proof above, see Figure~\ref{fig:lemmas}. Denote the horizontal plane by $\Pi$ and the future-directed unit vertical vector by $n_v$. For any timelike plane, if it has a normal vector in the future of $\Pi$, then it separates this vector from $n_v$. Hence, in our situation $n$ and $n_v$ cannot belong to the same open halfspace with respect to $\Pi_0$. But $n_v$ is in the convex hull of $\chi_1$ and $\chi'_1$. Thus, since $\Pi_0$ contains $\chi_1$, we get that $n$ and $\chi'_1$ cannot belong to the same open halfspace with respect to $\Pi_0$. Hence, the intrinsic angle in $\pt C$ of $\chi$ at $o$ in the opposite side to $n$ with respect to $\Pi_0$ contains $\chi'_1$. Then $\alpha_+ \geq \pi$.
\end{proof}

\begin{figure}[h]
\begin{center}
\includegraphics[scale=0.25]{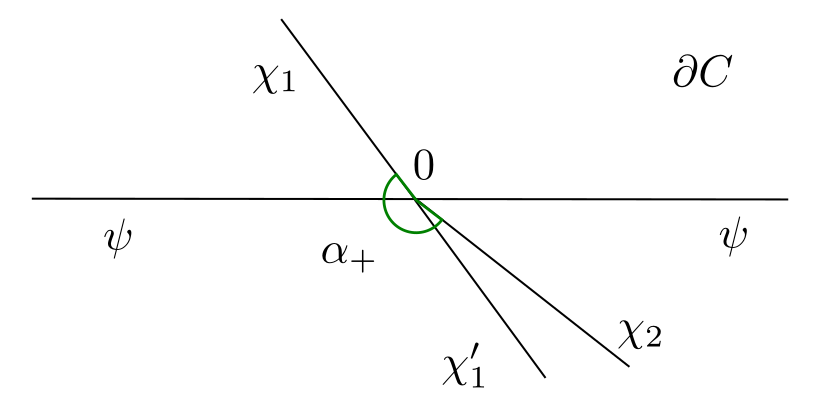}
\caption{ \footnotesize	For the proofs of Lemma~\ref{horiz} and~\ref{vertic}, the positions of rays in the intrinsic geometry of $\pt C$, which is isometric to the Euclidean plane. The idea of both proofs is to show that the ray $\chi'_1$ belongs to the correct side with respect to the plane $\Pi_0$, which is spanned by the rays $\chi_1$ and $\chi_2$ in space.}
\label{fig:lemmas}
\end{center}
\end{figure}

The proofs of the next two lemmas use the ideas of Bonsante from his proof of Proposition 7.6 in \cite{bonsante}.

\begin{lemma}
\label{nondecr}
Let $C \subset \R^{2,1}$ be a spacelike future convex Cauchy polyhedral set. Fix an affine system of coordinates such that the origin $o$ is in $\pt C$ and the horizontal plane $\Pi$ through the origin is a supporting plane to $C$. Let $\phi: \Pi \rar \R$ be the convex function defining $\pt C$. Let $p \in \pt C$ and $\psi: [0, T] \rar \pt C$ be an intrinsic geodesic segment connecting $o$ with $p$. Let $\psi_h: [0, T] \rar \Pi$ be the orthogonal projection of $\psi$ to $\Pi$. Then $\phi\circ\psi_h$ and $\|\psi_h\|$ are non-decreasing functions.
\end{lemma}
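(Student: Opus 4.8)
The plan is to show that the two functions $g(t):=\phi(\psi_h(t))=x_0(\psi(t))$ and $Q(t):=\|\psi_h(t)\|^2$ are \emph{convex} on $[0,T]$ and have nonnegative right derivative at $t=0$; since a convex function on $[0,T]$ whose right derivative at the left endpoint is $\ge 0$ is non-decreasing, this yields that $\phi\circ\psi_h$ and $\|\psi_h\|=\sqrt{Q}$ are non-decreasing.

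First I would parametrise $\psi$ by arclength and, after a small perturbation of its endpoint (geodesics depend continuously on their endpoints, and a generic endpoint gives a geodesic missing the finitely many vertices of $C$), assume that $\psi$ meets no vertex of $C$; the general case follows by passing to the limit, since $g$ and $Q$ vary continuously with $\psi$. Then $\psi$ is a concatenation of straight Minkowski segments $\psi|_{[t_i,t_{i+1}]}\subset F_i$ with constant unit spacelike velocities $w_i$, crossing edges of $C$ transversally at the times $t_i$. On each segment $g$ is affine, of slope $(w_i)_0$, and $Q$ is an upward parabola with leading coefficient $\|(w_i)_h\|^2=1+(w_i)_0^2>0$; in particular $g$ and $Q$ are continuous on $[0,T]$ and convex on each $[t_i,t_{i+1}]$.

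The heart of the matter is the behaviour at the edge crossings, where one must check $g'(t_i^+)\ge g'(t_i^-)$ and $Q'(t_i^+)\ge Q'(t_i^-)$. For $g$ the jump equals $(w_i-w_{i-1})_0$; crossing an edge of the future-convex set $C$ means crossing a ``valley'' of $\pt C$ (locally the graph of the maximum of two affine functions), which forces the velocity change $w_i-w_{i-1}$ to be future-causal --- this is the infinitesimal counterpart of Lemma~\ref{horiz} --- whence $g'(t_i^+)-g'(t_i^-)=(w_i-w_{i-1})_0\ge 0$ and $g$ is convex. For $Q$ the jump equals $2\langle\psi_h(t_i),(w_i-w_{i-1})_h\rangle_{\mathrm{Eucl}}=2\langle\psi(t_i),w_i-w_{i-1}\rangle+2\,x_0(\psi(t_i))\,(w_i-w_{i-1})_0$; the last term is $\ge 0$ because $x_0(\psi(t_i))=g(t_i)\ge 0$ (from the first part) and $w_i-w_{i-1}$ is future-directed, and the first term, which (as $w_i-w_{i-1}$ is orthogonal to the edge direction) depends only on the edge line through $\psi(t_i)$, is to be estimated by applying Lemma~\ref{vertic} to the timelike plane tangent at $\psi(t_i)$ to the cylinder of radius $\|\psi_h(t_i)\|$ about the vertical axis through $\pi_h(o)$ --- whose hypothesis ``the relevant normal lies in $C$'' holds precisely because $\pi_h(o)$ minimises $\phi$ (equivalently $\Pi$ supports $C$ at $o$), convexity of $\phi$ with minimum at $\pi_h(o)$ giving $\nabla\phi(y)\cdot y\ge\phi(y)\ge 0$. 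A similar check is needed at vertices of $C$, handled by the perturbation above. Finally, $g$ is affine near $0$ with $g(t)=x_0(\psi(t))\ge 0=g(0)$ (as $\Pi=\{x_0=0\}$ supports $C$ at $o$ from the future), so $g'(0^+)\ge 0$, while $Q(0)=\|\pi_h(o)\|^2=0$ with $Q'(0^+)=2\langle\psi_h(0),(w_0)_h\rangle=0$ since $\psi_h(0)=0$; convexity then forces $g'\ge 0$ and $Q'\ge 0$ on $[0,T]$.

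The main obstacle I anticipate is the edge-crossing estimate for $Q$ --- namely extracting from Lemma~\ref{vertic} the inequality $\langle\psi(t_i),w_i-w_{i-1}\rangle\ge -g(t_i)(w_i-w_{i-1})_0$, the step in which both the future-convexity of $C$ and the fact that $o$ realises the minimum of $x_0$ on $\pt C$ are used in an essential way --- together with the parallel treatment of the vertices of $C$.
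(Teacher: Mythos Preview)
Your approach is sound and genuinely different from the paper's. The paper works \emph{globally} on $\pt C$: it shows that for every $a>0$ the sublevel set $D_a=\{x_0\le a\}\cap\pt C$ is intrinsically convex (via Lemma~\ref{horiz}), and that each half-space set $D_q=\{x\in\pt C:\langle x_h,q\rangle\le\|q\|^2\}$ is intrinsically convex (via Lemma~\ref{vertic}); monotonicity of $\phi\circ\psi_h$ and $\|\psi_h\|$ then follows by the contrapositive, since $o$ lies in every such set and a geodesic from $o$ must stay in it. You instead work \emph{locally along $\psi$}, showing that the scalar functions $g$ and $Q$ are convex in $t$ with $g'(0^+)\ge 0$ and $Q'(0^+)=0$. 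Your route yields the slightly stronger statement that $g$ and $Q$ are convex (not merely non-decreasing), at the price of a more computational edge-by-edge analysis; the paper's route is more synthetic and makes the role of Lemmas~\ref{horiz}--\ref{vertic} transparent.

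Your anticipated obstacle --- deducing $Q'(t_i^+)\ge Q'(t_i^-)$ from Lemma~\ref{vertic} --- is real in the sense that the \emph{conclusion} of that lemma (an angle inequality for the curve $\Pi_0\cap\pt C$, which is not the geodesic $\psi$) does not transparently give the inequality you need. What does work, and what your outline is really pointing at, is a direct computation that uses only the \emph{hypothesis} of Lemma~\ref{vertic}. Writing $n_1,n_2$ for the future unit normals of the two faces and $v_1,v_2$ for the in-face unit vectors orthogonal to the edge (pointing across it), one has $w_i-w_{i-1}=b(v_2-v_1)$ with $b>0$; a short computation in the timelike $2$-plane $e^\perp$ shows $v_2-v_1$ is a positive multiple of $n_1+n_2$ (hence future-timelike, which also gives your claim for $g$). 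Since each $\Pi_j$ is a support plane of $C$ through $p=\psi(t_i)$ and $o\in C$, one has $c_j:=\langle p,n_j\rangle\ge\langle o,n_j\rangle=0$, whence
\[
\langle\psi(t_i),\,w_i-w_{i-1}\rangle_{\mathrm{Mink}}=\kappa(c_1+c_2)\ge 0
\]
for some $\kappa>0$. Combined with your second term $2g(t_i)(w_i-w_{i-1})_0\ge 0$ this gives $Q'(t_i^+)-Q'(t_i^-)\ge 0$. The inequality $c_j\ge 0$ is exactly the condition ``the horizontal normal pointing towards $o$ lies in the wedge'' that you correctly identified via $\nabla\phi(y)\cdot y\ge\phi(y)\ge 0$; so the geometric input you need is the same as in Lemma~\ref{vertic}, but you should invoke it directly rather than through the lemma's conclusion.
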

In this section, $\|\cdot\|$ is the  Minkowski norm, which gives the Euclidean norm over $\Pi$.
\begin{proof}
The intrinsic metric on $\pt C$ is CAT(0), which implies that a geodesic segment between any two points is unique. Note also that $\psi$ is a piecewise-linear map. For $a>0$ define $D_a:=\{x=(x_0, x_1, x_2) \in \pt C: x_0 \leq a\}$. Lemma~\ref{horiz} implies that $D_a$ is convex in the intrinsic geometry of $\pt C$. Indeed, it implies directly that at every kink point of $\pt D_a$ that is the intersection of a horizontal plane with an edge of $\pt C$, the intrinsic angle is $\leq \pi$. But for the kink points that come from the vertices of $\pt C$ the same follows by continuity. From the convexity of $D_a$, it is easy to see that $\phi\circ\psi_h$ is non-decreasing. Indeed, suppose the converse. Then some $t \in [0, T]$ belongs to a segment where $\phi\circ\psi_h$ is decreasing. Then the intrinsic geodesic from $o$ to $\psi(t)$ does not completely belong to $D_{\phi\circ\psi_h(t)}$. This contradicts to the convexity of $D_{\phi\circ\psi_h(t)}$. 

For the second claim, pick $q \in \Pi$ and define the set $D_q:=\{x \in \pt C: \la x,q \ra \leq \|q\|\}$. We claim that it is also convex. Indeed, consider the timelike plane \[\Pi_q:=\{x \in \R^{2,1}: \la x,q \ra = \|q\|\}~.\] Because $C$ is future-convex and $\Pi$ is a supporting plane to $C$ at $o$, for every $p \in \pt D_q$ the ray from $p$ orthogonal to $\Pi_q$, pointing to the halfspace containing $o$, does not point to the exterior of $C$. Thereby, for any two supporting planes $\Pi_1$ and $\Pi_2$ to $C$ that form an edge of $C$ intersecting $\pt D_q$, Lemma~\ref{vertic} shows that the intrinsic angle at the corresponding kink point of $\pt D_q$ is $\leq \pi$. The same holds by continuity for all kink points of $\pt D_q$. It follows that $D_q$ is convex.

Suppose now that $\|\psi_h\|$ is somewhere decreasing. Then some $t \in [0, T]$ belongs to a segment where $\|\psi_h\|$ is decreasing. Then the geodesic from $o$ to $\psi(t)$ does not completely belong to $D_{\psi_h(t)}$, which contradicts to the convexity of the latter.
\end{proof}

\begin{lemma}
\label{cohomconverge}
Let $\p_i=(\rho,\tau_i,f_i)$ be a sequence in $\P_c(\rho, V)$ such that the induced metrics $\mc I(\p_i)=d_i \in \mc \M_-(S, V)$ converge to a metric $d \in \mc \M_-(S, V)$. Then, up to a subsequence, the cohomology classes $\tau_i \in \mc H^1(\rho)$ converge.
\end{lemma}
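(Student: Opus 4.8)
Here is the plan of proof.

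\smallskip

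The plan is to show that the classes $[\tau_i]$ remain in a bounded subset of the \emph{finite-dimensional} vector space $\mc H^1(\rho)\cong T_\rho\mc T(S)$; boundedness there immediately yields a convergent subsequence. Fix once and for all a finite generating set $\gamma_1,\dots,\gamma_k$ of $\pi_1S$; a class in $\mc H^1(\rho)$ is bounded as soon as it admits a representative in $\mc Z^1(\rho)$ whose values on the $\gamma_j$ are bounded in $\R^{2,1}$. We are free to pass to subsequences throughout.

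\smallskip

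First I would fix such representatives by a geometric normalization. For each $i$ the convex hull $\tilde P_i:=\widetilde{\CH(\p_i)}\subset\R^{2,1}$ has boundary equal to the graph of a convex $1$-Lipschitz function over the horizontal plane (Lemma~\ref{lem:boudary polyhedral} and the cited \cite[Lemma 3.11]{bonsante}); let $o_i$ be its unique lowest point for the time coordinate $x_0$, so that the horizontal plane through $o_i$ is supporting. Translating $\tilde\Omega^+(\rho,\tau_i)$ — which only changes $\tau_i$ by a coboundary, hence not its class — I put $o_i$ at the origin, so $\pt\tilde P_i=\{x_0=\phi_i(x_1,x_2)\}$ with $\phi_i\ge 0$ convex $1$-Lipschitz, $\phi_i(0)=0$, and $\{x_0=0\}$ supporting at the origin: this is exactly the setting of Lemma~\ref{nondecr}. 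Since $o_i$ is a vertex of $\pt\tilde P_i$ it is $\tilde f_i(\hat u_i)$ for some lift $\hat u_i$ of a vertex $u\in V$, and after passing to a subsequence $u$ does not depend on $i$. By equivariance of $\tilde f_i$, in this normalization $\tau_i(\gamma_j)=\rho_{\tau_i}(\gamma_j)(o_i)=\tilde f_i(\gamma_j\cdot\hat u_i)$, which is again a vertex of $\pt\tilde P_i$; moreover the intrinsic distance on $\pt\tilde P_i$ from $o_i$ to it is the $d_i$-length of a shortest based loop at $u$ freely homotopic to $\gamma_j$, hence at most $\ell_{d_i}(\gamma_j)+2\diam(S,d_i)$. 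Because $d_i\to d$ in the Lipschitz topology, the quantities $\ell_{d_i}(\gamma_j)$, $\diam(S,d_i)$, the systole $\sys(S,d_i)$ (bounded below) and the cone angles of $d_i$ (bounded above) are all controlled uniformly in $i$; put $L:=\max_j\ell_d(\gamma_j)+2\sup_i\diam(S,d_i)+1$. So the claim reduces to: the points of $\pt\tilde P_i$ at intrinsic distance $\le L$ from $o_i=0$ stay in a fixed Euclidean ball, with a bound depending only on $L$, the systole and the cone angle bound.

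\smallskip

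The main step is therefore an a priori estimate in the spirit of \cite[Prop.~7.6]{bonsante}: for the normalized convex Cauchy surface $\pt\tilde P_i$, a bound on the intrinsic distance of a point to the ``flat bottom'' $o_i$ gives a bound on its Euclidean position. Here I would invoke Lemma~\ref{nondecr}: along an intrinsic geodesic $\psi$ issued from $o_i$, the height $\phi_i\circ\psi_h$ and the horizontal extent $\|\psi_h\|$ are both non-decreasing; it thus suffices to bound $\|\psi_h\|$ at the endpoint, after which the $1$-Lipschitz property bounds the height, hence the Euclidean norm. To bound the horizontal extent one couples this monotonicity with the convexity of $\pt\tilde P_i$ and the intrinsic bounds via the comparison sets $D_a$ and $D_q$ of the proof of Lemma~\ref{nondecr}: a convex spacelike Cauchy surface with a horizontal supporting plane at $o_i$ cannot become asymptotically lightlike within bounded intrinsic distance of $o_i$, for that would force either the collapse of a closed geodesic of $(S,d_i)$ (against the systole bound) or an unbounded concentration of curvature (against the cone-angle bound) — equivalently, $d_i$ would then fail to stay Lipschitz-close to $d$. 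Turning this into a quantitative statement — extracting from the intrinsic bounds a uniform gap between $1$ and the maximal slope of $\phi_i$ attained on the relevant region — is the technical core, and is the step I expect to be the main obstacle.

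\smallskip

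Granting this estimate, each $\tau_i(\gamma_j)$ has Euclidean norm $\le C$ with $C$ independent of $i$, so the normalized cocycles lie in a bounded, hence relatively compact, subset of the finite-dimensional space $\mc Z^1(\rho)$. A subsequence therefore converges in $\mc Z^1(\rho)$; passing to classes, the corresponding subsequence of $[\tau_i]$ converges in $\mc H^1(\rho)$, which is the assertion.
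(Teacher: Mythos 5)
Your reduction is sound and you have assembled the right ingredients: normalizing by a translation so that the lowest point of $\pt\tilde P_i$ sits at the origin with a horizontal supporting plane (this is essentially the paper's normalization $\tilde f_i(v_0)=o$), observing that $\tau_i(\gamma_j)=\tilde f_i(\gamma_j\cdot\hat u_i)$ is a vertex of $\pt\tilde P_i$ whose intrinsic distance to the origin is controlled by $d_i\to d$, and invoking Lemma~\ref{nondecr} to turn intrinsic geodesics into monotone horizontal and vertical projections. But you explicitly leave the core step as a gap: you never prove the ``uniform gap between $1$ and the maximal slope of $\phi_i$'' that would translate bounded intrinsic distance into a Euclidean bound, and you yourself flag this as ``the main obstacle.'' This is not an omission of routine detail --- it is precisely the content of the lemma. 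Everything else you wrote is a correct but elementary reformulation; the lemma has no content until this estimate is supplied, and it is not clear that the direct quantitative bound you envisage can be cleanly extracted from the systole and cone-angle bounds alone.

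The paper circumvents the need for such an a priori bound by a blow-up argument by contradiction, and this is the structural difference you should internalize. Assume, after passing to a subsequence, that $\tau_i(\gamma)\to\infty$ for some generator $\gamma$. Choose $\lambda_i\to 0$ so that the normalized cocycles $\lambda_i\tau_i$ stay bounded and converge to a nonzero cocycle $\tau_\lambda$; one checks using the horizontal supporting plane that $\tau_\lambda$ takes only spacelike (or zero) values. Rescaling the convex sets by $\lambda_i$ collapses the intrinsic metrics: $\lambda_i d_i\to 0$. Now one looks at the intrinsic geodesics $\psi_i$ in $\pt\tilde P_i$ (rescaled) from the origin to $\lambda_i\tilde f_i(\gamma v_0)\to\tau_\lambda(\gamma)\in\pt K$, and uses exactly Lemma~\ref{nondecr} plus the fact that the intrinsic lengths of the $\psi_i$ tend to zero to conclude that in the limit $(\phi\circ\psi_h)'(t)=1$ almost everywhere. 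This forces the limit path to be causal, hence $\tau_\lambda(\gamma)$ would be causal, contradicting its spacelikeness (or, in the case $\ell_i\to\infty$, forces a timelike boundary point, contradicting future-convexity). The point is that the monotonicity of Lemma~\ref{nondecr} is used qualitatively, at the level of the limit object, not as a uniform quantitative slope bound over all $i$. This makes the argument both simpler and more robust than the estimate you were trying to produce. As written, your proposal is not a proof --- it is a correct setup plus an honest identification of what is missing.
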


\begin{proof}
Fix $v_0 \in \tilde V$ and choose a sequence representative of the classes of cocycles, which  we will still denote by  $\tau_i$, such that for the corresponding maps $\tilde f_i: \tilde V \rar \tilde \Omega^+(\rho, \tau_i)$ we have $\tilde f_i(v_0)=o$, where $o$ is the origin, and the horizontal plane $\Pi$ through $o$ is a supporting plane to all $\widetilde \CH(\p_i)$. Up to passing to a subsequence, we may assume that $\tau_i$ converge to a map $\ol\tau: \pi_1S \rar \ol{\R^{2,1}}$. Pick a system of generators $\Xi$ for $\pi_1S$. If for all $\gamma \in \Xi$ we have $\ol \tau(\gamma)\in  \R^{2,1}$, then $\ol{\tau}$ is a cocycle, determining the desired $\tau \in \mc H^1(\rho)$. 

Suppose that for some $\gamma \in \Xi$ we have $\ol{\tau}(\gamma)\in \pt_\infty \R^{2,1}$. The rest of the proof is devoted to finding a contradiction to this. 

Let $K \subset \R^{2,1}$ be the Euclidean unit ball centered at $o$. Consider a sequence of positive numbers $\lambda_i$ such that for all $\gamma \in \Xi$ we have $\lambda_i\tau_i(\gamma)\in K$ and for some $\gamma_i \in \Xi$ we have $\lambda_i\tau_i(\gamma_i)\in \pt K$. Then the sequence $\lambda_i$ converges to zero and, up to passing to a subsequence, $\lambda_i\tau_i$ converge to a cocycle $\tau_\lambda \in \mc Z^1(\rho)$ such that for some $\gamma_0 \in \Xi$ we have $\tau_\lambda(\gamma_0) \in \pt K$, particularly $\tau_\lambda(\gamma_0) \neq 0$.

We denote by $\ol C_i$ the closures of $ C_i:=\widetilde \CH (\lambda_i \p_i)$ in $\ol{\R^{2,1}}$. Up to passing to a subsequence, the sets $\ol C_i$ converge in the Hausdorff sense to a closed convex set $\ol C \subset \ol{\R^{2,1}}$ (here we mean the Hausdorff convergence with respect to the topology of $\ol{\R^{2,1}}$), and the maps $\lambda_i \tilde f_i$ converge to a map $\tilde f: \tilde V \rar \pt \ol C$. Because for all $i$ we have $o \in \pt C_i$ and $\pt^0_\infty \R^{2,1} \subset \pt \ol C_i$, the set $C:=\ol C \cap \R^{2,1}$ is non-empty, and is a future convex set. Note that $\Pi$ is a supporting plane to $C$, which implies that for all $\gamma \in \pi_1S$, the vector $\tau_\lambda(\gamma)$ is spacelike or zero. Indeed, if for some $\gamma \in \pi_1S$ this is wrong, then $\tau_\lambda(\gamma^{-1})$ does not belong to the future side of $\Pi$, as for all $i$ we have $\tau_i(\gamma^{-1})=-\rho(\gamma)\tau_i(\gamma)$. Because $C$ is future convex, $\pt C$ is a graph over $\Pi$. Because $\lambda_i$ converge to zero, the metrics $\lambda_id_i$ on $\tilde S$ also converge to zero. 

We denote by $\phi_i, \phi: \Pi \rar \R$ the convex functions defining $\pt C_i$, $\pt C$.
Let $\psi_i$ be the geodesic segment in $\pt C_i$ connecting $o$ with $\lambda_i \tilde f_i(\gamma_0 v_0)$. By definition, this sequence of points converges to the point $\tilde f (\gamma_0v_0)=\tau_\lambda(\gamma_0)\in \partial K$.
Let $\psi_{h, i}$ be the orthogonal projection of $\psi_i$ onto $\Pi$, and let $\ell_i$ be its length. 
Note that since $\tilde f(\gamma_0v_0)$ is in $\pt K$ and is spacelike, $\ell_i$ are bounded away from zero. 
We remark that either the sequence $\ell_i$ is bounded, or there is a subsequence that goes to infinity. We will see below that any of these cases leads to the wanted contradiction.

Suppose first that $\ell_i$ are bounded.
 Then, up to passing to a subsequence, $\ell_i$ converge to a positive number $\ell$.
Abusing notation, we will denote by $\psi_{h, i}$ and $\psi_{ i}$ the parametrization proportional to arc-length over $[0,\ell]$ of the respective paths, with $0$ sent to $o$. By Arzel\`a--Ascoli,
 $\psi_{h, i}$ converge to a path 
  $\psi_h: [0,\ell] \rar \Pi$, with length $\ell_0\leq \ell$. We denote by $\psi: [0, \ell] \rar \pt C$ its lift to $\pt C$, which is also a Lipschitz path. 
Since the metrics $\lambda_id_i$ converge to zero, the lengths of the paths $\psi_i$ converge to zero. We have
%\[\lim_{i \rar +\infty} \int_0^{T_i}\sqrt{1-((\phi_i\circ\psi_{h,i})'(t))^2}dt=0.\]
\[\lim_{i \rar +\infty} \int_0^{\ell}\|\psi_i'(t)\|dt=0~.\]
%Therefore, $\lim_{i \rar \infty}|(\phi_i\circ\psi_{h,i})'(t)|=1$ for almost all $t \in [0, T]$. By Lemma~\ref{.}, $(\phi_i\circ\psi_{h,i})'(t)$ are positive, hence the limit is 1. It follows that $\phi\circ\psi_h(t)=t$. On the other hand, \[\|\psi_h(t)\| \leq t=\len(\psi_h|_{[0, t]}).\]
Therefore, $\lim_{i \rar \infty}\|\psi_i'(t)\|=0$ for almost all $t \in [0, \ell]$. 
%It follows that for almost all $t \in [0, T]$ the derivative $(\phi\circ\psi_h)'(t)$ exists and its absolute value is 1. 
By Lemma~\ref{nondecr}, the numbers $(\phi_i\circ\psi_{h,i})'(t)$ are non-negative where defined (which is almost everywhere). As by
definition, $\|\psi_{h,i}'(t)\|=\frac{\ell_i}{\ell}$ almost everywhere, and as $\|\psi_i'(t)\|^2=\|\psi_{h,i}'(t)\|^2-((\phi_i\circ\psi_{h,i})'(t))^2$  almost everywhere, it follows that  almost everywhere, $\lim_{i \rar \infty}(\phi_i\circ\psi_{h,i})'(t)=1$.
By the Dominated Convergence Theorem, $\phi_i \circ \psi_{h,i}(t)$ converges then to $t$. But  it also converges to $\phi \circ \psi_h(t)$,  so the latter is equal to $t$.
% By the dominated convergence theorem, \[\lim_{i\rar \infty}\phi_i\circ\psi_{h,i}(t)=\phi\circ\psi_h(t)=t.\] 
On the other hand, \[\|\psi_h(t)\| \leq \len(\psi_h|_{[0, t]})\leq \liminf_i\len (\psi_{h,i}|_{[0, t]}) =\lim_i t \frac{\ell_i}{\ell}=t~.\]
We see that for every $t$ the Minkowski vector $\psi(t)$ is causal. Particularly so is $\psi(\ell)=\tilde f(\gamma_0 v_0)=\tau_\lambda(\gamma_0)$. This is a contradiction as $\tau_\lambda(\gamma_0)$ must be spacelike.

Suppose  now that up to passing to a subsequence, $\ell_i\to \infty$. Note however that the endpoint of $\psi_i$ is in $K$,  and by Lemma~\ref{nondecr}, for an arc-length parametrization, $\|\psi_{h,i}\|$ are non-decreasing for every $i$. Hence, the images of $\psi_{h,i}$ belong to the compact set $K \cap \Pi$. Then, up to a subsequence again, the maps $\psi_{h, i}$ converge uniformly on compact sets to a Lipschitz map $\psi_h: [0, +\infty) \rar K \cap \Pi$. More precisely, let us parametrize all the $\psi_{h, i}$ by arc-lengths.  Up to a finite number of elements, we assume that all lengths are greater than 2.
Using Arzel\`a--Ascoli we choose a subsequence of $\psi_{h,i}$ such that  $\psi_{h,i}|_{[0, 2]}$ converges  to a 1-Lipschitz path $\psi_{h}|_{[0, 2]}$.
Starting with this subsequence, we reproduce the same procedure, with $3$ instead of $2$, and  the limiting path  $\psi_{h}|_{[0, 3]}$ coincides with $\psi_{h}|_{[0, 2]}$ over $[0,2]$ by the uniqueness of the limit.
 We repeat  this countably many times to define $\psi_{h}$ over $[0, +\infty)$. 

 Denote its lift to $\pt C$ by $\psi: [0, +\infty) \rar \pt C$. Similarly to the previous case, we get $\phi\circ\psi_h(t)=t$. Since $\psi_h(t)$ belongs to $K$, we have $\|\psi_h(t)\|\leq 1$ for all $t$. Therefore, for some $t$ the Minkowski vector $\psi(t) \in \pt C$ is timelike. This contradicts to the fact that $C$ is a future convex set.

At the end of the day, such a sequence $(\ell_i)_i$ cannot exist, and  we have arrived at the wanted contradiction.
\end{proof}

\subsubsection{Convergence of marked points}
\label{sec:sub points}
\begin{lemma}
\label{minmax}
For every $\alpha>0$ and every compact set $U \subset \mc H^1(\rho)$ there exists a constant $\beta \geq \alpha$ such that for every convex Cauchy surface $\Sigma\subset \Omega^+(\rho, \tau)$, $\tau \in U$, if the infimum of the cosmological time  over $\Sigma$ is at most $\alpha$, then its supremum is at most $\beta$. 
\end{lemma}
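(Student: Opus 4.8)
The plan is to prove a sharper statement: writing $C:=\overline{I^+(\Sigma)}$ for the future-complete totally convex set with $\partial C=\Sigma$ furnished by Lemma~\ref{cauchy}, and letting $\tilde C\subset\tilde\Omega^+(\rho,\tau)$ be its lift, I would produce $\beta=\beta(\alpha,U)\ge\alpha$ with $\{\CT>\beta\}\subseteq\tilde C$. This is enough: $\{\CT>\beta\}$ is open, so if some $\tilde q\in\tilde\Sigma=\partial\tilde C$ had $\CT(\tilde q)>\beta$, then $\tilde q$ would lie in $\inter(\tilde C)$, a contradiction; hence $\CT\le\beta$ on $\tilde\Sigma$, and since $\CT$ is $\rho_\tau(\pi_1S)$-invariant, $\sup_\Sigma\CT\le\beta$. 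We may assume $\sup_\Sigma\CT>\alpha$ (otherwise $\beta=\alpha$ works); since $\Sigma$ is connected and $\inf_\Sigma\CT\le\alpha$, by the intermediate value theorem there is $\tilde p\in\tilde\Sigma$ with $\CT(\tilde p)=\alpha$. Because $\tilde C$ is future-complete, $\rho_\tau(\pi_1S)$-invariant and contains $\tilde p\in\tilde\Sigma$, it contains $I^+(\rho_\tau(\gamma)\tilde p)$ for every $\gamma\in\pi_1S$. Thus the inclusion $\{\CT>\beta\}\subseteq\tilde C$ would follow from:
\[
\CT(\tilde q)>\beta\ \Longrightarrow\ \rho_\tau(\gamma)\tilde p\in I^-(\tilde q)\ \text{for some }\gamma\in\pi_1S .
\]

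To get this, I would use two facts. First, the orbit $\rho_\tau(\pi_1S)\cdot\tilde p$ lies on the level surface $L_\alpha$ and projects to a single point of the compact quotient $L_\alpha/\rho_\tau(\pi_1S)$; hence every point of $L_\alpha$ is within intrinsic distance $D:=\diam(L_\alpha/\rho_\tau(\pi_1S))$ of this orbit, and over the compact family $\tau\in U$ there is a uniform bound $D\le D(\alpha,U)$. Second, for $\tilde q$ with $t:=\CT(\tilde q)>\alpha$, let $\tilde q_0\in L_\alpha$ be the point at Lorentzian distance $\alpha$ from the initial singularity along the past geodesic from $\tilde q$ realising $\CT(\tilde q)$; then $\tilde q_0\in I^-(\tilde q)$ and, more strongly, $I^-(\tilde q)\cap L_\alpha$ contains an intrinsic ball of $L_\alpha$ centred at $\tilde q_0$ of radius $R(t)\to\infty$ as $t\to\infty$, with the growth uniform over $\tau\in U$. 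Choosing $\beta=\beta(\alpha,U)$ so that $R(t)>D(\alpha,U)$ whenever $t>\beta$, this ball meets the orbit of $\tilde p$, producing the desired $\gamma$.

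The hard part will be the two uniformity statements over the compact set $U$, both of which belong, in spirit, to Bonsante's circle of ideas in \cite{bonsante}. The uniform bound on $\diam(L_s/\rho_\tau(\pi_1S))$ for $s\le\alpha$ should come from the continuity in the cocycle of the construction of the domains $\tilde\Omega^\pm$ and of the cosmological time, after fixing a continuous section $U\to\mc Z^1(\rho)$ so that the $\tilde\Omega^+(\rho,\tau)$ are uniformly located, together with the uniform control of support functions of invariant future convex sets in \cite[Prop.~3.7]{BF}. The uniform lower bound on the inradius of $I^-(\tilde q)\cap L_\alpha$ in terms of $t=\CT(\tilde q)$ is a comparison argument in $\R^{2,1}$: after an affine normalisation controlled by $U$ one arranges $L_\alpha$ to sit inside a fixed round future cone, and then one estimates the Lorentzian distances from $\tilde q$ to points of $L_\alpha$ near $\tilde q_0$ by comparison with the Fuchsian (de Sitter) model, where the inradius equals exactly $\alpha\,\mathrm{arccosh}\!\big(\tfrac12(t/\alpha+\alpha/t)\big)$. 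The monotonicity of height and horizontal distance along intrinsic geodesics of convex Cauchy surfaces (Lemma~\ref{nondecr}) is a convenient tool for keeping track of how $L_\alpha$ sits relative to $\tilde q$ during this comparison.
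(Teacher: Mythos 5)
Your approach is genuinely different from the paper's: the paper cites the single-cocycle case of the statement from \cite[Corollary~3.19]{BF} and then runs a short compactness/contradiction argument for uniformity over $U$ (using that $\tilde\Omega^+(\rho,\tau_i)$, the cosmological time, and the boundary cellulations of $\clconv(p_i)$ all vary continuously, together with Lemmas~\ref{subdiv} and \ref{lem:boudary polyhedral}), while you propose to re-derive everything through a quantitative inradius estimate. Your logical reduction (find $\tilde p\in\tilde\Sigma\cap L_\alpha$ by the IVT, show that for $\CT(\tilde q)>\beta$ the intrinsic ball $B_{\mathrm{int}}(\tilde q_0,R(t))\subset L_\alpha$ is large enough to meet the orbit, hence large enough to force $\tilde q\in\inter\tilde C$) is sound, and the uniform bound on $D=\diam(L_\alpha/\rho_\tau(\pi_1S))$ over compact $U$ is a genuine continuity statement. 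But the key estimate — that $I^-(\tilde q)\cap L_\alpha$ contains an \emph{intrinsic} ball about $\tilde q_0$ of radius $R(t)\to\infty$ — is not established by the comparison you sketch, and that is a real gap. The intrinsic metric on $L_\alpha$ is not uniformly comparable to the Euclidean horizontal metric in the coordinates adapted to $u=\nabla\CT(\tilde q)$: the Gauss map of $L_\alpha$ is onto $\H^2$, so moving away from $\tilde q_0$ the normal becomes arbitrarily boosted relative to $u$, the surface becomes arbitrarily steep, and a small intrinsic ball can project to an arbitrarily large horizontal set — precisely the set you need to fit inside $I^-(\tilde q)$. Normalizing $L_\alpha$ into a ``fixed round future cone'' cannot help, since $L_\alpha$ is asymptotic to a light cone and is never contained in a cone of slope bounded away from $1$; and Lemma~\ref{nondecr} only asserts monotonicity of height and horizontal distance along geodesics (for polyhedral convex surfaces, moreover), not an upper bound on horizontal displacement in terms of intrinsic arc length.

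What would actually close the gap is a quantitative Lipschitz bound on the Gauss map $N:L_\alpha\to\H^2$, namely that the shape operator of $L_\alpha$ has eigenvalues in $[0,1/\alpha]$ (a Riccati-equation consequence of convexity of the initial singularity). Then along an arc-length intrinsic geodesic $\psi$ from $\tilde q_0$ one has $-\langle N(\psi(s)),u\rangle\le\cosh(s/\alpha)$, hence both the horizontal and $u$-vertical speeds of $\psi$ are $\le\cosh(s/\alpha)$; writing $d,h$ for the horizontal displacement and $u$-height relative to $r(\tilde q)$ one gets $d(\psi(\ell))+h(\psi(\ell))\le\alpha+2\alpha\sinh(\ell/\alpha)$, and since $\psi(\ell)\in I^-(\tilde q)$ as soon as $d+h<t$, this yields a cocycle-independent $R(t)=\alpha\,\mathrm{arcsinh}\!\left(\frac{t-\alpha}{2\alpha}\right)\to\infty$. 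This ingredient — or something equivalent — is absent from your sketch, and without it the argument does not go through. Even with it, your route is considerably heavier than the paper's compactness argument, which avoids all quantitative estimates by exploiting that the single-cocycle case is already known and that the relevant geometric data vary continuously in the cocycle.
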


\begin{proof}
In the case when $U$ is reduced to a single point, this lemma is proved in \cite[Corollary 3.19]{BF}. Now let us consider the general case, and suppose that the result is false. Hence, there exist $\alpha>0$ and a sequence $(\tau_i)_i$ of cocycles converging to a cocycle $\tau$ and a sequence of convex Cauchy surfaces $(\Sigma_i)_i$ with the infimum of cosmological time bounded above by $\alpha$ and the supremum of the cosmological time going to $\infty$. We have that $\tilde{\Omega}^+(\rho,\tau_i)$ converge to $\tilde{\Omega}^+(\rho,\tau)$, as well as the respective level sets of the cosmological time, see   \cite[Lemma 3.27]{barbot-fillastre} or Propositions~6.2 and 6.4 in \cite{bonsante}. (It is worth noting the relation between support functions and extremal values of the cosmological time, cf. \cite[Lemma 3.10]{BF}.) This implies that for any sequence of points $\tilde p_i \in \tilde \Omega^+(\rho, \tau_i)$ converging to $\tilde p \in \tilde \Omega^+(\rho, \tau)$ we have $\CT_i(\tilde p_i)$ converge to $\CT(\tilde p)$.

Let $p_i\in \Sigma_i\cap L_\alpha$. 
As the convex hull $P_i$ of $p_i$ is in the future of $\Sigma_i$, the cosmological time of $\pt P_i$ also goes to infinity.
 As  $L_\alpha$ is compact, $(p_i)_i$ subconverges to a point $p$ in $L_\alpha$. From Lemma~\ref{subdiv}, for all sufficiently large $i$ the boundary cellulation of $\clconv(p_i)$ is a subdivision of the boundary cellulation of $\clconv(p)$. From this it is easy to see that $\pt\widetilde\clconv(p_i)$ converges to $\pt \widetilde\clconv(p)$. This and the remark above on the behavior of the cosmological time imply that $\CT$ is unbounded from above on $\pt\clconv(p)$. But by Lemma~\ref{lem:boudary polyhedral}, the latter is a convex Cauchy surface, particularly compact, which shows the contradiction.
\end{proof}

Consider a level surface of the cosmological time. It is a $C^1$-submanifold, hence the induced metric is induced by a Riemannian structure. Its universal covering can be put in the future cone of the origin of Minkowski space, so that it is locally a graph of a function over the hyperboloid. We may locally approximate this graph by smooth strictly convex ones, whose induced metric has negative sectional curvature. Then, following the proof of Proposition~3.12 in \cite{filslu}, the induced distances locally uniformly converge, and hence the local CAT(0) property is preserved. We note that by a finer intrinsic investigation one can deduce that the induced metric is actually $C^{1,1}$ with curvature defined almost everywhere, see~\cite{KP}. We will apply the following lemma to the level surfaces.

%The following lemma will be used for level sets of the cosmological time. Indeed, they are $C^1$ by properties of the cosmological time, so the induced distance is a distance (and not only a pseudo-distance). 
%Also, their universal covering can be put in the future cone of the origin of Minkowski space, so that they are locally graphs of functions over the hyperboloid. We may locally approximate those graphs by smooth strictly convex ones, whose induced metric has negative sectional curvature. Then, following the proof of Proposition~3.12 in \cite{filslu}, the induced distances locally uniformly converge, and hence the local CAT(0) property is preserved.

\begin{lemma}
\label{curve1}
For every $\e, A>0$ and every CAT(0)-metric $d$ on $S$ of area at most $A$, for every $p \in S$ and every complete geodesic $\psi$ through $p$ in $(S, d)$ there exists a closed homotopically non-trivial Lipschitz curve $\chi$ through $p$ in $(S, d)$ consisting from three subsequent arcs $\chi_1$, $\chi_2$ and $\chi_3$ such that the arcs $\chi_1$ and $\chi_3$ are subarcs of $\psi$ and we have $\len(\chi_1) \leq A/\e$, $\len(\chi_2)\leq \e,$ $\len(\chi_3)\leq A/\e$.
\end{lemma}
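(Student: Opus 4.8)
The plan is to construct $\chi$ by following the geodesic $\psi$ for a bounded time, waiting for it to come back close to $p$, and then closing it up with a short arc, while extracting a homotopically non-trivial loop along the way. First I would observe that since $(S,d)$ has area at most $A$, a ball $B(p,\e/2)$ of radius $\e/2$ has area at most $A$, so the maximal number of disjoint such balls is controlled; more precisely, if the geodesic ray $\psi|_{[0,t]}$ (parametrized by arc length, with $\psi(0)=p$) does not re-enter $B(p,\e)$, then the $\e/2$-neighborhood of the \emph{image} of this ray is embedded along the ray transversally enough that its area grows linearly in $t$ until the ray returns near $p$. The key quantitative point: there must exist a time $t_1 \le A/\e$ at which $\psi(t_1) \in \ol{B(p,\e)}$ with $t_1>0$. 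Indeed, otherwise the tube of radius $\e/2$ around $\psi|_{[0,A/\e]}$ would be embedded and have area of order $(A/\e)\cdot \e = A$, in fact strictly larger once one is careful with the endpoints, contradicting the area bound. (This is the standard systolic-type pigeonhole estimate; CAT(0) is used to guarantee that short geodesic segments do not self-overlap and that the tube estimate is valid.)

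Next, having found such a $t_1 \le A/\e$, I set $\chi_1 := \psi|_{[0,t_1]}$, so $\len(\chi_1)\le A/\e$, and I let $\chi_2$ be the geodesic segment in $(S,d)$ from $\psi(t_1)$ back to $p$; since $\psi(t_1)\in \ol{B(p,\e)}$ we get $\len(\chi_2)\le \e$. This produces a closed Lipschitz curve $\chi_1 * \chi_2$ through $p$ of length at most $A/\e + \e$. If this loop is already homotopically non-trivial, I am essentially done after absorbing a trivial $\chi_3$ (a constant arc, or, to respect the statement literally, a degenerate subarc of $\psi$ of length $0$, or I simply split $\chi_1$ into two subarcs of $\psi$ and insert $\chi_2$ in the middle — which is exactly the required form $\chi_1,\chi_2,\chi_3$ with $\chi_1,\chi_3$ subarcs of $\psi$). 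If instead $\chi_1*\chi_2$ is contractible, then the geodesic ray $\psi$ restricted to $[t_1,\infty)$ is, up to the contractible correction $\chi_2$, homotopic rel endpoints issues to the same as starting a fresh geodesic ray from $p$; so I repeat the argument on $\psi|_{[t_1,\infty)}$ to find a further return time $t_2$ with $t_2 - t_1 \le A/\e$, giving a second loop. Iterating, I obtain a sequence of loops; because $\pi_1(S)$ is non-trivial and a complete geodesic on a closed hyperbolic-type surface cannot stay in the trivial homotopy sector forever (a geodesic that returns to arbitrarily small neighborhoods of $p$ only along null-homotopic loops would force the universal cover geodesic to be bounded, contradicting completeness and that $d$ is a genuine metric with $\pi_1 S$ acting cocompactly by isometries), after finitely many steps I reach a return time where the resulting loop is homotopically non-trivial. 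At that stage $\chi_3 := \psi$ restricted to the final stretch before the return is the third arc, $\chi_1$ is the initial stretch of $\psi$, and $\chi_2$ the short closing segment; after relabelling I can arrange the non-trivial loop to have exactly the form $\chi_1 * \chi_2 * \chi_3$ with $\len(\chi_1),\len(\chi_3)\le A/\e$ and $\len(\chi_2)\le \e$.

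The main obstacle I anticipate is making the tube/pigeonhole argument fully rigorous in the CAT(0) setting: I need to control how much area is swept by the $\e/2$-tube around a geodesic arc that has not yet returned to $B(p,\e)$, and to argue that such a tube is embedded (or at least that its area is bounded below linearly in the arc length), using only CAT(0) geometry and the area bound — not smoothness or a curvature bound. Here the relevant tool is the fact that in a CAT(0) space geodesics between points at distance $\ge \e$ cannot come within $\e/2$ of each other at interior parameters without the configuration degenerating, which lets me cover the tube by a controlled number of balls of definite area-to-diameter ratio; alternatively, one can bypass this by a direct systole-type estimate, noting that the systole of $(S,d)$ is bounded above in terms of $A$ (short loops exist by the area bound), and then comparing $\psi$ to a shortest non-trivial loop. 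The homotopy-type finiteness step (that the iteration terminates with a non-trivial loop) is then the cleaner part, following from cocompactness of the $\pi_1 S$-action on the universal cover and completeness of $\psi$.
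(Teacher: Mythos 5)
Your first key claim --- that the tube/area argument forces $\psi$ to \emph{return to} $\ol{B(p,\e)}$ within time $A/\e$ --- does not follow and is in fact false in general. The pigeonhole argument carried out downstairs in $S$ only shows that the $\e/2$-tube around $\psi|_{[0,A/\e]}$ cannot be embedded, i.e.\ that $\psi$ has a near self-intersection somewhere on this interval (two parameters $s_1<s_2$ with $d(\psi(s_1),\psi(s_2))\le\e$). Neither of these points need be anywhere near $p$; for a generic geodesic through $p$ the first return time to a small ball around $p$ can be far larger than $A/\e$, even though near self-intersections occur much earlier. This error cascades: you cannot close the loop at $p$ after a single step, and your fallback of iterating when the loop is contractible both destroys the required bound $\len(\chi_1)\le A/\e$ (you would accumulate $kA/\e$ after $k$ iterations, or else the two subarcs of $\psi$ would no longer emanate from $p$) and is only asserted, not proved, to terminate.

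The paper avoids both problems by lifting to the universal cover $\tilde S$. There one bounds the area of the Fermi tube $R(\tilde p,l,\e)$ around a length-$l$ segment of the lift $\tilde\psi$ from below by $\e l$ via the Fermi Lemma of Alexander--Bishop --- exactly the quantitative CAT(0) input you flagged as the hard point, but imported as a citable black box rather than rederived. By equivariance $R(\gamma\tilde p,l,\e)=\gamma R(\tilde p,l,\e)$ for $\gamma\in\pi_1 S$, so if the tubes over \emph{all} preimages $\tilde p'\in\tilde P$ of $p$ were pairwise disjoint at $l=A/\e$, the tube would inject into $S$ and contradict $\area(S)\le A$. Hence for some $l_0\le A/\e$ the tube at $\tilde p$ meets the tube at a \emph{different} preimage $\tilde p'$. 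Concatenating a subarc of $\tilde\psi$ from $\tilde p$, a short arc of length $\le\e$ across the overlap, and a subarc of the lift $\tilde\psi'$ through $\tilde p'$ back to $\tilde p'$, and projecting to $S$, gives a closed curve through $p$ in exactly the shape $\chi_1*\chi_2*\chi_3$ with the stated length bounds, and non-triviality is automatic because the lifted endpoints $\tilde p\ne\tilde p'$ are distinct. This is the idea your proposal is missing: encoding non-triviality as ``the lift connects two distinct preimages of $p$'' simultaneously replaces the false ``returns to $B(p,\e)$'' claim and the unbounded iteration.
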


We refer to~\cite{BH} for a discussion of the notion of area in general CAT(0)-spaces, but remark again that we will apply Theorem~\ref{curve1} only to metrics coming from $C^{1,1}$-Riemannian structures.
It is possible that some arcs are empty, particularly $\psi$ might be a closed geodesic of length at most $A/\e$, in which case it may be taken as $\chi$. It is possible that the arcs $\chi_1$ and $\chi_3$ overlap as subarcs of $\psi$.

\begin{proof}
Let $\tilde P$ be the full preimage of $p$ in $\tilde S$ and $\tilde p \in \tilde P$ be some lift. The lift $\tilde d$ of $d$ is a (globally) CAT(0)-metric. Let $\tilde \psi$ be the lift of $\psi$ through $\tilde p$ and $R(\tilde p, l, \e)$ be the set of points $r$ at distance at most $\e/2$ from $\tilde \psi$ (on both sides) such that the closest point to $r$ on $\tilde\psi$ is at distance at most $l/2$ from $\tilde p$.
There is a natural surjective map from $R(\tilde p, l, \e)$ to an $(\e\times l)$-Euclidean rectangle $E$: we send the corresponding segment of $\tilde\psi$ to the segment $I$ of $E$, connecting the midpoints of the sides of length $\e$, and then extend it to a map from $R(\tilde p, l, \e)$ to $E$ so that the nearest-point projections to $\tilde\psi$ and  to $I$ commute. The Fermi Lemma, \cite[Lemma 3.1]{alexander-bishop}, states that this map is 1-Lipschitz, hence 
 $\area(R(\tilde p, l, \e))\geq \e l$. 
%In particular, any point in such a $R$ can be parameterized by a $\e\times l$ Euclidean rectangle, and by Fermi Lemma \cite[Lemma 3.1]{alexander-bishop},  
 %$\area(R(\tilde p, l, \e))\geq \e l$. 

  We denote by $l_0$ the minimal number such that $R(\tilde p, l_0, \e)$ intersects $R(\tilde p', l_0, \e)$ for some $\tilde p' \in \tilde P\setminus\{\tilde p\}$. Due to the area bound, $l_0 \leq A/\e$.

Since the sets $R(\tilde p, l_0, \e)$ and $R(\tilde p', l_0, \e)$  intersect, there exists a curve $\tilde \chi$ connecting $\tilde p$ with $\tilde p'$, consisting of three subsequent arcs $\tilde \chi_1$, $\tilde \chi_2$ and $\tilde \chi_3$ such that $\tilde\chi_1 \subset \tilde \psi$, $\len(\tilde \chi_1)\leq l_0$; $\len(\tilde\chi_2)\leq \e$; $\tilde \chi_3 \subset \tilde \psi'$, where $\tilde \psi'$ is the lift of $\psi$ passing through $\tilde p'$, $\len(\tilde\chi_3)\leq l_0$. The projection of $\tilde \chi$ to $S$ is the desired curve $\chi$.
\end{proof}

For a spacetime $\Omega^+(\rho, \tau)$ the flow $\xi^t$ is minus the gradient flow of the cosmological time.

\begin{lemma}[{\cite[Proposition 6.1]{BBZ}}]
\label{bbz1}
Let $\chi: [a,b] \rar \Omega^+(\rho, \tau)$ be a spacelike Lipschitz curve contained in the past of the level set $L_1$ of the cosmological time, and $\chi'$ be its projection to $L_1$ along the flow $\xi^t$. Then $\len(\chi)\leq \len(\chi')$.
\end{lemma}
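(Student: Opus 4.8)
The plan is to exploit the structure of the cosmological time: its gradient flow carries level sets to level sets by a normal flow that expands towards the future, so pushing $\chi$ forward onto $L_1$ along the flow cannot decrease Riemannian lengths.

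First I would recall the structure of $\CT$ on $\tilde\Omega^+(\rho,\tau)$. For each $p$ the supremum defining $\CT(p)$ is realised by a future timelike geodesic segment from the initial singularity to $p$, along which $\CT$ is proper time; these geodesics are exactly the integral curves of $-\nabla\CT$, and $\langle\nabla\CT,\nabla\CT\rangle=-1$. Parametrising the domain by the flow, take $\Psi\colon L_1\times\R_{>0}\rar\tilde\Omega^+$ with $\Psi(y,\alpha)$ the point of cosmological time $\alpha$ on the gradient line through $y\in L_1$ (this is essentially the map $\Phi_{\tilde U}$ of Lemma~\ref{fbundle}). Since $\partial_\alpha\Psi=-\nabla\CT$ is a future unit timelike \emph{geodesic} field and is orthogonal to the level sets, a direct computation gives the Gaussian normal form $\Psi^*g=-d\alpha^2+\bar g_\alpha$, where $\bar g_\alpha$ is the induced Riemannian metric of $L_\alpha$ pulled back to $L_1$. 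Consequently $\partial_\alpha\bar g_\alpha=2\,\II_\alpha$, with $\II_\alpha$ the second fundamental form of $L_\alpha$ in the future normal direction, and by \cite{bonsante} every level surface $L_\alpha$ is future convex, so $\II_\alpha\geq0$. Hence $\alpha\mapsto\bar g_\alpha$ is a nondecreasing family of quadratic forms on $L_1$.

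With this in hand the estimate is immediate. Lift $\chi$ and write the lift as $\tilde\chi(s)=\Psi(y(s),\alpha(s))$; being contained in the past of $L_1$ forces $\alpha(s)=\CT(\tilde\chi(s))\leq 1$ for all $s$. The projection of $\chi$ to $L_1$ along the flow $\xi^t$ lifts to $\tilde\chi'(s)=\Psi(y(s),1)$, so using $\Psi^*g=-d\alpha^2+\bar g_\alpha$ and vanishing of the cross term,
\[|\dot{\tilde\chi}(s)|^2_g=\bar g_{\alpha(s)}(\dot y,\dot y)-\dot\alpha^2\ \leq\ \bar g_{\alpha(s)}(\dot y,\dot y)\ \leq\ \bar g_1(\dot y,\dot y)=|\dot{\tilde\chi}'(s)|^2_g,\]
where the last inequality uses $\alpha(s)\leq 1$ and the monotonicity of $\bar g_\alpha$. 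Since $\chi$ is spacelike both sides are nonnegative a.e., and integrating their square roots gives $\len(\chi)\leq\len(\chi')$.

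The main obstacle is purely one of regularity: $L_\alpha$ is only $C^1$ in general, so $\II_\alpha$ and the identity $\partial_\alpha\bar g_\alpha=2\II_\alpha$ need a weak interpretation. I would handle this as elsewhere in the paper, approximating $L_1$ from the future side by smooth strictly convex Cauchy surfaces — whose normal flows are smooth and genuinely expanding — and passing to the limit in the length inequality (cf. the discussion preceding Lemma~\ref{curve1}), or equivalently invoking the $C^{1,1}$-regularity of the induced metric, so that $\II_\alpha\geq0$ holds almost everywhere. Apart from this technical point the argument is a formal computation in normal coordinates, and one does not even need flatness of the ambient space beyond what guarantees convexity of the level sets.
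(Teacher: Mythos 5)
The paper itself gives no proof of Lemma~\ref{bbz1}: it is quoted verbatim from \cite[Proposition 6.1]{BBZ}, so there is no internal argument to compare against. Your proposal must therefore be judged on its own.

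The core computation you set up is sound and is the natural geometric route: in Gaussian normal coordinates $\Psi^*g=-d\alpha^2+\bar g_\alpha$ one has $|\dot{\tilde\chi}|^2=\bar g_{\alpha}(\dot y,\dot y)-\dot\alpha^2$, and monotonicity of $\alpha\mapsto\bar g_\alpha$ plus $\alpha(s)\leq 1$ gives the pointwise comparison with $|\dot{\tilde\chi}'|^2=\bar g_1(\dot y,\dot y)$. The identification $\partial_\alpha\bar g_\alpha=2\II_\alpha$ and the sign of $\II_\alpha$ from future convexity are also correct. However, the regularity step is a genuine gap, not just a footnote, and your proposed fix does not yet close it. First, $\CT$ is a priori only $C^1$, so $\bar g_\alpha$ is only continuous in $\alpha$ and $\partial_\alpha\bar g_\alpha=2\II_\alpha$ has no pointwise meaning. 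Second, the approximation you suggest (replace $L_1$ by a smooth strictly convex Cauchy surface $\Sigma$ and run its normal flow) changes the flow: the geodesic normal flow of $\Sigma$ is not $\xi^t$, so the projected curve changes, and the past normal geodesics of a generic smooth convex surface focus, so one must also control the domain on which that flow foliates, and then show convergence of both the flow and the projected lengths as $\Sigma\to L_1$ in the merely $C^1$ topology. None of this is automatic. Third, the regularity result you invoke — $C^{1,1}$-regularity of the \emph{induced metric} on a level set, cited via \cite{KP} elsewhere in the paper — is not the same as $C^{1,1}$-regularity of $\CT$ itself; the latter is what your argument needs (so that $\II_\alpha$ exists almost everywhere and the a.e.\ inequality $\partial_\alpha\bar g_\alpha\geq 0$ integrates). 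That $\CT$ is $C^{1,1}$ is indeed a known fact from \cite{bonsante}, and invoking it explicitly would patch your argument, but it has to be cited correctly.

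It is worth noting that a more elementary route avoids second fundamental forms altogether and requires no regularity of the level sets. Writing a lift as $\tilde\chi(s)=r(s)+\alpha(s)N(s)$ with $r(s)$ the foot point on the initial singularity and $N(s)$ the future unit direction, one shows directly that the Lorentzian square of a chord increases under the projection to $L_1$. The two inputs are: $\langle N(s_1),N(s_2)\rangle\leq -1$ since both are future unit timelike, and $\langle r(s_1)-r(s_2),\,N(s_1)\rangle\geq 0$ since the plane through $r(s_1)$ orthogonal to $N(s_1)$ is a support plane of $\tilde\Omega^+$. An elementary expansion of $\langle \tilde\chi'(s_1)-\tilde\chi'(s_2),\,\tilde\chi'(s_1)-\tilde\chi'(s_2)\rangle-\langle \tilde\chi(s_1)-\tilde\chi(s_2),\,\tilde\chi(s_1)-\tilde\chi(s_2)\rangle$ then reduces the claim to $(\alpha(s_1)-\alpha(s_2))^2\geq 0$. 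Differentiating along the Lipschitz curve gives the a.e.\ speed comparison with no appeal to the regularity of $\CT$ beyond $C^1$ and to no approximation. This is closer in spirit to what the flat-Minkowski setting offers for free, and is likely closer to the argument in \cite{BBZ}.
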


The following lemma uses a decomposition of $\Omega^+(\rho, \tau)$ into \emph{thin blocks} and \emph{solid blocks}. Roughly speaking, the intersection of the level sets $L_a$ of the cosmological time with the thin blocks recovers the  geodesic lamination $\mc L$, which is obtained as the image of the Gauss map of the spacelike part of $\partial\tilde\Omega^+(\rho, \tau)$.  The connected components of $L_a\setminus \mc L$ are the intersections of $L_a$ with the solid blocks. As we only need to use the following lemma, we do not need to define further those blocks, and we refer to \cite{BBZ} for more details.

\begin{lemma}[{\cite[Proposition 6.2]{BBZ}}]
\label{bbz2}
Let $\chi: [a,b] \rar \Omega^+(\rho, \tau)$ be a spacelike Lipschitz curve and $\chi'$ be its projection to $L_1$ along the flow $\xi^t$. Assume that $\chi$ is contained in a single block of the canonical decomposition. Then
\[\exp(-\len(\chi'))\leq\frac{\CT(\chi(a))}{\CT(\chi(b))}\leq\exp(\len(\chi'))~,\]
\[\len(\chi)\leq \len(\chi')\exp(\len(\chi'))\CT(\chi(a))~.\]
\end{lemma}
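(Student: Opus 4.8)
The assertion is local along $\chi$: by the structure theory of the canonical decomposition (Bonsante \cite{bonsante}, as used in \cite{BBZ}), a neighbourhood of $\chi$ inside a single block is isometric to an open subset of $\R^{2,1}$ of one of two model types --- the future cone $I^+(o)$ of a point $o$ (\emph{solid block}), or the future $I^+(\sigma)$ of a spacelike geodesic $\sigma$ (\emph{thin block}) --- and this isometry carries $\CT$, the flow $\xi^t$ and the level set $L_1$ to the corresponding model objects. So the plan is to prove the two inequalities by a direct computation in coordinates adapted to the flow in each of these two models, and then invoke the structure theory to glue the local statements into the global one.

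In the solid block take $o$ to be the origin, so $\CT(p)=\sqrt{-\la p,p\ra}$, $L_1$ is the unit hyperboloid $\H^2$, the flow lines are the radial rays, and the projection to $L_1$ is $p\mapsto p/\CT(p)$. Writing $c(t):=\CT(\chi(t))$ one has $\chi(t)=c(t)\,\chi'(t)$ with $\chi'(t)\in\H^2$, hence $\la\chi',\chi'\ra=-1$, $\la\chi',\dot{\chi'}\ra=0$, and therefore $\la\dot\chi,\dot\chi\ra=-\dot c^2+c^2|\dot{\chi'}|^2$, where $|\dot{\chi'}|$ is measured in the induced metric of $L_1$. As $\chi$ is spacelike, $\la\dot\chi,\dot\chi\ra\geq0$ a.e., whence the two pointwise bounds $\bigl|\tfrac{d}{dt}\ln c(t)\bigr|\leq|\dot{\chi'}(t)|$ and $|\dot\chi(t)|\leq c(t)\,|\dot{\chi'}(t)|$. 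Integrating the first over $[a,b]$ gives $\bigl|\ln\CT(\chi(b))-\ln\CT(\chi(a))\bigr|\leq\len(\chi')$, which is the first inequality; integrating it over $[a,t]$ gives $c(t)\leq\CT(\chi(a))\,e^{\len(\chi')}$, and substituting this into the second bound and integrating yields $\len(\chi)\leq\CT(\chi(a))\,e^{\len(\chi')}\,\len(\chi')$. This is a short Gronwall-type argument.

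The thin block is treated the same way, now in coordinates where $\sigma$ is the $x_1$-axis, $\CT(p)=\sqrt{p_0^2-p_2^2}$, the flow lines are the timelike lines meeting $\sigma$ orthogonally, and $L_1=\{p_0^2-p_2^2=1\}$ is a flat strip. Parametrizing $\chi(t)=\bigl(c(t)\cosh\phi(t),\,v(t),\,c(t)\sinh\phi(t)\bigr)$ and its flow-projection $\chi'(t)=\bigl(\cosh\phi(t),\,v(t),\,\sinh\phi(t)\bigr)$, one computes $\la\dot\chi,\dot\chi\ra=-\dot c^2+c^2\dot\phi^2+\dot v^2$ and $\len(\chi')=\int_a^b\sqrt{\dot\phi^2+\dot v^2}\,dt$; spacelikeness gives $\dot c^2\leq c^2\dot\phi^2+\dot v^2$ and $|\dot\chi|^2\leq c^2\dot\phi^2+\dot v^2$, and the two inequalities follow exactly as before, using $c\geq1$ on the block to absorb the $\dot v^2$ term against $c^2\dot v^2$. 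One then has to check that a curve contained in a single block falls into one of the two model cases on all of $[a,b]$, and that the model charts really do intertwine the local and global $\CT$, $\xi^t$ and $L_1$.

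The main obstacle is precisely the thin block. In contrast with the solid block, the direction of $L_1$ along $\sigma$ is \emph{not} rescaled by the cosmological time, so the comparison of $\chi$ with $\chi'$ is not uniform in $\CT$: at small cosmological time the ratio between $|\dot\chi|$ (or $|\tfrac{d}{dt}\ln c|$) and $|\dot{\chi'}|$ would blow up, and one genuinely needs to know that the cosmological time is bounded below on the block (equivalently, that $\chi$ lies in the future of $L_1$) --- this is where the precise geometry of the blocks from \cite{bonsante} is used, and it parallels the role of the hypothesis in Lemma~\ref{bbz1}. A secondary, essentially bookkeeping, difficulty is to extract from \cite{bonsante,BBZ} exactly the statement that the block charts are compatible with the flow, the cosmological time and $L_1$, so that the two model computations assemble into the claim.
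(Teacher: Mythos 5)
This lemma is cited in the paper verbatim from Barbot--B\'eguin--Zeghib \cite[Proposition 6.2]{BBZ}; the paper gives no proof of it, so there is nothing in-paper to compare against. Your overall strategy --- reduce to model computations in solid and thin blocks and integrate a Gronwall-type pointwise bound --- is the natural one, and the solid-block case is correct as you wrote it. The thin-block case, however, has a genuine gap that you flag but then misdiagnose.

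In the thin block your two pointwise bounds, $|\dot c/c|\leq|\dot{\chi'}|$ and $|\dot\chi|\leq c\,|\dot{\chi'}|$, come from $\dot c^2\leq c^2\dot\phi^2+\dot v^2$ and $|\dot\chi|^2\leq c^2\dot\phi^2+\dot v^2$ only after dominating $\dot v^2$ by $c^2\dot v^2$, which requires $c=\CT\geq 1$. You write that this is justified because ``$\chi$ lies in the future of $L_1$'' and that this ``parallels the role of the hypothesis in Lemma~\ref{bbz1}.'' Both claims are off: the hypothesis of Lemma~\ref{bbz1} is that $\chi$ lies in the \emph{past} of $L_1$ (so $\CT\leq 1$), and in the one place the paper invokes Lemma~\ref{bbz2}, namely the proof of Lemma~\ref{curve2}, the Cauchy surface $\Sigma$ is normalized so that $\sup_\Sigma\CT=1$, hence $\CT\leq 1$ along the arcs in question. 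A proof requiring $\CT\geq 1$ therefore does not reach the regime in which the lemma is actually used. And the inequalities as transcribed in the paper really cannot hold for an arbitrary spacelike curve in a thin block without some further constraint: take $\chi(t)=(c_0,t,0)$, $t\in[0,T]$, running along the singular geodesic at constant $\CT=c_0$; then $\chi'(t)=(1,t,0)$, $\len(\chi)=\len(\chi')=T$, and the second inequality becomes $T\leq Te^Tc_0$, i.e. $c_0\geq e^{-T}$, which fails for $c_0$ small. So the missing ingredient is a hypothesis or normalization present in \cite[Proposition 6.2]{BBZ} but absent from the quoted statement; it is not something you can extract from the block geometry from \cite{bonsante} as you propose, and you should go back to the exact formulation in \cite{BBZ} before attempting a self-contained proof.
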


The proof of the next Lemma is based on the ideas of Barbot--Beguin--Zeghib in their proof of Theorem 3.5 in \cite{BBZ}.

\begin{lemma}
\label{curve2}
For every $\e>0$ and every compact set $U \subset \mc H^1(\rho)$ there exists a constant $\alpha>0$ such that for every convex Cauchy surface $\Sigma\subset \Omega^+(\rho, \tau)$, $\tau \in U$, if the systole of $\Sigma$ is greater than $\e$, then the infimum of the cosmological time $\CT_{\rho,\tau}$ over $\Sigma$ is at least $\alpha$. 
\end{lemma}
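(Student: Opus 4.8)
The plan is to argue by contradiction, combining the geometric estimates of Lemmas~\ref{curve1},~\ref{bbz1} and~\ref{bbz2} with the compactness of the level sets $L_\alpha$ established in the preceding discussion and in~\cite{bonsante, barbot-fillastre}. Suppose the statement fails: then there exist $\e>0$, a sequence of cocycles $\tau_i \in U$ and convex Cauchy surfaces $\Sigma_i \subset \Omega^+(\rho,\tau_i)$ with systole greater than $\e$, but with $\alpha_i := \inf_{\Sigma_i} \CT_{\rho,\tau_i} \to 0$. Up to a subsequence, $\tau_i$ converges to some $\tau \in U$, and — as recalled in the proof of Lemma~\ref{minmax} — the domains $\tilde\Omega^+(\rho,\tau_i)$ together with their cosmological times converge to $\tilde\Omega^+(\rho,\tau)$ and $\CT_{\rho,\tau}$ in the appropriate sense. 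Since the systole of $\Sigma_i$ is bounded below but the surface penetrates arbitrarily close to the initial singularity, I want to produce on $\Sigma_i$ a short homotopically nontrivial loop, contradicting the systole bound.

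First I would normalize: rescaling is not available here since $\rho$ is fixed, so instead I work at cosmological time close to $\alpha_i$. Pick $p_i \in \Sigma_i$ with $\CT(p_i) = \alpha_i$ (or within a factor $2$ of it), and let $L_{\alpha_i}^i$ be the corresponding level set in $\Omega^+(\rho,\tau_i)$, which is a $C^1$ spacelike convex Cauchy surface, compact, with induced $C^{1,1}$-metric that is CAT(0) (as recalled just before Lemma~\ref{curve1}). The key point is a uniform \emph{area} bound on $L_{\alpha_i}^i$: because the $\tau_i$ lie in the compact set $U$, the geometry of the initial singularities is controlled, and the area of the level set $L_c$ of cosmological time is a continuous function of $(\tau, c)$ that one can bound uniformly for $\tau \in U$ and $c \le \alpha_0$ for any fixed $\alpha_0$; concretely one can extract this from the convergence statements used in Lemma~\ref{minmax} together with Lemma~\ref{minmax} itself (which bounds the supremum of $\CT$ over a Cauchy surface once its infimum is bounded). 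Apply Lemma~\ref{curve1} to $(L_{\alpha_i}^i, d_i)$ with this uniform area bound $A$ and with the given $\e$: through the point $p_i' \in L_{\alpha_i}^i$ lying on the same $\xi^t$-flow line as $p_i$, and through any complete geodesic, we obtain a homotopically nontrivial loop $\chi_i' \subset L_{\alpha_i}^i$ of length at most $2A/\e + \e =: M$, a bound independent of $i$.

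Now I would transport $\chi_i'$ back down to $\Sigma_i$ along the flow $\xi^t$ to get a homotopically nontrivial loop $\chi_i$ in $\Sigma_i$, and estimate its length from above using the block decomposition of $\Omega^+(\rho,\tau_i)$ and Lemmas~\ref{bbz1} and~\ref{bbz2}. Splitting $\chi_i'$ into finitely many pieces each contained in a single block of the canonical decomposition — the number of pieces being controlled by the combinatorics of the geodesic lamination $\mc L_{\tau_i}$, and in the thin blocks one uses Lemma~\ref{bbz1} (length non-increasing under downward projection) while in the solid blocks one uses the quantitative estimate of Lemma~\ref{bbz2}, which gives $\len(\chi) \le \len(\chi')\exp(\len(\chi'))\CT(\chi(a))$ — the total length of $\chi_i$ is bounded by (a universal function of $M$) times $\alpha_i$, hence tends to $0$. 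This contradicts $\sys(\Sigma_i) > \e$. The main obstacle I anticipate is precisely making the area bound on $L_{\alpha_i}^i$ uniform over $\tau \in U$ and controlling the number of blocks the loop $\chi_i'$ meets, since a priori a short curve could still weave through many thin blocks; this is handled by observing that $\chi_i'$ can be taken with $\chi_1, \chi_3$ geodesic subarcs (so each meets each leaf of $\mc L$ at most once in a controlled way) and that the middle arc $\chi_2$ is short, so that the contributions from the solid blocks it crosses are each proportional to the local cosmological time, which near $\Sigma_i$ is of order $\alpha_i$.
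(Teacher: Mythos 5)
The central misconception is your claim that ``rescaling is not available here since $\rho$ is fixed.'' Rescaling $\R^{2,1}$ by a scalar $\lambda>0$ fixes the linear holonomy $\rho$ and merely replaces the cocycle $\tau$ by $\lambda\tau$, so it is available, and the paper uses it: after bounding $\sup_\Sigma \CT$ from above via Lemma~\ref{minmax}, one rescales so that $\Sigma$ lies entirely in the \emph{past} of $L_1$. Your decision to avoid rescaling drives you to apply Lemma~\ref{curve1} on the level set $L^i_{\alpha_i}$ at the \emph{minimum} cosmological time, which lies (weakly) in the past of $\Sigma_i$. But Lemma~\ref{bbz1} is a one-way estimate: projecting a curve downward, toward the initial singularity, does not increase length. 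Pushing $\chi_i'$ upward from $L^i_{\alpha_i}$ to $\Sigma_i$ goes in the wrong direction, and there is no reason for $\len(\chi_i)$ to be small. (Incidentally, since the area of a level set is monotone in $\CT$ and $\alpha_i \to 0$, the area of $L^i_{\alpha_i}$ goes to zero; the uniform-area concern is therefore not the real obstacle, the direction of projection is.)

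A second genuine gap concerns the geodesic fed into Lemma~\ref{curve1}: you take ``any complete geodesic,'' and the patch that the arcs $\chi_1,\chi_3$ ``meet each leaf of $\mc L$ at most once'' is false --- a geodesic arc of length $2A/\e$ on a compact hyperbolic surface can cross leaves of a lamination arbitrarily many times. The paper avoids this entirely by taking $\psi'$ to be the simple geodesic that is the intersection of $L_1$ with the \emph{thin block containing} $p'$; then $\chi_1',\chi_3'$ are subarcs of a single lamination leaf and hence lie in one thin block, so Lemma~\ref{bbz2} applies to them directly. Also note that the paper does not bound the total length of the projected loop. It observes that the middle arc $\chi_2$ stays short on $\Sigma$ (Lemma~\ref{bbz1}), so the systole bound forces one of $\chi_1,\chi_3$ to be long on $\Sigma$ while short on $L_1$, and the second inequality of Lemma~\ref{bbz2}, applied to that single arc inside its single thin block with endpoint $p$, turns this length comparison directly into the lower bound $\CT(p)\ge \e^2/(4A\exp(2A/\e))$. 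Finally, the case where $p$ lies in a solid block is not addressed in your proposal; the paper handles it separately by travelling through one solid block of length at most $\diam(S,h)$ to reach the thin-block case.
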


\begin{proof}
Due to Lemma~\ref{minmax}, it is enough to consider only those $\Sigma$, for which the supremum of the cosmological time is bounded from above by some $\beta>0$. By applying scaling, we can assume that $\beta=1$.  Support functions of $\tilde L_1$  converge uniformly when cocycles converge \cite[Lemma 4.11]{barbot-fillastre}. This implies convergence of the areas of $L_1$-surfaces \cite[Lemma 2.12]{BF}. Hence, since $U$ is compact, there exists $A>0$ depending only on $U$ such that for every $\tau \in U$, we have $\area(L_1(\tau)) \leq A$, where $L_1(\tau)$ is the $L_1$-surface in $\Omega^+(\rho, \tau)$. Fix now some $\tau \in U$.

Consider a Cauchy surface $\Sigma \subset \Omega^+(\rho, \tau)$ in the past of $L_1$ of systole $\geq \e$. 
Pick $p \in \Sigma$ first in a thin block of $\Omega^+(\rho, \tau)$. The intersection of this block with $L_1$ is a simple geodesic $\psi'$. 
Let $p'$ be the projection of $p$ to $L_1$ along the flow $\xi^t$.  We apply Lemma~\ref{curve1} to $p'$, $\psi'$ and $\e/2$ and get a curve $\chi' \subset L_1$, consisting of three arcs $\chi_1', \chi_2'$ and $\chi_3'$ with $\len(\chi_1'), \len(\chi_3') \leq \frac{2A}{\e}$ and $\len(\chi'_2)\leq \frac{\e}{2}.$

Let $\chi$ be the $\xi^t$-projection of $\chi'$ to $\Sigma$. We transfer the decomposition of $\chi'$ into the arcs to the corresponding decomposition of $\chi$. Due to Lemma~\ref{bbz1}, we have $\len(\chi_2)\leq\len(\chi'_2)\leq \frac{\e}{2}$. Thus, as by assumption the systole of $\Sigma$ is greater than $\e$, one of the arcs $\chi_1$ or $\chi_3$ has length $\geq \frac{\e}{2}$. Applying Lemma~\ref{bbz2} to this arc, we obtain
\[\CT(p) \geq \frac{\e^2}{4A\exp(\frac{2A}{\e})}~.\]

Now take $p \in \Sigma$ in a solid block of $\Omega^+(\rho, \tau)$, and let $p'$ its $\xi^t$-projection onto $L_1$. 
Set $\delta:=\diam(S, h)$. There exists a point $q' \in L_1$ in a thin block of $\Omega^+(\rho, \tau)$ and a Lipschitz arc $\chi \subset L_1$ of length $\leq \delta$ connecting $p'$ and $q'$ and belonging to the solid block of $p'$ except the endpoint at $q'$.
Let $q$ be the projection of $q'$ along $\xi^t$. Then Lemma~\ref{bbz2} gives us a bound
\[\CT(p)\geq \CT(q)\exp(-\delta)~.\]

Putting it all together, it follows that for every $p \in \Sigma$ we have
\[\CT(p)\geq \exp(-\delta) \frac{\e^2}{4A\exp(\frac{2A}{\e})}~.\]
\end{proof}

Finally, we can give the proof of Proposition \ref{compact}.

\begin{proof}[Proof of Proposition \ref{compact}]
Lemma~\ref{cohomconverge} shows that $\tau_i$ subconverge to some $\tau \in \mc H^1(\rho)$. Let $\CT_i$ be the cosmological time of $\Omega^+(\rho, \tau_i)$.
Suppose that for all $v \in V$ the cosmological time $\CT_i(f_i(v))$ is unbounded from above.
From \cite[Lemma 7.4]{bonsante}, the area of the level sets of the cosmological time is an increasing function of the cosmological time. As the area of convex Cauchy surfaces is non-decreasing with respect to the inclusion \cite[Lemma 3.24]{BF},
the areas of $\pt \CH(\p_i)$ grow to infinity, which is a contradiction. Hence there is $v \in V$ such that $\CT_i(f_i(v))$ is uniformly bounded from above. Lemma~\ref{minmax} implies that the supremum of $\CT_i$ over $\pt \CH(\p_i)$ is uniformly bounded from above, particularly for all $v \in V$, $\CT_i(f_i(v))$ is uniformly bounded from above. Lemma~\ref{curve2} shows that it is also uniformly bounded from below, by continuity of the systole, see e.g. \cite[Lemma 2.20]{roman-hyp} (in the reference hyperbolic cone-metrics are considered instead of flat ones, but the proof of the statement remains exactly the same).
It follows that up to a subsequence, $\p_i$ converge to $\p \in\P(\rho, V)$. We need to observe that the vertices do not collapse. For every distinct $v, w \in V$, all segments between $f_i(v)$ and $f_i(w)$ are spacelike. By applying the reversed triangle inequality for the Minkowski plane, we see that the intrinsic distance between $f_i(v)$ and $f_i(w)$ in $\pt\clconv(\p)$ is bounded from above by the infimum of lengths of these segments. Hence, the vertices do not collapse, and we obtain $\p \in\ol{\P}_c(\rho, V)$. Since $d \in \mc \M_-(S, V)$, we have $\p \in \P_c(\rho, V)$.
\end{proof}

\section{Proof of Theorem~\ref{thmII'}}

\subsection{The Teichm\"uller tangent vector field associated to a flat metric}\label{sec:lift}

Recall from Lemma~\ref{open} that $\P_c(S, V)$ is an analytic manifold of dimension $12 \mathbf{g}-12+3n$. 
Recall from \eqref{eq:def hat pi} the fibration $\mu: \P(S, V) \rar T\mc T(S)$. We will also denote by $\mu$ its restriction to $\P_c(S, V)$, which is also a fibration.
The composition    with   the natural projection  $p:T \mc T(S)\rar \mc T(S)$ gives a fibration $\pi: \P_c(S, V) \rar \mc T(S)$. We have that  $\pi^{-1}(\rho)=\P_c(\rho,V)$ is a manifold of dimension $6 \mathbf{g}-6+3n$. 
By Theorem~\ref{thm:1}, the restriction of $\mc I$  to $\pi^{-1}(\rho)$ is a $C^1$ diffeomorphism, and in turn, by Lemma~\ref{diff}, the map $\mc I: \P_c(S, V)\to \mc \M_-(S, V)$ is a $C^1$ submersion.

It also follows that $\mc I^{-1}(d)$ is  a $C^1$ submanifold of $\P_c(S, V)$ of dimension $6\mathbf{g}-6$. From Theorem~\ref{thm:1},   the intersection $\mc I^{-1}(d)\cap \pi^{-1}(\rho)$ is a point, and is transverse.
In turn, $\pi|_{\mc I^{-1}(d)}$ is a $C^1$ diffeomorphism.

 The restriction of  $\mu :\P_c(S, V)\to T\mc T(S) $ to $\mc I^{-1}(d)$ is a  $C^1$ injective immersion.  Moreover,  $\mc I^{-1}(d)$  is the image of the $C^1$ section $(\pi|_{\mc I^{-1}(d)})^{-1}\circ p$, hence $\mu|_{\mc I^{-1}(d)}$ is proper, and in turn an embedding.

\begin{notation}\label{not: m X}
We denote by $\p_{d}(\rho)=\mc I^{-1}(d)\cap \pi^{-1}(\rho)$ and by $X_d(\rho)=\mu(\p_{d}(\rho))$ the resulting tangent vector of Teichm\"uller space at the point $\rho$.
\end{notation}

We consider $\p_d$ as a map $\mc T(S) \rar \P_c(S, V)$.

\begin{lemma}\label{lem:Xd C1}
The vector field $X_d$ over Teichm\"uller space is $C^1$.
\end{lemma}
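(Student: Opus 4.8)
The plan is to exhibit $X_d$ as a composition of maps whose regularity is already established in the discussion preceding the statement, so that the $C^1$ claim follows formally. By Notation~\ref{not: m X}, $X_d=\mu\circ\p_d$, where $\p_d\colon\mc T(S)\to\P_c(S,V)$ sends $\rho$ to the unique point of $\mc I^{-1}(d)\cap\pi^{-1}(\rho)$ (existence and uniqueness being exactly Theorem~\ref{thm:1}). First I would observe that $\p_d$ factors as $\iota\circ(\pi|_{\mc I^{-1}(d)})^{-1}$, where $\iota\colon\mc I^{-1}(d)\hookrightarrow\P_c(S,V)$ is the inclusion. Since $\mc I$ is a $C^1$-submersion (Lemma~\ref{diff} combined with Theorem~\ref{thm:1}), the level set $\mc I^{-1}(d)$ is a $C^1$-submanifold and $\iota$ is $C^1$; and since $\mc I^{-1}(d)$ meets each fiber $\pi^{-1}(\rho)$ transversely in a single point, the restriction $\pi|_{\mc I^{-1}(d)}\colon\mc I^{-1}(d)\to\mc T(S)$ is a $C^1$-diffeomorphism, so its inverse is $C^1$. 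Hence $\p_d$ is $C^1$.

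Next I would record that $\mu\colon\P_c(S,V)\to T\mc T(S)$ is real-analytic, in particular $C^1$: it is the restriction to the open subset $\P_c(S,V)$ of the map of \eqref{eq:def hat pi}, which is induced, through the analytic quotient constructions of Section~\ref{sec:markedpoints}, from the analytic projection $\widetilde{\P}(S,V)\subset T\mc R(S)\times(\R^{2,1})^V\to T\mc R(S)\to T\mc T(S)$. Composing, $X_d=\mu\circ\p_d$ is $C^1$. Finally one verifies that $X_d$ is a genuine vector field, i.e.\ a section of the projection $p\colon T\mc T(S)\to\mc T(S)$: since $p\circ\mu=\pi$ and $\p_d(\rho)\in\pi^{-1}(\rho)$, we have $p(X_d(\rho))=\rho$, so $X_d(\rho)\in T_\rho\mc T(S)$.

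I do not expect a real obstacle here. All the substantive work has already been done: the $C^1$-regularity of the induced metric map (Lemma~\ref{diff}), the infinitesimal rigidity giving non-degeneracy of $d\mc I_\rho$ (Proposition~\ref{infrig}), and the compactness (Proposition~\ref{compact}) assembling into the fibrewise diffeomorphism of Theorem~\ref{thm:1}; the present lemma only packages these via the inverse function theorem. The one point warranting a line of justification is the $C^1$-regularity of $\mu$, which is immediate from the way the analytic atlas on $\P(S,V)$ was built in Section~\ref{sec:markedpoints}.
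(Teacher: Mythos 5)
Your proposal is correct and follows essentially the same route as the paper: the paper's one-line proof ("immediate from its definition, as $\p_d$ is also $C^1$") rests exactly on the discussion preceding the lemma, where $\mc I$ is shown to be a $C^1$-submersion, $\pi|_{\mc I^{-1}(d)}$ a $C^1$-diffeomorphism via transversality of the fibrewise intersection, and $\mu$ analytic, all of which you have spelled out. Your extra paragraph checking that $X_d$ is a section of $T\mc T(S)\to\mc T(S)$ is a harmless addition.
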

\begin{proof}
That is immediate from its definition, as $\p_d$ is also $C^1$.
\end{proof}

%\subsection{Lifting analytic curves from the Tecihmuller space}\label{sec:lift}

We also need to check a stronger regularity property for $\p_d$.

\begin{notation} Let $\ms C$ be a cellulation of $(S, V)$, denote by $\P_c(S, \ms C)$ the space of marked spacetimes $\p$ such that the  face cellulation of $\clconv(\p)$ is exactly $\ms C$. Denote by $\ol{\P}_c(S, \mc C) \subset \P_c(S, V)$ the subspace of marked spacetimes $\p$ such that $\mc C$ subdivides the face cellulation.
\end{notation}

 We note that $\P_c(S, \ms C)$, $\ol{\P}_c(S, \ms C)$ are subsets  of an analytic variety $\mc A(\ms C) \subset \P_c(S, V)$ determined by the coplanarity conditions for the quadruples of points belonging to the same face. These subsets are open and closed respectively with respect to the usual topology. (Here by an analytic variety we mean the intersection of finitely many analytic submanifolds. Note that we cannot guarantee that the intersection is transverse, and hence that $\mc A(\ms C)$ is an analytic submanifold.)

\begin{lemma}\label{lem:lift}
Let $\rho_t$, $t \in (-\e,\e)$, be an analytic curve in $\mc T(S)$. Then there exists a cellulation $\ms C$ of $(S, V)$, which is a subdivision of the face cellulation  $\ms C_0$ of the boundary of $\CH (\p_d(\rho_0))$, and $\e_0$, $0<\e_0<\e$, such that for all $t \in (0, \e_0]$ we have $\p_d(\rho_t) \in \P_c(S, \ms C)$. Moreover, $\p_d(\rho_t)$ is analytic over $[0, \e_0]$.
\end{lemma}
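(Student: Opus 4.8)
The plan has two parts: first show that the face cellulation of $\CH(\p_d(\rho_t))$ is constant for small $t>0$ (it may well jump at $t=0$, and this is the whole difficulty), and then read off analyticity. Write $\ms C_0$ for the face cellulation of $\CH(\p_d(\rho_0))$ and list the finitely many cellulations of $(S,V)$ subdividing $\ms C_0$ as $\ms C_0=\ms C^{(1)},\dots,\ms C^{(m)}$. Since $\p_d$ is $C^1$ (see the beginning of this section) and $\rho_t$ is continuous, $t\mapsto\p_d(\rho_t)$ is continuous, so Lemma~\ref{subdiv} gives $\e_1\in(0,\e)$ and a neighbourhood $W$ of $\p_d(\rho_0)$ in $\P_c(S,V)$ with $\p_d(\rho_t)\in W$ for $|t|\le\e_1$ and with the face cellulation of every point of $W$ equal to one of the $\ms C^{(j)}$.

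\textbf{A local analytic substitute for $\mc I$.} For a triangulation $\ms T$ of $(S,V)$ refining $\ms C_0$ and $\p'\in W$ close enough to $\p_d(\rho_0)$, the polyhedral surface with combinatorics $\ms T$ over $\p'$ is an embedded, possibly non-convex, Cauchy surface, and its edge lengths give an analytic map $\mc I_{\ms T}\colon W\to\R^{E(\ms T)}$, which we view as a chart of $\M_-(S,V)$ near $d:=\mc I(\p_d(\rho_0))$; this is the construction already used in Section~\ref{sec:iskhakov}. It agrees with $\mc I$ exactly on $\ol\P_c(S,\ms T)$, and by Claim~\ref{vertical} in the proof of Lemma~\ref{diff} (equality, at a configuration with planar faces, of the differentials of the edge-length functions of all triangulations refining a face) one has $d\mc I_{\ms T}=d\mc I$ at $\p_d(\rho_0)$. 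Since $\mc I$ is a $C^1$ submersion and $\pi|_{\mc I^{-1}(d)}$ a $C^1$ diffeomorphism (Theorem~\ref{thm:1}), and $\dim\P_c(S,V)=\dim\M_-(S,V)+\dim\mc T(S)$, the map $(\mc I,\pi)$ is a local $C^1$-diffeomorphism at $\p_d(\rho_0)$; hence $(\mc I_{\ms T},\pi)$, being analytic with invertible differential there, is a local analytic diffeomorphism, with a local analytic inverse $\Psi_{\ms T}$. The key consequence: whenever $\ms T$ subdivides the face cellulation of a $\p_d(\rho_t)$ near $\p_d(\rho_0)$, the $\ms T$-surface over it is $\partial\CH(\p_d(\rho_t))$, so $\mc I_{\ms T}(\p_d(\rho_t))=d$, and therefore $\p_d(\rho_t)=\Psi_{\ms T}(d,\rho_t)$, an analytic function of $t$.

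\textbf{Stabilising the combinatorics.} Fix for each $j$ a triangulation $\ms T_j$ refining $\ms C^{(j)}$ (hence refining $\ms C_0$) and put $Z_j:=\{t\in(-\e_1,\e_1):\ \p_d(\rho_t)\in\ol\P_c(S,\ms C^{(j)})\}$, i.e. the set of $t$ for which $\ms C^{(j)}$ subdivides the face cellulation of $\p_d(\rho_t)$. On $Z_j$ the previous paragraph (with $\ms T=\ms T_j$) gives $\p_d(\rho_t)=h_j(t)$, where $h_j(t):=\Psi_{\ms T_j}(d,\rho_t)$ is analytic; conversely $h_j(t)\in\ol\P_c(S,\ms C^{(j)})$ forces $t\in Z_j$, so $Z_j=\{t:\ h_j(t)\in\ol\P_c(S,\ms C^{(j)})\}$. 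Near $\p_d(\rho_0)$ the set $\ol\P_c(S,\ms C^{(j)})$ is cut out by finitely many analytic equations (coplanarity of the vertices of each face of $\ms C^{(j)}$) and finitely many analytic inequalities (convexity of the resulting polyhedral surface), so $Z_j$ is semianalytic near $0$; in particular its germ at $t=0^+$ is either empty or a half-interval. The face cellulation of $\p_d(\rho_t)$ equals exactly $\ms C^{(i)}$ iff $t\in Z_i$ but $t\notin Z_l$ for every $\ms C^{(l)}$ strictly coarser than $\ms C^{(i)}$; these $m$ sets partition $(-\e_1,\e_1)$, each is semianalytic near $0$, so exactly one has nonempty germ at $0^+$. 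It determines a cellulation $\ms C:=\ms C^{(k)}$ and a number $\e_0\in(0,\e_1)$ with the face cellulation of $\p_d(\rho_t)$ equal to $\ms C$ for all $t\in(0,\e_0]$; by construction $\ms C$ subdivides $\ms C_0$, so $\p_d(\rho_t)\in\P_c(S,\ms C)$ for $t\in(0,\e_0]$.

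\textbf{Analyticity, and the main obstacle.} Now take the triangulation $\ms T$ of the second paragraph to refine $\ms C$, hence $\ms C_0$. For every $t\in[0,\e_0]$ the face cellulation of $\p_d(\rho_t)$ — namely $\ms C$ for $t>0$ and $\ms C_0$ for $t=0$ — is subdivided by $\ms T$, so the last sentence of the second paragraph yields $\p_d(\rho_t)=\Psi_{\ms T}(d,\rho_t)$ for all $t\in[0,\e_0]$ (shrinking $\e_0$ so that $\rho_t$ remains in the domain of $\Psi_{\ms T}$), and the right-hand side is analytic in $t$. The step I expect to be the real obstacle is the stabilisation: the analytic varieties $\mc A(\ms C^{(j)})$ underlying $\ol\P_c(S,\ms C^{(j)})$ need not be smooth at $\p_d(\rho_0)$, which blocks a direct implicit-function-theorem argument on them; the remedy is to push $\ol\P_c(S,\ms C^{(j)})$ through the ambient local analytic diffeomorphism $(\mc I_{\ms T_j},\pi)$, turning the question into one about semianalytic subsets of an interval, where the one-sided germ at $0$ is elementary to control.
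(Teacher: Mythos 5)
Your proof is correct and closely parallels the paper's own argument: both rest on (i) the finite family of analytic edge-length maps $\mc I_{\ms T}$ attached to triangulations refining $\ms C_0$, (ii) Claim~\ref{vertical} together with the nondegeneracy of $d\mc I$ to obtain a local analytic inverse of $(\mc I_{\ms T},\pi)$, and (iii) the resulting analytic curves $\Psi_{\ms T}(d,\rho_t)$. The one genuine organizational difference is the stabilization step. The paper indexes by triangulations $\ms T_i$, produces the analytic curve $\p_{i,t}=\pi^{-1}(\rho_t)\cap\mc I_i^{-1}(d)$, and then shows by an iterative coarsening argument (if $\p_{i,t}$ escapes $\P_i(S,\ms C_i')$ arbitrarily close to $0$, the curve also lies in the analytic variety of a strictly coarser cellulation; repeat, and this must terminate) that each $\p_{i,t}$ eventually has constant combinatorics; it then identifies $\p_d(\rho_t)$ with one of the $\p_{i,t}$ on subintervals. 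You instead index by cellulations $\ms C^{(j)}$ refining $\ms C_0$, rewrite $Z_j=\{t:\ \p_d(\rho_t)\in\ol\P_c(S,\ms C^{(j)})\}$ as $\{t:\ h_j(t)\in\ol\P_c(S,\ms C^{(j)})\}$ with $h_j$ analytic, observe that $Z_j$ is semianalytic in the interval and hence near $0^+$ is either empty or a full one-sided interval, and finish by a pigeonhole on the partition $\{W_i\}$. This is a cleaner packaging of the same analyticity input and avoids the iteration. Two small points worth making explicit to close the argument: first, that near $\p_0$ the set $\ol\P_c(S,\ms C^{(j)})$ really is cut out by finitely many analytic equalities (coplanarity) \emph{and} analytic inequalities (future-convexity along the edges of $\ms C^{(j)}$), since the coplanarity equations alone do not suffice (a triangulated quadrilateral gives no equation but the surface can still be non-convex along the chosen diagonal); and second, the converse inclusion in $Z_j=\{t:h_j(t)\in\ol\P_c(S,\ms C^{(j)})\}$ uses that $h_j(t)\in\ol\P_c(S,\ms C^{(j)})$ forces $\mc I(h_j(t))=\mc I_{\ms T_j}(h_j(t))=d$ and $\pi(h_j(t))=\rho_t$, hence $h_j(t)=\p_d(\rho_t)$ by Theorem~\ref{thm:1}.
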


\begin{proof}
Denote $\p_d(\rho_t)$ by $\p_t$. 
Let $\ms T_1, \ldots, \ms T_r$ be all the triangulations refining the boundary cellulation $\ms C_0$ of $\CH(\p_0)$ and let $U$ be a neighborhood of $\p_0$ in $\P_c(S, V)$. For every $i=1, \ldots, r$, an element $\p=(\rho, \tau, f) \in U$ and a triangulation $\ms T_i$ determine a polyhedral surface $\Sigma_i$ in $\Omega^+(\rho, \tau)$, which, provided that $U$ is small enough, is a spacelike Cauchy surface. We also assume that $U$ is small enough so that in any such surface $\Sigma_i$ any edge of $\ms C_0$ has nonzero exterior dihedral angle, so the combinatorics of $\Sigma_i$ is a subdivision of $\ms C_0$. For a celluation $\ms C$ subdividing $\ms C_0$, we denote by $\P_{i}(S, \ms C)$ the set of such triples $(\rho, \tau, f)\in U$ that the resulting $\Sigma_i$ has combinatorics exactly $\ms C$ (which may be empty).
We denote by $\mc I_i: U \rar \mc \M_-(S, V)$ the map sending $\p \in U$ to the induced metric on $\Sigma_i$ (the induced metrics are flat with negative singular curvatures provided that $U$ is sufficiently small). The maps $\mc I_i$ are analytic. It follows from Claim~\ref{vertical} in the proof of Lemma~\ref{diff} that the differentials of all $\mc I_i$ at $\p_0$ coincide with the differential of $\mc I$. In turn,  Lemma~\ref{infrig} implies that $\mc I_i^{-1}(d)$ are $(6g-6)$-dimensional analytic submanifolds around $\p_0$ transverse to $\pi$. We denote by $\p_{i, t}$ $\pi^{-1}(\rho_t)\cap \mc I_i^{-1}(d)$, which is then an analytic curve, provided that $\e$ is sufficiently small. By possibly decreasing $\e$, we assume that it is small enough so that $\p_{i, t}$ and $\p_t$ belong to $U$ for all $t \in (-\e, \e)$ and for all $i$.

We first claim that for every $i$ there exists a cellulation $\ms C_i$ of $(S, V)$ (which is a subdivision of $\ms C_0$) and $\e_i$, $0<\e_i<\e$, such that for all $t \in (0, \e_i]$ we have $\p_{i,t} \in \P_{i}(S, \ms C_i)$. (We remark that a cellulation $\ms C_i$ satisfying this condition is clearly unique.) Indeed, fix $i$, then because of the choice of $U$ there exists $\ms C_i'$ refining $\ms C_0$ and a sequence of positive numbers $t_j'$ converging to zero such that $\p_{i, t_j'} \in \P_{i}(S, \ms C_i') \subset \mc A(\ms C_i')$. 
%By $\mc A(\ms C_i')$ we denote the analytic variety in $U$ defined by the conditions of vertices in the faces of $\ms C_i'$ to be coplanar. 
Since $\p_{i,t}$ is an analytic curve, it belongs to $\mc A(\ms C_i')$.
%We first claim that the first statement of the lemma holds for all curves $P_{i, t}$.
%\b{ As the face decomposition of $\Sigma_i$ is locally a subdivision of $\ms C_0$, there is a cellulation $\ms C$  such that $P_0$   is an accumulation point for         a sequence $(P_{i,t_j})_j$
%of points of $P_i(S, C)$, where $t_j$ is a sequence of positive numbers converging to zero. Since $P_{i,t_j}$ belongs to $\mc A(\ms C)$ and}
% Since $P_{i,t}$ is an analytic curve, it belongs to $\mc A(\ms C)$.
The set $\P_{i}(S, \ms C_i')$ is an open subset of $\mc A(\ms C_i')$, and $\p_{i, 0}=\p_0$ belongs to the closure of $\P_{i}(S, \ms C_i')$. If $\p_{i, t}$ does not belong to $\P_{i}(S, \ms C_i')$ for all small enough positive $t$, then for every $t_j'$ consider the maximal open segment in $[0, \e]$ containing $t_j'$ such that $\p_{i,t}$ is in $\P_{i}(S, \ms C_i')$ over this segment (because $\P_{i}(S, \ms C_i')$ is open in $\mc A(\ms C_i')$, such a segment exists). Let $t_j''$ be the leftmost endpoint of this segment. Then $t_j''$ is a non-increasing sequence converging to zero, and up to passing to a subsequence there exists a cellulation $\ms C''_i$ such that $\p_{i, t_j''} \in \P_{i}(S, \ms C''_i)$. Moreover, by construction of $\{t''_j\}$, the cellulation $\ms C_i'$ is a strict subdivision of the cellulation $\ms C''_i$. Then the curve $\p_{i,t}$ belongs to the analytic variety $\mc A(\ms C''_i)$. If  $\p_{i, t}$ does not belong to $\P_{i}(S, \ms C''_i)$ for all small enough positive $t$, we can continue this process with a strictly larger cellulation. Since we cannot pass to a strictly larger cellulation infinitely many times, eventually we stop. Thus, we get a cellulation $\ms C_i$ such that $\p_{i, t}$ belongs to $\P_{i}(S, \ms C_i)$ for all small enough positive $t$. (We remark that by uniqueness, our resulting cellulation $\ms C_i$ actually coincides with our starting cellulation $\ms C_i'$.)

We note now that for each $t$ there is $i$ such that $\p_t=\p_{i,t}$, by Lemma~\ref{subdiv}. Since any two analytic curves either coincide, or intersect in a discrete set of points, we see that $[-\e/2, \e/2]$ can be decomposed into finitely many closed segments, over which $\p_t=\p_{i,t}$ for some fixed $i$ for this segment. Thereby, the second statement of the lemma holds, and the first statement for the curve $\p_t$ follows from the statement above for the curves $\p_{i,t}$.
\end{proof}

\subsection{The total length function}

\subsubsection{Definition}\label{sec def total leght}

We saw in the preceding section how to associate to any flat metric $d$ on $S$ with negative singular curvatures a vector field $X_d$ over $\mc T(S)$. We now associate to $d$ a real-valued function over $\mc T(S)$, the \emph{total length}, which can be defined in two ways. 

First, for
$\rho\in \mc T(S)$, if $\ms C$ is the face cellulation of $\CH(\p_{d}(\rho))$, with $\p_{d}(\rho)=\mc I^{-1}(d)\cap \pi^{-1}(\rho)$, then
$$\L_d(\rho):= \sum_{e\in E(\ms C)}\theta_e l_e~,$$
where $\theta_e$ is the dihedral angle at the edge $e$ of $\CH(\p_{d}(\rho))$, and $l_e$ is its $d$-length.

On the other hand, for $\rho\in \mc T(S)$, the Gauss image of $\partial\CH(\p_{d}(\rho))$ defines a balanced cellulation $(\ms G, w)$ over $(S,h)$, where $h$ is the hyperbolic metric on $S$ with holonomy $\rho$ and the weights are the dihedral angles, and we have
$$\L_d(\rho):=\sum_{e\in E(\ms G)}w_e \ell_e~,$$ 
where $\ell_e$ is the $h$-length of the edge. In fact, if $e^*$ is dual to $e$, $l_e=w_{e^*}$ and $\theta_e=\ell_{e^*}$.

We first prove in Section~\ref{sec:smoothness} that $\L_d$ is $C^1$. This result is based on the Schl\"afli Formula. In Section~\ref{sec:proper} we show that $\L_d$ is proper. After preliminary work in Section~\ref{sec:pairing}, in Section~\ref{sec:directional derivatives} we compute the first derivatives of $\L_d$. By investigating its second derivatives, in Section~\ref{sec:WP derivative} we show that it is strictly convex.

\subsubsection{Smoothness}\label{sec:smoothness}

Before discussing our function, we have to make a short sidestep to discuss a useful tool, the Schl\"afli formula in the Minkowski space. We will need it only in a very restricted case: for deformations of tetrahedra close to spacelike degenerate ones. Thereby, we will make our discussion as short as possible, and we refer to \cite{souam,rivin-schlenker2} for a more advanced exposition.

Let $T \subset \R^{2,1}$ be a tetrahedron with all spacelike faces. Particularly, all its edges are spacelike, but are divided into two types, if the exterior normals to the adjacent faces belong (1) to the same component of the unit pseudo-sphere, or (2) to the opposite components. By a (non-oriented) \emph{dihedral angle} $\theta_e$ of an edge $e$ we mean (1) the non-oriented Lorentzian angle between the exterior normals to the adjacent faces, or (2) the non-oriented Lorentzian angle between the exterior normal of one face and the opposite to the exterior normal of the other face, depending respectively if $e$ is of type (1) or (2).

Suppose that the vertices of $T$ undergo an infinitesimal deformations, inducing infinitesimal deformations $\dot \theta_e$ on all the dihedral angles. Then the Schl\"afli formula is the following result:

\begin{equation}
\label{schlafli}
\sum_{e \in E(T)}\dot\theta_e l_e=0~.
\end{equation}

This formula is proven in a much greater generality in \cite[Theorem 2]{souam}. Note, however, that Souam excludes from the consideration the degenerate case, which is what we actually need: when two adjacent faces of an oriented polyhedron have coinciding normals. However, the formula easily extends to our case by continuity. Souam makes his exclusion due to the impossibility to define coherently his \emph{signed dihedral angle} in this situation. However, when the branches of values of dihedral angles are given to every edge during a deformation, the variational formula anyway holds. We note that other proofs can be extracted from the articles \cite{victor}, \cite{rivin-schlenker2}, though the desired result is also not explicitly formulated there.

Let  $\Sigma$ be a spacelike polyhedral Cauchy surface in $\Omega^+(\rho,\tau)$, not necessarily convex. For an edge $e$ of $\Sigma$, its \emph{oriented dihedral angle} is the non-oriented Lorentzian angle between the future normals of the two adjacent sides, taken with plus sign if the edge is future-convex, and with minus sign otherwise. Then we define \emph{the total mean curvature} of $\Sigma$ as
\[M(\Sigma)=\sum_{e \in E(\Sigma)}\theta_e l_e~,\]
where the sum is over all edges of $\Sigma$. We now use the Schl\"affli formula to show

\begin{lemma}
\label{diff2}
$\L_d$ is $C^1$.
\end{lemma}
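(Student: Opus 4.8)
The plan is to show that $\L_d$ is $C^1$ by proving that, locally on $\mc T(S)$, it can be written as a sum of two pieces: one coming from the edges of the convex hull that persist under small deformations, which will visibly be $C^1$ by the results of Section~\ref{sec: the realization map}, and one coming from ``virtual'' edges of a refining triangulation, whose contribution to $\L_d$ I will show vanishes to first order by the Schl\"afli formula.

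More precisely, first I would fix $\rho_0 \in \mc T(S)$ and let $\ms C_0$ be the face cellulation of $\CH(\p_d(\rho_0))$. By Lemma~\ref{lem:lift} applied along analytic curves through $\rho_0$ (and then, as is standard, extended to a neighborhood by a partition-into-cells argument as in the proof of Lemma~\ref{diff}), there is a triangulation $\ms T$ refining $\ms C_0$ so that, for $\rho$ near $\rho_0$, the polyhedral surface $\Sigma(\rho) \subset \Omega^+(\rho, \tau)$ built from $\ms T$ on the configuration $\p_d(\rho)$ is an embedded (possibly non-convex, at the virtual edges) spacelike Cauchy surface, varying $C^1$ in $\rho$. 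On $\Sigma(\rho)$ both the edge lengths $l_e$ and the oriented dihedral angles $\theta_e$ (as defined just above Lemma~\ref{diff2}) are $C^1$ functions of $\rho$: the edge lengths are analytic by the discussion in Section~\ref{sec: the realization map}, and the dihedral angles are $C^1$ by Lemma~\ref{diff} together with the $C^1$ dependence of $\p_d$ (Lemma~\ref{lem:lift}, or directly Lemma~\ref{lem:Xd C1}). Since the virtual edges of $\ms T$ have dihedral angle $0$ at $\rho_0$ (they lie inside faces of $\ms C_0$), we may write
\[
\L_d(\rho) = M(\Sigma(\rho)) = \sum_{e \in E(\ms T)} \theta_e(\rho)\, l_e(\rho)
\]
near $\rho_0$, where the true edges of $\ms C_0$ contribute the genuine total length and the virtual edges contribute terms $\theta_e l_e$ with $\theta_e(\rho_0) = 0$.

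Now I would compute the differential of $\L_d$ at $\rho_0$. Differentiating the displayed sum and using $\theta_e(\rho_0) = 0$ on the virtual edges, the terms $\dot\theta_e\, l_e$ on the virtual edges survive but the terms $\theta_e\, \dot l_e$ there vanish; on the true edges of $\ms C_0$ both terms survive. The key point is that the Schl\"afli formula \eqref{schlafli}, applied face-by-face to each tetrahedron (equivalently, to each face of $\ms C_0$ triangulated by $\ms T$, viewed as a degenerate or near-degenerate spacelike polyhedron) and summed, shows that the contribution $\sum_e \dot\theta_e\, l_e$ over \emph{all} edges of $\ms T$ that are interior to a fixed face of $\ms C_0$ is zero, because for such a configuration the surface is locally a graph over a spacelike plane and the Schl\"afli relation for the (infinitesimally thin) region bounded by that face forces the weighted sum of angle variations of its internal edges, together with boundary contributions, to cancel — more carefully, one runs Schl\"afli on each triangulated face treated as a flat polygon in $\R^{2,1}$, where all dihedral angles along interior edges are $0$ and all $\dot\theta_e$ along boundary edges agree with the ambient ones, so the interior $\sum \dot\theta_e l_e = 0$. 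After this cancellation, $d\L_d|_{\rho_0}$ is expressed purely in terms of $\dot\theta_e\, l_e + \theta_e\, \dot l_e$ over the true edges of $\ms C_0$, each of which is a $C^0$ (indeed the $l_e$ are analytic and $\theta_e$ are $C^1$, so their product is $C^1$) function of $\rho$; hence $d\L_d$ exists and depends continuously on $\rho_0$, which is exactly the assertion that $\L_d$ is $C^1$.

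The main obstacle I anticipate is the bookkeeping at the degenerate configurations: the Schl\"afli formula \eqref{schlafli} as stated is for a genuine tetrahedron, whereas at $\rho_0$ the ``tetrahedra'' refining a face of $\ms C_0$ are flat (all vertices coplanar), so one must invoke the continuity extension of Schl\"afli discussed in the paragraph preceding Lemma~\ref{diff2}, and carefully check that the angle-variation cancellation over interior edges of each face is independent of which refining triangulation $\ms T$ is chosen — this independence is precisely Claim~\ref{vertical} in the proof of Lemma~\ref{diff}, so it is available. A secondary technical point is ensuring the choice of triangulation $\ms T$ and the neighborhood of $\rho_0$ can be made uniform enough that $\Sigma(\rho)$ stays an embedded Cauchy surface with all faces spacelike and all boundary angles less than $\pi$; this was already handled in the proof of Lemma~\ref{diff} and of Lemma~\ref{lem:lift}, so I would simply cite those.
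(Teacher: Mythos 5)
Your proof breaks down at two points, both genuine.

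First, the displayed identity $\L_d(\rho)=M(\Sigma(\rho))=\sum_{e\in E(\ms T)}\theta_e(\rho)\,l_e(\rho)$ with a \emph{single fixed} refining triangulation $\ms T$ is false over a neighborhood of $\rho_0$. The left side is the total mean curvature of the actual boundary $\pt\CH(\p_d(\rho))$, while the right side is the total mean curvature of the (possibly non-convex) auxiliary surface $\Sigma(\rho)$ built from $\ms T$. These agree only when $\ms T$ happens to refine the face cellulation of $\CH(\p_d(\rho))$. Take a quadrilateral face $Q$ of $\ms C_0$ with diagonals $d_1$, $d_2$: for some nearby $\rho$ the convex hull splits $Q$ along $d_1$, for others along $d_2$, and if $\ms T$ uses $d_2$ while the convex hull uses $d_1$, then $\Sigma(\rho)$ has a concave kink along $d_2$ and $M(\Sigma(\rho))<\L_d(\rho)$. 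So you cannot reduce to a single analytic $M(\Sigma(\cdot))$; you get a family of analytic functions $M_i=M(\Sigma_i(\cdot))$, one per refining triangulation, each of which equals $\L_d\circ\pi$ only on the sub-region $\ol{\P}_c(S,\ms T_i)\cap\mc I^{-1}(d)$. The $C^1$-ness must then be proved by showing that the $dM_i$ all agree at $\p$.

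Second, your Schl\"afli step does not give what you claim. You assert that running Schl\"afli on each flat face of $\ms C_0$ ``forces the interior $\sum\dot\theta_e\,l_e=0$.'' This is not true: if you lift one vertex of a flat quadrilateral, the diagonal belonging to your chosen triangulation acquires a genuinely nonzero angle derivative $\dot\theta_d$, and there is nothing to cancel it (the opposite diagonal, whose $\dot\theta$ has the opposite sign, is simply not an edge of $\Sigma$). The Schl\"afli relation~\eqref{schlafli} is a statement about all six edges of a degenerate tetrahedron $T$, and in the paper's proof it is applied to the \emph{difference} $\dot M_1-\dot M_2$ for two triangulations related by a flip: the boundary dihedral angle differences and the two diagonal terms assemble exactly into $\pm\sum_{e\in E(T)}\dot\theta_e^T l_e=0$ (using that the edge-length differentials agree, Claim~\ref{vertical}). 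No single $\dot M_i$ has vanishing virtual-edge contribution. Combining these two corrections gives the paper's argument; as written, yours does not close.
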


\begin{proof}
Pick $\p=(\rho,\tau,f) \in \P_c(S, V)$, let $U$ be its neighborhood and $\ms C$ be its face cellulation.
Let $\ms T_1, \ldots, \ms T_r$ be all subtriangulations of $\ms C$. For every $i=1, \ldots, r$, an element $\p'=(\rho',\tau',f') \in U$ and a triangulation $\ms T_i$ determine a polyhedral surface $\Sigma_i$ in $\Omega^+(\rho', \tau')$, which, provided that $U$ is small enough, are spacelike Cauchy surfaces. Define $M_i:=M(\Sigma_i)$, which is then an analytic function over $U$. For $\p' \in (U\cap \ol{\P}_c(S, \ms T_i))$ the surface $\pt \CH(\p')$ coincides with $\Sigma_i$. Thus, over $\ol{\P}_c(S, \ms T_i)\cap \mc I^{-1}(d)$, the function $\L_d\circ \pi$ coincides with $M_i$. Hence, it is enough to show that at the initial configuration $\p$ the differentials of $M_i$ coincide.

We assume that $\ms T_1$ and $\ms T_2$ differ by a flip of one diagonal in a quadrilateral $Q$, and will show the coincidence of the differentials of $M_1$ and $M_2$. Since any two triangulations under our consideration can be connected by a sequence of such flips, this will finish the proof.

Consider the quadrilateral $Q$ as a degenerate spacelike tetrahedron $T$. Let $\dot \p$ be a tangent vector at $\p$ at the initial position. By means of local development of $T$ in $\R^{2,1}$, $\dot \p$ induces a variation on the dihedral edges of $T$. By \eqref{schlafli}, we have
\[\sum_{e \in E(T)}\dot\theta_e l_e=0~.\]

On the other hand, as shown in Claim~\ref{vertical} in the proof of Lemma~\ref{diff}, the differentials of all edge-lengths for two surfaces coincide in the initial position. It is easy to see then, that \[\dot M_1-\dot M_2=\pm \sum_{e \in E(T)}\dot\theta_e l_e=0~.\] This finishes the proof.
\end{proof}

\subsubsection{Properness}\label{sec:proper}

We prove properness of $\L_d$ in Lemma~\ref{proper2}.

\begin{lemma}
For every $\e, c_1, c_2>0$ there exists $c_3>0$ such that if a Euclidean triangle $T$ has $\area(T)\leq c_1$, all side-lengths $\geq c_2$ and one side-length $\geq c_3$, then it has an angle $\geq \pi-\e$.
\end{lemma}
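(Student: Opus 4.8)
The plan is to exploit that when the area is bounded and one side is very long, the altitude onto that long side must be very short, and then to translate the shortness of that altitude into the statement that the two angles at the endpoints of the long side are nearly $0$ (or one of them nearly $\pi$), forcing some angle of the triangle to be close to $\pi$.

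First I would set up notation. Let $T$ have vertices $A,B,C$, with $BC$ the side of length $a\geq c_3$ that lies opposite $A$, and write $b=|CA|$, $c=|AB|$, so $b,c\geq c_2$ by hypothesis. The altitude from $A$ to the line $BC$ has length $h=2\area(T)/a\leq 2c_1/c_3$, which can be made as small as we like by enlarging $c_3$. Dropping the perpendicular foot $H$ of $A$ onto the line $BC$ and using the right triangles $ABH$ and $ACH$, one gets $\sin(\angle ABC)=h/c$ and $\sin(\angle ACB)=h/b$ (this holds whether or not these angles are obtuse, since $\sin$ agrees on supplementary angles). Hence
\[
\sin(\angle ABC)\leq \frac{2c_1}{c_2c_3},\qquad \sin(\angle ACB)\leq \frac{2c_1}{c_2c_3}.
\]
Set $\delta:=\arcsin\!\big(2c_1/(c_2c_3)\big)$, which is well defined as soon as $c_3\geq 2c_1/c_2$ and tends to $0$ as $c_3\to\infty$.

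Next I would extract the large angle. Since a Euclidean triangle has at most one non-acute angle, at least one of $\angle ABC,\angle ACB$, say $\angle ACB$, is $\leq \pi/2$, so $\angle ACB\leq\delta$. If also $\angle ABC\leq\pi/2$, then $\angle ABC\leq\delta$, and $\angle BAC=\pi-\angle ABC-\angle ACB\geq \pi-2\delta$. If instead $\angle ABC>\pi/2$, then from $\sin(\angle ABC)\leq\sin\delta$ and $\pi-\angle ABC<\pi/2$ we get $\pi-\angle ABC\leq\delta$, i.e.\ $\angle ABC\geq\pi-\delta\geq\pi-2\delta$. In either case $T$ possesses an angle of measure $\geq\pi-2\delta$.

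Finally I would choose the constant. If $\varepsilon\geq\pi$ the statement is vacuous, so assume $0<\varepsilon<\pi$; then $\sin(\varepsilon/2)>0$, and taking $c_3:=2c_1/(c_2\sin(\varepsilon/2))$ gives $2c_1/(c_2c_3)=\sin(\varepsilon/2)\leq 1$, hence $\delta=\varepsilon/2$ and the angle produced above is $\geq\pi-\varepsilon$. There is no genuine obstacle in this argument; the only points requiring a little care are to handle the obtuse case correctly (the large angle need not be the one opposite the long side) and to make sure the argument of $\arcsin$ stays in $[0,1]$, both of which are taken care of by the choice of $c_3$.
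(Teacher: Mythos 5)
Your proof is correct and takes essentially the same approach as the paper: bounded area plus a long side forces a short altitude, which makes the sines of both adjacent angles small, and then the third angle (or the one obtuse adjacent angle) must be close to $\pi$. The only cosmetic difference is that the paper orders the side-lengths first so that the middle angle is automatically acute, whereas you handle the possibly-obtuse case by an explicit short case-split.
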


\begin{proof}
Let $T$ be a triangle that satisfy our conditions, with side-lengths $l_1\leq l_2\leq l_3$ and with respective angles $\alpha_1$, $\alpha_2$ and $\alpha_3$. We have
\[\sin(\alpha_2)= \frac{2\area(T)}{l_1l_3}\leq \frac{2c_1}{c_2c_3}~.\]
Since $\alpha_1 \leq \alpha_2$ by the Sine Law, we see that for sufficiently large $c_3$ we get $\alpha_1+\alpha_2 \leq \e$, as desired.
\end{proof}

\begin{cor}
\label{t1}
For every $\e>0$ there exists $c=c(\e,d)>0$ such that if a geodesic triangulation $\ms T$ of $(S, V, d)$ has an edge of length $\geq c$, then it has an angle $\geq \pi-\e$.
\end{cor}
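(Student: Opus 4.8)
The plan is to read Corollary~\ref{t1} off the preceding Lemma by treating each face of a geodesic triangulation as an honest Euclidean triangle. Since $d$ is a flat metric all of whose cone-points lie in $V$, the interior of every face $F$ of a geodesic triangulation $\ms T$ of $(S,V,d)$ is flat and cone-point-free, and $\partial F$ is a concatenation of three geodesic arcs; developing $F$ into the Euclidean plane therefore identifies it isometrically with a Euclidean triangle whose side-lengths are the $d$-lengths of the three corresponding edges of $\ms T$. The faces of $\ms T$ tile $(S,d)$, so $\area(F)\le \area(S,d)=:c_1$, a constant depending only on $d$.

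The only real content is a uniform lower bound $c_2=c_2(d)>0$ for the $d$-length of an arbitrary edge of an arbitrary geodesic triangulation of $(S,V,d)$. An edge of $\ms T$ is a non-constant simple geodesic arc whose interior is disjoint from $V$, so it is of one of two kinds. If its two endpoints are distinct points of $V$, its length is at least $\min_{v\ne w\in V}d(v,w)>0$. If it is a loop based at a single point $v\in V$, then, because there is $r_0=r_0(d)>0$ such that for every $v\in V$ the ball $B(v,r_0)$ is isometric to a flat cone (respectively to a flat disk when $v$ is a smooth point) --- this is a standard uniform non-degeneracy property of a fixed flat cone-metric --- the loop cannot be contained in $B(v,r_0)$: a non-constant simple geodesic loop based at the apex of a flat cone, developed into the cone, would be a straight segment from the apex to itself, hence degenerate. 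Thus any such loop has length at least $2r_0$. Taking $c_2$ to be the minimum of $\min_{v\ne w}d(v,w)$ and $2r_0$ yields the desired bound.

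With $c_1$ and $c_2$ fixed, apply the preceding Lemma with these constants and the given $\e$ to obtain $c_3=c_3(\e,c_1,c_2)=c_3(\e,d)>0$, and set $c:=c_3$. Suppose a geodesic triangulation $\ms T$ of $(S,V,d)$ has an edge $e$ with $d$-length $\ge c$. Pick a face $F$ of $\ms T$ having $e$ among its three boundary sides. Then $F$, viewed as a Euclidean triangle, has $\area(F)\le c_1$, all side-lengths $\ge c_2$, and one side-length (that of $e$) $\ge c_3$; by the Lemma $F$ has an angle $\ge\pi-\e$, which is an angle of $\ms T$. This proves the corollary.

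The step I expect to require the most care is the uniform lower bound $c_2$: one must confirm that every edge of a triangulation with vertex set $V$ is indeed either a positive-distance chord between two points of $V$ or a non-constant simple geodesic loop --- never a constant or monogon-bounding loop --- and invoke the standard fact that a fixed flat cone-metric admits a uniform scale $r_0$ below which all metric balls around the cone-points are flat cones. Everything else is bookkeeping.
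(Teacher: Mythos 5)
Your proof is correct and takes the route the paper clearly intends: develop each face of $\ms T$ isometrically to a Euclidean triangle (Gauss--Bonnet forces the interior angles to sum to $\pi$, so the development is an embedding) and invoke the preceding Lemma with $c_1=\area(S,d)$ and a uniform lower bound $c_2(d)$ on edge lengths. The paper states the corollary without proof, and the substantive ingredient you supply --- the lower bound $c_2$, in particular the treatment of loop edges based at a single vertex of $V$ via the cone-neighborhood radius $r_0$ --- is a genuinely necessary piece of care, since a loop need not have length $\geq\min_{v\neq w}d(v,w)$ (this is the bound used without further comment later in the proof of Lemma~\ref{finitetriang}, where loops can certainly occur, e.g.\ when $|V|=1$).
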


\begin{lemma}
\label{t2}
For every $c_1, c_2>0$ there exists $\e$, $0<\e<\pi$, such that if a hyperbolic polygon $P$ has $\area(P)\geq c_1$ and $\diam(P) \leq c_2$, then all its angles are $\geq \e$.
\end{lemma}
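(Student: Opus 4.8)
The plan is to trap the polygon inside a thin circular sector whenever one of its angles is small, and read off an area bound from that. Fix a convex hyperbolic polygon $P$ (the faces of the cellulations we work with are convex, so this is the only case we need; for a general simple polygon the statement is false, as one sees by cutting a thin slit into a large disk). The idea is that if $P$ has an angle too close to $0$ at some vertex $v$, then convexity confines $P$ to the wedge of that opening at $v$, while $\diam(P)\le c_2$ confines $P$ to the ball of radius $c_2$ about $v$, and the intersection of the two has small area.

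First I would fix a vertex $v$ of $P$ and let $\alpha$ be the interior angle of $P$ at $v$; we may assume $\alpha<\pi$, since otherwise there is nothing to prove at that vertex. Because $v\in P$ and $\diam(P)\le c_2$, every point of $P$ lies within distance $c_2$ of $v$, so $P\subseteq\overline{B(v,c_2)}$. Because $P$ is convex it is the intersection of the closed half-planes determined by its edge-lines; among these are the two half-planes bounded by the complete geodesics $\ell_1,\ell_2$ carrying the two edges of $P$ incident to $v$, each chosen to contain $P$. Their intersection is a wedge $W$ with apex $v$, whose opening angle at $v$ equals exactly $\alpha$ (this is the one point needing a little care: it is $\alpha$, not its supplement $\pi-\alpha$, precisely because $P\subseteq W$ near $v$ and $W$ is convex). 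Hence $P\subseteq W\cap\overline{B(v,c_2)}$, a hyperbolic circular sector of opening $\alpha$ and radius $c_2$.

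The only computation is the area of this sector. In geodesic polar coordinates centered at $v$ the hyperbolic metric is $ds^2=dr^2+\sinh^2 r\,d\theta^2$, so the area form is $\sinh r\,dr\,d\theta$, and
\[
\area(P)\ \le\ \area\big(W\cap\overline{B(v,c_2)}\big)\ =\ \int_0^{\alpha}\!\int_0^{c_2}\sinh r\,dr\,d\theta\ =\ \alpha\,(\cosh c_2-1).
\]
Together with $\area(P)\ge c_1$ this gives $\alpha\ge c_1/(\cosh c_2-1)$. Applying this to every vertex of $P$, it suffices to take $\e:=\min\{\pi/2,\ c_1/(\cosh c_2-1)\}$: then $0<\e<\pi$ and every angle of $P$ is $\ge\e$. (If $c_1/(\cosh c_2-1)\ge\pi$, then no polygon satisfies the hypotheses and the statement is vacuous, so this choice is harmless.)

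I do not expect a genuine obstacle: the content is the elementary observation that a convex polygon with a small angle at a vertex is confined to a thin angular sector, combined with the standard area formula for a hyperbolic sector. The only things to handle with care are the appeal to convexity in the containment $P\subseteq W$ and the verification that the sector's apex angle is the interior angle $\alpha$ itself rather than its supplement.
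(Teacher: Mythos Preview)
Your proof is correct and follows essentially the same approach as the paper's: trap the polygon in a circular sector of radius $c_2$ at a vertex and use that the sector area tends to zero with the opening angle. You make the argument more explicit by computing the sector area as $\alpha(\cosh c_2-1)$ and giving a closed-form $\e$, whereas the paper simply appeals to monotonicity; you also rightly flag that convexity is being used for the containment $P\subseteq W$, an assumption the paper leaves implicit.
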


\begin{proof}
Note that $P$ belongs to a circular sector of radius $c_2$ centered at any its vertex, with sides of the sector containing the sides of the polygon adjacent to the vertex. If an angle of the vertex goes to zero, then the area of such sector monotonically decreases to zero. This implies the desired conclusion.
\end{proof}

We now introduce few more notions. Let $d \in \mc \M_-(S, V)$. We will say that a geodesic cellulation $\ms C$ of $(S, V, d)$ is \emph{strict} if all the faces are (isometric to) strictly convex Euclidean polygons. By strictly convex we mean that the angle of each polygon at every point of $V$ is strictly less than $\pi$. Let $\theta: E(\ms C) \rar \R_{\geq 0}$ be a weight function. We will say that $(\ms C, \theta)$ is \emph{$H$-admissible}, if $\ms C$ is strict, all weights are positive, and for every vertex there exists a hyperbolic polygon with edges determined by $\theta$ and with angles complementary to the respective face angles of $\ms C$. This polygon is called the \emph{dual polygon} to $v \in V$. We will say that $(\ms C, \theta)$ is \emph{weakly $H$-admissible} if it is a subdivision of an $H$-admissible $(\ms C', \theta')$ (without adding new vertices and with new edges having zero weight). The \emph{total length} of the weight function is $\sum_{e \in E(\mc C)} \theta_el_e$, where $l_e$ is the length of $e$ in $d$.

 By the dualization construction, every $H$-admissible weighted geodesic cellulation determines a hyperbolic metric $h$. When $\ms C$ is given, we will be calling a function $\theta$ (weakly) $H$-admissible, if it makes a (weakly) $H$-admissible pair with $\ms C$.

\begin{lemma}
\label{finitetriang}
For every $c>0$ there are finitely many triangulations $\ms T$ of $(S, V, d)$ supporting a weakly $H$-admissible weight function of total length $\leq c$.
\end{lemma}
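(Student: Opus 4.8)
The plan is to reduce the statement to a uniform bound on edge‑lengths and then exploit the geometry of the dual hyperbolic polygons. I first record two purely metric facts about the fixed cone surface $(S,V,d)$: since $d$ has negative singular curvatures, all cone angles exceed $2\pi$, so the universal cover $(\tilde S,\tilde d)$ is $\mathrm{CAT}(0)$. Hence geodesics between points of $\tilde S$ are unique, and by proper discontinuity of the $\pi_1S$‑action there are, for every $L>0$, only finitely many geodesic arcs with endpoints in $V$ of $d$‑length $\leq L$; moreover the set of lengths of such arcs is bounded away from $0$, say by $\delta=\delta(d)>0$. Since a geodesic triangulation of $(S,V,d)$ has the fixed number $6\mathbf g-6+3n$ of edges and is determined by its edge set (the faces being the complementary regions), it follows that for each $L>0$ only finitely many triangulations of $(S,V,d)$ have all edges of $d$‑length $\leq L$. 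Therefore it suffices to exhibit $L=L(c,d)$ such that every triangulation $\ms T$ supporting a weakly $H$‑admissible weight function of total length $\leq c$ has all edges of $d$‑length $\leq L$.

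Suppose no such $L$ exists. Then there is a sequence of such triangulations $\ms T_k$, each a subdivision (without new vertices, new edges getting weight $0$) of an $H$‑admissible pair $(\ms C'_k,\theta'_k)$ with total length $\sum_{e\in E(\ms C'_k)}\theta'_{k,e}\,l_e\leq c$, and with an edge of $\ms T_k$ of $d$‑length tending to $\infty$. By Corollary~\ref{t1}, for every $\e>0$ and all large $k$ the triangulation $\ms T_k$ has a face angle $\geq\pi-\e$; a diagonal/subsequence argument, using that $V$ is finite, produces a fixed vertex $v\in V$ at which $\ms T_k$ has a face angle tending to $\pi$. Because $\ms T_k$ subdivides $\ms C'_k$, this triangle corner lies inside a single face of $\ms C'_k$, so $\ms C'_k$ has a face angle $\alpha_k$ at $v$ with $\alpha_k\to\pi$ (in particular $\alpha_k<\pi$, as $\ms C'_k$ is strict). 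Consequently the dual hyperbolic polygon $P_v^{(k)}$ attached to $v$ by the dualization of $(\ms C'_k,\theta'_k)$ has an interior angle equal to $\pi-\alpha_k\to 0$.

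Now comes the key point. The interior angles of the simple hyperbolic polygon $P_v^{(k)}$ are the complements $\pi-\alpha_{k,j}$ of the face angles $\alpha_{k,j}$ of $\ms C'_k$ at $v$, and $\sum_j\alpha_{k,j}$ is the cone angle $\vartheta_v$ of $d$ at $v$, which depends only on $d$. By the Gauss--Bonnet formula, $\area\!\big(P_v^{(k)}\big)=\vartheta_v-2\pi$, the negative of the singular curvature of $d$ at $v$: a fixed positive constant, independent of $k$. Applying Lemma~\ref{t2} with $c_1=\vartheta_v-2\pi$ and an arbitrary $c_2>0$: a hyperbolic polygon of area $\geq c_1$ having an interior angle $<\e(c_1,c_2)$ must have diameter $>c_2$. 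Since $\pi-\alpha_k\to 0$, we conclude $\diam\!\big(P_v^{(k)}\big)\to\infty$, hence its perimeter, which equals $\sum_{e\ni v}\theta'_{k,e}$ (the side‑lengths of $P_v^{(k)}$ are exactly the $\theta'$‑weights of the edges of $\ms C'_k$ incident to $v$, a loop being counted twice), also tends to $\infty$.

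Finally, using $l_e\geq\delta(d)$ for every edge of a geodesic cellulation,
\[
c\;\geq\;\sum_{e\in E(\ms C'_k)}\theta'_{k,e}\,l_e\;\geq\;\delta(d)\sum_{e\ni v}\theta'_{k,e}\;\longrightarrow\;\infty,
\]
a contradiction. Hence the required bound $L(c,d)$ exists, and by the first paragraph the lemma follows. The main obstacle is the third step: converting the information that $\ms C'_k$ has a single nearly‑flat corner at $v$ into the conclusion that the associated dual polygon is geometrically large; this is precisely where the fixed‑area identity from Gauss--Bonnet together with the isoperimetric‑type Lemma~\ref{t2} are essential, and one must take care that the dual polygon is a genuine simple hyperbolic polygon so that both tools apply.
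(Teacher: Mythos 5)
Your argument is correct, and it rests on exactly the same two geometric lemmas as the paper's proof (Corollary~\ref{t1} and Lemma~\ref{t2}) together with the observation that the dual polygon at a vertex has area equal to minus the singular curvature there. The difference is structural: you argue by contradiction and travel the implication chain in reverse. The paper proceeds directly: it bounds each weight $\theta_e \leq c/c_1$ from the length bound, then uses the fact that a triangulation of $(S,V)$ has a fixed number of edges to bound vertex degrees, thereby bounding the \emph{diameter} of each dual polygon from above, then invokes Lemma~\ref{t2} to bound face angles away from $\pi$, and finally Corollary~\ref{t1} to bound edge lengths. You instead assume edge lengths are unbounded, use Corollary~\ref{t1} to force a face angle to $\pi$, hence a dual-polygon angle to $0$, and then use the contrapositive of Lemma~\ref{t2} (with the fixed area input) to force the diameter, hence the perimeter, hence $\sum_{e\ni v}\theta'_e$, to infinity, contradicting the total-length bound via $l_e \geq \delta(d)$. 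This contrapositive route lets you bypass the degree bound entirely, which is a genuine (if modest) simplification. Two small remarks: your display inequality $\sum_{E(\ms C'_k)}\theta'_e l_e \geq \delta\sum_{e\ni v}\theta'_e$ is off by a factor of $2$ if $\sum_{e\ni v}$ counts loops twice (as you say it does when identifying it with the perimeter); the conclusion is unaffected. And the paper's $c_1$ (minimal $d$-distance between distinct points of $V$) is what you call $\delta(d)$; worth noting that a lower bound for lengths of loops at a single vertex also holds by the same discreteness/compactness argument, which you implicitly use when you assert $l_e\geq\delta$ for \emph{every} edge including loops.
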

\begin{proof}
Consider a triangulation $\ms T$ with a weakly $H$-admissible weight function $\theta$ such that $L(\ms T, \theta) \leq c$, where $L(\ms T, \theta)$ is the total length.
Let $c_1$ be the minimal distance between two points of $V$ in $d$. Then for every $e \in E(\ms T)$ we have $\theta_e \leq c/c_1$. Because the number of edges of $\ms T$ is fixed, the degree of each vertex is bounded from above, hence for every $v \in V$ the diameter of the dual polygon is bounded from above. Note that its area is minus the curvature of $v$. It follows from Lemma~\ref{t2} that there exists $\e$, $0<\e<\pi$, such that all angles in $\ms T$ are $\leq \pi-\e$. Corollary~\ref{t1} implies that then all the lengths of edges of $\ms T$ are $\leq c_2$ for some $c_2>0$. Since the lengths of geodesics between points of $V$ form a discrete set, there are finitely many such triangulations.
\end{proof}

\begin{lemma}
\label{proper2}
$\L_d$ is proper.
\end{lemma}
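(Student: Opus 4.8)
The plan is to argue by contradiction. Suppose $\L_d$ is not proper; then there are a constant $c>0$ and a sequence $\rho_i\in\mc T(S)$ leaving every compact subset of $\mc T(S)$ with $\L_d(\rho_i)\le c$ for all $i$. Write $\p_i:=\p_d(\rho_i)$, let $\ms C_i$ be the face cellulation of $\CH(\p_i)$ and $\theta^{(i)}\colon E(\ms C_i)\to\R_{>0}$ the assignment of its dihedral angles, so that $\L_d(\rho_i)=\sum_{e\in E(\ms C_i)}\theta^{(i)}_e\,l_e$, where $l_e$ is the $d$-length of $e$. Since the marked points are in strictly convex position they are exactly the vertices of $\CH(\p_i)$, so each face is a strictly convex Euclidean polygon; hence $(\ms C_i,\theta^{(i)})$ is an $H$-admissible weighted geodesic cellulation of $(S,V,d)$ whose dualization is the hyperbolic metric $h_i$ with holonomy $\rho_i$, of total length $\L_d(\rho_i)\le c$.

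I first reduce to a fixed combinatorial type. Subdividing $\ms C_i$ by geodesic diagonals to a triangulation $\ms T_i$ of $(S,V,d)$ and extending $\theta^{(i)}$ by $0$ on the new edges makes $(\ms T_i,\theta^{(i)})$ weakly $H$-admissible of total length $\le c$; by Lemma~\ref{finitetriang} there are only finitely many triangulations of $(S,V,d)$ carrying such a weight function, and each of them is a subdivision of only finitely many cellulations of $(S,V)$, so after passing to a subsequence I may assume $\ms C_i=\ms C$ for a fixed cellulation $\ms C$. Every edge of $\ms C$ joins two distinct points of $V$ (it is an edge of a convex Euclidean polygon), hence has $d$-length at least $\ell_0:=\min_{v\neq w\in V}d_{(S,d)}(v,w)>0$, and therefore
\[
\sum_{e\in E(\ms C)}\theta^{(i)}_e\ \le\ \frac{1}{\ell_0}\sum_{e\in E(\ms C)}\theta^{(i)}_e\,l_e\ =\ \frac{\L_d(\rho_i)}{\ell_0}\ \le\ \frac{c}{\ell_0}.
\]

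Then I pass to the hyperbolic surface. The Gauss image of $\pt\CH(\p_i)$ realizes the cellulation $\ms G$ dual to $\ms C$ as a geodesic cellulation of $(S,h_i)$ whose edge-lengths are exactly the $\theta^{(i)}_e$; so the total $h_i$-length of $\ms G$ is $\le c/\ell_0$. The $1$-skeleton of $\ms G$ carries $\pi_1S$ (all complementary faces are disks), so I fix once and for all a finite system of closed curves $\gamma_1,\dots,\gamma_N$ that fills $S$ (for instance the standard generators of $\pi_1S$ coming from a fundamental polygon) together with edge-loops $\eta_1,\dots,\eta_N$ of $\ms G$ with $\eta_j$ freely homotopic to $\gamma_j$. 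If $\eta_j$ traverses $T_j$ edges of $\ms G$, then $\ell_{h_i}(\gamma_j)\le\len_{h_i}(\eta_j)\le T_j\cdot c/\ell_0$, whence $\sum_{j=1}^N\ell_{h_i}(\gamma_j)\le\bigl(\sum_j T_j\bigr)c/\ell_0$ is bounded independently of $i$. Since the total length of a filling system of curves is a proper function on Teichm\"uller space, the hyperbolic metrics $h_i$, and with them the representations $\rho_i$, stay in a compact subset of $\mc T(S)$, a contradiction. Therefore $\L_d$ is proper.

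The main obstacle is precisely this first reduction: a bound on the weighted total length $\sum_e\theta^{(i)}_e l_e$ alone carries no information about $\rho_i$, since the $d$-lengths $l_e$ could a priori shrink; it is Lemma~\ref{finitetriang} that forces the combinatorics of the cellulation to stabilize, which keeps the $l_e$ bounded away from $0$ and thus upgrades the weighted bound to an honest bound on the total $h_i$-length of the dual geodesic cellulation $\ms G$ on $(S,h_i)$, which in turn controls a filling family of closed geodesics and hence the location of $\rho_i$ in Teichm\"uller space. (Note that a bound on the diameter and area of $(S,h_i)$ would by itself give only precompactness in moduli space; it is the use of the \emph{fixed} cellulation $\ms G$ that pins down the marking.)
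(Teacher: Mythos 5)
Your argument is correct and, after the shared use of Lemma~\ref{finitetriang} to freeze the combinatorics along the sequence, takes a genuinely different route from the paper's. The paper extracts a convergent subsequence of the weight functions $\theta^{(i)}$ and then verifies directly that the limit is weakly $H$-admissible --- the delicate step being a Gauss--Bonnet argument ruling out non-simply-connected faces of the coarsened limit cellulation --- so that it \emph{constructs} the limiting $\rho\in\mc T(S)$ and shows $\rho_i\to\rho$. You instead observe that once the cellulation $\ms C$ is frozen, the bound on $\sum_e\theta^{(i)}_e l_e$ becomes a uniform bound on the total $h_i$-length of the dual graph $\ms G$, hence on the $h_i$-lengths of a fixed filling family of closed curves carried by the $1$-skeleton of $\ms G$, and then invoke the classical properness of the total-length function of a filling system on Teichm\"uller space. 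Your route is more modular (it delegates the existence of a limit to a known compactness criterion rather than inspecting the limiting weighted cellulation) at the modest price of citing an external properness result the paper's argument does not require. Two small points to tidy. First, the claim that every edge of $\ms C$ joins two \emph{distinct} points of $V$ is not in general true: the face cellulation of $\CH(\p)$ can contain loop edges, since an edge of $\pt\widetilde\CH(\p)$ may join two distinct lifts of the same cone point. This is harmless --- once $\ms C$ is fixed, just take $\ell_0:=\min_{e\in E(\ms C)}l_e>0$, a minimum of finitely many positive $d$-lengths. Second, you should make explicit that the free homotopy class in $S$ of the $h_i$-geodesic realization of each combinatorial edge-loop $\eta_j\subset\ms G$ is independent of $i$; this holds because the isotopy class of $\ms G$ on $(S,V)$, pulled back via the marking of $\Omega^+(\rho_i,\tau_i)$, is determined by $\ms C$, which has been fixed.
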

\begin{proof}
Let $\rho_i \in \mc T(S)$ be a sequence such that $\L_d(\rho_i)$ converges to $c \in \R_{>0}$. Due to Lemma~\ref{finitetriang}, up to a subsequence, we can assume that all the corresponding cellulations $\ms C_i$ admit a subdivision to the same triangulation $\ms T$. Up to passing to a subsequence, all weights on $\ms T$ converge to a weight function $\theta: E(\ms T) \rar [0, c]$. We need to check that $\theta$ is weakly $H$-admissible. Note that the space of hyperbolic polygons with marked sides and fixed angles is closed (we allow the degenerations of edges to zero, but the result is still a convex hyperbolic polygon as its area is fixed). Hence, by continuity, for every $v \in V$, the weights on the edges incident to $v$ determine a dual hyperbolic polygon. It remains only to check that the decomposition $\ms C$ obtained from $\ms T$ by deleting all edges of zero length, is a strict cellulation (particularly, that all faces are simply connected). Since all the dual polygons are convex hyperbolic polygons, there are at least three edges in $\ms C$ from every vertex, and all face angles in $\ms C$ are $<\pi$. Suppose that a face of $\ms C$ is not simply connected. That would mean that there is a non-simply connected compact flat surface with strictly convex boundary, which cannot happen due to the Gauss--Bonnet formula. 
%and the fact that each vertex has a dual hyperbolic polygon. 
Hence, $\ms C$ is a strict cellulation and $(\ms C, \omega)$ determines $\rho \in \mc T(S)$, which is the limit of $\rho_i$.  
\end{proof}

\subsection{Pairing of Codazzi tensors and balanced cellulations}\label{sec:pairing}

The aim of the present Section is to prove Formula \eqref{eq:la formule mixte}, which will be used to relate the differential of the total length function to the Weil--Petersson metric.
 
\subsubsection{Background on Codazzi tensors}

Let $b$ be a (smooth) symmetric $(0,2)$ tensor over the hyperbolic surface $(S,h)$. 
%We suppose moreover that $b$  is traceless. 
%In conformal coordinates, $b$ writes 
%$\lambda\begin{pmatrix}
%b_{1}^1 &  b_{1}^2  \\ b_{1}^2 & -b_{1}^1  
%\end{pmatrix}$, where $\lambda$ is the conformal facotr. The almost structure $J$ over $S$ given by the hyperbolic metric $h$, is simply given by $\begin{pmatrix}0 & -1 \\ 1 & 0 \end{pmatrix}$, so it is immediate to check that 
%\begin{equation}\label{eq:bj=jb}Jb=-bJ~. \end{equation}
Recall that if $\nabla$ is the Levi-Civita connection of $h$, then

 \begin{equation}\label{eq:def cod 2}
 \nabla_X b (Y,Z)= X.b(Y,Z) - b(\nabla_XY,Z) - b(\nabla_XZ,Y)~.
 \end{equation}
Since $b$ is symmetric, we have
$$\nabla_Xb(Y,Z)=\nabla_Xb(Z,Y)~.$$
We will say that a symmetric $(0, 2)$ tensor $b$ is \emph{Codazzi} if
$$\nabla_Xb(Y,Z)=\nabla_Yb(X,Z)~. $$
 Recall that the divergence $\div  b$ of $b$ is the $1$-form defined as 
$$\div(b)(X)=\operatorname{tr}_h(\nabla_. b(X,\cdot))~. $$ 
 
\begin{lemma}\label{lem: cod et div}
If $b$ is a symmetric traceless $(0, 2)$ tensor over $(S,h)$, then Codazzi is equivalent to 
 divergence-free. 
\end{lemma}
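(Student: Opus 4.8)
The plan is to compute both conditions in a local orthonormal frame and to observe that, for a symmetric traceless tensor on a surface, the ``Codazzi defect'' and the divergence carry exactly the same information.

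First I would record two elementary facts. From \eqref{eq:def cod 2}, since $b$ is symmetric, the $(0,3)$-tensor $\nabla b$ is symmetric in its last two slots. Second, since the Levi-Civita connection is metric and $b$ is traceless, $\operatorname{tr}_h(\nabla_X b)=\nabla_X(\operatorname{tr}_h b)=0$ for every vector field $X$; equivalently, in any local orthonormal frame $e_1,e_2$ of $(S,h)$ one has $\nabla_X b(e_1,e_1)+\nabla_X b(e_2,e_2)=0$. Recall also that in such a frame $\div b(X)=\nabla_{e_1}b(X,e_1)+\nabla_{e_2}b(X,e_2)$.

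Next I would introduce the \emph{Codazzi defect}, the $(0,3)$-tensor $\Theta(X,Y,Z):=\nabla_X b(Y,Z)-\nabla_Y b(X,Z)$, which is antisymmetric in $X,Y$, so that $b$ is Codazzi exactly when $\Theta\equiv 0$. Expanding, and using the symmetry of $b$ together with the trace identity above, one gets
\begin{align*}
\Theta(e_1,e_2,e_2) &= \nabla_{e_1}b(e_2,e_2)-\nabla_{e_2}b(e_1,e_2)\\
&= -\bigl(\nabla_{e_1}b(e_1,e_1)+\nabla_{e_2}b(e_1,e_2)\bigr)= -\div b(e_1),
\end{align*}
and analogously $\Theta(e_1,e_2,e_1)=\nabla_{e_1}b(e_2,e_1)+\nabla_{e_2}b(e_2,e_2)=\div b(e_2)$.

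Finally, since $\Theta$ is antisymmetric in its first two arguments and $S$ is $2$-dimensional, $\Theta$ is completely determined by the $1$-form $Z\mapsto\Theta(e_1,e_2,Z)$; by the two identities just obtained, this $1$-form vanishes if and only if $\div b$ does. Hence $\Theta\equiv 0\iff\div b\equiv 0$, which is precisely the asserted equivalence. There is no genuine obstacle in this argument: it is a short local computation, and the only points that require a little care are the bookkeeping of the index symmetries and the (routine) remark that $\Theta$ and $\div b$ are tensorial, so that checking the identities in one frame suffices. One could alternatively give an invariant formulation using the rotation by $\pi/2$ coming from the orientation of $S$, but the frame computation is the most direct route.
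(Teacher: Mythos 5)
Your proof is correct, and it takes a genuinely different route from the paper's. You argue by a direct local computation: the Codazzi defect $\Theta(X,Y,Z)=\nabla_Xb(Y,Z)-\nabla_Yb(X,Z)$ is antisymmetric in $X,Y$, hence on a surface is determined by the $1$-form $Z\mapsto\Theta(e_1,e_2,Z)$, and using tracelessness you identify this $1$-form with $\div b$ (after a rotation and sign). The paper instead works structurally with the almost-complex structure $J$: it verifies $bJ=-Jb$, checks that $J$ is parallel, and shows that if $b$ is Codazzi then $bJ$ is both divergence-free and Codazzi (and similarly starting from divergence-free); iterating and using $J^2=-\mathrm{id}$ yields the equivalence. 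Both arguments ultimately exploit the same ingredient (dimension $2$ plus orientation), but yours is shorter and more elementary for the lemma at hand, while the paper's route has the side benefit of recording that $bJ$ inherits the Codazzi / divergence-free properties from $b$ — a fact that gets reused later when the almost-complex structure $\mc J$ on Teichm\"uller space enters the picture (cf.\ the identity \eqref{eq:J acts} and the Weil--Petersson computations). One small remark: your identity $\Theta(e_1,e_2,e_1)=\div b(e_2)$ uses the symmetry of $\nabla_Xb$ in its last two slots implicitly; it is worth flagging, though you did note that symmetry at the outset.
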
 
\begin{proof} %In all the proof $b$ is  a symmetric traceless tensor. 
 The almost-complex  structure $J$ defined by $h$ in conformal coordinates at a point is given by $\begin{pmatrix}0 & -1 \\ 1 & 0 \end{pmatrix}$. So it is immediate to check that 
  $bJ$ and $Jb$ are symmetric and traceless and  that  
\begin{equation}\label{eq:jbj}
bJ=-Jb~.
\end{equation} 
In particular,
$$b(X,JY)=(bJ)(X,Y)=(bJ)(Y,X)=b(Y,JX)~, $$
and also
\begin{equation}\label{eq:jbj2}
b(JX,JY)=b(Y,JJX)=-b(X,Y)~.
\end{equation}
It is easy to see that $J$ is parallel for $\nabla$, for example by expressing $\nabla$ in conformal coordinates. In turn, $J$ and $\nabla$ commutes:
$$0=(\nabla_X J)(Y)=\nabla_X(JY)-J\nabla_XY~.$$

From the equations above and from \eqref{eq:def cod 2} we easily check that
 $$\nabla_X (bJ)(Y,Z)=\nabla_Xb(Y,JZ)~.$$

We first prove that if $b$ is Codazzi, then  $bJ$ divergence-free. Indeed, if $e_i$ is an orthonormal frame for $h$, we compute for example
 $$\div(bJ)(e_1)=\nabla_{e_1}b(e_1,e_2)-\nabla_{e_2}b(e_1,e_1)$$
 which is equal to zero because $b$ is symmetric and Codazzi. The same holds for $e_2$. 
 
We also have that if $b$ Codazzi, then  $bJ$ is Codazzi. Indeed,
$$\nabla_X(bJ)(Y,Z)=\nabla_Xb(Y,JZ)=\nabla_Yb(X,JZ)=\nabla_Y (bJ)(X,Z)~. $$
 
 Now, if $b$ is Codazzi, $bJ$ is Codazzi, hence $bJJ=-b$ is divergence free.

In the same way, it is immediate to check that if $b$ is divergence free, then $bJ$ is divergence free and Codazzi. 
 \end{proof}
 
 For future reference, let us note the following consequence of the Divergence Theorem.

\begin{lemma}\label{lemma divergence}
Let $f, \varphi$ be  smooth functions on a hyperbolic surface with boundary $(M,h)$ and suppose that $b$ is symmetric and divergence free. Then
 $$ \int_M \tr_h(\varphi b\nabla^2f) = -\int_M  b(\nabla f,\nabla \varphi) + \int_{\partial M}  \varphi b(\nabla f,N)   $$
where $N$ is the unit outward normal of $\partial M$. 
\end{lemma}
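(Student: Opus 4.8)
The plan is to reduce the identity to the classical Divergence Theorem applied to a suitably chosen vector field on $(M,h)$. First I would introduce the vector field $Z$ that is $h$-dual to the $1$-form $\varphi\, b(\nabla f,\cdot)$; that is, $Z$ is characterized by $h(Z,W)=\varphi\, b(\nabla f,W)$ for all $W$. The Divergence Theorem then gives
\[
\int_M \div(Z)\, dA_h = \int_{\partial M} h(Z,N)\, ds = \int_{\partial M} \varphi\, b(\nabla f, N)\, ds~,
\]
so everything comes down to computing $\div(Z)$ pointwise.

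Next I would expand $\div(Z)$ using the Leibniz rule for the connection $\nabla$. Writing things in an $h$-orthonormal frame $\{e_i\}$, one has $\div(Z) = \sum_i h(\nabla_{e_i} Z, e_i)$, and since $h(Z,\cdot) = \varphi\, b(\nabla f,\cdot)$ one gets, after differentiating the defining relation,
\[
\div(Z) = \sum_i e_i\big(\varphi\, b(\nabla f, e_i)\big) - \sum_i \varphi\, b(\nabla f, \nabla_{e_i} e_i)~.
\]
Distributing the derivative on $\varphi$, on $b$, and on $\nabla f$, and using formula \eqref{eq:def cod 2} for $\nabla_{e_i} b$ together with the fact that $\nabla^2 f(\cdot,\cdot) = h(\nabla_\cdot \nabla f,\cdot)$ is the Hessian, this reorganizes into three terms:
\[
\div(Z) = b(\nabla f, \nabla \varphi) + \varphi\, (\div b)(\nabla f) + \varphi\, \tr_h\big(b\, \nabla^2 f\big)~,
\]
where the first term collects the $e_i(\varphi)$ contributions, the middle term is $\varphi\sum_i (\nabla_{e_i} b)(\nabla f, e_i) = \varphi\,(\div b)(\nabla f)$, and the last term collects $\varphi\sum_i b(\nabla_{e_i}\nabla f, e_i)$. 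Here I am using the symmetry of $b$ to identify $\sum_i b(\nabla_{e_i}\nabla f, e_i)$ with $\tr_h(b\nabla^2 f)$, and to match the divergence convention $\div(b)(X)=\operatorname{tr}_h(\nabla_\cdot b(X,\cdot))$ stated just before the lemma.

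Since $b$ is assumed divergence-free, the middle term vanishes, leaving $\div(Z) = b(\nabla f,\nabla\varphi) + \varphi\,\tr_h(b\nabla^2 f)$. Substituting into the Divergence Theorem and rearranging yields exactly
\[
\int_M \tr_h(\varphi\, b\, \nabla^2 f)\, dA_h = -\int_M b(\nabla f,\nabla\varphi)\, dA_h + \int_{\partial M} \varphi\, b(\nabla f,N)\, ds~,
\]
as claimed. The only mildly delicate point — and the step I would double-check carefully — is the bookkeeping of conventions: that $\tr_h(b\nabla^2 f)$ as written in the statement really equals $\sum_i b(\nabla_{e_i}\nabla f, e_i)$, and that the sign in the divergence term is consistent with the paper's definition $\div(b)(X)=\operatorname{tr}_h(\nabla_\cdot b(X,\cdot))$. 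Everything else is a routine application of the Leibniz rule and Stokes' theorem; smoothness of $f$, $\varphi$ and $b$ on the compact surface-with-boundary $M$ makes all the integrations legitimate.
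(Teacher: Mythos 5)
Your proof is correct and follows essentially the same route as the paper: both apply the Divergence Theorem to the vector field $Z=\varphi\,b^\sharp(\nabla f)$ and expand $\div(Z)$ via the Leibniz rule, using $\div b=0$ to kill the middle term. The only cosmetic difference is that the paper first computes $\div(b)(\nabla f)=\div(b^\sharp(\nabla f))-\tr_h(b\nabla^2 f)$ and then multiplies by $\varphi$, whereas you expand $\div(\varphi\,b^\sharp(\nabla f))$ in one pass; these are the same computation.
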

The integrations are implicitly done with respect to the volumes forms of $M$ and $\partial M$ respectively. Also, for  
 a $(0,2)$ symmetric tensor $b$, we will denote by $b^\sharp$ the self-adjoint $(1,1)$ tensor such that 
$b(X,Y)=h(b^\sharp(X),Y)$, and hence for two such tensors $b_1$ and $b_2$, $\tr_h(b_1b_2)$ means $\tr(b_1^\sharp b_2^\sharp)$.
\begin{proof} 
Recalling that $\nabla_Xb^\sharp=\left(\nabla_X b\right)^\sharp$ (see e.g. \cite[2.66]{ghl}), we have

$$\div(b)(X)=\operatorname{tr}(\nabla_{.} b^\sharp(X))=\operatorname{tr}(\nabla_. (b^\sharp(X)))- \operatorname{tr}(b^\sharp(\nabla_. X))=\div (b^\sharp(X))-\operatorname{tr}(b^\sharp(\nabla_. X))~.$$ 
Thereby,
$$\div(b)(\nabla f)=\div (b^\sharp(\nabla f))-\operatorname{tr}(b^\sharp(\nabla_. \nabla f))
=$$ $$= \div (b^\sharp(\nabla f))-\operatorname{tr}(b^\sharp (\nabla^2 f)^\sharp)=
\div (b^\sharp(\nabla f))-\operatorname{tr}_h(b \nabla^2 f)~.$$
Hence if $b$ is divergence free, multiplying by $\varphi$ and reordering
$$\operatorname{tr}_h(\varphi b \nabla^2 f)=
\varphi \div (b^\sharp(\nabla f))=\div (\varphi b^\sharp(\nabla f))- b^\sharp(\nabla f).\varphi~.$$
Integrating and using the Divergence Theorem leads to the result.
\end{proof}

\subsubsection{Traceless Codazzi tensor associated to a balanced cellulation}

Let us consider $\p=\p_{d}(h)\in \P_c(S,V)$, and let
$s_\p$ be the support function of $P=\widetilde\CH(\p)$. We already used support functions in the proof of Lemma~\ref{lem:orbite infinie}, but here we assume that the domain of a support function is the hyperboloid $\H^2$. In more details, for $x \in \H^2$,
\[s_\p(x):=\sup \{\langle p, x \rangle: p \in P\}~.\]
It is known to be well-defined over $\H^2$, see~\cite{bonsante}. For more details on support functions in this setting, we refer to~\cite{FV, BF}.
%
% us fix a representation $\rho$ and a cocycle $\tau$ in the respective equivalence class. Hence $\p_{d}(h)$ xdefines a set of points in $\R^{2,1}$, invariant for the action of $\rho_\tau(\pi_1S)$. The convex hull of those point
%is a spacelike polyhedron $\tilde P$, and we denoted by  its support function. The Gauss image of $\tilde P$ is invariant under the action of  $\rho(\pi_1S)$, and descents onto $(S,h)$ to the balanced cellulation $(\ms G, w)$.
The function $s_\p$ in general is not $\rho(\pi_1S)$ invariant, but as for $x \in \H^2$ there exists a point $p$ on $P$ such that $s_\p(x)=\langle p,x\rangle$, then $$s_\p(\rho(\gamma)(x))=\langle \rho_\tau(\gamma)(p),\rho(\gamma)(x)\rangle~,$$ hence we have
\begin{equation}\label{eq:equiariance}
s_\p(\rho(\gamma)(x))=s_\p(x)+\langle \rho^{-1}(\gamma)(\tau(\gamma)),x\rangle~.
\end{equation}

If $(\ms G, w)$ is the balanced cellulation of $(S,h)$ defined by  $\p$,
over any face $F$ of the lift of $\ms G$ on $\H^2$, $s_\p$ is the restriction  of the linear map
 $x\mapsto \langle x,v\rangle$, where $v$ is the vertex of $P$ sent by the Gauss map to $F$.

\begin{prop}\label{prop: codazzi cocycle}
 There exists a unique smooth function $s_{d}$ on $\mathbb{H}^2$ such that it satisfies \eqref{eq:equiariance} and $\nabla^2 s_d-s_d \tilde h$ is a traceless symmetric Codazzi tensor which is $\rho$-invariant, where $\tilde h$ is the metric over $\H^2$.
\end{prop}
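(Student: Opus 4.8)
The plan is to realize $s_d$ as a \emph{mollification of the support function $s_\p$} by subtracting from $s_\p$ a solution of a scalar elliptic equation on $(S,h)$ that kills the trace. The two structural facts that make this work are: (i) on the constant curvature $-1$ surface $\H^2$ the operator $u\mapsto \nabla^2 u-u\tilde h$ is \emph{automatically Codazzi} for every smooth $u$ (a short curvature computation, using that its kernel is exactly the space of restrictions $\ell_v(x)=\la x,v\ra$ of linear functions, $v\in\R^{2,1}$); and (ii) because $\nabla^2(\cdot)-(\cdot)\tilde h$ annihilates linear functions and commutes with isometries, \eqref{eq:equiariance} forces $\nabla^2 s_d-s_d\tilde h$ to be $\rho$-invariant as soon as $s_d$ satisfies \eqref{eq:equiariance}. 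Thus, seeking $s_d$ in the affine family $s_\p+C^\infty(S)$ (any two functions satisfying \eqref{eq:equiariance} differ by a genuine $\rho$-invariant function, i.e. a function on $S$), the Codazzi and $\rho$-invariance requirements are free, and the only real condition is that $\nabla^2 s_d-s_d\tilde h$ be a genuine smooth tensor and traceless, i.e. $\operatorname{tr}_h(\nabla^2 s_d-s_d\tilde h)=\Delta_h s_d-2s_d=0$, where $\Delta_h=\operatorname{tr}_h\nabla^2$ is the Laplace--Beltrami operator (with non-positive spectrum, as in Lemma~\ref{lemma divergence}).

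The first step is to compute $\nabla^2 s_\p-s_\p\tilde h$ \emph{as a distribution}. Over the interior of each face $F$ of the lift $\tilde{\ms G}$, $s_\p=\ell_v|_F$ for the dual vertex $v$ of $P$, so this tensor vanishes there; hence it is supported on the $1$-skeleton of $\tilde{\ms G}$. Near the interior of an edge $e$ adjacent to faces dual to $v_1,v_2$ one has the local model $s_\p=\max(\ell_{v_1},\ell_{v_2})=\ell_{v_1}+(\ell_{v_2-v_1})_+$; since $\ell_{v_1},\ell_{v_2-v_1}$ lie in the kernel of $\nabla^2(\cdot)-(\cdot)\tilde h$, one gets $\nabla^2 s_\p-s_\p\tilde h=\nabla^2(\ell_{v_2-v_1})_+-(\ell_{v_2-v_1})_+\tilde h$, whose only singular part is the ridge contribution $\nabla^2(\ell_{v_2-v_1})_+\big|_{\mathrm{sing}}=|\nabla^{\tilde h}\ell_{v_2-v_1}|\,(\hat n_e\otimes\hat n_e)\,\mathcal H^1|_e$ with $\hat n_e$ the $\tilde h$-unit conormal to $e$. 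A direct Minkowski computation identifies $|\nabla^{\tilde h}\ell_{v_2-v_1}|$ along $e$ with the weight $w_e$ (the edge length of $P$), and the balance condition at vertices of $\tilde{\ms G}$ shows there is no extra atomic contribution there. Consequently $\nabla^2 s_\p-s_\p\tilde h$ is a $\rho$-invariant Codazzi current on $S$ supported on the $1$-skeleton of $\ms G$, with singular part $\sum_e w_e(\hat n_e\otimes\hat n_e)\mathcal H^1|_e$ and trace the finite positive measure $\mu:=\sum_{e}w_e\,\mathcal H^1|_e$.

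Next, since the closed hyperbolic surface $(S,h)$ has no eigenvalue $-2$, the operator $\Delta_h-2$ is an isomorphism on the relevant Sobolev scales; let $f\in W^{1,p}(S)$ ($p<2$, in particular $f$ continuous) solve $(\Delta_h-2)f=\mu$, and set $s_d:=s_\p-\tilde f$, where $\tilde f$ is the $\rho$-invariant lift of $f$. Then $s_d$ still satisfies \eqref{eq:equiariance}. Away from the $1$-skeleton, $\tilde f$ is smooth; across the interior of each edge $e$ the jump relation for $\Delta_h$ gives a conormal-derivative jump $w_e$, hence $\nabla^2\tilde f-\tilde f\tilde h$ has singular part $w_e(\hat n_e\otimes\hat n_e)\mathcal H^1|_e$ along $e$ — \emph{exactly matching} that of $\nabla^2 s_\p-s_\p\tilde h$. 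Therefore $\nabla^2 s_d-s_d\tilde h$ has no singular part, i.e. it is a genuine tensor; it is traceless because $\operatorname{tr}_h(\nabla^2 s_d-s_d\tilde h)=\mu-(\Delta_h-2)f=0$; it is Codazzi and $\rho$-invariant by fact (i)--(ii) above. Elliptic regularity then bootstraps $s_d$ to $C^\infty$. For uniqueness, if $s_d,s_d'$ both satisfy the conclusion, their difference descends to $f\in C^\infty(S)$ with $(\Delta_h-2)f=\operatorname{tr}_h(\nabla^2 f-f h)=0$, and invertibility of $\Delta_h-2$ forces $f\equiv 0$.

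The step I expect to be the main obstacle is the distributional identification in the second paragraph together with the matching of singular parts in the third: one must pin down the coefficient $w_e$ and, crucially, the \emph{sign}, so that the singular parts of $\nabla^2 s_\p-s_\p\tilde h$ and of $\nabla^2\tilde f-\tilde f\tilde h$ cancel rather than add, and one must handle the vertices of $\ms G$ (where the naive local model breaks) via the balance condition. An alternative, more abstract route — representing the class $[\tau]\in\mc H^1(\rho)$ by its harmonic traceless Codazzi representative and then integrating the (overdetermined but, on the simply connected $\H^2$, solvable) equation $\nabla^2 s_d-s_d\tilde h=b$ — would also work, but the construction above has the advantage of exhibiting $s_d$ concretely as a smoothed support function, which is what is needed for the length-functional computations in the sequel.
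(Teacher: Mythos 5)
The paper does not supply a proof of Proposition~\ref{prop: codazzi cocycle}; it simply cites~\cite{FV,BS,BF,OlikerSimon}. Those references typically proceed either by representing the cocycle class by its harmonic/holomorphic-quadratic-differential representative and then integrating $\nabla^2 u - u\tilde h = b$ on the simply connected $\H^2$, or by working directly in the framework of mean-curvature measures for convex surfaces. Your argument takes a different, more hands-on route: you mollify the support function $s_\p$ itself by solving a scalar elliptic equation with a measure source, concentrated on the $1$-skeleton of the dual cellulation. This is a legitimate and attractive approach, and your two structural reductions are correct and do most of the work: the Ricci identity on a surface of constant curvature $-1$ does make $u\mapsto\nabla^2 u-u\tilde h$ automatically Codazzi for every smooth $u$, and since that operator annihilates the restrictions of linear functionals and commutes with isometries, the equivariance law~\eqref{eq:equiariance} forces $\rho$-invariance of the output tensor. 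This correctly reduces everything to the scalar trace condition $(\Delta_h-2)s_d=0$, and the sign bookkeeping you worry about does work out: by~\eqref{eq:diff grad} the gradient of $s_\p$ jumps by $w_e N_i^-$ when crossing $e$ from $O_i^-$ to $O_i^+$, so the distributional Hessian of $s_\p$ has singular part $+w_e(\hat n_e\otimes\hat n_e)\,\mathcal H^1|_e$ — positive, as expected since the $1$-homogeneous extension of $s_\p$ is a supremum of linear maps — and this is exactly the jump produced by a solution of $(\Delta_h-2)f=w_e\,\mathcal H^1|_e$, so the subtraction cancels the singular parts rather than doubling them. Once $(\Delta_h-2)s_d=0$ holds distributionally, Weyl's lemma upgrades $s_d$ to $C^\infty$, and uniqueness follows since $\Delta_h-2$ is invertible on $C^\infty(S)$.

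Two technical points should be fixed if you want this to be airtight. First, your claim that $f\in W^{1,p}$ with $p<2$ is ``in particular continuous'' is false as stated: in dimension two $W^{1,p}\hookrightarrow C^0$ requires $p>2$, not $p<2$. What actually gives continuity here is that $\mu$ is carried by a $1$-rectifiable set, so $f$ is (locally) the logarithmic potential of a $1$-dimensional measure and is therefore Hölder (in fact Lipschitz away from the vertices of $\ms G$); say that instead of citing Sobolev embedding. Second, the absence of atomic contributions at the vertices of $\ms G$ in the distributional Hessian of $s_\p$ deserves an explicit sentence: the $1$-homogeneous extension of $s_\p$ is convex on the future cone, the distributional Hessian of a convex function is a matrix-valued Radon measure, and such a measure cannot charge a point in dimension $\ge 2$; hence no Dirac masses appear at the vertices regardless of the balance condition, which you can then use (as you do) only to organize the edge contributions. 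With these two repairs your proof stands, and it has the virtue — which the cited references do not emphasize — of explicitly exhibiting $s_d$ as a smoothing of the support function, which is precisely the object that reappears in the length-functional analysis of Section~\ref{sec:pairing}.
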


 We denote by $b_{d}$ the projection of this tensor onto the hyperbolic surface $(S,h)$. We refer to \cite{FV,BS,BF,OlikerSimon} for details.

\begin{remark}
To simplify computations, it is worth to note (see e.g. \cite{FV}) that if $F$ is the one-homogeneous extension to the future cone of the origin of a smooth function  $f$ over the hyperboloid,  then at $x\in \mathbb{H}^2$,
\begin{equation}\label{eq:gradient}
\grad F(x)= \nabla f(x) - f(x)x~;
\end{equation} 
\begin{equation}\label{eq:hessien}\operatorname{Hess} F(x)= (\nabla^2-\tilde h)(x)f~.\end{equation}
\end{remark}

\begin{lemma}\label{lem fuch noyau}
Let $b$ be a traceless divergence-free $(0, 2)$ tensor  on  $(S,h)$. Let $\{O_i,i\in I\}$ be a finite covering of $S$ by discs and let $s$ be a $C^2$ function on $\H^2$ satisfying \eqref{eq:equiariance}. For any $i$, 
let us denote by $s_i$ the function over $S$ with support in $O_i$ defined by the restriction of $s$ to an arbitrary lift of $O_i$.
Let $\{\varphi_i, i\in I\}$ be a partition of unity subordinated to $\{(O_i)_{i\in I}\}$. 
Then
$$\int_S \tr_h(bb_{d})  =-\sum_i \int_S  b(\nabla s_{i},\nabla \varphi_i)~. $$
\end{lemma}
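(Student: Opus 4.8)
The plan is to reduce the global identity to a sum of local computations, each of which is handled by the Divergence Theorem in the form of Lemma~\ref{lemma divergence}. First I would recall that $b_d$ is, by definition (Proposition~\ref{prop: codazzi cocycle}), the descent to $(S,h)$ of the $\rho$-invariant traceless symmetric Codazzi tensor $\nabla^2 s_d - s_d\tilde h$ on $\H^2$; since $b$ is traceless and divergence-free, Lemma~\ref{lem: cod et div} tells us $b$ is Codazzi, and conversely $b_d$ is divergence-free. So $\tr_h(b\,b_d)$ makes sense as a function on $S$ (the $\rho$-invariance of $\nabla^2 s_d - s_d\tilde h$ is what allows the trace to descend). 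Using the partition of unity $\{\varphi_i\}$ subordinate to $\{O_i\}$ and $\sum_i \varphi_i \equiv 1$, I would write
\[
\int_S \tr_h(b\,b_d) = \sum_i \int_S \varphi_i \,\tr_h(b\,b_d) = \sum_i \int_{O_i} \varphi_i\, \tr_h\big(b\,(\nabla^2 s_i - s_i\, h)\big),
\]
where on each $O_i$ we have lifted to a disc in $\H^2$ and used that $b_d$ restricted there equals $\nabla^2 s_i - s_i h$ for the local representative $s_i$ (this is legitimate precisely because the discrepancy in $s$ under $\rho$ is the linear function in \eqref{eq:equiariance}, whose Hessian-minus-itself vanishes — that is the key point making $s_i$ a valid local potential regardless of which lift of $O_i$ is chosen).

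Next, on each disc $O_i$ I would split $\tr_h(b(\nabla^2 s_i - s_i h)) = \tr_h(b\,\nabla^2 s_i) - s_i\,\tr_h(b)$, and the second term drops out since $b$ is traceless. For the first term I apply Lemma~\ref{lemma divergence} with $M = O_i$, $f = s_i$, $\varphi = \varphi_i$, using that $b$ is symmetric and divergence-free:
\[
\int_{O_i} \varphi_i\, \tr_h(b\,\nabla^2 s_i) = -\int_{O_i} b(\nabla s_i, \nabla \varphi_i) + \int_{\partial O_i} \varphi_i\, b(\nabla s_i, N).
\]
Here the boundary term vanishes because $\varphi_i$ has support contained in the interior of $O_i$, so $\varphi_i \equiv 0$ on $\partial O_i$. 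Summing over $i$ and extending the integration back to all of $S$ (each integrand $b(\nabla s_i, \nabla\varphi_i)$ being supported in $O_i$) gives exactly
\[
\int_S \tr_h(b\,b_d) = -\sum_i \int_S b(\nabla s_i, \nabla \varphi_i),
\]
which is the claimed formula.

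The only genuine subtlety — and the step I would be most careful about — is the regularity/well-definedness of the local potentials $s_i$ and the verification that $b_d|_{O_i} = \nabla^2 s_i - s_i h$. One must check that the transformation rule \eqref{eq:equiariance} forces the Hessian-type tensor $\nabla^2 s - s\tilde h$ to be genuinely $\rho$-equivariant (not merely equivariant up to something), so that it descends, and that changing the lift of $O_i$ changes $s_i$ only by the restriction of a linear functional $x \mapsto \langle a, x\rangle$, whose contribution to $\nabla^2(\cdot) - (\cdot)\tilde h$ is zero by the standard identity (the restriction of a linear functional on $\R^{2,1}$ to $\H^2$ has Hessian equal to itself times $\tilde h$). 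Granting Proposition~\ref{prop: codazzi cocycle}, which asserts $s_d$ is smooth and $\nabla^2 s_d - s_d\tilde h$ is Codazzi and $\rho$-invariant, these points are essentially bookkeeping; but since $s$ in the statement is only assumed $C^2$, one should note that this is exactly enough regularity for $\nabla s_i$ and $\nabla^2 s_i$ to be continuous, hence for the integration by parts in Lemma~\ref{lemma divergence} to apply on each disc.
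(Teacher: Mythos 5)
There is a genuine gap in your argument at the step where you claim that, on a lift of $O_i$, one has $b_d|_{O_i} = \nabla^2 s_i - s_i h$. This is false unless $s$ coincides with the distinguished function $s_d$ of Proposition~\ref{prop: codazzi cocycle}, which the hypotheses do not assume: the lemma allows an \emph{arbitrary} $C^2$ function $s$ satisfying~\eqref{eq:equiariance}, and indeed in its application (the proof of Lemma~\ref{lem trace mixte}) $s$ is taken to be the support function $s_\p$, not $s_d$. Two functions satisfying~\eqref{eq:equiariance} differ by a $\rho$-invariant function; write $s - s_d$ for this difference and let $f$ be its descent to $S$. Then the local representative of $b_d$ is $(\nabla^2 - h)(s_i - f)$, so $(\nabla^2 - h)s_i = b_d + (\nabla^2 - h)f$ on $O_i$, which is not $b_d$. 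Your last paragraph correctly verifies that $(\nabla^2 - \tilde h)s$ is $\rho$-invariant (hence descends) and that the choice of lift is immaterial, but this only shows that the tensor descends to \emph{some} symmetric Codazzi tensor on $S$ — not that it descends to $b_d$. In general it does not; for a generic $s$ satisfying~\eqref{eq:equiariance} the descended tensor is not even traceless.

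What is missing is the cancellation of the $f$-contribution, which is precisely the step the paper supplies. Setting $s_{0,i} := s_i - f$ (the correct local representative of $s_d$) and running the divergence argument as you did yields $\int_S \tr_h(bb_d) = -\sum_i \int_S b(\nabla s_{0,i}, \nabla\varphi_i)$; one then replaces $s_{0,i}$ by $s_i$ using
\[
\sum_i \int_S b(\nabla f, \nabla\varphi_i) = \int_S b\Big(\nabla f, \nabla \sum_i\varphi_i\Big) = \int_S b(\nabla f, \nabla 1) = 0,
\]
which is where the hypothesis $\sum_i\varphi_i\equiv 1$ enters essentially. Equivalently, you could first prove $\int_S \tr_h\big(b\,(\nabla^2 - h)f\big) = 0$ for every function $f$ on $S$ (again via Lemma~\ref{lemma divergence} on the closed surface $S$, using that $b$ is traceless and divergence-free), which justifies replacing $b_d$ by your descended tensor $(\nabla^2 - h)s$ inside $\int_S\tr_h(b\,\cdot)$. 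Either way, a separate cancellation argument is required; without it the identification $b_d|_{O_i} = \nabla^2 s_i - s_i h$ is simply incorrect and the chain of equalities does not compute the stated left-hand side.
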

Note that by \eqref{eq:equiariance} two different choices of lifts of $O_i$ will result by adding to $s_i$ the restriction to $\H^2$ of a linear map.
\begin{proof}
By  \eqref{eq:equiariance}, the function $s-s_d$ is $\rho$-invariant, hence it defines a function $f$ over $S$. Let $s_{0,i}=s_i-f$. 
Then
$$\int_S \tr_h(bb_d)
=\sum_{i\in I} \int_S \varphi_i \tr_h(bb_d) 
=\sum_i \int_S \tr_h(\varphi_i b(\nabla^2 -h)s_{0,i})~.$$ 
As $b$ is traceless, we obtain
$$\int_S \tr_h(bb_d) = \sum_i \int_S \tr_h(\varphi_i b\nabla^2 s_{0,i})~.  $$

Lemma~\ref{lemma divergence} gives (because the support of $\varphi_i$ is in $O_i$, there is no boundary term)
$$\int_S \tr_h(bb_d) = -\sum_i \int_S  b(\nabla s_{0,i},\nabla \varphi_i)~. $$
Finally,
$$\sum_i \int_S  b(\nabla s_{0,i},\nabla\varphi_i)=\sum_i \int_S  b(\nabla s_{i},\nabla\varphi_i)~,$$
because
$$\sum_i \int_S  b(\nabla f,\nabla\varphi_i) 
= \int_S  b(\nabla f,\nabla 1)=0 ~.$$
\end{proof}

\begin{lemma}\label{lem trace mixte}
Let $b$ be a traceless symmetric Codazzi $(0,2)$ tensor  on $(S,h)$, and let $(\ms G, w)$ be the balanced cellulation of $(S,h)$ given by $\p$. Then 
\begin{equation}\label{eq:la formule mixte}\int_S \tr_h(bb_d) = -\sum_{e\in E(\ms G)}w_e \int_e b(U_e,U_e)~,\end{equation}
where $U_e$ is a unit tangent vector of $e$. 
\end{lemma}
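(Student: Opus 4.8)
The plan is to combine Lemma~\ref{lem fuch noyau} with a concrete analysis of the support function $s_{\p}$ of $P = \widetilde{\CH}(\p_d(h))$. Recall from Section~\ref{sec:pairing} that over each face $F$ of the lift $\tilde{\ms G}$ of the balanced convex cellulation, $s_{\p}$ is the restriction to $\H^2$ of a linear functional $x \mapsto \langle x, v_F\rangle$, where $v_F$ is the vertex of $P$ corresponding to $F$ under the Gauss map. Since $\nabla^2(\langle \cdot, v_F\rangle) - \langle \cdot, v_F\rangle \tilde h = 0$ on $\H^2$ (linear functionals satisfy the trivial Codazzi equation, cf.\ \eqref{eq:hessien} applied to a linear one-homogeneous extension), the Codazzi tensor $b_d$ associated to $s_d$ is supported, as a distribution, on the edges of $\ms G$. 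The content of Lemma~\ref{lem trace mixte} is to identify this distributional tensor precisely.

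\textbf{Key steps.} First, I would apply Lemma~\ref{lem fuch noyau} with the given traceless Codazzi (equivalently divergence-free, by Lemma~\ref{lem: cod et div}) tensor $b$, writing
\[
\int_S \tr_h(b\, b_d) = -\sum_{i} \int_S b(\nabla s_i, \nabla \varphi_i)~.
\]
Second, I would choose the covering $\{O_i\}$ adapted to $\ms G$: small discs centered at the vertices of $\ms G$, small discs around interior points of edges (each meeting exactly two faces), and small discs inside the faces (each meeting exactly one face). On a face-disc, $s_i$ restricts to a single linear functional, so $\nabla^2 s_i - s_i \tilde h = 0$ and an integration by parts (Lemma~\ref{lemma divergence}, with $b$ divergence-free) shows $\int_S b(\nabla s_i, \nabla\varphi_i) = -\int \tr_h(\varphi_i b \nabla^2 s_i) + (\text{boundary}) = 0$ provided $\varphi_i$ is supported in the interior; more carefully, the contributions of the face-discs and vertex-discs must be shown to cancel or vanish, leaving only the edge contributions. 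Third, on an edge-disc $O_i$ meeting faces $F^+, F^-$ across edge $e$, I would write $s_i = s^- + \mathbf{1}_{F^+}(s^+ - s^-)$ where $s^\pm(x) = \langle x, v_{F^\pm}\rangle$; the difference $\phi_e := s^+ - s^-$ vanishes along $e$, so $\nabla \phi_e$ along $e$ is normal to $e$, and in fact $\nabla \phi_e = w_e N_e$ where $N_e$ is the unit normal to $e$ and $w_e = |v_{F^+} - v_{F^-}|$ is the edge length of $P$, i.e.\ the weight. Summing the $\varphi_i$ over each edge and passing to the limit as the discs shrink, the term $-\sum_i \int_S b(\nabla s_i, \nabla\varphi_i)$ collapses (using $\sum_i \varphi_i \equiv 1$ and Stokes across $e$) to $\sum_e \int_e b(\nabla\phi_e, \text{(jump direction)})$, and using $b(N_e, U_e)$-type identities together with $b$ traceless (so $b(N_e,N_e) = -b(U_e,U_e)$ by \eqref{eq:jbj2} applied with $JU_e = N_e$) this becomes $-\sum_e w_e \int_e b(U_e, U_e)$.

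\textbf{Main obstacle.} The delicate point is the bookkeeping at the edges and vertices: $s_{\p}$ is only Lipschitz (piecewise linear) across edges, so $\nabla^2 s_d$ is genuinely a measure concentrated on the edges with a $\delta$-type density, and one must justify rigorously that the limit of $-\sum_i \int b(\nabla s_i, \nabla\varphi_i)$ over a shrinking partition of unity picks out exactly the edge integrals with the correct weight and sign, with no residual vertex contributions. The cleanest way is to compute directly: fix an edge $e$, take $O_i$ a thin neighborhood of a compact subarc of $e$, choose $\varphi_i$ depending only on the signed distance to $e$, and use that $\nabla s_i$ is constant $= \grad(s^-)$ on the $F^-$ side and $= \grad(s^+)$ on the $F^+$ side, so that $\nabla s_i \cdot \nabla\varphi_i$ integrated against the transverse coordinate telescopes to $(\grad s^+ - \grad s^-)\cdot N_e = \langle v_{F^+}-v_{F^-}, \cdot\rangle$-contribution; the orthogonality $\nabla(s^+-s^-) \perp e$ along $e$ (since $s^+ = s^-$ on $e$) is what forces the answer to be $w_e b(U_e,U_e)$ and not involve the tangential derivative. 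The vertex discs contribute nothing in the limit because the integrand is bounded while the disc area goes to zero. Assembling these pieces, together with the sign conventions fixed by the orientation of $\ms G$ and the outward-normal convention, yields \eqref{eq:la formule mixte}.
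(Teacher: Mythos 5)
Your proposal follows essentially the same route as the paper: apply Lemma~\ref{lem fuch noyau} to a partition of unity adapted to the cellulation (face/edge/vertex discs), observe that face contributions vanish because $s_i$ is linear there, extract the edge contributions from the jump $\nabla s^+ - \nabla s^- = \pm w_e N_e$ across each edge, and convert $b(N_e,N_e)$ to $-b(U_e,U_e)$ using tracelessness. The one tactical difference is at the vertices: the paper keeps the vertex discs fixed and computes their contribution explicitly (obtaining partial edge integrals over the arcs near the vertex, then assembling everything via $\sum_i\varphi_i\equiv 1$ along each edge), whereas you propose to shrink the vertex discs so that their contribution vanishes. That strategy can be made to work, but your stated justification --- ``the integrand is bounded while the disc area goes to zero'' --- is not correct as written: when the disc radius $r$ shrinks, $\nabla\varphi_i$ necessarily scales like $1/r$ (it is not bounded), and what actually makes the vertex term go to zero is that the integral scales like $(1/r)\cdot r^2 = r$; moreover, one must simultaneously re-cover the annulus around the vertex by edge-discs (each meeting a single edge) as $r\to 0$, and verify by dominated convergence that the edge contributions converge to the full edge integrals. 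The paper's fixed-covering argument avoids this delicacy by simply carrying the vertex terms along rather than discarding them in a limit.
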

\begin{proof}
In the proof we will denote $s_\p$ by $s$. 
 Let $\{\varphi_i, i\in I\}$ be a partition of unity subordinated to an open cover  $\{O_i, i\in I\}$ of $S$ such that 
 \begin{itemize}[nolistsep]
\item on each $O_i$ we have well-defined functions $s_i$ and $s_{0,i}$,
\item $I=I_f\sqcup I_e \sqcup I_v$ where
 if $i\in I_f$, $O_i$ is contained in a face of $\ms G$,
if $i\in I_e$, $O_i$ meets exactly one edge of $\ms G$,
 if $i\in I_v$, $O_i$ meets exactly one vertex of $\ms G$ and no edges but the edges emanating from this vertex.
\end{itemize} 

 Lemma~\ref{lem fuch noyau} was proved for $C^2$ functions satisfying \eqref{eq:equiariance}. However, as $s$ is a support function, it can be approximated by $C^2$ support functions satisfying \eqref{eq:equiariance} \cite[Appendix A]{BF}, and on $S$ minus the edges and vertices the gradients also converge by convexity. Actually the proof of  Lemma~\ref{lem fuch noyau} could be directly implemented in the present proof without using any approximation, but we have preferred to state a separate statement.

By Lemma~\ref{lem: cod et div}, $b$ is divergence-free. 
Denoting $A_i=\int_S  b(\nabla s_{i},\nabla \varphi_i)$, from
 Lemma~\ref{lem fuch noyau},
\begin{equation}\label{eq:def A_i}\int_S \tr_h(bb_d)= -\sum_i A_i~. \end{equation}

Let us compute $A_i$ with respect to the different sets of indices $i$.
\begin{enumerate}
\item Let $i\in I_f$. Then on $O_i$ the function $s_i$ is  $C^\infty$, so by 
Lemma~\ref{lemma divergence},
 $$A_i= -\int_{O_i} \tr_h(\varphi_i b\nabla^2s_i)$$
(there is no boundary term as the support of $\varphi_i$ is included in $O_i$). As  $b$ is traceless, 
$$A_i= -\int_{O_i} \tr_h(\varphi_i b(\nabla^2s_i-s_ih))$$
and as  $s_i$ is  the restriction  of a linear map, by \eqref{eq:hessien}
$\nabla^2s_i-s_ih=0$. Hence $A_i=0$.
\item Let $i\in I_e$. We have an edge $e$ which separates 
$O_i$: 
$$O_i=O_i^- \sqcup O_i^+ \sqcup (O_i\cap e)~. $$
Let $N_i^-=-N_i^+$ be  the outward unit normal of $\pt O_i^\pm$ (the choice of $+$ or $-$ being arbitrary). Let $s_i^\pm$  be the extensions of $s_i|_{O_i^\pm}$ to a neighborhood of $O_i^\pm$, coming from the restriction of linear maps.
By Lemma~\ref{lemma divergence},
\begin{equation*}
\begin{split}
\ A_i= &-\int_{O^+_i} \tr_h(\varphi_i b(\nabla^2s_i^+))- \int_{O^-_i} \tr_h(\varphi_i b(\nabla^2s_i^-)) \\ & + \int_{\partial O_i^+} \varphi_i b(\nabla s_i^+,N_i^+) 
+  \int_{\partial O_i^-} \varphi_i b(\nabla s_i^-,N_i^-)~.
\end{split}
\end{equation*}

By the same reason as in the $i\in I_f$ case,   $\int_{O^\pm_i} \tr_h(\varphi_i b(\nabla^2s_i^\pm))=0$,
so
$$A_i= \int_{\partial O_i^+} \varphi_i b(\nabla s_i^+,N_i^+)+  \int_{\partial O_i^-} \varphi_i b(\nabla s_i^-,N_i^-)~.$$
Now  $\varphi_i$ has compact support in $O_i$, hence 
$\partial O_i^\pm$ reduces to $e\cap O_i$. It will be convenient to write them as integrals on $e$:
$$A_i= \int_{e} \varphi_i b(\nabla s_i^+,N_i^+)+  \int_{e} \varphi_i b(\nabla s_i^-,N_i^-)~,$$
and clearly  on $e\cap O_i$, $N_i^-= -N_i^+$. So 
 $$A_i= \int_{e}\varphi_i  b(\nabla s_i^+-\nabla s_i^-,N_i^+)~.$$
 
It is easy to compute that  over $e$,
\begin{equation}\label{eq:diff grad}\nabla s_i^+-\nabla s_i^-=-w_e N_i^+~.\end{equation}

Indeed, there are vertices $v_\pm$ of $\widetilde\CH(\p)$ on an edge $e^*$, whose Gauss image is a lift of $e$,   such that $s_i(x)=\langle x,v_\pm\rangle$ (here we abuse notation identifying $x$ as a point in $O_i$ and as a point in $\H^2$). By definition, $w_e$ is the length of $e$, and $v_+-v_-=w_eN_i^-$ (here we also abuse the notation and consider $N_i^-$ as the unit vector in $\R^{2,1}$ normal to the plane defining the edge $e$). From  \eqref{eq:gradient}, and because for all $x\in e$, $\langle x,N_i^+\rangle=0$, we obtain \eqref{eq:diff grad}. So
 we are left with 
$$A_i=-w_e\int_{e} \varphi_i b(N_i^+,N_i^+)~.$$
\item Let $i\in I_v$. Around a vertex $v$, number  the emanating edges in a clockwork fashion $e_1,\ldots,e_n$, and also in a cyclic manner, $e_{n+1}=e_1$, etc. Let us denote by $O_{i,k}$ the part of $O_i$ contained between the edges $e_k,e_{k+1}$. Now we can proceed exactly as in point 2. The only difference is now that the boundary of $O_{i,k}$ on which $\varphi_i$ is not zero is the union of two geodesic segments, so we need to use the version of the Divergence Theorem for manifolds with corners. But for such manifolds, only the codimension one boundary component enters Stokes theorem, so we may apply the same formula, see e.g. \cite{lee}. In turn,
$$A_i= -\sum_{k=1}^n w_{e_k} \int_{e_k} \varphi_{k}b(N_i^+,N_i^+)~. $$

\end{enumerate}

Returning to \eqref{eq:def A_i}, to compute 
$\int_S \tr_h(bb_d)$ we need to 
sum all the $A_i$. This will lead to the result by definition of a partition of unity. Indeed, let $e$ an edge of the cellulation. Suppose that for example $e=(e\cap O_1)\cup (e\cap O_2) \cup (e\cap O_3)$ i.e. $O_1$ and $O_2$ will each contain a (different) vertex of the cellulation, while $O_2$ don't. So
$$\int_{e} \varphi_{1}b(N_e,N_e)+\int_{e} \varphi_{2}b(N_e,N_e)+\int_{e} \varphi_{3}b(N_e,N_e)=\int_{e} b(N_e,N_e)~,$$
where $N_e$ is any unit normal to $e$.

Hence from \eqref{eq:def A_i} we have  $\tr_h(bb_d) = \sum_{e}w_e \int_e b(N_e,N_e)$
and the result follows because as $U_e=JN_e$,  from  \eqref{eq:jbj2} we get
$b(N_e,N_e)=-b(U_e,U_e)$.
\end{proof}

\subsubsection{Weil--Petersson metric}

From now on we consider $\mc T(S)$ as a subspace of the space of hyperbolic metrics on $S$, transverse to the action by the group of isotopies. 
 For this point of view, the tangent space of $\mc T(S)$ at $h$ is the space of symmetric traceless Codazzi $(0,2)$ tensors over $(S,h)$, see e.g. \cite{tromba,yamada,BS} (note Lemma~\ref{lem: cod et div}). In the following, we will consider implicitly paths $h(t)$ in $\mc T(S)$ such that $h'(0)$ is a non-zero symmetric traceless Codazzi tensor.

%\begin{remark}\label{rem:codazzi hdq}
By a folklore result, known as \emph{Hopf Lemma} (see e.g. \cite[Lemma 3.1]{KT}), $b$ is a symmetric traceless Codazzi tensor if and only if 
\begin{equation}\label{def:Qb}
Q(X)= b(X,X)+i Jb(X, X)~ \end{equation}
is a holomorphic quadratic differential over the Riemann surface underlying $(S,h)$. 

In a local chart, a holomorphic quadratic differential is expressed as $Q=q(z)dz^2$. If the metric is expressed as $h=\chi(z)dzd\bar z$, then for two holomorphic quadratic differentials $Q_1$ and $Q_2$ the function $\frac{q_1\bar q_2}{\chi}$ is independent on the choice of coordinates and is denoted by $\frac{Q_1\bar Q_2}{h^2}$.
The \emph{Weil--Petersson cometric} is defined as
$$\langle Q_1,Q_2\rangle_{\textsc{wp}}=\mathrm{Re}\int_S \frac{Q_1\bar Q_2}{h^2}~.$$
With \eqref{def:Qb}, one compute easily (see \cite{tromba}) that for two symmetric transverse Codazzi tensors $b_1,b_2$ over $(S,h)$,
\begin{equation}\label{eq:WP1}\langle X(b_1),X(b_2)\rangle_{\textsc{wp}}=\frac{1}{2}\int_S \tr_h(b_1b_2)~,\end{equation}
where $X(b)$ is the tangent vector of Teichm\"uller space associated to the Codazzi tensor $b$, as a first order deformation of the hyperbolic metric.

Now recall the  tangent vector field $X_d$ over $\mc T(S)$ defined by the flat metric $d$ (Notation~\ref{not: m X}). The vector $X_d(h)$ defines a balanced cellulation $(\ms G, w)$, which in turn defines a function $s_\p$ and hence a symmetric traceless Codazzi tensor $b_d$, as explained in the preceding section. 
Here, the symmetric traceless Codazzi tensor $b_d$ is not identified with a first-order deformation of hyperbolic metrics, as in \eqref{eq:WP1}, 
but rather with an element of
$\mc H^1(\rho)$. Hence there are two ways to identify $b_d$ with a tangent vector of Teichm\"uller space, but they differ only by the almost-complex structure $\mc J$ of $\mc T(S)$ \cite[Theorem B]{BS}:
\begin{equation}\label{eq:J acts}
\mc J X(b_d)=X_d(h)~.
\end{equation}

This almost complex structure for holomorphic quadratic differential is the left multiplication by the almost complex structure $J$ of $(S,h)$,  
 and then $\mc J$ acts on symmetric traceless Codazzi tensors as $b\mapsto bJ$ (recall \eqref{eq:jbj}).
 At the end of the day, using \eqref{eq:J acts} to implicitly identify $X_d(h)$ with a tangent vector of Teichm\"uller space as a space of metrics,  
 \eqref{eq:WP1} and \eqref{eq:la formule mixte}, we get that if $\omega$ is the Weil--Petersson symplectic form, then
 \begin{equation}\label{eq:WP et integrale}
 \begin{split}
\omega(X(b),X_d(h))&=\omega(X(b),\mc J X(b_d))=\langle X(b),X(b_d)\rangle_{\textsc{wp}}=\\ &=\frac{1}{2}\int_S \tr_h(bb_d)=-\frac{1}{2}\sum_{e}w_e \int_e b(U_e,U_e)~.
\end{split} \end{equation}

For future reference, let us consider a symmetric traceless Codazzi tensor $b$, which is a first-order deformation of a path of hyperbolic metrics $h(t)$ with $h(0)=h$ along a 
Weil--Petersson geodesic. From \cite[Theorem 3.8]{yamada},
we have 
\begin{equation}\label{eq:derivee seconde le lon WP}h''(0)=\left(\frac{1}{4}-\frac{1}{2}(\Delta_h -2)^{-1}\right)\|b\|^2 h~, \end{equation}
with $\|b\|^2=\tr_h(bb)$ and $\Delta_h$ the Laplacian of $h$. 
%The difference of a factor $2$ between the formula above and the one in  \cite{yamada} comes from the fact that his Weil--Petersson metric is the $L^2$-paring of the Codazzi tensor, hence half of our  Weil--Petersson metric. 
This formula is also proved in \cite[(3.4)]{wolf}. In that reference $h'(0)$ 
%is given in terms of a holomorphic quadratic differential that 
corresponds to 2 times our Codazzi tensor $b$, and with this difference taken into account, the both formulas are the same. Note that the Weil--Petersson metrics they both consider differ by a factor $2$, but that does not change the parametrization of the Weil--Petersson  geodesics with a given tangent vector.

\subsection{First derivatives}\label{sec:directional derivatives}

Let $h(t)$, $t\in(-\e,\e)$, be an analytical path of hyperbolic metrics in $\mc T(S)$, sufficiently small so that the conclusion of Lemma~\ref{lem:lift} occurs.
For each $t>0$, let $P(t)=\partial \widetilde\CH(\p_d(h))$.
 The quotient  of the image by the Gauss map of $\partial P(t)$ gives a  weighted graph $(\ms G_t, w_t)$  over $S$. More precisely, there exists a graph $G$ and a piecewise smooth mapping $f(t)$ from $G$ to $\ms G_t$.

From Lemma~\ref{lem:lift}, it follows that the dependence on $t>0$ of the vertices of $P(t)$ in Minkowski space is analytic. As the combinatorics of $P(t)$ is fixed, and as the Gauss map can be written as a cross product of vertices, for a vertex $v\in G$ the map $t\mapsto f(t)(v)$ is smooth (note that the graph is sent to the smooth surface $S$, as the hyperbolic metric changes over $S$, we cannot keep the conformal structure over $S$). 

By construction, for each $t>0$, $(\ms G_t, w_t)$ is a  balanced cellulation over $(S,h(t))$, whose dual metric is (isometric to) $d$. In turn, the edge lengths of $P(t)$ are constant, so the weights $w_t$ does not depend on $t$, and we will write them $w$.

Let us insist that for $t>0$, for all edges $e$ of $G$, the image of $f_e(t)$ does not collapse to a point. Also for $t=0$, if a connected subgraph of $G$ collapses to a point, then it is a tree. 
The same conclusion occurs for $t<0$, but the underlying graph could be different from $G$.
We will call \emph{admissible} such a path 
$(\ms G_t, w,h(t))$.

The fact that the dual metric is prescribed allows to define unit tangent vector also for edges reduced to a point, as explained in the lemma below.

\begin{lemma}\label{lem:continuite Ut}
Let $(\ms G_t, w,h(t))$ be an admissible path.
 For $t>0$ let $U_{e,v}(t)$ be the $h(t)$-unit tangent vector  at the vertex $f_v(t)$  of  $f_e(t)$. Then $U_{e,v}(t)$ converges to a $h(0)$-unit tangent vector $U_{e,v}(0)$ at $f_v(0)$.
%Moreover, if $f_e(0)$ does not collapse to a point, then $U_{e,v}(0)$ is its  unit tangent vector at $f_v(0)$. 
\end{lemma}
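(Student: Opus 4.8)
The plan is to carry out the whole computation inside $\mathbb{H}^2\subset\R^{2,1}$, where the Gauss image of $P(t)=\partial\widetilde\CH(\p_d(h(t)))$ naturally lives and where the $h(t)$-intrinsic geometry of $\ms G_t$ is just the restriction of the Minkowski metric. First I would record the inputs supplied by Lemma~\ref{lem:lift}: for $t\in[0,\e_0]$ the vertices of $P(t)$ (as points of $\R^{2,1}$, equivariantly lifted) depend analytically on $t$, the face cellulation of $P(t)$ is the fixed subdivision $\ms C$ of $\ms C_0$ for $t\in(0,\e_0]$, and it degenerates to $\ms C_0$ at $t=0$ with every face still a nondegenerate spacelike polygon. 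For a vertex $v$ of $G$, i.e. a face of $\ms C$, let $f_v(t)\in\mathbb{H}^2$ be the future unit normal of the corresponding face of $P(t)$; being the normalized Minkowski cross product of two of its edge vectors, $f_v(t)$ is analytic on $[0,\e_0]$. For an edge $e$ of $G$ with endpoints $v,v'$, the Gauss image $f_e(t)$ is the geodesic segment $[f_v(t),f_{v'}(t)]$ in $\mathbb{H}^2$, and (up to the fixed rotation by $\frac{\pi}{2}$ in $T_{f_v(t)}\mathbb{H}^2$, which disposes of the ``tangent'' versus ``normal'' distinction in the statement) the vector $U_{e,v}(t)$ equals
\[
U_{e,v}(t)=\frac{\operatorname{proj}_{f_v(t)}\!\big(f_{v'}(t)\big)}{\big\|\operatorname{proj}_{f_v(t)}\!\big(f_{v'}(t)\big)\big\|},\qquad \operatorname{proj}_p(x):=x+\langle x,p\rangle p,
\]
where $\operatorname{proj}_p$ is the Minkowski-orthogonal projection onto $T_p\mathbb{H}^2=p^\perp$.

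The only difficulty is the $0/0$ indeterminacy when $f_e(0)$ collapses, i.e. $f_v(0)=f_{v'}(0)$. To deal with it, set $g(t):=f_{v'}(t)-f_v(t)$, an analytic $\R^{2,1}$-valued function on $[0,\e_0]$. For $t>0$ the cellulation is exactly $\ms C$, so the two faces of $P(t)$ meeting along the edge dual to $e$ are genuinely distinct and hence have distinct unit normals, i.e. $g(t)\ne0$; thus $g\not\equiv0$ and we may write $g(t)=t^k\hat g(t)$ with $\hat g$ analytic on $[0,\e_0]$ and $\hat g(0)\ne0$ (finite vanishing order). Since $\operatorname{proj}_p$ is linear in $x$ and $\operatorname{proj}_p(p)=0$, one has $\operatorname{proj}_{f_v(t)}(f_{v'}(t))=\operatorname{proj}_{f_v(t)}(g(t))=t^k\operatorname{proj}_{f_v(t)}(\hat g(t))$, so for $t>0$
\[
U_{e,v}(t)=\frac{\operatorname{proj}_{f_v(t)}(\hat g(t))}{\|\operatorname{proj}_{f_v(t)}(\hat g(t))\|}.
\]
Now the numerator is continuous on $[0,\e_0]$, and it is nonzero at $t=0$: for $t>0$ the chord $g(t)$ joining two distinct points of $\mathbb{H}^2$ is spacelike (directly, $\langle g,g\rangle=-2-2\langle f_v,f_{v'}\rangle>0$ by the reversed Cauchy--Schwarz inequality for future timelike vectors), so $\hat g(0)=\lim_{t\to0^+}t^{-k}g(t)$ is spacelike or lightlike, in particular nonzero and not a multiple of the timelike vector $f_v(0)$, whence $\operatorname{proj}_{f_v(0)}(\hat g(0))\ne0$. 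Therefore $U_{e,v}(t)$ extends continuously to $t=0$ with value $U_{e,v}(0):=\operatorname{proj}_{f_v(0)}(\hat g(0))/\|\operatorname{proj}_{f_v(0)}(\hat g(0))\|$, a unit tangent vector to $\mathbb{H}^2$ at $f_v(0)$; the same argument post-composed with the rotation by $\frac{\pi}{2}$ gives the ``normal'' version, proving the first assertion.

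For the second assertion, suppose $f_e(0)$ does not collapse, i.e. $f_v(0)\ne f_{v'}(0)$; then $g(0)\ne0$, so $k=0$ and $\hat g\equiv g$, and the formula above yields $U_{e,v}(0)=\operatorname{proj}_{f_v(0)}(f_{v'}(0))/\|\operatorname{proj}_{f_v(0)}(f_{v'}(0))\|$, which is exactly the unit tangent vector of the geodesic segment $f_e(0)=[f_v(0),f_{v'}(0)]$ at $f_v(0)$, as claimed. The heart of the matter — and the only genuine obstacle — is the collapsing case: the argument relies on the analyticity coming from Lemma~\ref{lem:lift}, which forces $g$ to vanish only to finite order so that the normalized quotient has a limit, together with the causal character of chords of the hyperboloid, which guarantees that the leading coefficient $\hat g(0)$ has nonzero projection to $T_{f_v(0)}\mathbb{H}^2$.
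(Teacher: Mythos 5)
Your proof is correct, and it takes a genuinely different route from the one in the paper. The paper works combinatorially on the surface: it notes that the tree $G'$ of collapsed edges has a leaf $v$ incident to some non-collapsed edge $e$, so $U_{e,v}(0)$ exists, and then it reconstructs the remaining tangent vectors at $v$ (and propagates along the tree) using the fact that the angles between edges at a vertex are fixed along the deformation, being supplementary to the angles of the corresponding face of the dual Euclidean metric $d$; the proof finishes with a consistency check that the outcome is independent of the order of reconstruction. Your proof instead pushes everything into $\R^{2,1}$: you observe that the face normals $f_v(t)$ are analytic (by Lemma~\ref{lem:lift}), so $g(t)=f_{v'}(t)-f_v(t)$ vanishes to finite order $t^k$ with leading coefficient $\hat g(0)\neq0$; you cancel the $t^k$ factor inside the normalization of the projection, and show that $\hat g(0)$ cannot be collinear with the timelike vector $f_v(0)$ because the chords $g(t)$ are spacelike, whence the projected leading coefficient is nonzero and the normalized quotient converges. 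This is cleaner in that it produces an explicit formula for $U_{e,v}(0)$ and avoids the propagation and the well-definedness check; on the other hand it leans hard on the analyticity statement from Lemma~\ref{lem:lift}, whereas the paper's argument would go through for any $C^0$ family with fixed dual metric and would make the role of $d$ more visible. Both arguments are complete.

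One minor point worth being precise about when presenting the argument: the edge-to-vertex duality means $f_v(t)$ is the future unit normal to the face of $P(t)$ corresponding to $v$, and the sign of the Minkowski cross product must be fixed once and for all (by requiring future-directedness at $t=0$) so that $f_v(t)$ is a single analytic branch; also it should be said explicitly that $\operatorname{proj}_{f_v(0)}(\hat g(0))\neq 0$ follows because the kernel of $\operatorname{proj}_p$ is exactly the timelike line $\R p$, which a nonzero non-timelike vector cannot lie on. You in fact say both of these things, so the write-up is sound.
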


\begin{proof}
% At $t=0$, the length of some edges may be $0$.  Let $G^+$ be the graph corresponding to $W_t$ for $t>0$, with the edge $e$ weighted by $w_e$. Let us parametrized any $e$ by $[0,1]$ proportionally to the arc-length.  Let $h_e(t):[0,1]\to S$ be a map from the edge $e$ of $G^+$ to an edge of $W_t$. For $t>0$, $h_e(t)$ is an embedding.    But $h_e(0)$ may be constant.
Let $G' \subset G$ be a connected subgraph collapsing to a point at $t=0$ and $v$ be its leaf. Then $v$ has an edge $e$ that does not collapse to a point. Hence we have the well-defined tangent vector $U_{e,v}(0)$. The dual Euclidean metric determines all angles between the tangent vectors to the edges at $v$ as the supplementary angles between the dual edges in $d$, see Figure~\ref{fig:path_cellulation}. Hence we can reconstruct the tangent vectors for any edge of $G'$ at $v$. For an edge $e'$ of $G'$ adjacent to $v$ and $v'$ the tangent vector $U_{e',v'}(0)$ is determined just as $-U_{e', v}(0)$. Then again we can reconstruct all tangent vectors at $v'$ with the help of the angles of $d$. Proceeding this way we reconstruct all tangent vectors at the edges of $G'$. It is easy to see that the result is independent from the sequence, in which we are doing the reconstruction, and every obtained tangent vector is the limit of the respective tangent vectors as $t \rar 0+$.
%
%By the definition of admissibility, 
%there is an edge $e'$ adjacent to $v$ such that 
%the image of $f_{e'}(0)$ is not reduced to a point. Hence we know $U_{e',v}(0)$. 
%So we  can define the unit tangent vector $U_{e,v}(0)$, because as the dual Euclidean metric is given, $U_{e,v}(0)$ is defined from $U_{e',v}(0)$ by a rotation of an angle which is the supplementary angle  between the dual edges in $d$, see Figure~\ref{fig:path_cellulation}.
\end{proof}

In particular, for $t\geq 0$ the following balance condition holds: for any vertex $v$ of $G$,
\begin{equation}\label{eq:balanced 1 t}
\sum_{E\ni e\ni v} w_e U_{e,v}(t)=0~.
\end{equation}
Moreover, let $G_0$ be the graph obtained from $G$ by contracting the components of collapsed edges at $t=0$, so $\ms G_0$ is its geodesic realization in $(S, h(0))$. We denote by $E_0$ the set of its edges (we may refer to them as to \emph{visible edges} and to the vertices of $G_0$ as to \emph{visible vertices}). Then at $t=0$ for a vertex $v$ of $G_0$ we still have the balance condition
\begin{equation*}
\sum_{E_0\ni e\ni v} w_e U_{e,v}(0)=0~,
\end{equation*}
as for a collapsed edge $e \in E$ with vertices $v$ and $v'$ we have $U_{e,v}(0)=-U_{e, v'}(0)$.
 
Of course, a similar result holds for $t\leq 0$, but the vectors $U_{e,v}(0)$, defined by continuity, are not necessarily the same for $t>0$ or $t<0$, see 
Figure~\ref{fig:path_cellulation}.

\begin{figure}[h!]
\begin{center}
\includegraphics[scale=0.3]{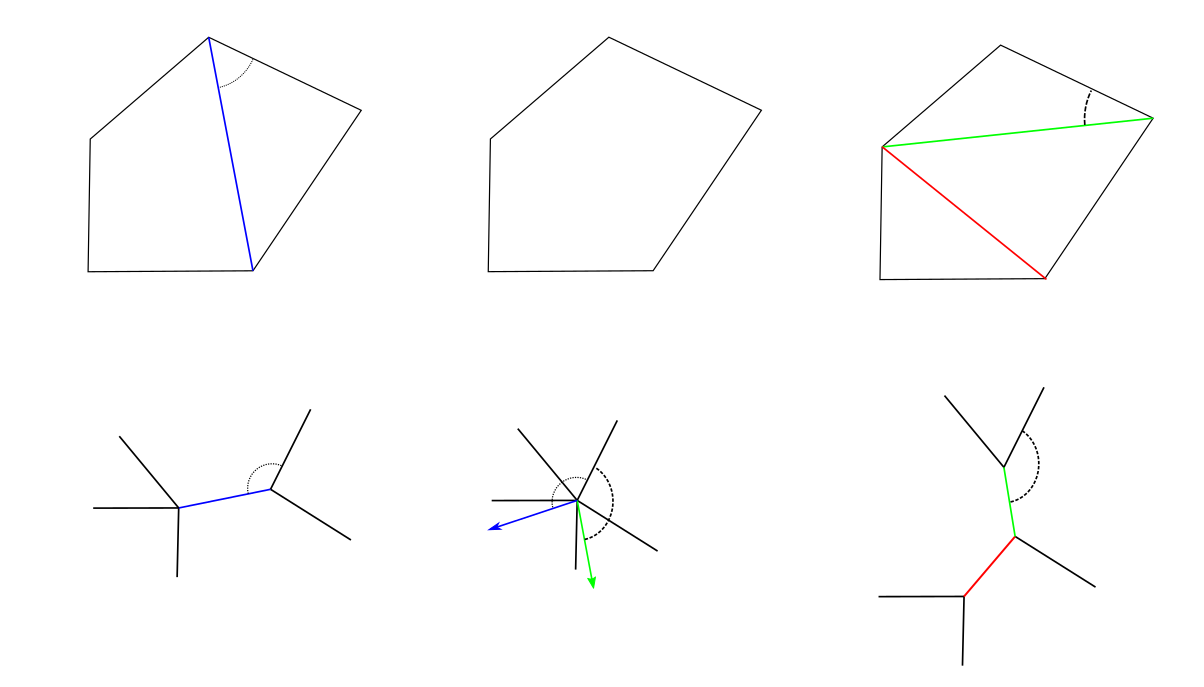}
\caption{\footnotesize	 To Lemma~\ref{lem:continuite Ut}: vectors $U_{e,v}(0)$ for $t>0$ and $t<0$ are different.}
\label{fig:path_cellulation}\end{center}
\end{figure}

For any edge $e\in E(G)$ and $t\in [0,\e)$, we consider a parameterization of the $h(t)$-geodesic $f_e(t)$ over $[0,1]$ proportionally to the arc length, and we will denote $f_e(t)(s)$ by $f_e(t,s)$, so $f_e$ is now a continuous function from $[0,\e)\times [0,1]$ to $S$. In the case of collapsed edges, at $t=0$ the map $f(0, s)$ sends $[0,1]$ to a point. By considering that the displacement of lifts of points in the hyperbolic plane is smooth, it follows that $f$ is smooth over  $(0,\e)\times [0,1]$.
We denote by $V_e$ the image by the tangent map of $f_e$ of $\frac{\partial}{\partial t}$ and by $T_e$ the image by the tangent map of $f_e$ of $\frac{\partial}{\partial s}$. We will denote by $U_e(t,s)$ the $h(t)$-unit tangent vector of $f_e(t)$ at $f_e(t,s)$. Comparing with notation of Lemma~\ref{lem:continuite Ut}, if $f_e(t,0)=f_v(t)$, then $U_{e,v}(t)=U_e(t,0)$.
In particular, if $f_e(t)$ is not reduced to a point (which does not occur for $t>0$),
\begin{equation}\label{eq:T et U}T_e(t,s)=  \ell_e(t)U_e(t,s)~, \end{equation}
where  $\ell_e(t)$ is the $h(t)$-length of the edge $f_e(t)$ (which may be zero).

 Note that in general,     
 if $\nabla^t$ is the Levi-Civita connection of $h(t)$,  $\nabla^t_{V}T$
is meaningless. However, there is a notion of connection defined by the pull-back over the variation of $f$, which allows formally the same computations as if $V$ and $T$ were defined in a neighborhood, so we will abuse the notation and write $\nabla^t_V T$, see e.g. Section 3.B.1 in \cite{ghl} or Section 6.1 in \cite{jost}. In particular, as $V$ and $T$ are images of coordinates fields,
\begin{equation}\label{eq:crochet 0 TU}\nabla^t_VT= \nabla^t_T V~. \end{equation}

\begin{remark}\label{rem:somme sur V}
A remark that we will often use is that at a vertex $v$ the vector $V$ does not depend on the edges starting at $v$, conversely to $U$ and $T$. In turn,
\eqref{eq:balanced 1 t}  implies, for any $t_1$ and $t_2$, 
\begin{equation}\label{eq:wsum zero derivative}\sum_{e\ni v} w_e h(t_2)(U_e(t_1,0),V(t_1,0))=0~.\end{equation} 
 \end{remark}

For an admissible path $(\ms G_t, w,h)$, we will look at  the total length of the balanced cellulation at $t$:

\begin{equation}\label{eq:Lt 1}L(t):=\L_d(h(t))=\sum_{e\in E} w_e \ell_e(t)~, \end{equation}
which is of course continuous over $(-\e,\e)$. (In the present section, $E$ is the set of edges of $G$.)

 \begin{lemma}\label{lem:L'} Along an admissible path 
\begin{equation}\label{eq:L'}L'(t)=\frac{1}{2}\sum_{e\in E} w_e\ell_e \int_0^1 h'(t)(U_e(t,s),U_e(t,s))ds~. \end{equation}
\end{lemma}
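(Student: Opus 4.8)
The plan is to differentiate the total length $L(t)=\sum_{e\in E} w_e\ell_e(t)$ directly, using the formula $\ell_e(t)=\int_0^1\|T_e(t,s)\|_{h(t)}\,ds$ for the $h(t)$-length of the geodesic $f_e(t)$, where $T_e=\partial_s f_e$. Since $f$ is smooth on $(0,\e)\times[0,1]$ by the analyticity from Lemma~\ref{lem:lift}, I may differentiate under the integral sign for $t>0$. Writing $\|T_e\|^2 = h(t)(T_e,T_e)$, the $t$-derivative of the integrand picks up two contributions: one from the explicit dependence of the metric $h(t)$, giving $\tfrac12 h'(t)(T_e,T_e)/\|T_e\|$, and one from the motion of the curve, giving $h(t)(\nabla^t_V T_e, T_e)/\|T_e\|$ (using the pull-back connection, as discussed around \eqref{eq:crochet 0 TU}).

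The first term already yields, after using $T_e = \ell_e U_e$ from \eqref{eq:T et U} (valid for $t>0$) and hence $\|T_e\|=\ell_e$, exactly the claimed right-hand side $\tfrac12\sum_e w_e\ell_e\int_0^1 h'(t)(U_e,U_e)\,ds$. So the whole point is to show the second term vanishes: $\sum_{e\in E} w_e\int_0^1 h(t)(\nabla^t_V T_e, T_e)\,\|T_e\|^{-1}\,ds = 0$. Here I would use that $f_e(t)$ is a geodesic parametrized proportionally to arc length, so $T_e$ has constant $h(t)$-norm along the curve and $\nabla^t_{T_e} T_e = 0$; combined with \eqref{eq:crochet 0 TU}, $h(t)(\nabla^t_V T_e, T_e) = h(t)(\nabla^t_{T_e} V, T_e) = \partial_s\big(h(t)(V,T_e)\big) - h(t)(V,\nabla^t_{T_e}T_e) = \partial_s\big(h(t)(V,T_e)\big)$. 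Therefore the $s$-integral telescopes: $\int_0^1 h(t)(\nabla^t_V T_e,T_e)\,\|T_e\|^{-1}ds = \ell_e^{-1}\big[h(t)(V,T_e)\big]_{s=0}^{s=1} = h(t)(V,U_e)\big|_{s=1} - h(t)(V,U_e)\big|_{s=0}$, i.e. the difference of the values of $h(t)(V,U_{e})$ at the two endpoints of $e$.

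Summing $w_e$ times this over all edges $e$, I reorganize the sum as a sum over vertices $v$ of $G$: each vertex $v$ contributes $\sum_{e\ni v} w_e\, h(t)\big(V(v), U_{e,v}(t)\big)$, where $V(v)$ is the (edge-independent) variation vector at $v$ (Remark~\ref{rem:somme sur V}), and the sign works out because $U_e$ at the endpoint $s=1$ of $e$ is the inward/outward tangent $-U_{e,v}$ at that vertex. By the balance condition \eqref{eq:wsum zero derivative}, each such vertex sum is zero. Hence the second term vanishes and \eqref{eq:L'} follows for $t>0$; the formula at $t=0$ then follows by continuity (both sides are continuous, using Lemma~\ref{lem:continuite Ut} to make sense of the $U_{e,v}(0)$ on collapsed edges, and $w_e\ell_e=0$ kills any collapsed-edge contribution on the right). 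The main obstacle is bookkeeping the boundary-term signs correctly when passing from "sum over edges with two endpoints" to "sum over vertices," together with ensuring the justification for differentiating under the integral and for $\nabla^t_{T_e}T_e=0$ is clean given that $f_e$ is only piecewise smooth in $t$ at $t=0$ — but the latter is handled by working on $(0,\e)$ and taking limits.
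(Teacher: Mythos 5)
Your proposal is correct and follows essentially the same route as the paper: decompose $L'(t)$ into a contribution from the change of metric and a contribution from the motion of the geodesics, use the first variation of arc length (which you derive explicitly rather than citing, as the paper does with \cite[(6.1.4)]{jost}) together with the balance condition \eqref{eq:wsum zero derivative} to kill the second contribution, and extend to $t=0$ by continuity of $\L_d$.
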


We will often skip the points $(t,s)$ from $U$ and $V$ when the context makes them clear. 
\begin{proof}
We recall that $L$ is $C^1$ (because $\L_d$ is $C^1$). We prove the formulas for $t\not=0$, for which there are no collapsed edges, and notice that the formula extends continuously to $t=0$. (It is actually in general true that a continuous function over $(-\e,\e)$ with continuous derivative over $(-\e,\e)\setminus \{0\}$ that extends continuously at $0$ is $C^1$.)

Let $t\not=0$. Let us denote by  $\L_{h(u)}(f(\alpha))$ the total length for the metric $h(u)$ of the the graph at time $\alpha$, so that $f(\alpha)$ is geodesic for $h(\alpha)$ and  $L(t)=\L_{h(t)}(f(t))$.
We have
$$L'(t)=\frac{d}{du}\L_{h(u)}(f(t))|_{u=t}+\frac{d}{d\alpha}\L_{h(t)}(f(\alpha))|_{\alpha=t}~. $$

The point is that 
$\frac{d}{d\alpha}\L_{h(t)}(f(\alpha))|_{\alpha=t} $ is a weighted sum of variations of geodesics, which is equal for each $e$ (see e.g.  \cite[(6.1.4)]{jost}), to $\frac{1}{\ell_e(t)}h(t)(T,V)|_{s=0}^{s=1}=h(t)(U,V)|_{s=0}^{s=1}$, whose weighted sum is equal to zero by Remark~\ref{rem:somme sur V}. Hence,
\begin{equation}\label{eq:equation geodesiques}\frac{d}{d\alpha}\L_{h(t)}(f(\alpha))|_{\alpha=t} =0~.\end{equation}

Now
\begin{equation*}
\begin{split}
L'(t)&=\frac{d}{du}\L_{h(u)}(f(t))|_{u=t}=\frac{1}{2}\sum_{e\in E} w_e\int_0^1 \frac{h'(t)(T,T)}{h(t)(T,T)^{1/2}}ds \\ &=\frac{1}{2}\sum_{e\in E} w_e\int_0^1 h'(t)(U,U)
h(t)(T,T)^{1/2}ds =\frac{1}{2}\sum_{e\in E} w_e\ell_e\int_0^1h'(t)(U,U)ds\end{split}\end{equation*}
and this last expression converges to the same expression when
$t\to 0^\pm$.
\end{proof}

Recall the tangent vector field $X_d$ over $\mc T(S)$ defined by the flat metric $d$ (Notation~\ref{not: m X}).

\begin{prop}\label{prop:WP gradient}
The vector field $X_d$ is minus the Weil--Petersson symplectic gradient of $\L_d$.
\end{prop}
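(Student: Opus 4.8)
The plan is to reduce the statement to a pointwise comparison of two formulas that are already established: the derivative of the total length along an admissible path (Lemma~\ref{lem:L'}) and the expression \eqref{eq:WP et integrale} for the Weil--Petersson pairing against $X_d$. Since $\L_d$ is $C^1$ (Lemma~\ref{diff2}) and $X_d$ is $C^1$ (Lemma~\ref{lem:Xd C1}), and since the Weil--Petersson symplectic form $\omega$ is non-degenerate, it suffices to prove that, for every $h \in \mc T(S)$ and every tangent vector $v \in T_h\mc T(S)$,
\[
d\L_d|_h(v) = -\omega\big(v, X_d(h)\big)~;
\]
with the convention that the symplectic gradient of a function $f$ is the vector field $Y$ satisfying $\omega(\cdot, Y) = df$, this says exactly that $X_d = -Y$, i.e. that $X_d$ is minus the symplectic gradient of $\L_d$.

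First I would write $v = X(b)$ for a symmetric traceless Codazzi tensor $b$ over $(S,h)$ and choose an analytic path $h(t)$, $t \in (-\e,\e)$, in $\mc T(S)$ with $h(0) = h$ and $h'(0) = b$, taken short enough that Lemma~\ref{lem:lift} applies; this produces an admissible path $(\ms G_t, w, h(t))$ in the sense of Section~\ref{sec:directional derivatives}. Since $\L_d$ is $C^1$, the function $L(t) = \L_d(h(t))$ is differentiable at $t = 0$ (so the one-sided nature of the admissible-path construction is harmless), and Lemma~\ref{lem:L'} evaluates its derivative there as
\[
d\L_d|_h(v) = L'(0) = \frac{1}{2}\sum_{e} w_e\, \ell_e \int_0^1 b\big(U_e(0,s), U_e(0,s)\big)\, ds~,
\]
where the edges collapsing at $t = 0$ drop out since $\ell_e = 0$ for them. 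Each remaining edge $e$ is a geodesic edge of the balanced convex cellulation $(\ms G_0, w)$ of $(S,h)$ associated to $\p_d(h)$, and $s \mapsto f_e(0,s)$ is a parametrization of it of constant speed $\ell_e$; reparametrizing by arc length turns the last display into
\[
d\L_d|_h(v) = \frac{1}{2}\sum_{e \in E(\ms G_0)} w_e \int_e b(U_e, U_e)~,
\]
where $U_e$ is the $h$-unit tangent vector along $e$.

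Finally I would invoke Formula~\eqref{eq:WP et integrale}, which states precisely that $\omega(X(b), X_d(h)) = -\frac{1}{2}\sum_{e} w_e \int_e b(U_e, U_e)$; comparing with the previous display gives $d\L_d|_h(v) = -\omega(v, X_d(h))$, as wanted. I do not expect a genuine obstacle here, since all the substance sits in the earlier lemmas; what remains is bookkeeping — verifying that an arbitrary tangent direction at $h$ is realized by an admissible path (combine Lemma~\ref{lem:lift} with the freedom to choose any analytic curve through $h$ with the prescribed Codazzi tangent vector, projecting onto the slice transverse to isotopies if needed), that the collapsed edges contribute nothing, that the arc-length reparametrization is carried out correctly, and that the sign conventions are consistent so that the word ``minus'' in the statement is the right one.
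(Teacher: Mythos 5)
Your proposal is correct and follows exactly the same route as the paper: compute $L'(0)$ via Lemma~\ref{lem:L'}, observe that collapsed edges contribute nothing, use the $C^1$-smoothness of $\L_d$ from Lemma~\ref{diff2} to upgrade the one-sided derivative to the full differential, and conclude by comparing with Formula~\eqref{eq:WP et integrale}. The only cosmetic difference is that you spell out the arc-length reparametrization and the symplectic-gradient sign convention more explicitly than the paper does.
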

\begin{proof}
Let $b$ be a symmetric traceless Codazzi tensor over $(S,h)$, which is tangent to the analytical path $h(t)$ at $h(0)=h$ in $\mc T(S)$. From \eqref{eq:L'},
if $f_e(0)$ is a point, then the corresponding integral does not contribute to the sum. In particular,
\begin{equation*}\label{eq:L'(0)}\L_d(h(t))'|_{t=0}=\frac{1}{2}\sum_{e\in E_0} w_e \int_{e}b(U_e,U_e)~. \end{equation*}

As  $\L_d$ is $C^1$ by Lemma~\ref{diff2}, we have
%\begin{equation*}
$d\L_d(h)(b)=\frac{1}{2}\sum_{e\in E_0} w_e \int_{e}b$
%\end{equation*} 
and the result follows from \eqref{eq:WP et integrale}.
\end{proof}

\begin{cor}\label{cor:C2}
$\L_d$ is $C^2$.
\end{cor}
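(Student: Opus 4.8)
The plan is to upgrade the regularity of $\L_d$ from $C^1$ (Lemma~\ref{diff2}) to $C^2$ by combining Proposition~\ref{prop:WP gradient} with the $C^1$-regularity of the vector field $X_d$ (Lemma~\ref{lem:Xd C1}). The key observation is that the Weil--Petersson symplectic gradient is an invertible bundle map: if $\omega$ is the Weil--Petersson symplectic form and $\mathrm{grad}^{\weil}$ denotes the symplectic gradient, then for any $C^1$ function the symplectic gradient is obtained by applying the smooth (indeed real-analytic) bundle isomorphism $T^*\mc T(S) \to T\mc T(S)$ induced by $\omega$ to the differential of the function. Conversely, the differential of $\L_d$ is recovered from $X_d$ by applying the inverse bundle isomorphism.

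First I would recall the chain of regularity facts already established: $\L_d$ is $C^1$ (Lemma~\ref{diff2}), $X_d$ is $C^1$ (Lemma~\ref{lem:Xd C1}), and $X_d = -\mathrm{grad}^{\weil}\L_d$ (Proposition~\ref{prop:WP gradient}). Since the Weil--Petersson symplectic form $\omega$ is a real-analytic (in particular $C^\infty$) non-degenerate $2$-form on $\mc T(S)$, the musical isomorphism $\flat_\omega : T\mc T(S) \to T^*\mc T(S)$, $Y \mapsto \omega(Y, \cdot)$, is a $C^\infty$ vector bundle isomorphism, and so is its inverse $\sharp_\omega$. From $X_d = -\mathrm{grad}^{\weil}\L_d$ we get, by the very definition of the symplectic gradient, that $d\L_d = -\flat_\omega(X_d)$ as sections of $T^*\mc T(S)$. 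Now $X_d$ is a $C^1$ section of $T\mc T(S)$ and $\flat_\omega$ is $C^\infty$, so the composition $\flat_\omega \circ X_d$ is a $C^1$ section of $T^*\mc T(S)$; that is, $d\L_d$ is a $C^1$ one-form. A function whose differential is $C^1$ is $C^2$, which is exactly the claim.

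The main point to be careful about — really the only subtlety — is to make sure the notion of ``symplectic gradient'' is being used with its literal meaning, namely that $\L_d$ being $C^1$ guarantees $d\L_d$ exists as a continuous one-form and $\mathrm{grad}^{\weil}\L_d$ is by definition the continuous vector field $\sharp_\omega(d\L_d)$; Proposition~\ref{prop:WP gradient} then identifies this with $-X_d$, an equality of continuous sections. Inverting gives $d\L_d = -\flat_\omega \circ X_d$, and the bootstrapping of regularity through the smooth bundle map is then immediate. I do not anticipate a genuine obstacle here: this corollary is a soft consequence of the already-proven identification of the gradient, and the only thing to check is that the Weil--Petersson form is smooth enough (which is classical — it is a Kähler, indeed real-analytic, structure on Teichm\"uller space, see e.g.~the references to \cite{tromba,wolpert82} already used) so that applying $\sharp_\omega$ and $\flat_\omega$ does not cost regularity.

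Concretely, the proof reads as follows.

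\begin{proof}
By Proposition~\ref{prop:WP gradient}, $X_d = -\operatorname{grad}^{\weil}\L_d$. Since $\L_d$ is $C^1$ by Lemma~\ref{diff2}, its differential $d\L_d$ is a continuous $1$-form on $\mc T(S)$, and by definition of the symplectic gradient, $\operatorname{grad}^{\weil}\L_d = \sharp_\omega(d\L_d)$, where $\sharp_\omega : T^*\mc T(S) \to T\mc T(S)$ is the bundle isomorphism induced by the Weil--Petersson symplectic form $\omega$. Hence
\[
d\L_d = -\flat_\omega(X_d)~,
\]
where $\flat_\omega = (\sharp_\omega)^{-1} : T\mc T(S) \to T^*\mc T(S)$. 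The form $\omega$ is real-analytic, so $\flat_\omega$ is a $C^\infty$ vector bundle isomorphism. By Lemma~\ref{lem:Xd C1}, $X_d$ is a $C^1$ section of $T\mc T(S)$, hence $\flat_\omega(X_d)$ is a $C^1$ section of $T^*\mc T(S)$, i.e. $d\L_d$ is a $C^1$ one-form. A function whose differential is of class $C^1$ is of class $C^2$; therefore $\L_d$ is $C^2$.
\end{proof}
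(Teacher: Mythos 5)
Your proof is correct and is exactly the argument the paper intends; the paper's proof reads, in full, ``This is because $X_d$ is $C^1$ (Lemma~\ref{lem:Xd C1}),'' and you have simply spelled out the implicit steps (that $d\L_d = -\flat_\omega(X_d)$ via Proposition~\ref{prop:WP gradient}, that $\flat_\omega$ is a smooth bundle map, and that a function with $C^1$ differential is $C^2$). No discrepancy.
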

\begin{proof}
This is because $X_d$ is $C^1$ (Lemma~\ref{lem:Xd C1}).
\end{proof}

\begin{remark}
Recall that the study of the Weil--Petersson symplectic gradient of the length of a simple closed geodesic is classical and goes to works~\cite{wolpert82, wolpert83, wolpert87} of Wolpert. In particular, Wolpert established a nice formula for the Weil--Petersson symplectic form evaluated on two such vectors. An analogue of this for balanced celluations $(\ms G, w)$ and 
$(\ms G', w')$ over a hyperbolic surface $(S,h)$ was computed in~\cite{FS}.
%Formula \eqref{eq:WP et integrale} above is classical when the balanced geodesic cellulation is replaced by a simple closed geodesic, see e.g. \cite{wolpert82, wolpert83, wolpert87}. 
%%[non derivee longueur], or compare Corollary 3.3 in \cite{ker earth} wol81 wol82 for WP cosine }
%We also mention that the Weil--Petersson symplectic form was also computed in \cite{FS} in terms of two balanced geodesic cellulations $(\ms G, w)$ and 
%$(\ms G', w')$ over $(S,h)$. 
Namely, if $d$ and $d'$ are the respective dual flat metrics, then
$$w(X_{d}(h),X_{d'}(h))=\frac{1}{2}\sum_{p\in e\cap e'} w_ew_e' \cos \theta_{ee'}~, $$
where $\theta_{ee'} $
is the angle of intersection
between $e$ and $e'$ according to the orientation of $S$.
\end{remark}

\subsection{Second derivative along Weil--Petersson geodesic}\label{sec:WP derivative}

We continue to differentiate the length function $\L_d$ along an admissible path, starting from the first derivative obtained in Lemma~\ref{lem:L'}.

\begin{lemma}\label{lem:L''} Along an admissible path 
$$L''(0)=X-Y$$
with
\begin{equation*}
\begin{split}
& X:=\sum_{e\in E_0} w_e\ell_e \int_0^1 \frac{1}{2}h''(0)(U,U)-\frac{1}{4}h'(0)(U,U)^2ds~, \\
& Y:= \sum_{e\in E_0} w_e\ell_e\int_0^1 \|\nabla_{U} V^\bot\|^2 +\|V^\bot\|^2ds~. 
\end{split}
\end{equation*}
Here  $\|\cdot\|$ is the $h(0)$ norm and $V^\bot$  is the component of $V$ orthogonal to $U$.
\end{lemma}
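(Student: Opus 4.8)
The plan is to differentiate the first-derivative formula \eqref{eq:L'} one more time along the admissible path, evaluated at $t=0$. Starting from
\[
L'(t)=\frac{1}{2}\sum_{e\in E} w_e\ell_e \int_0^1 h'(t)(U_e(t,s),U_e(t,s))\,ds,
\]
I would first dispose of the edges that collapse at $t=0$: for such an edge $\ell_e(0)=0$, and since $\ell_e$, the integrand and (after the computations below) its $t$-derivative stay bounded near $t=0$, the product $w_e\ell_e(\cdots)$ together with its derivative vanishes at $t=0$. This is why in $L''(0)$ the sum runs only over the visible edges $E_0$. For $t\neq 0$ there are no collapsed edges and everything is smooth, so I can differentiate under the integral sign and use the product rule on the three $t$-dependent factors $\ell_e(t)$, $h'(t)(U,U)$, and the measure is fine since the parametrization is over the fixed interval $[0,1]$.

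The core computation is the $t$-derivative of $\ell_e(t)$ and of the integrand $h'(t)(U_e(t,s),U_e(t,s))$ at a generic point. For the first, $\ell_e(t)^2=h(t)(T_e,T_e)$ with $T_e=\ell_e U_e$ by \eqref{eq:T et U}, so $\ell_e'(t)=\tfrac12\ell_e\int_0^1\big(h'(t)(U,U)+2h(t)(\nabla^t_V U,U)\big)$-type expressions will appear; more efficiently I would use the first-variation-of-geodesic-length identity together with the balance condition as in the proof of Lemma~\ref{lem:L'} (equation \eqref{eq:equation geodesiques} and Remark~\ref{rem:somme sur V}) to kill the boundary terms arising from $\ell_e'$, so that only the "field-variation" terms survive in the weighted sum. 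For the second, I expand
\[
\frac{d}{dt}\big[h'(t)(U,U)\big]=h''(t)(U,U)+2h'(t)(\nabla^t_V U,U),
\]
and then I must rewrite $\nabla^t_V U$ in terms of the orthogonal component $V^\bot$ of $V$. Since $U$ is a unit vector field, $h(t)(\nabla^t_V U,U)=0$, so $\nabla^t_V U$ is orthogonal to $U$; using \eqref{eq:crochet 0 TU}, $\nabla^t_V T=\nabla^t_T V$, and $T=\ell_e U$, one relates $\nabla^t_V U$ to $\nabla^t_U V$ modulo a multiple of $U$ coming from $d\ell_e$, and the $U$-component is again annihilated after pairing with $U$ or swept into a boundary term that cancels in the weighted sum. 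Decomposing $V=V^\parallel U+V^\bot$, writing $\nabla^t_U V$ accordingly, and keeping only the part that contributes, the second-order terms organize into $\|\nabla_U V^\bot\|^2$ plus a curvature term; the latter, via the Jacobi equation along the geodesic $f_e(0)$ in the hyperbolic surface (curvature $-1$), produces exactly the $-\|V^\bot\|^2$ with a sign that, after collecting, gives $+\|V^\bot\|^2$ inside $Y$ (so $-Y$ overall). The $-\tfrac14 h'(0)(U,U)^2$ term in $X$ comes from differentiating the factor $\ell_e(t)$ together with the $h(t)(T,T)^{-1/2}$ normalization already visible in the computation of $L'$ in the proof of Lemma~\ref{lem:L'}: re-expanding $\int_0^1 h'(t)(T,T)/h(t)(T,T)^{1/2}\,ds$ and differentiating once more produces $\tfrac12 h''(U,U)-\tfrac14 h'(U,U)^2$ after using $h(t)(T,T)^{1/2}=\ell_e$.

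The main obstacle I anticipate is the bookkeeping of the boundary terms at the vertices: differentiating geodesic lengths produces terms of the form $h(t)(U_e,V)\big|_{s=0}^{s=1}$ and, at second order, terms like $h(t)(\nabla^t_U V, V)\big|_{s=0}^{s=1}$ and $h(t)(\nabla^t_V U,V)\big|_{s=0}^{s=1}$. I must show that, after multiplying by $w_e$ and summing over all edges meeting a given vertex $v$, these cancel. For the first order this is exactly \eqref{eq:wsum zero derivative} from Remark~\ref{rem:somme sur V}; at second order I would differentiate \eqref{eq:wsum zero derivative} in $t$, using that $V$ at $v$ is common to all incident edges (so it can be pulled out of the sum), and that $\sum_{e\ni v}w_e U_e(t,0)=0$ holds for all $t$, hence its covariant derivative along $V$ also vanishes. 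This is precisely the place where the balance condition \eqref{eq:balanced 1 t}, valid for all $t\geq 0$ by Lemma~\ref{lem:continuite Ut}, does the essential work; the subtlety is that at $t=0$ some edges are collapsed, but since collapsed edges form trees and their endpoints contribute opposite unit vectors $U_{e,v}(0)=-U_{e,v'}(0)$, the balance relation descends to the visible graph $G_0$ and the cancellation still goes through. Once all vertex boundary contributions are shown to cancel in the weighted sums, collecting the surviving interior terms yields $L''(0)=X-Y$ with $X$ and $Y$ exactly as stated, and the formula extends continuously from $t\neq 0$ to $t=0$ by the same limiting argument used at the end of the proof of Lemma~\ref{lem:L'}.
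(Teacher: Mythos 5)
Your plan misses the key structural idea that makes this computation tractable. Setting $g(u,\alpha):=\L_{h(u)}(f(\alpha))$, the proof of Lemma~\ref{lem:L'} shows $L'(t)=g_u(t,t)$, because the other term $g_\alpha(t,t)$ vanishes identically by the balance condition (equation \eqref{eq:equation geodesiques}). Differentiating $L'(t)$ directly, as you propose, then gives $L''=g_{uu}+g_{u\alpha}$: the pure metric variation $g_{uu}$ indeed produces $X$, but the cross-derivative $g_{u\alpha}$ does not contain any Riemann-tensor term --- computing it directly yields expressions like $\nabla_V h'$ and $h'(\nabla_U V,U)$, and no curvature appears, so there is no natural source for the $\|V^\perp\|^2$ contribution you need. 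Your appeal to the Jacobi equation is also misplaced: $V$ is not a Jacobi field along $f_e(0)$, since the $f_e(t)$ are geodesics for the \emph{varying} metrics $h(t)$, not for the fixed metric $h(0)$. The decisive step, which the paper uses and your proposal omits, is to differentiate the first-variation identity $g_\alpha(t,t)\equiv 0$ in $t$ to get $g_{\alpha u}+g_{\alpha\alpha}=0$, and combine this with symmetry of mixed partials to rewrite $L''=g_{uu}-g_{\alpha\alpha}$. Only then can one invoke the classical second-variation-of-length formula (Jost (6.1.7)) for $g_{\alpha\alpha}$ in the fixed metric $h(t)$, which produces $\|\nabla_U V^\perp\|^2$, a Riemann-tensor term that equals $+\|V^\perp\|^2$ on a hyperbolic surface, and a boundary term $h(\nabla_V V,T)|_0^1$ that is linear in $T=\ell_e U_e$ and therefore cancels in the weighted sum by Remark~\ref{rem:somme sur V}.

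Two further gaps. First, your claim that for a collapsed edge "the product $w_e\ell_e(\cdots)$ together with its derivative vanishes at $t=0$" is unsupported: differentiating that product yields the term $w_e\ell'_e(0)\int_0^1 h'(0)(U,U)\,ds$, which need not vanish (and $\ell_e$ may fail to be two-sided differentiable at $0$). The paper instead computes $L''(t)$ for $t\neq 0$ and passes to the limit $t\to 0^\pm$ using the continuity of $L''$. Second, the boundary terms you anticipate, such as $h(\nabla_U V,V)|_0^1$, are not of the form (an edge-independent vector) paired against $U_e$, so they would not cancel under $\sum_{e\ni v}w_e U_e=0$; the boundary term that actually arises in the paper's route through $g_{\alpha\alpha}$ is $h(\nabla_V V,T)$, and that one does cancel.
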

\begin{proof} With the same argument as in the proof of Lemma~\ref{lem:L'},
 for $t\not=0$,
$$L''(t)=\frac{d^2}{du^2}\L_{h(u)}(f(t))|_{u=t} + \frac{d}{d\alpha}\left(\frac{d}{du} \L_{h(u)}(f(\alpha))|_{u=t}\right)|_{\alpha=t}~. $$

It is easy to see that 
$$\frac{d^2}{du^2}\L_{h(u)}(f(t))|_{u=t} =\sum_{e\in E} w_e\ell_e \int_0^1 \frac{1}{2}h''(t)(U(t),U(t))-\frac{1}{4}h'(t)(U(t),U(t))^2ds~, $$
which extends continuously when $t\to 0^\pm$.

The derivative of \eqref{eq:equation geodesiques} with respect to $t$
says that 
$$\frac{d}{du}\left(\frac{d}{d\alpha} \L_{h(u)}(f(\alpha))|_{\alpha=t}\right)|_{u=t}=-\frac{d^2}{d\alpha^2}\L_{h(t)}(f(\alpha))|_{\alpha=t}~. $$

From \cite[(6.1.7)]{jost},
$$\frac{d^2}{d\alpha^2}\L_{h(t)}(f(\alpha))|_{\alpha=t}=\sum_e \frac{w_e}{\ell_e}\left(\int_0^1 \|\nabla_T V^\bot\|^2 - h(R(T,V^\bot)V^\bot,T)ds+ h(\nabla_VV,T)|^{s=1}_{s=0}\right)~, $$
where $R$ is the Riemann curvature tensor of $h=h(0)$.
 From Remark~\ref{rem:somme sur V},
$$\sum_e \frac{w_e}{\ell_e}h(\nabla_VV,T)|^{s=1}_{s=0}=0~.$$ Also, as the sectional curvature of $h$ is constant equal to $-1$, we obtain
$$\frac{d^2}{d\alpha^2}\L_{h(t)}(f(\alpha))|_{\alpha=t}=\sum_e w_e \ell_e\int_0^1 \|\nabla_U V^\bot\|^2 + \|V^\bot\|^2ds~, $$
that leads to the result, up to justify that we can change the order between $\frac{d}{du}$ and $\frac{d}{d\alpha}$. Indeed, one may write explicitly $\frac{d^2}{d\alpha du} \L_{h(u)}(f(\alpha))$ and see that all the second partial derivatives of $(u,t)\mapsto \L_{h(u)}(f(t))$ are continuous.
\end{proof}

The following result is an adaptation of the corresponding one in \cite{wolf} (where the second variation of length of closed geodesics is studied) and of some arguments of \cite{KT} (where the second variation of energy of weighted graphs is studied).

\begin{prop}\label{prop:derivve seconde}
If the admissible path follows a Weil--Petersson geodesic up to second order, 
then $L''(0)>0.$
\end{prop}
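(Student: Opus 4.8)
The plan is to start from the decomposition $L''(0) = X - Y$ of Lemma~\ref{lem:L''}, substitute the Weil--Petersson geodesic formula \eqref{eq:derivee seconde le lon WP} for $h''(0)$ into the term $X$, and then show that the resulting expression exceeds $Y$. Writing $b = h'(0)$, a symmetric traceless Codazzi tensor, and $\|b\|^2 = \tr_h(bb)$, formula \eqref{eq:derivee seconde le lon WP} gives $\tfrac12 h''(0)(U,U) = \tfrac12\bigl(\tfrac14 - \tfrac12(\Delta_h - 2)^{-1}\bigr)\|b\|^2 \cdot h(U,U)$ along each edge, where $U$ is a unit vector, so $h(U,U) = 1$. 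Thus the integrand of $X$ becomes $\tfrac12\bigl(\tfrac14 - \tfrac12(\Delta_h-2)^{-1}\bigr)\|b\|^2 - \tfrac14 b(U,U)^2$. The first step is therefore to rewrite
\[
X = \sum_{e\in E_0} w_e \ell_e \int_0^1 \Bigl( \tfrac12 \bigl(\tfrac14 - \tfrac12(\Delta_h-2)^{-1}\bigr)\|b\|^2 - \tfrac14 b(U,U)^2 \Bigr)\, ds,
\]
and to recognize, using $bJ = -Jb$ and $b(JU,JU) = -b(U,U)$ (see \eqref{eq:jbj2}), the pointwise identity $b(U,U)^2 + b(U,JU)^2 = \tfrac12\|b\|^2$ valid for traceless symmetric $b$ and orthonormal $(U,JU)$. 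Hence $\tfrac14 b(U,U)^2 = \tfrac18\|b\|^2 - \tfrac14 b(U,JU)^2$, so the integrand of $X$ simplifies to $-\tfrac14(\Delta_h-2)^{-1}\|b\|^2 + \tfrac14 b(U,JU)^2$.

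The second, and main, step is to control $Y$. Here one should express $V^\perp = \phi\, JU$ for a scalar function $\phi$ along the edge, so $\|V^\perp\|^2 = \phi^2$ and $\|\nabla_U V^\perp\|^2 = (\partial_s \phi / \ell_e)^2 \cdot \ell_e^2 \cdot$ (appropriately, since $s$ is the normalized parameter) — more precisely, with the arc-length derivative one gets $\|\nabla_U V^\perp\|^2 = (\phi')^2$ where $'$ denotes the derivative with respect to arc-length, using that $\nabla_U(JU) = 0$ along a geodesic (as $J$ is parallel and $U$ is the unit tangent to a geodesic). So $Y = \sum_e w_e \int_{f_e(0)} \bigl((\phi'_e)^2 + \phi_e^2\bigr)$, an integral over the geodesic graph. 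Now the crucial point, following Wolf \cite{wolf} and Kajigaya--Tanaka \cite{KT}: the function $\phi$ (equivalently the normal component of the variation field $V$) is \emph{not} arbitrary — it is determined by the requirement that the deformed cellulation remains balanced and geodesic with the dihedral data fixed by $d$. One must derive the Euler--Lagrange / harmonicity-type equation that $V$ satisfies along edges (coming from the geodesic equation for $f_e(t)$) together with the first-order balance conditions at the vertices (the derivative of \eqref{eq:balanced 1 t}, i.e. \eqref{eq:wsum zero derivative}). Using these, one integrates by parts along each edge and sums over the graph; the vertex boundary terms cancel by the first-order balance condition exactly as in Remark~\ref{rem:somme sur V}. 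The expected outcome is that $Y$ can be rewritten as a quadratic form in $\phi$ that is controlled by $\int_S b(U,JU)^2$-type quantities along edges, and the comparison reduces to the spectral inequality $(\Delta_h - 2)^{-1} < 0$ on the space orthogonal to constants — precisely the fact, going back to Wolpert \cite{wolpert87}, that $\Delta_h - 2$ has negative spectrum so its inverse contributes a strictly positive term $-\tfrac14(\Delta_h-2)^{-1}\|b\|^2 > 0$.

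The main obstacle, as I see it, is the matching of the edge-integrals: $X$ after simplification involves $\int_S$-type integrals of $\|b\|^2$ and of $b(U,JU)^2$ against the weighted geodesic graph, while $Y$ involves the Dirichlet-type energy of the normal variation field $V^\perp$ along the same graph. One needs an exact identity (not merely an inequality) relating $V$ to $b$ — essentially the statement that $V$ is the harmonic-type variation realizing the first-order change of the balanced geodesic cellulation under the metric deformation $b$ — and this is what makes $Y$ bounded above by the "good" part of $X$ with the spectral term left over as a strictly positive surplus. Concretely, I would: (i) differentiate the edge geodesic equation and the vertex balance equations in $t$ to pin down $V^\perp$; (ii) substitute into $Y$ and integrate by parts, discarding vertex terms via \eqref{eq:wsum zero derivative}; (iii) combine with the simplified $X$; (iv) invoke strict negativity of $(\Delta_h-2)^{-1}$ on functions and the non-vanishing of $b$ to conclude $L''(0) = X - Y > 0$, noting that equality would force $b = 0$, contradicting that the path is a nontrivial Teichm\"uller deformation. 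The delicate bookkeeping is step (ii), where collapsed edges at $t=0$ must be handled — but by the admissibility hypothesis collapsed components are trees, their contributions to $Y$ vanish in the limit (as for $X$ in Proposition~\ref{prop:WP gradient}), so the sums are genuinely over $E_0$ and the argument closes.
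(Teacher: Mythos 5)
Your outline correctly identifies the paper's strategy: substitute the Weil--Petersson second-variation formula \eqref{eq:derivee seconde le lon WP} into $X$, use the Wolf/Kajigaya--Tanaka pointwise spectral bound, and control $Y$ via the Jacobi-type ODE $\ddot\eta - \ell^2\eta = -\ell\dot\beta$ for the normal component and integration by parts. Your simplification of the $X$-integrand to $-\tfrac14(\Delta_h-2)^{-1}\|b\|^2 + \tfrac14 b(U,JU)^2$ also matches \eqref{eq:A_e1}. However, there are two genuine gaps in your plan.

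First, you set $V^\perp = \phi\, JU$ and focus entirely on the normal component, expecting the vertex boundary terms to cancel directly by \eqref{eq:wsum zero derivative}. They do not. The edge integration by parts on $\eta\dot\eta$ leaves a boundary term $\ell^{-2}\eta\dot\eta|_0^1$ that, after using \eqref{eq alpha lpha'} and the balance condition, converts into $-\ell^{-2}\zeta\dot\zeta|_0^1$, where $\zeta = h(V,U)$ is the \emph{tangential} component. This forces a second integration by parts on $\zeta$, driven by the independent relation $\dot\zeta = -\ell\alpha + \ell'$ coming from $h_t(U_t,U_t)=1$, after which the surviving boundary terms $\ell\alpha\zeta|_0^1$ and $\ell^{-1}\eta\beta|_0^1$ combine into $\tfrac12 b(U,V)|_0^1$ and only then vanish by the balance condition. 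Unlike Wolf's closed-geodesic setting, the tangential variation is unavoidable here precisely because the edges have endpoints; a plan that decouples $V^\perp$ at the outset cannot close.

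Second, your step (iv) attributes the strict positivity to the fact that $(\Delta_h-2)^{-1}$ is negative definite, expecting the spectral term to be the final positive surplus. That is not how the argument closes. The quantitative bound $-\tfrac14(\Delta_h-2)^{-1}\|b\|^2 \geq \tfrac1{24}\|b\|^2$ (with that specific constant) is consumed in producing $X_e \geq \int(\tfrac13\alpha^2 + \tfrac43\beta^2)\,ds$, which is then matched against $-Y \geq -\tfrac13 A - \tfrac43 B + \tfrac43 C$ so that the $\alpha^2$ and $\beta^2$ contributions cancel exactly. What remains is the purely algebraic quantity $\tfrac43 C$, a sum of squares in $\eta$, $\dot\eta$, $\beta$; strict positivity then follows from a separate two-step argument showing $C=0$ forces $\eta=\beta=0$ on all edges, and then (via a second look at $A$) also $\alpha=0$, hence $b\equiv0$ by holomorphy. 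Merely invoking strict negativity of $(\Delta_h-2)^{-1}$ would not recover this, because after the integrations by parts there is no standalone spectral term left over to save you.
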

\begin{proof}
We denote the respective integrals over an edge $e$ in Lemma~\ref{lem:L''}
%, divided by $\ell_e$, 
by $X_e$ and $Y_e$, so
\begin{equation}
\label{eq:L''}
L''(0)=\sum_{e \in E_0} \omega_e\ell_e(X_e - Y_e)~.
\end{equation}

Let $b$ be the traceless symmetric Codazzi tensor that corresponds to  $h'(0)$. Pick an edge $e\in E_0$. Let $U^\perp$ be the rotation by $\pi/2$ of $U$. We denote by $\alpha: [0,1] \rar \R$ and by $\beta: [0,1] \rar \R$ the functions
\begin{equation*}
%\begin{split}
\alpha:=\frac{1}{2}b(U,U),~~~~~
\beta:=\frac{1}{2}b(U,U^\bot)~.
%\end{split}
\end{equation*}
(In more details, e.g., $\alpha(s)$, $s \in [0,1]$, is defined as $\frac{1}{2}b_{f_e(s)}(U_{f_e(s)}, U_{f_e(s)})$.)

Let us first bound $X_e$. We have
$$X_e=\int_0^1 \frac{1}{2} h''(0)(U,U)-\frac{1}{4}h'(0)(U,U)^2ds~.$$
As $b$ is symmetric and traceless, \begin{equation}\label{eq:norme de b}\|b\|^2=\tr_h(bb)=2(b(U,U)^2+b(U, U^\bot)^2)=8\alpha^2+8\beta^2~.
\end{equation} 
As we are  following up to second order a Weil--Petersson geodesic starting from $h$ and directed by the Codazzi tensor $b$, \eqref{eq:derivee seconde le lon WP} applies, and we get
\begin{equation}
\label{eq:A_e1}
X_e=\int_0^1 -\frac{1}{4}(\Delta-2)^{-1}\|b\|^2+\beta^2  ds~.
\end{equation}

We note that \cite[Lemma 5.1]{wolf}, \cite[Formula (3.13)]{KT}
\begin{equation}
\label{t6}
-\frac{1}{4}(\Delta-2)^{-1}\|b\|^2 \geq \frac{1}{24}\|b\|^2~.
\end{equation}
So we substitute it in~(\ref{eq:A_e1}) and get
\begin{equation}
\label{A_e2}
X_e \geq \int_0^1 \frac{1}{3}\alpha^2+\frac{4}{3}\beta^2 ds~.
\end{equation}

Now we deal with $Y_e$.
Take the deformation vector field $V$ and decompose it into the tangential and orthogonal parts, $V^\top$ and $V^\bot$, to the edge $e$. We denote by $\zeta: [0,1] \rar \R$ and $\eta: [0,1] \rar \R$ the oriented lengths of $V^\top$ and $V^\bot$ respectively, pulled back to $[0,1]$. In other terms, 
\begin{equation*}
%\begin{split}
\zeta:=h(V, U), ~~~~~
\eta:=h(V, U^\perp)~.
%\end{split}
\end{equation*}

In the following, we will denote by $\dot{\phantom{x}}$ the derivative with respect to the parameter $s$ of the respective functions.
As $\zeta$ is the oriented length of $V^\top$,
$\dot{\zeta}$ is the oriented length of $\ell_e\nabla_U V^\top$ and because $\nabla_U U=0$, $\nabla_U V^\top$ is collinear with $U$, 
hence
\begin{equation}\label{eq:zeta dot}
\dot{\zeta}=\ell_eh(\nabla_UV,U)=\ell_e h(\nabla_UV^\top,U)~,
\end{equation}
\begin{equation}\label{eq zeta zeta'}
\zeta\dot{\zeta}=\ell_e h(V^\top, \nabla_U V^\top)~.
\end{equation} 

Similarly, $\dot{\eta}$ is the oriented length of $\ell_e\nabla_U V^\bot$. Then $\nabla_U V^\bot$ is orthogonal to $U$, i.e., collinear to $U^\bot$, 
hence
\begin{equation}\label{eq:alpha dot}
\dot{\eta}=\ell_eh(\nabla_UV,U^\bot)=\ell_e h(\nabla_UV^\bot,U^\bot)~,
\end{equation}
\begin{equation}\label{eq alpha lpha'}
\eta\dot{\eta}=\ell_e h(V^\perp, \nabla_U V^\perp)=\ell_e h(V, \nabla_U V)-\ell_e h(V^\top, \nabla_U V^\top)=\ell_e h(V, \nabla_U V)-\zeta\dot\zeta~.
\end{equation} 

The key points are connections between $\alpha$ and $\zeta$ and between $\beta$ and $\eta$. We start from the first pair. Note that  by \eqref{eq:crochet 0 TU} and \eqref{eq:T et U}, along an edge,
\begin{equation}
\nabla_U V=\nabla_V U +\frac{\ell'}{\ell}U~.
\end{equation}
From this and $h_t(U_t, U_t)=1$, we get by differentiating at $t=0$,
\[0=b(U, U)+2h(\nabla_V U, U)=b(U, U)+2h(\nabla_U V, U)-2\frac{\ell'}{\ell}~.\]
From this,
\begin{equation}
\label{t3}
\dot{\zeta}=\ell h(\nabla_U V, U)=-\frac{\ell}{2}b(U, U)+{\ell'}=-\ell\alpha+{\ell'}~.
\end{equation}

For the pair $\eta$ and $\beta$ we have the following differential equation  
\begin{equation}\label{eq:la equation}\ddot{\eta}-\ell^{2}\eta=-\ell\dot{\beta}~.\end{equation}

This formula is proved in
  Section 4.1 in \cite{wolf} and also in  Lemma 3.12 in \cite{KT}. Note that in  \cite{wolf}, what is denoted by $V$ is the normal component of the deformation, as there Wolf is interested in the case of closed geodesic, thus neglecting the impact of the tangent component. We, however, have to care about the impact of $V^\top$ on the boundary terms. Also, the geodesics are parameterized by arc-length in \cite{wolf}, while they are parameterized proportionally to arc-length in our case. In contrast, the parametrization in \cite{KT} is the same as ours.

It is easy to check (as in \cite{wolf}) that there exists a primitive $\theta$ of $\eta$ satisfying
\begin{equation}
\label{t2b}
\ddot{\theta}-\ell^2\theta=-\ell\beta~,
\end{equation}
and, as a primitive, $\dot{\theta}=\eta$.

Using \eqref{eq:alpha dot},  the term $Y_e$ in (\ref{eq:L''}) can be written as
\begin{equation}\label{eq:Be bis}Y_e=\int_0^1 (\ell^{-1}\dot{\eta})^2  + \eta^2 ds~. \end{equation}

We now partially follow computations from \cite{wolf} keeping a special track on the boundary terms (which do not appear in~\cite{wolf} as only the case of a closed geodesic is considered there): 
\begin{equation}
\label{B_e}
\begin{split}
Y_e&=\int_0^1 (\ell^{-1}\dot{\eta})^2  + \eta^2 ds \\
&\stackrel{\mathrm{IBP}}{=} \ell^{-2}\eta\dot{\eta}|^1_0+\int^1_0(\eta^2-\ell^{-2}\eta\ddot{\eta})ds \\ 
&\stackrel{\eqref{eq:la equation}}{=}\ell^{-2}\eta\dot{\eta}|_0^1+\ell^{-1}\int_0^1\eta\dot{\beta}ds\\
&\stackrel{\mathrm{IBP}}{=}\ell^{-2}\eta\dot{\eta}|^1_0+\ell^{-1}\eta\beta|^1_0-\ell^{-1}\int^1_0\dot{\eta}\beta ds~.
\end{split}
\end{equation}
We denote $\ell^{-1}\int^1_0\dot{\eta}\beta ds$ by $Y^*_e$. We first see what happens with the boundary term $\ell^{-2}\eta\dot{\eta}|^1_0+\ell^{-1}\eta\beta|^1_0$ in the total sum, and then bound $Y^*_e$.
 
We substitute \eqref{eq alpha lpha'} instead of $\eta\dot{\eta}|^1_0$. Then, in the total sum  the term $\ell_e h(V, \nabla_U V)$ gets eliminated by the balance condition, so we have
\begin{equation}
\label{flip}
\sum_{e \in E_0} \omega_e\ell^{-1}_e\eta\dot{\eta}|^1_0=-\sum_{e \in E_0} \omega_e\ell^{-1}_e\zeta\dot{\zeta}|^1_0~.
\end{equation}

Using integration by parts and (\ref{t3}), we get
\begin{equation}
\label{t8}
\zeta\dot{\zeta}|^1_0=\int_0^1(\dot{\zeta}^2+\ddot{\zeta}\zeta)ds=\int_0^1(\dot{\zeta}^2-\ell\dot{\alpha}\zeta)ds
=-\ell\alpha\zeta|^1_0+\int^1_0\dot{\zeta}(\dot{\zeta}+\ell\alpha)ds~.
\end{equation}

By definition,
\begin{multline}
\label{elimin}
\eta\beta+\alpha\zeta=\frac{1}{2}(h(V, U^{\perp})b(U, U^\perp)+h(V, U)b(U,U))=\\=\frac{1}{2}b(U, h(V, U^{\perp})U^\perp+h(V, U)U)=\frac{1}{2}b(U, V)~.
\end{multline}
When we take its weighted sum, it disappears due to the balance condition.

From \eqref{t3},
\begin{equation}
\label{t9}
\dot{\zeta}(\dot{\zeta}+\ell\alpha)=\left({\ell'}\right)^2-\alpha{\ell}{\ell'}=\left({\ell'}-\frac{{\ell}\alpha}{2} \right)^2-\frac{{\ell}^2\alpha^2}{4}\geq -\frac{{\ell}^2\alpha^2}{4}~. 
\end{equation}

Using (\ref{B_e}), (\ref{flip}), (\ref{t8}), (\ref{elimin}) and (\ref{t9}) we then get 
\begin{equation*}
-Y=-\sum_{e\in E_0} w_e \ell_e Y_e \geq \sum_{e\in E_0} w_e \ell_e \left(Y^*_e-\int_0^1 \frac{\alpha^2}{4}ds  \right)~.
\end{equation*}

Introducing the notations $Y^*:= \sum_e w_e \ell_e Y^*_e$, $A:=\sum_e w_e \ell_e \int_0^1 \alpha^2$, the inequality above is expressed as

\begin{equation}
\label{B}
-Y\geq Y^* - \frac{1}{4}A~.
\end{equation}

Now we write
$$Y^*_e=\ell^{-1}\int^1_0\dot{\eta}\beta ds= \int_0^1\ell^{-1}\left(\left(\sqrt {\frac{\ell^{-1}}{4}}\dot\eta+\sqrt {\ell}\beta\right)^2-\frac{\ell^{-1}}{4}\dot\eta^2-\ell  \beta^2\right)ds~.$$
By adding to the integrand $\pm \eta^2/4$, with \eqref{eq:Be bis} we have that
\begin{equation}
\label{2way}
Y^*_e  = - \frac{Y_e}{4} + \int_0^1 \frac{\eta^2}{4}+\ell^{-1}\left(\sqrt {\frac{\ell^{-1}}{4}}\dot\eta+\sqrt {\ell }\beta\right)^2ds -\int_0^1\beta^2 ds~. 
\end{equation}
%that gives in particular
%$$
%B^*_e \geq - B_e/4 - \int_0^1  \beta^2 ds~.
%$$
We denote $B:=\sum_{e \in E_0} w_e \ell_e \int_0^1 \beta^2$,
\begin{equation}
\label{remainder}
C:=\sum_{e \in E_0}w_e\ell_e \int_0^1\frac{\eta^2}{4}+\ell^{-1}\left(\sqrt {\frac{\ell^{-1}}{4}}\dot\eta+\sqrt {\ell }\beta\right)^2ds~.
\end{equation}
Taking the weighted sum of (\ref{2way}), we get
\begin{equation}
\label{eq:B* ter}
Y^*= -\frac{1}{4}Y-B+C~.\end{equation}

Using this with \eqref{B}, we obtain
$$-Y\geq -\frac{1}{4}Y - \frac{1}{4}A-B+C~,$$
i.e.
$$-Y\geq -\frac{1}{3}A-\frac{4}{3}B+\frac{4}{3}C ~. $$

With our current notation, \eqref{A_e2} gives
$$X\geq \frac{1}{3}A+\frac{4}{3}B~, $$ hence 
 $$L''(0)=X-Y\geq \frac{4}{3}C~.$$
Looking at (\ref{remainder}), we see that $L''(0)$ is non-negative. Moreover, $L''(0)=0$ implies $C=0$, hence on every edge $\eta=0$. Thereby, also $\dot\eta=0$ on every edge, and then $C=0$
also implies that $\beta=0$ on every edge, so $B=0$. From the definition of $Y_e^*$ and $Y^*$, we then get that $Y^*=0$. Thus (\ref{B}) gives us
$$-Y \geq -\frac{1}{4}A$$
and we have
$$L''(0)=X-Y\geq \frac{1}{12}A~.$$
It follows that $L''(0)=0$ also implies that $\alpha=0$ at every edge. Then $b$ is zero along every edge. But $b$ is the real part of a holomorphic quadratic differential, see \eqref{def:Qb}. By holomorphy, it follows that $b=0$ over $S$, which is excluded by our choice of the path.
\end{proof}

The Weil--Petersson metric is not complete over $\mc T(S)$, but it has an exhaustion by compact  convex sets \cite{wolpert87}, so we may speak about Weil--Petersson  convex funtions.

\begin{cor}\label{cor Ld cvxe}
The Weil--Petersson Hessian of $\L_d$ is positive definite and $\L_d$ is strictly convex.
\end{cor}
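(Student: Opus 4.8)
The plan is to deduce Corollary~\ref{cor Ld cvxe} directly from the results already assembled. The positive-definiteness of the Weil--Petersson Hessian of $\L_d$ is exactly the content of Proposition~\ref{prop:derivve seconde}: given any point $h\in\mc T(S)$ and any nonzero tangent vector, represented by a nonzero traceless symmetric Codazzi tensor $b$, one takes the Weil--Petersson geodesic $h(t)$ through $h$ with $h'(0)$ corresponding to $b$. By Lemma~\ref{lem:lift} this geodesic (for $t$ in a small one-sided interval, after possibly reversing time) lifts to an admissible path $(\ms G_t,w,h(t))$ in the sense of Section~\ref{sec:directional derivatives}, so that $L(t)=\L_d(h(t))$ and $L''(0)>0$ by Proposition~\ref{prop:derivve seconde}. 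Since $\L_d$ is $C^2$ by Corollary~\ref{cor:C2}, the Weil--Petersson Hessian $\Hess^{\weil}\L_d$ is a well-defined symmetric bilinear form at each point, and its value on $(b,b)$ is precisely $L''(0)$ computed along the Weil--Petersson geodesic (this is the standard identity $\Hess^{\weil}F(v,v)=\frac{d^2}{dt^2}\big|_{t=0}F(c(t))$ for $c$ a geodesic with $\dot c(0)=v$). Hence $\Hess^{\weil}\L_d(b,b)>0$ for all $b\neq 0$, i.e.\ the Hessian is positive definite.

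For the convexity statement, I would first recall, as the excerpt does, that although the Weil--Petersson metric on $\mc T(S)$ is incomplete, Wolpert~\cite{wolpert87} shows that $\mc T(S)$ is exhausted by a nested family of compact, geodesically convex (for the Weil--Petersson metric) subsets. This makes the notion of a Weil--Petersson convex function meaningful: a function is Weil--Petersson convex if its restriction to every Weil--Petersson geodesic segment is convex. Now restrict $\L_d$ to an arbitrary Weil--Petersson geodesic segment $c:[0,1]\to\mc T(S)$; the composition $t\mapsto \L_d(c(t))$ is $C^2$ (again by Corollary~\ref{cor:C2}), and by the Hessian computation above its second derivative equals $\Hess^{\weil}\L_d(\dot c(t),\dot c(t))$, which is strictly positive whenever $\dot c(t)\neq 0$, i.e.\ for all $t$ along a nondegenerate geodesic. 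A $C^2$ function of one real variable with strictly positive second derivative is strictly convex; therefore $\L_d\circ c$ is strictly convex, and since $c$ was an arbitrary Weil--Petersson geodesic, $\L_d$ is strictly convex in the Weil--Petersson sense.

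There is one subtlety I would be careful about, and it is the only real obstacle: Lemma~\ref{lem:lift} and Proposition~\ref{prop:derivve seconde} are phrased for an \emph{analytic} curve $\rho_t$ in $\mc T(S)$ and for an admissible path following a Weil--Petersson geodesic \emph{up to second order}; one must check that every Weil--Petersson geodesic (or at least a second-order jet of one) is covered. This is fine because Weil--Petersson geodesics are real-analytic curves in $\mc T(S)$ (the Weil--Petersson metric is real-analytic), so Lemma~\ref{lem:lift} applies verbatim, and the hypothesis ``follows a Weil--Petersson geodesic up to second order'' in Proposition~\ref{prop:derivve seconde} is satisfied exactly by the geodesic itself. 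One small bookkeeping point is that Lemma~\ref{lem:lift} only guarantees the admissible lift on a one-sided interval $[0,\e_0]$; but convexity of a $C^2$ function is a pointwise condition on the second derivative, and at an interior point of a geodesic one may approach from either side, while the first and second derivatives of $\L_d\circ c$ are continuous in $t$, so the strict positivity of $L''(0)$ obtained from the one-sided admissible path gives strict positivity of $(\L_d\circ c)''$ at every parameter value. This closes the argument.

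In summary: Proposition~\ref{prop:derivve seconde} plus Corollary~\ref{cor:C2} give $\Hess^{\weil}\L_d>0$, and combined with Wolpert's convex exhaustion of $(\mc T(S),\dwp)$ this yields that $\L_d$ is strictly Weil--Petersson convex. I expect the write-up to be short; the main thing to get right is the translation between ``second derivative along a Weil--Petersson geodesic is positive'' (Proposition~\ref{prop:derivve seconde}) and ``the Weil--Petersson Hessian is positive definite'', which is immediate once one knows $\L_d\in C^2$.
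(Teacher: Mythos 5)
Your proposal is correct and follows the same approach as the paper's proof: both use Wolpert's geodesic convexity of Teichm\"uller space, lift a Weil--Petersson geodesic to an admissible path via Lemma~\ref{lem:lift} (using that WP geodesics are analytic), invoke Corollary~\ref{cor:C2} for the $C^2$ regularity, and then apply Proposition~\ref{prop:derivve seconde}. You are slightly more explicit about the one-sided nature of the lift and about why the WP geodesic itself is an allowed analytic test curve, but these points are already implicit in the paper's (shorter) argument.
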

\begin{proof}
Although the Weil--Petersson metric is not complete over $\mc T(S)$, the exponential map is a homeomorphism from its domain \cite{wolpert87}, hence we may consider a Weil--Petersson geodesic tangent to a traceless symmetric Codazzi tensor $b$. We then consider an admissible path of $\mc T(S)$ that coincides with this Weil--Petersson geodesic up to second order. Recall that $\L_d$ is $C^2$ by Corollary~\ref{cor:C2}. Hence from  Proposition~\ref{prop:derivve seconde}, the Hessian of $\L_d$ is positive definite. As any pair of points in $\mc T(S)$ can be joined by a Weil--Petersson geodesic \cite{wolpert87}, it follows that $\L_d$ is strictly convex.
\end{proof}

\subsection{Proof of Theorem~\ref{thmII'}}\label{sec: proof II}

Having the results of the preceding sections in hand,  one may mimic the argument in \cite{bonahon} to conclude the proof of Theorem~\ref{thmII'}.

Let $d_1$ and $d_2$ be two flat metrics with negative singular curvatures over $S$. The function $L_{d_1}+L_{d_2}$ is $C^1$ and proper (Lemma~\ref{diff2} and Lemma~\ref{proper2}), hence it has a critical point $h\in \mc T(S)$. As $X_{d_i}$ is minus the Weil--Petersson symplectic gradient of $L_{d_i}$ (Proposition~\ref{prop:WP gradient}), it follows that $X_{d_1}(h)+X_{d_2}(h)=0$. As $L_{d_1}+L_{d_2}$ is strictly convex (Corollary~\ref{cor Ld cvxe}), this critical point is unique. As the Hessian of $L_{d_1}+L_{d_2}$ is positive definite (Corollary~\ref{cor Ld cvxe}), the intersection is transverse.

\small

\bibliographystyle{alpha}
\bibliography{flat-ghmc-with-polyhedral-boundary}
\bigskip{\footnotesize\par
  \textsc{IMAG, Universit\'e de Montpellier, CNRS, Montpellier, France} \par
  \textit{E-mail}: \texttt{francois.fillastre@umontpellier.fr}
}
\bigskip{\footnotesize\par
  \textsc{University of Vienna, Faculty of Mathematics, Oskar-Morgenstern-Platz 1, A-1090 Vienna, Austria} \par
  \textit{E-mail}: \texttt{roman.prosanov@univie.ac.at}
}

\end{document}